\definecolor{darkyellow}{rgb}{0.7, 0.7, 0}
\definecolor{darkred}{rgb}{0.6, 0.1, 0.1}
\definecolor{darkblue}{rgb}{0.2, 0.2, 0.7}
\definecolor{darkgreen}{rgb}{0.1, 0.4, 0.1}
\definecolor{bettercyan}{rgb}{0.1, 0.4, 0.7}
\renewcommand*{\bibnamedash}{%
	\leavevmode\raise +0.6ex\hbox to 5.5ex{\hrulefill}.\space\space}
\numberwithin{equation}{section}
\newenvironment{proposition}
{\pushQED{\qed}\propositionx}
{\popQED\endpropositionx}
\newenvironment{propositionp}
{\pushQED{\qed}\propositionx}
{\popQED\endpropositionx}
\newtheorem*{theorem*}{Theorem}
\newtheorem{theorem}{Theorem}
\newenvironment{lemma}
{\pushQED{\qed}\lemmax}
{\popQED\endlemmax}
\theoremstyle{remark}
\newtheorem{example}{Example}[section]
\newenvironment{remark}
{\pushQED{\qed}\remarkx}
{\popQED\endremarkx}
\newenvironment{remark*}
{\pushQED{\qed}\remarkxa}
{\popQED\endremarkxa}
\newcommand{\bbC}{\mathbb{C}}
\newcommand{\bbN}{\mathbb{N}}
\newcommand{\bbR}{\mathbb{R}}
\newcommand{\bbS}{\mathbb{S}}
\newcommand{\calA}{\mathcal{A}}
\newcommand{\calB}{\mathcal{B}}
\newcommand{\calC}{\mathcal{C}}
\newcommand{\calD}{\mathcal{D}}
\newcommand{\calE}{\mathcal{E}}
\newcommand{\calF}{\mathcal{F}}
\newcommand{\calG}{\mathcal{G}}
\newcommand{\calI}{\mathcal{I}}
\newcommand{\calJ}{\mathcal{J}}
\newcommand{\calK}{\mathcal{K}}
\newcommand{\calM}{\mathcal{M}}
\newcommand{\calR}{\mathcal{R}}
\newcommand{\calS}{\mathcal{S}}
\newcommand{\calV}{\mathcal{V}}
\newcommand{\calX}{\mathcal{X}}
\newcommand{\calY}{\mathcal{Y}}
\newcommand{\scrA}{\mathscr{A}}
\newcommand{\fraka}{\mathfrak{a}}
\newcommand{\frakm}{\mathfrak{m}}
\newcommand{\bfa}{\mathbf{a}}
\newcommand{\bfr}{\mathbf{r}}
\newcommand{\bfx}{\mathbf{x}}
\newcommand{\dd}{\,\mathrm{d}}
\title[Complete asymptotic analysis of low energy scattering]{Complete asymptotic analysis of low energy scattering for Schr\"odinger operators with a short-range potential}
\author{Ethan Sussman}
\date{September 4th, 2025 (Last update). November 14th, 2024 (Draft).}
\email{ethan.sussman@northwestern.edu}
\address{Department of Mathematics, Northwestern University, Illinois, USA}
\subjclass[2020]{Primary 35C20. Secondary 58J50.}
\begin{document}
	
\begin{abstract}
	Recent work by Hintz--Vasy provides a partial asymptotic analysis of the low-energy limit of scattering for Schr\"odinger operators with a short-range potential.  
	Using a slight refinement of Hintz's algorithm, we complete the asymptotic analysis by providing full asymptotic expansions in every possible asymptotic regime. Moreover, the analysis is done in any dimension $d\geq 3$, for any asymptotically conic manifold, and we keep track of partial multipole expansions. Applications include full asymptotic analyses of the Schr\"odinger, wave, and Klein--Gordon equations, one of these being described in a companion paper. Using previous work, only partial asymptotic analyses were possible.
\end{abstract}

\maketitle

\tableofcontents

\section{Introduction}

This is a paper about full asymptotic expansions. If $f\in C^{K}(\bbR)$, then Taylor's theorem tells us that $f(x)$ has a partial asymptotic expansion as $x\to 0$:
\begin{equation}
	f(x) - \sum_{j=0}^K \frac{f^{(j)}(0)}{j!} x^j= o(x^{K}).
	\label{eq:Taylor} 
\end{equation}
If $f\in C^\infty(\bbR)$, then one has a \emph{full} asymptotic expansion:
\begin{equation}
	f(x) \sim \sum_{j=0}^\infty \frac{f^{(j)}(0)}{j!} x^j, 
\end{equation}
where the precise meaning of `$\sim$' is that \cref{eq:Taylor} holds for every $K\in \bbN$. 
Usually, knowing that a function is $C^0$ is more important than knowing that it is $C^1$, and knowing that it is $C^1$ is more important that it is $C^2$, and so on. 
However, keeping track of precise amounts differentiability can be tedious. If one can get away with working with $C^\infty$ functions, then it is often advisable to do so, even at loss of generality. For this reason, proving the existence of full asymptotic expansions can be worthwhile.

In PDE, asymptotic expansions provide precise statements about the behavior of solutions in various asymptotic regimes. Unlike in the simple one-dimensional setting, in higher dimensions variables can be taken to their extreme values \emph{jointly} in different ways. For example, when studying the Schr\"odinger equation on $\smash{\mathbb{R}^{1,3}_{t,\bfx}}$ with a time-independent short-range potential, very different behavior is seen 
\begin{itemize}
	\item as $t\to\infty$ if $\bfx$ is fixed vs. 
	\item if 
	$\lVert \bfx \rVert=vt$ for some $v>0$. 
\end{itemize}In the former regime, only bound states contribute at leading order, whereas the latter regime allows one to follow a wavefunction in the continuous spectrum of the Schr\"odinger operator as it disperses. The latter decays pointwise. Bound states do not. 
These are not the only regimes. For example, following the evolving wavefunction along the curve $\lVert \bfx \rVert=ct^k$ for $k>1$, $c>0$,
results in outrunning the wavepacket --- an accelerating observer sees rapid decay, at least if the initial data is also rapidly decaying and sufficiently smooth. A final asymptotic regime that needs to be distinguished is when $\lVert \bfx \rVert\sim \sqrt{t}$,
but this ``transitional regime'' turns out to only be relevant to understanding subleading terms in the asymptotic expansion. 

When proving results like those described above, it is of interest to show that they are complete. This means showing that the relevant asymptotic expansions agree at the interface between asymptotic regimes, i.e.\ ``at the corner.'' For example:
\begin{equation}
	f \in C^\infty([0,\infty)\times (0,\infty)) \cap C^\infty ((0,\infty)\times [0,\infty)) \not\Rightarrow f\in C^\infty([0,\infty)^2),
\end{equation}
as $f(x,y)=1/(x+y)$ is a counter-example; the problem is smoothness (in this case, even continuity) at the corner $(0,0)\in [0,\infty)^2$. In our companion paper \cite{Full}, we prove (using the results in the present paper) that the asymptotic regimes described above are complete for the Schr\"odinger equation. 

Asymptotics like those discussed in the previous paragraph are often deduced from results about \emph{low-energy scattering theory}. This subfield of scattering theory, which goes back to Wigner's work on scattering thresholds \cite{Wigner}, studies monochromatic waves (solutions of the Helmholtz equation) in the limit where the wavelength is taken to infinity, the deep infrared. In this limit, the Helmholtz equation behaves like Laplace's equation in bounded regions, but the effect of having a positive spectral parameter is still felt at large distances.
Via the Fourier transform, asymptotic expansions for low-energy scattering can be converted into asymptotic expansions on spacetime for the usual PDEs. In the mathematics literature, this strategy goes back to Jensen--Kato \cite{JensenKato} and has been used since to study wave propagation  in both the relativistic and non-relativistic settings. See e.g.\ \cite{Schlag1}\cite{Schlag2, Schlag3}\cite{Goldberg} for a small sample of the research literature regarding the Euclidean case.  

This paper continues a program recently undertaken by Hintz and Vasy \cite{VasyLA, Vasy, VasyLagrangian}\cite{HintzPrice}. Hintz's goal was to prove Price's law without symmetry assumptions. 
\emph{Price's law} describes the asymptotic tail of radiation on black hole spacetimes and is therefore significant in the mathematical analysis of general relativity. Because the spatial slices of black hole spacetimes are only \emph{asymptotically} Euclidean and not exactly Euclidean,\footnote{Black hole exteriors can be thought of as having two ends, the $r\to\infty$ end and the horizon. It turns out that the large end is the one more relevant for Price's law, which applies even without a horizon, as long as the large-$r$ tails of the metric perturbation are similar to Schwarzschild. So, we use ``Price's law'' to refer to $t^{-3}$ pointwise decay on asymptotically Schwarzschild spacetimes, as well as similar results regarding potential scattering.} Hintz's work required going beyond the Euclidean case. For this, he used Vasy's low-energy toolkit developed in \cite{Vasy, VasyLagrangian}. We will also use this toolkit.
Other works on the asymptotically Euclidean case include \cite{Hafner}\cite{GH1, GH2, GHK}\cite{Bouclet0,Bouclet}\cite{VasyWunsch}\cite{Tao}\cite{Waters}. We will say a bit more about some of these below.

Our aim here is to complete the Hintz--Vasy low-energy program. Hintz has proven only \emph{partial} asymptotic expansions at low-energy. For wave propagation, this yields only partial asymptotic expansions at large-$t$: the leading-order term for the wave equation (that is, Price's law) and the first few terms for the Schr\"odinger/Klein--Gordon equations. 
Here, we provide \emph{full} asymptotic expansions, in all possible asymptotic regimes (except for the high energy regime, which we consider already understood and therefore do not discuss further).

Low-energy scattering theory has intrinsic interest, but our real motivation lies in wave propagation. We discuss Price's law in \S\ref{sec:Price}, but applications are otherwise lacking here.
This is because the Schr\"odinger equation in $(1+3)$-dimensions is handled in our companion paper \cite{Full}. There, we cite the results here. Were we to only use pre-existing results -- and not the results below -- we would only be able to produce partial expansions. 
Likewise, if one takes the argument in \cite{HintzPrice} and replaces the partial low-energy analysis there with the full low-energy analysis here, then one gets a full asymptotic expansion for linear waves on asymptotically flat spacetimes, rather than only the leading terms. This reduces problems about linear wave propagation on particular spacetimes to a matter of computation, the computation of the terms below. We will indicate some examples at the end of  \S\ref{sec:Price}.

The spacetime compactifications in  \cite{HintzPrice}\cite{Full} -- the former concerning the wave equation, and the latter concerning the Schr\"odinger equation -- are different\footnote{Besides the lack of spacelike infinity in the latter, there is an additional parabolic scaling regime, which is ultimately related to the low-energy transitional regime `tf' in the present paper.}, which shows that going from the low-energy analysis to the spacetime analysis is non-trivial. It is substantially easier for wave than for Schr\"odinger. However, the same low-energy theory is the input for both. The only difference is the particular Fourier transform taken. 

\begin{remark*}
	 The Klein--Gordon equation (and other massive equations, like massive Dirac) can be treated along the same lines as the Schr\"odinger equation. It uses the same compactification within the lightcone. 
\end{remark*}

\begin{remark*}
	So far, we have only been discussing parabolic and hyperbolic PDE with \emph{time-independent} coefficients. The opposite extreme is PDE with time-dependent coefficients which settle down to their constant-coefficient analogues as $t\to\infty$. This latter setting has been studied by many authors. Works which produce asymptotics in this setting, using microlocal tools similar tools to those used here, include Baskin--Vasy--Wunsch \cite{BasinVasyWunsch} for wave, Sussman \cite{desc} for Klein--Gordon, and Gell-Redman--et.\ al.\ \cite{Parabolicsc, HassellNL} for Schr\"odinger. This list is not exhaustive.

	Ultimately, one would like to be able to study time-dependent coefficients which settle down to time-independent but non-constant coefficients. First steps in this direction have been undertaken by Hintz \cite{Hintz3b, Hintz3bwave} for the wave equation, using microlocal tools strictly more sophisticated than those applied here. Again, we are only discussing the microlocal literature; the non-microlocal literature on this topic is large, and these brief remarks are not an attempt to indicate anything about it.
\end{remark*}

\subsection{Main result}\label{subsec:main_result}
Let $X$ denote an asymptotically conic manifold, including the ur-example of Euclidean space.\footnote{We are using the term ``asymptotically conic manifold'' in the sense of Melrose \cite{MelroseGeometric}. For the reader not familiar with such terminological conventions, let us emphasize that $X$ is a compact manifold-\emph{with-boundary}. Indeed, any compact manifold-with-boundary can be endowed with a metric that makes it asymptotically conic. When we speak of ``Euclidean space'' in this paper, what we really mean is the radial compactification $X=\bbR^d\sqcup \infty \bbS^{d-1}$ that compactifies $\bbR^d$ by attaching one point ``at infinity'' for each ray in $\bbR^d$ emanating from the origin, in such a way so that $1/\langle r \rangle$ becomes a boundary-defining-function of $\infty \bbS^{d-1} = \partial X$. This is equivalent to stereographically compactifying to a hemisphere.}

The crux of Hintz's treatment of Price's law  (note that Hintz considers only the case $d=\dim X=3$, but a similar analysis applies for $d\geq 3$) is the proof that, given Schwartz forcing $f \in \calS(X)$, the resolvent output $R(E\pm i0) f$
admits a partial asymptotic expansion on some particular manifold-with-corners $X^+_{\mathrm{res}}$ compactifying the low-energy limit: 
\begin{equation}
X^+_{\mathrm{res}} \hookleftarrow \bbR^+_E \times X, \qquad X^+_{\mathrm{res}} \cap \operatorname{cl}_{X^+_{\mathrm{res}}} \{E<E_0\} \Subset X^+_{\mathrm{res}}\text{ for all }E_0>0.
\end{equation}
(The second of these equations is just saying that the low-energy end of $X^+_{\mathrm{res}}$ is compact, without saying anything about the high-energy end.)
Specifically, $X^+_{\mathrm{res}} = [ [0,\infty)_\sigma \times X ; \{0\}\times \partial X]$ is the result of blowing up the corner of $[0,\infty)_\sigma \times X$, where $\sigma$ is related to the energy $E$ by $E=\sigma^2$. 

This manifold-with-corners has three faces, bf (boundary face, the pure large-distance regime), tf (transition face), and zf (zero face, the pure low-energy regime) --- see \Cref{fig}.

\begin{figure}[h]
	\begin{tikzpicture}
		\fill[gray!5] (5,-1.5) -- (0,-1.5) -- (0,0)  arc(-90:0:1.5) -- (5,1.5) -- cycle;
		\draw[dashed] (5,-1.5) -- (5,1.5);
		\node[white] (ff) at (.75,.75) {ff};
		\node (zfp) at (-.35,-.75) {$\mathrm{zf}$};
		\node (flr) at (3,1.7) {$\mathrm{bf}$};
		\node (mf) at (.8,.6) {$\mathrm{tf}$};
		\draw (0,-1.5) -- (0,0)  arc(-90:0:1.5)  -- (5,1.5);
		\draw[dashed] (5,-1.5) -- (0,-1.5);
		\node[darkgray] at (3,0) {$X^+_{\mathrm{res}}$};
		\draw[->, darkred] (.1,-.1) -- (.1,-.75) node[right] {$1/r$};
		\draw[->, darkred] (.1,-.1) to[out=0, in=-150] (.8,.1) node[below right] {$\!\!\hat{r}=r \sigma$};
		\draw[->, darkred] (1.6,1.4) -- (2.4,1.4) node[below] {$\sigma$};
		\draw[->, darkred] (1.6,1.4) to[out=-90, in=57] (1.34,.6) node[right] {$1/\hat{r}$};
	\end{tikzpicture}
	\caption{The mwc $X^+_{\mathrm{res}}$, with the boundary hypersurfaces $\mathrm{bf} = \mathrm{cl}_{X^+_{\mathrm{res}}}\{(\sigma,\theta) : \sigma>0,\theta\in \partial X \}$,  $\mathrm{zf} = \mathrm{cl}_{X^+_{\mathrm{res}}}\{(0,x) : x\in X^\circ \}$, and $\mathrm{tf}$.}
	\label{fig}
\end{figure}
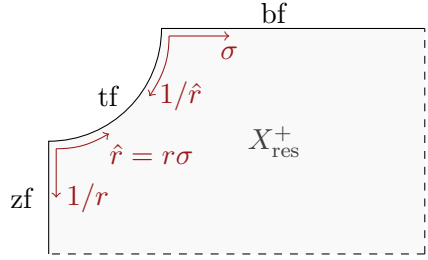

Our main theorem is the following. We phrase it in terms of a set of admissible one-parameter families $\{P(\sigma)\}_{\sigma\geq 0}$ of operators $P(\sigma) \in \operatorname{Diff}^2(X^\circ)$. The requirements for such a family to be admissible are enumerated in \S\ref{sec:op}. Roughly, the set of admissible families is slightly larger than the set of (appropriately conjugated) spectral families of short-range Schr\"odinger operators on $X$. 
We prove in \S\ref{sec:op} that Schr\"odinger operators with coefficients admitting full asymptotic expansions in the large-$r$ limit fall under our framework if there is no bound state or resonance at zero energy. This last clause generically holds.

The reason for the parenthetical ``appropriately conjugated'' above is that when we are studying a (short-range\footnote{ Long-range potentials, such as the $O(1/r)$ Coulomb potential, require serious modifications and we do not discuss them here.}) Schr\"odinger operator $P_0 = \triangle_g+V$, what we call $P(\sigma)$ is not the spectral family $P_0-\sigma^2$ but rather the result 
\begin{equation}
	P(\sigma) = e^{i\sigma r} r^{i\sigma\frakm/2} (P_0-\sigma^2) e^{-i\sigma r}   r^{-i\sigma \frakm/2}
	\label{eq:misc_yu1}
\end{equation}
of conjugating $P_0-\sigma^2$ by the exponentials\footnote{When $X=\overline{\bbR^d}$, what we call $r$ really only agrees with the Euclidean radial coordinate away from the origin. It is chosen so as to be smooth everywhere. On general asymptotically conic manifolds, $r=\rho^{-1}$, where $\rho$ is a boundary-defining-function.} 
\begin{equation} 
	e^{\pm i\sigma r} r^{\pm i\sigma\frakm/2}. 
\end{equation} 
(Here, $\frakm\in \bbR$ is proportional to the coefficient of the $O(1/r)$ term in the expansion of the $\dd r^2$ component of the metric.)
Studying $P(\sigma)$ is equivalent to studying $P_0-\sigma^2$, so this distinction is not important.
There are two reasons why we phrase things in terms of the conjugated spectral family rather than the spectral family itself:
\begin{itemize}
	\item  when studying wave propagation on stationary but non-static black hole spacetimes, such as the Kerr black hole as in \cite{HintzPrice}, the relevant $P(\sigma)$ do not arise by conjugating the spectral family of a Schr\"odinger operator, 
	\item  it is usually more convenient to work with $P(\sigma)$ than $P_0-\sigma^2$ (otherwise, we would have to carry many exponentials around), and we would like to reserve the simpler notation for the operator we talk about the most.
\end{itemize}
We invite the first-time reader to consider $P(\sigma)$ to be the conjugated spectral family of a Schr\"odinger operator (and to not worry about the ``conjugated'' modifier, which is just a matter of convenience).

Our main theorem also refers to the ``limiting resolvent'' $R(\sigma^2\pm i0)$. If $P(\sigma)$ is the conjugated spectral family of a Schr\"odinger operator, then this is just the usual limiting resolvent for the spectral family. The general definition appears in \S\ref{sec:op}; see \cref{eq:resolv_def}. What one does is invert $P(\sigma)$ between appropriate function spaces (see \Cref{prop:Vasy_absorption}) and then undo the conjugation above: 
\begin{equation}
	R(\sigma^2-i0)=
	e^{-i\sigma r}r^{-i\sigma\frakm/2}	P(\sigma)^{-1} e^{i\sigma r} r^{i\sigma \frakm/2}. 
\end{equation}
This approach to the limiting absorption principle goes back to Sommerfeld, but the form we use is from \cite{VasyLA}.

Our main theorem, proven in \S\ref{sec:arg}, is:
\begin{theorem}
	Let $X$ be an asymptotically conic manifold, and consider an admissible family $\{P(\sigma)\}_{\sigma\geq 0}$ of operators on it, as described in \S\ref{sec:op}, including 
	\begin{itemize}
		\item the assumption that $P(0)$ lacks a bound state or resonance at zero energy,	$\ker_{\calA^{d-2}(X)} P(0) = \{0\}$,\footnote{This requirement is generically satisfied.} 
		\item the requirement that the map in \cref{eq:Sommerfeld} is invertible.\footnote{This requirement is automatic if $P$ is symmetric. See \Cref{prop:Vasy_absorption}.}
	\end{itemize}
	For any $f\in \calS(X)$, the limiting resolvent output  $R(\sigma^2 \pm i0) f$ has the form 
	\begin{equation}
	R(\sigma^2 \pm i0) f = e^{\pm i \sigma (r +2^{-1} \frakm \log r)} u
	\label{eq:udef}
	\end{equation}
	for some function $u$ which is fully polyhomogeneous on $X^+_{\mathrm{res}}$. Here, $r$ is the reciprocal of a boundary-defining-function of $X$.\footnote{In the asymptotically Euclidean case, what we call ``$r$'' is really the Japanese bracket of the usual Euclidean radial coordinate. See the beginning of \S\ref{sec:op}.}
	\label{thm:A}
\end{theorem}

\begin{remark*}
	Roughly, this means that the resolvent output $R(\sigma^2 \pm i0) f$ has the form of an explicit oscillatory function times something \emph{non-oscillatory}, $u$. In fact, $u$ admits \emph{three} different asymptotic expansions:
	\begin{itemize}
		\item One expansion is in the region $r\sigma \gg 1$, and uniform as $\sigma\to 0^+$; this is the expansion at the face bf. As $\hat{r}=r\sigma\to\infty$, one sees the same decay rate that one sees in solutions of the Helmholtz equation for positive energy. This means $O(1/r^{(d-1)/2})$ decay, or more precisely $O(1/\hat{r}^{(d-1)/2})$ decay.
		\item One expansion is in powers of $1/(r+\sigma)$ with terms which are functions of $\sigma/(r+\sigma)$. This is the expansion at the face tf. The main index here comes from solving the PDE at $\sigma=0$, which we do in \S\ref{sec:0}. When $d=3$, this means $O(1/r)$ decay, same as at bf, \emph{because this is how the Green's function for the Laplacian decays}. In higher dimensions, 
		it is instead $O(1/r^{d-2})$. 
		
		The face tf only becomes essential \emph{at subleading order} in $\sigma$. As far as the first few terms in the low-energy expansion are concerned, you could blow this face down.
		\item  One expansion is in the region $r\sigma \ll 1$, essentially in powers of $\hat{r}=r\sigma$. This is the expansion at zf. 
		
		An important thing about the index sets appearing in this regime is the appearance of $\log \sigma$ terms at some point. This is the source of the main term in Price's law. The place in this paper where these log terms appear for the first time is \Cref{prop:Ntf_main}. Once they appear, they keep appearing.
	\end{itemize}
	Note that we have not one but \emph{two} low-energy expansions, one when $r=O(1/\sigma)$ and one when $r=\Omega(1/\sigma)$. The $d=3$ case is elaborated on in \S\ref{sec:Price}.
	See \cite[\S2]{Full} for a fuller unpacking of what this amounts to when studying the Schr\"odinger equation. 
\end{remark*}

Thus, the (limiting) resolvent output admits an atlas of asymptotic expansions;
see \Cref{thm:main} for a more precise version. The same holds for objects, like the spectral projector, that can be constructed out of the limiting resolvent.

Recall that polyhomogeneity is a generalization of smoothness which allows terms of the form $\rho^j (\log \rho)^k$ to appear in the $\rho\to 0^+$ asymptotic expansion, for any $(j,k)\in \bbC\times \bbN$, where $\rho=\rho_{\mathrm{f}}$ is a boundary-defining-function for some boundary hypersurface of our manifolds-with-corners. Thus, saying that $u$ is polyhomogeneous on $X^+_{\mathrm{res}}$ with index set $\calE=\calE_{\mathrm{f}}\subset \bbC\times \bbN$ is just saying that $u$ can be expanded in generalized Taylor series 
\begin{equation}
	u \sim \sum_{(j,k)\in \calE} u_{j,k} \rho^j (\log \rho)^k, \qquad u_{j,k}  = u_{j,k,\mathrm{f}}\in C^\infty(\mathrm{f}^\circ)
\end{equation}
at each boundary hypersurface $\mathrm{f} \in \{\mathrm{zf},\mathrm{tf},\mathrm{bf}\}$ of $X^+_{\mathrm{res}}$ (and that the Taylor series can be differentiated term-by-term, and that the different Taylor series at adjacent boundary hypersurfaces are consistent at the corners). Our proof of \Cref{thm:A} is constructive, in that it produces the terms in the asymptotic expansions and therefore allows them to be computed. We have not encapsulated the resultant formulae in a theorem. They are not clean.

One feature of the treatment of low-energy scattering here is an attention to partial \emph{multipole} expansions (with respect to some fixed boundary collar on $X$). Thus, when we prove the theorem above, we will actually show that the first few terms in the large radii expansions of $u$ involve only finitely many harmonics of the boundary Laplacian $\triangle_{\partial X}\in \operatorname{Diff}^2(\partial X)$. The quantitative meaning of ``few'' in the previous sentence depends on the decay rate of any terms in the given Schr\"odinger operator which break the symmetry associated to the preferred boundary collar. Proving this requires a bit of bookkeeping in the analysis of the null space of the given Schr\"odinger operator. This bookkeeping can be found in \S\ref{sec:0}, along with some additional motivation. The argument in that section is due to Melrose \cite{MelroseAPS}, in much greater generality, but there does not seem to exist in the literature an exposition with the level of detail required here.  

In his paper, Hintz uses a recursive algorithm similar to that used here for computing the first few terms in the low-energy asymptotics. Unfortunately, it is not entirely straightforward to extend his algorithm beyond the last order he considers. The reason it is not straightforward is that it is necessary to analyze the resolvent output on polyhomogeneous functions 
\begin{equation}
f \in \calA^{\calE,\calF,\calG}(X^+_{\mathrm{res}}) ,\quad \calE,\calF,\calG\subseteq \bbC\times \bbN,
\end{equation} 
where $\calE,\calF,\calG$ are appropriate index sets and $ \calA^{\calE,\calF,\calG}(X^+_{\mathrm{res}})$ is the set of functions admitting full asymptotic expansions on $X^+_{\mathrm{res}}$, with $\calE,\calF,\calG$ the index sets at bf, tf, and zf respectively; see \cite[Def.\ 2.13]{HintzPrice}.\footnote{Unfortunately, in our companion paper \cite{Full}, our notational conventions included listing index sets in the opposite order. Thus, in that paper, $\calA^{\calG,\calF,\calE}(X^{\mathrm{sp}}_{\mathrm{res}} \backslash \infty \mathrm{f})$ means what we call $\calA^{\calE,\calF,\calG}(X^+_{\mathrm{res}})$ here. We felt the former convention to be more natural when working with the additional face $\infty\mathrm{f} = X^{\mathrm{sp}}_{\mathrm{res}}\backslash X^+_{\mathrm{res}}$ (which corresponds to $E=\infty$), but in the present paper there is no such face (since we do not study the $E\to\infty$ limit). We decided to prioritize matching notational conventions with \cite{HintzPrice} rather than \cite{Full}.} Hintz only analyzes the case where $f$ is partially polyhomogeneous already on $[0,\infty)_\sigma \times X$. Here, we need to allow more singular behavior at the corner $\{0\}\times \partial X \subset [0,\infty)_\sigma \times X$.

The algorithm that we employ is actually a small modification of Hintz's. The details are described in \S\ref{sec:arg}, but let us emphasize that the key ideas are already contained in \cite{HintzPrice}. The main difference, besides producing full expansions, is that we use an inductive argument which does not worsen decay at each step, this being necessary to get sharp asymptotics at $\mathrm{bf}$. In contrast, the argument used by Hintz is simpler, but at the cost of producing a fictitious singularity at $\mathrm{bf}$.\footnote{The reason that Hintz produces a fictitious singularity at $\mathrm{bf}$ is that he applies $P(0)^{-1}$ globally, but $P(0)$ is only a good approximation for $P(\sigma)$ if $r\sigma \ll 1$, i.e.\ away from $\mathrm{bf}$. In contrast, whenever we apply $P(0)^{-1}$ below, there will be a cutoff localizing near $\mathrm{zf}$.} In addition, we work in greater generality, but this does not require new ideas, just more notation.

\begin{remark*}
	The work of Melrose--Sa Barreto \cite{MelroseSa} and Guillarmou--Hassell--Sikora \cite{GH1, GH2, GHK} is close in spirit to the work here. They use geometric microlocal tools to understand the Schwartz kernel $K_\pm(x,y;\sigma)$ of the limiting resolvents $R(\sigma^2 \pm i 0)$. While it should be possible in principle to get the asymptotics of 
	\begin{equation}
		R(\sigma^2 \pm i0)f (x) = \int_X K_\pm(x,y;\sigma) f(y) \dd \mathrm{Vol}_g( y)
	\end{equation}
	from those of the Schwartz kernel $K_\pm$ via a push-forward theorem, it is reasonably non-trivial. This is why \cite{HintzPrice} did not proceed in this way. One of the goals of the present work was to produce theorems about low-energy scattering in a ready-to-use form. 
\end{remark*}

\subsection{Method of proof and outline for rest of paper}

There are three additional sections of this paper besides those already mentioned: \S\ref{sec:bf}, \S\ref{sec:1}, \S\ref{sec:2}. These contain the asymptotic analyses at bf, zf, and tf, respectively. 
\begin{itemize}
	\item In \S\ref{sec:bf}, we discuss the limiting absorption principle for $\sigma$-dependent forcings with Schwartz behavior as $\sigma\to 0^+$. The low energy limit is singular, but the $O(\sigma^\infty)$ suppression of the forcing is sufficient to push through the asymptotic analysis without trouble. This is the easiest section of the paper.
	\item \S\ref{sec:1} involves the analysis at the boundary hypersurface $\mathrm{zf}\subset X^+_{\mathrm{res}}$, that is at \emph{exactly} zero energy. In this section, we solve the inhomogeneous PDE modulo terms which are suppressed by one order of the boundary-defining-function $\rho_{\mathrm{zf}}$ of the boundary hypersurface $\mathrm{zf}$. That is, we produce an $O(\rho_{\mathrm{zf}})$-quasimode, where by ``$O(E)$-quasimode'' we mean an approximate solution to the inhomogeneous PDE which solves the PDE modulo an error which is suppressed by a factor of $E$ \emph{relative} to the forcing we started with (or more precisely, relative to the function spaces in which the forcing was supposed to lie; if the forcing was anomalously small, then we are not demanding that the new error be even smaller). 
	\item Finally, \S\ref{sec:2} involves the analysis at $\mathrm{tf} \subset X^+_{\mathrm{res}}$. In it, we upgrade the previously constructed quasimode to an $O(\sigma^{\varepsilon})$-quasimode for some $\varepsilon>0$, i.e.\ an $O(\sigma^{0+})$-quasimode.  
	Such a quasimode solves the PDE modulo terms which are decaying as $\sigma\to 0^+$ (in the function spaces capturing the expected amount of decay at $\partial X$), \emph{uniformly} in $x\in X$. It is the analysis in this section which forms the heart of the analysis of low energy phenomena. 
	
	A key conceptual point is that even if the original quasimode is $O(\rho_{\mathrm{zf}}^\infty)$-good, the quasimode we end up with will typically only be $O(\rho_{\mathrm{zf}}^\varepsilon)$-good at $\mathrm{zf}$ for some $\varepsilon>0$. It is better at $\mathrm{tf}$, since it is $O(\sigma^{0+})$, but the quasimode may be worse at $\mathrm{zf}$. Worsening the quasimode at $\mathrm{zf}$ is a necessary price to pay for bettering the quasimode at $\mathrm{tf}$. For understanding the resolvent output $R(\sigma^2 \pm i0)f$, the relevant quasimodes must be uniformly good as $\sigma\to 0$. This is therefore a price we are forced to pay. In \cite{HintzPrice}, this step in the algorithm, when studying the $O(\sigma)$ term in the resolvent output, worsens an $O(\rho_{\mathrm{zf}})$-quasimode to an $O(\rho_{\mathrm{zf}}|\!\log \rho_{\mathrm{zf}}|) $-quasimode, because the new error term has a logarithmic singularity at zf. Its contribution to the resolvent ends up being an $O(\sigma^2 |\!\log \sigma|)$ term.
	This new logarithmic singularity is the ultimate source of Price's law.
	
	In the example of Price's law just discussed, we can take any $\varepsilon \in (0,1)$;\footnote{If $\partial X\neq \bbS^{d-1}$, then fractional powers of $\sigma$ can arise, which can necessitate taking some $\varepsilon \ll 1$.} our quasimode is just shy of $O(\sigma)$ good. We have only introduced a $\sigma\log \sigma$ singularity. This sort of singularity is very mild.
	As long as we do not introduce an $O(\rho_{\mathrm{zf}}^0) = O(1)$ error at $\mathrm{zf}$, the quasimode produced by combining \S\ref{sec:1}, \S\ref{sec:2} is better than what we started with, namely $u=0$, which, by definition, is just an $O(1)$-quasimode.  
\end{itemize}
These results are combined in \S\ref{sec:arg} in order to prove \Cref{thm:A}. The basic idea is to alternate applying the lemmas \S\ref{sec:1}, \S\ref{sec:2}. Some early applications of the latter lemmas are vacuous --- but they become essential eventually. After each round, we divide the resultant error by the appropriate power of $\sigma$ (namely just $\sigma$ in the asymptotically Euclidean case) and take the result as our new forcing.
This recursive algorithm, followed by asymptotic summation, produces an $O(\sigma^\infty)$ quasimode $w$. In the $\frakm=0$ case, this means
\begin{equation} 
	P(\sigma)(e^{\pm i \sigma r} w) = f+F
\end{equation} 
for $F = O(\sigma^\infty)$ (in suitable function spaces) as $\sigma\to 0^+$.

We then appeal to  \S\ref{sec:bf} to study $R(\sigma^2 \pm i0) F$; because $F=O(\sigma^\infty)$, we will also have $R(\sigma^2 \pm i0) F=O(\sigma^\infty)$ (in some suitable space), so the low-energy analysis of $R(\sigma^2 \pm i0) F$ is ``trivial,'' as we have already indicated. It can then be shown that 
\begin{equation}
R(\sigma^2 \pm i0) f = e^{\pm i \sigma r} w - R(\sigma^2 \pm i0) F.  
\end{equation}
So, if $w$ is constructed so as to be polyhomogeneous on $X^+_{\mathrm{res}}$, and $e^{\mp i \sigma r } R(\sigma^2 \pm i0)F$ is polyhomogeneous on $[0,\infty)_\sigma \times X$ (in fact, it will be), then the function $u$ defined by \cref{eq:udef} is polyhomogeneous on $X^+_{\mathrm{res}}$. 
In fact, because $e^{\mp i \sigma r } R(\sigma^2 \pm i0)F = O(\sigma^\infty)$ (in appropriate spaces), the low energy expansion of $u\sim w$ is just that of $w$. We know the low energy expansion of $w$ because \emph{that is how we built $w$}.

The $\frakm\neq 0$ case is similar, except a more refined ansatz than $e^{\pm i \sigma r}$ is required.

We have written \S\ref{sec:bf}, \S\ref{sec:1}, \S\ref{sec:2} in a high amount of generality. This is more generality than required in \S\ref{sec:arg}, but there may be some applications. For example, the lemmas in \S\ref{sec:bf} allow one to study forcings with slower decay than in \S\ref{sec:arg}. We expect that, when actually computing the asymptotic expansions whose existence is guaranteed by \Cref{thm:A}, the added generality will be useful for proving results with optimal index sets. 

We have seven appendices. As already mentioned, \S\ref{sec:0} is a self-contained exposition of multipole expansions for perturbations of the Laplacian on exactly conic manifolds, this being the model problem on $\mathrm{zf}\subset X^+_{\mathrm{res}}$.  This appendix was originally intended to be a standalone expository article, so it can actually be read independently of the rest of this paper. It is rather more pedagogical. 
In \S\ref{sec:tf}, we provide a similar treatment for a conjugated Schr\"odinger operator on an exact cone, this being the model problem at the boundary hypersurface $\mathrm{tf}\subset X^+_{\mathrm{res}}$. 
In \S\ref{sec:Mellin}, we review some elementary facts about the Mellin transform, for reference elsewhere in the appendices. This is included just to make the paper a bit more self-contained.
Next, \S\ref{sec:tedium} contains a few straightforward but tedious proofs of propositions stated in \S\ref{sec:op} that were felt to clutter the exposition.
Finally, \S\ref{sec:example} contains an ODE example with $\frakm\neq 0$ (the confluent hypergeometric equation). 

The final appendix, \S\ref{sec:index}, is a non-exhaustive index of notation. The reader should refer to this appendix whenever unfamiliar notation is encountered.

\subsection{A central mechanism}
The content of this paper is quite technical at times, so let us highlight a simple mechanism used in the algorithm below. This mechanism is not essential to the proof of polyhomogeneity, but it does explain which logarithmic terms are essential, and this is why we have made use of it. (As already stated, logarithmic terms are the origin of Price's law, so it is important to understand where they appear.)

Consider a function like $1/\langle r \rangle^{k}$ for $k\gg 1$. As far as the analysis at $\mathrm{zf}$ is concerned, this term, which is decaying at $\mathrm{zf}\cap \mathrm{tf}$, is amenable. 
However, 
\begin{equation} 
\frac{1}{r^{k}}  = \Big(\frac{\sigma}{\hat{r}}\Big)^{k}
\label{eq:misc_015}
\end{equation} 
for $\hat{r}=r\sigma$. When studying the problem at $\mathrm{tf}$, the $\hat{r}\to 0^+$ regime is of interest and corresponds to $\mathrm{zf}\cap\mathrm{tf}$. From this perspective, $1/\hat{r}^{k}$ is horrible, blowing up very badly. The factor of $\sigma$ in \cref{eq:misc_015} does not help us, since it commutes with the PDE. Conversely, at $\mathrm{zf}$,  a term like $r^k$ is horrible, but $r^k = (\hat{r}/\sigma)^k$ is decaying as $\hat{r}\to 0^+$ for fixed $\sigma>0$, so is amenable at $\mathrm{tf}$. The factor of $\sigma^{-k}$ does not hurt us. 

Evidently, the terms which are badly behaved with respect to the analysis within one of $\mathrm{zf},\mathrm{tf}$ are well-behaved with respect to the analysis within the other. This is why, when producing a $O(\sigma^\infty)$-quasimode $u$ to $Pu\approx f$, we need to juggle inverting the model operators $N_{\mathrm{zf}}=P(0)$ and $N_{\mathrm{tf}}$ at $\mathrm{zf}$ and $\mathrm{tf}$, respectively.

There is a certain range of $k\in \bbR$ for which the inversion $\smash{N_{\mathrm{zf}}^{-1}1/r^k}$ makes sense, and there is a certain range of $k$ for which the inversion $\smash{N_{\mathrm{tf}}^{-1} 1/r^k}$ makes sense.
By the preceding logic, the range for $\mathrm{zf}$ is an interval which is infinite to the right, and the range for $\mathrm{tf}$ is an interval which is infinite \emph{to the left}. These could conceivably be disjoint, in which case we would be in mild trouble if ever $k$ in the gap arose. Fortunately, this is not the case --- the ranges end up being 
\begin{itemize}
	\item $(0,\infty)$ for $\mathrm{zf}$ and $(-\infty,d-2)$ for $\mathrm{tf}$ if we are talking about the solution $u$
	\item $(2,\infty)$ for $\mathrm{zf}$ and $(-\infty,d)$ for $\mathrm{tf}$ if we are talking about the forcing $f$.
\end{itemize}
(The reason these are different for $u$ versus $f$ is because $\triangle$ is non-degenerately a weighted b-/regular singular operator at spatial infinity, not an unweighted one. It induces two orders of decay when applied. For example, if $u$ is a polynomial, then $\triangle u$ is a polynomial of degree $2$ lower.)
These intervals for $\mathrm{zf},\mathrm{tf}$ overlap \emph{because we are assuming $d\geq 3$}. 

We quickly explain why the intervals above are what they are.
The Laplacian is a second-order regular singular operator at spatial infinity. Its two indicial roots for s-waves are $0$ and $d-2$. This can be seen from two things:
the fact that constants are in the kernel of the exact Laplacian 
tells us that $0$ is an indicial root, and the fact that solutions of Poisson's equation typically decay like $1/r^{d-2}$ (Coulomb's law) tells us that $d-2$ is an indicial root.
The indicial root relevant to the amenable $k$ is the worse one, namely $0$. 
The point is that $P(0)^{-1} f$ makes sense as long as $f\in \calA^{2+}(X)$. Compare with Coulomb's law.

The model operator $\hat{N}_{\mathrm{tf}}$ capturing $P$ at $\mathrm{tf}$ modulo lower-order terms is regular singular at $\hat{r}=0$.\footnote{The hat is because we are factoring out a $\sigma^2$. Indeed, $P\approx \sigma^2 \hat{N}_{\mathrm{tf}}$ at $\mathrm{tf}$. So, really $N_{\mathrm{tf}}=\sigma^2 \hat{N}_{\mathrm{tf}}$ is the model operator.} This corresponds to the corner $\mathrm{zf}\cap \mathrm{tf}$, so the indicial roots are related to those of $P(0)$ at $r=\infty$. In fact, the former are just the negatives of the latter. For example, they are $0$ and $2-d$ for s-waves. The reason for the flip in sign is that 
\begin{itemize}
	\item the indicial roots $c$ of $P(0)\overset{\text{at zf}}{\approx} P$ are describing which powers $1/r^c$ are special vis-\`a-vis its theory, 
	\item  the indicial roots $c$ of $\hat{N}_{\mathrm{tf}} \overset{\text{at tf}}{\approx} P$ at $\hat{r}=0$ are describing which powers of $\hat{r}^c$ are special vis-\`a-vis its theory.
\end{itemize}
The worse indicial root between $0,2-d$ is $2-d$. So, if $k<d$, then $1/r^k$ can be fed into $\hat{N}_{\mathrm{tf}}^{-1}$. Moreover, we can get logarithmic terms in the small-$\hat{r}$ expansion --- this happens for s-waves if $k\geq 2$. These logarithmic terms have the form $\log(\hat{r})=\log \sigma+\log r$, or a power thereof. This is how logarithmic terms in $\sigma$ arise.

Attempting to produce the low-energy expansion of the resolvent output without using $N^{-1}_{\mathrm{tf}}$, one encounters the problematic situation 
\begin{equation} 
	f\notin \calA^{2+}(X).
\end{equation} 
This necessitates applying the analysis at $\mathrm{tf}$. 
Actually, what we do is preemptively apply $N_{\mathrm{tf}}^{-1}$ \emph{in the previous step} so as to solve away the terms in the forcing 
\begin{equation} 
	f_{\mathrm{prev}}\in \calA^{2+}(X)
\end{equation} 
of that step giving rise to the problematic terms in $f$. 
We extract the terms $1/r^k$ for $k\in (2,d)$ from $f_{\mathrm{prev}}$ and then apply $N_{\mathrm{tf}}^{-1}$ to those. Note that these are in the amenable range $(-\infty,d)$, but we might generate  log terms when we apply $N_{\mathrm{tf}}^{-1}$. 
In the asymptotically Euclidean case, this usually happens. This is the origin of Price's law. 

We focused on s-waves in the discussion above, but the situation regarding other partial waves is even better, because the gap between the indicial roots associated with such waves is bigger, and the amenable intervals increase. For example, on $\bbR^3$, the amenable interval widens by $1$ on each side every time the azimuthal quantum number is incremented.

In situations where waves decay rapidly (e.g. odd-dimensional spacetimes with Schwartz metric perturbations), an algebraic conspiracy happens so as to prevent $f\notin \calA^{2+}(X)$ from ever arising. This means that one never needs to resort to applying $N_{\mathrm{tf}}^{-1}$, and so $\log \sigma$ terms never appear. We will see an example of such an algebraic conspiracy when we discuss the asymptotically $\bbR^3$ case, in \S\ref{sec:Price}. It just has to do with comparing the indicial roots of $P$ at $\mathrm{tf}$ versus at $\mathrm{bf}$, and whether those differ by an integer.

\label{rem:mechanism}

\section{Geometric setup and the admissible operators}
\label{sec:op}

Our setup here is a special case of that considered in \cite{Vasy}. Since we are concerned with asymptotics, we must assume that the coefficients of the PDE under consideration admit at least partial asymptotic expansions themselves, and we assume some additional symmetry of the $O(r^{-1})$ and $O(r^{-2})$ terms, but otherwise the setup is completely analogous. Before proving the main results of this section, we present the details of the setup. 

First, we discuss notation.
Let $X$ denote a connected, smooth manifold-with-boundary equipped with a boundary collar, by which we mean an embedding of the cylinder $\dot{X} = [0,1) \times \partial X$ into $X$
such that $(0,\theta)\mapsto \theta$ for all $\theta\in \partial X$. So, we can identify $\dot{X}$ with a subset of $X$. Moreover, we assume that the coordinate function $(\rho,\theta)\mapsto \rho$ on $\dot{X}$ extends to an element of $C^\infty(X;\bbR^{\geq 0})$, which we can assume vanishes only at $\partial X$. Fix such an extension, which we denote for the rest of the paper as $\rho$. Thus, $\rho$ is a boundary-defining-function for the boundary of $X$. 
In order to facilitate the comparison between the asymptotically conic setting and the exact Euclidean setting, define $r = \rho^{-1}$. 

The exact Euclidean case is when 
\begin{equation}
	X = \overline{\bbR^d} =\bbR^d\cup \infty \bbS^{d-1}
\end{equation}
is the radial compactification of $\bbR^d$ and when the boundary collar is the map $\dot{X}= [0,1)\times \bbS^{d-1}\hookrightarrow X$ given by $(\rho,\theta)\mapsto \rho^{-1} \theta$. Inverting, this just means that 
\begin{equation}
	\rho(\bfx) = \lVert \bfx \rVert^{-1}, \qquad \theta = \bfx/\lVert \bfx \rVert 
\end{equation}
for all $\bfx\in \bbR^d$ with $\lVert \bfx \rVert >1$. In other words, away from the origin $r=\rho^{-1}$ is the usual Euclidean radial coordinate and $\theta = \hat{\bfx}$ is the unit vector in the direction of $\bfx$. We can extend $\rho$ to an element of $C^\infty(X)$. Then, $r=1/\rho\in C^\infty(\bbR^d)$ is only given by the usual formula $r(\bfx) = \lVert \bfx \rVert$ outside of some neighborhood of the origin, but this should not cause any confusion.

\begin{remark*}
	We emphasize the choice of boundary collar (and therefore boundary-defining function) because the forms of the multipole expansions below depend on it. 
	There are two reasons for this: 
	\begin{enumerate}[label=(\Roman*)]
		\item Consider the fact that in the multipole expansion from Euclidean electrostatics the $1/r^2$ term involves only the spherical harmonics $Y^1_m$, not $Y^0_0=1$ (assuming the charge distribution is sufficiently rapidly decaying at infinity). In other words, the subleading term has no s-wave component. If we were to rewrite the expansion in terms of $r+1$ instead of $r$, this would no longer be true. So, the choice of boundary-defining function matters.
		\item More seriously, the presence of logarithmic terms depends on the presence of terms in the PDE that break the conic symmetry of the problem (meaning spherical symmetry in the Euclidean case). This is the reason there are no logarithmic terms in the multipole expansion from Euclidean electrostatics. But to even make sense of what it means to ``break the spherical symmetry,'' we need to fix a boundary collar.
	\end{enumerate}
A more thorough discussion of these points may be found in the introduction to \S\ref{sec:0}.
If the reader is willing to settle for a theorem involving sub-optimal index sets, then the choice of boundary collar may be forgotten.  
\end{remark*}

\subsection{Some standard notation} In this subsection, we review some objects defined by Melrose \cite{MelroseCorners, MelroseAPS, MelroseSC, MelroseGeometric}.

Let $\calV_{\mathrm{b}}(X)$ denote the set of smooth vector fields on $X$ (which are smooth all the way up to and including the boundary and) tangent to the boundary. These are the ``b-vector fields.'' Similar notation and terminology is used for tensors, algebras of differential operators, et cetera. For example, for $m\in \bbR$, $\operatorname{Diff}^m_{\mathrm{b}}(X)$ is the set of linear combinations over $C^\infty(X)$ of compositions of $\leq m$ b-vector fields. Then, \begin{equation} 
\operatorname{Diff}^{m,\ell}_{\mathrm{b}}(X) = \rho^{-\ell} \operatorname{Diff}^m_{\mathrm{b}}(X)
\end{equation} 
denotes the set of weighted b-differential operators. 

The set $\calV_{\mathrm{sc}}(X) = \rho \calV_{\mathrm{b}}(X)$ is the set of ``sc''-vector fields. These are the sections of a smooth vector bundle over $X$, which is called ${}^{\mathrm{sc}}T X$. Dualizing, one gets an associated cotangent bundle ${}^{\mathrm{sc}} T^* X$ which is canonically the usual cotangent bundle over the interior. Again, we can form the algebra $\operatorname{Diff}_{\mathrm{sc}}(X)$ of sc-differential operators, and notation similar to that used in the b- case can be employed.

We use $\calA^{\bullet}(M)$ to denote the sets of partially polyhomogeneous functions on a manifold-with-corners $M$, where the $\bullet$ contains a list of pairs $(\calE,\alpha)$, one for each boundary hypersurface $\mathrm{f}$ of $M$, where $\calE$ is the index set of the polyhomogeneous expansion at $\mathrm{f}$ and $\alpha\in \bbR \cup \{\infty\}$ is the order of the merely conormal error. When $\alpha=\infty$, in which case one has full polyhomogeneity at $\mathrm{f}$, we simply write $\calE$ in place of $(\calE,\infty)$. Likewise, if $\calE=\varnothing$, we simply write $\alpha$. For example, 
\begin{equation}
\calA^{(\calE,\alpha), \beta, \calK }(X^+_{\mathrm{res}}) 
\end{equation}
denotes the set of functions which are partially polyhomogeneous at $\mathrm{bf}$ with index set $\calE$ and conormal error $\alpha$, merely conormal at $\mathrm{tf}$ with order $\beta$, and fully polyhomogeneous at $\mathrm{zf}$ with index set $\calK$. 

Bundles over $\dot{X}$, function spaces on $\dot{X}$, et cetera are denoted similarly to the corresponding objects on $X$. While we will not state the definitions of such objects explicitly, it should be noted that we never require uniformity as $\rho\to 1^-$. 
For example, 
\begin{equation}
	\calA^0(\dot{X}) =\{a\in C^\infty(X^\circ) : Q a \in L^\infty_{\mathrm{loc}}(\dot{X}) \text{ for all }Q \in \operatorname{Diff}_{\mathrm{b}}^{\infty,0}(\dot{X}) \}. 
\end{equation}
Thus, for $a\in \calA^0(\dot{X})$, $\chi Q a\in L^\infty(\dot{X})$ for all $\chi\in C_{\mathrm{c}}^\infty(\dot{X})$, including those that do not vanish on $\partial X$. (So, $L^\infty_{\mathrm{loc}}(\dot{X})$ is smaller than $L^\infty_{\mathrm{loc}}(\dot{X}^\circ)$.) 
We will also add a `c' subscript to denote compact support, e.g.\ $\calA^0_{\mathrm{c}}(\dot{X})$. Note that this allows functions to be nonvanishing near $\rho=0$ (because $X$, with its boundary included, is compact), just not near $\rho=1$. So, in the Euclidean case, $C_{\mathrm{c}}^\infty(\dot{X})$ consists of smooth functions on $\bbR^d$ that extend to the radial compactification and that vanish on a neighborhood of the unit ball centered at the origin.

In this section, we will also work with the sc- and b- $L^2$-based Sobolev spaces $H_{\mathrm{sc}}^{r,\ell}(X),H_{\mathrm{b}}^{r,\ell}(X)$. The parameter $r\in \bbR$ is the differential order and $\ell\in \bbR$ is the amount of decay.
We refer to \cite{VasyLA} for details when $r\notin \bbN$. When $r\in \bbN$, 
\begin{equation}
H_{\mathrm{sc}}^{r,\ell}(X) = \{u\in \calD'(X^\circ) : \operatorname{Diff}_{\mathrm{sc}}^{r,\ell}(X) u \in L^2(X) \}
\end{equation}
and similarly for $H_{\mathrm{b}}^{r,\ell}$. In the Euclidean case, these are just the ordinary Sobolev spaces. 
One convention worth noting is that we follow Vasy in indexing the b-Sobolev spaces so that 
\begin{equation} 
H_{\mathrm{b}}^{0,0}(X)=H_{\mathrm{sc}}^{0,0}(X)=L^2(X,g),
\end{equation} 
where $g$ is an asymptotically conic metric (see below).
This differs from the other commonly used convention where $\smash{H_{\mathrm{b}}^{0,0}} = L^2(X,g_{\mathrm{b}})$ for $g_{\mathrm{b}}$ an asymptotically cylindrical metric (a b-metric). The different conventions just differ by a shift in the decay index.

\begin{remark}
	In \S\ref{sec:Price}, \S\ref{sec:0}, \S\ref{sec:tf} we will work with partially polyhomogeneous spaces $\calA^{\calE,\alpha}$ for $\calE$ a pre-index set. (See \S\ref{sec:index} for the definition of ``pre-index set.'') For example, if $\alpha$ is finite, we say that $f\in  \calA^{\calE,\alpha}(X)$ if it is smooth in the interior and 
	\begin{equation}
	\exists f_{j,k}\in C^\infty(\partial X_\theta)\text{ s.t. }f - \sum_{(j,k)\in \calE, \Re j \leq \alpha } f_{j,k}(\theta) \rho^j (\log \rho)^k \in \calA^\alpha(\dot{X}) .
	\end{equation}
	Usually, one works with $\calA^{\calE,\alpha}$ for $\alpha$ an index set, which has the advantage that it does not depend on the choice of boundary-defining-function or boundary collar. If $\calE$ is only a pre-index set, then $\calA^{\calE,\alpha}$ does depend on these choices. Unfortunately, in order to get sharp statements regarding the index sets involved in multipole expansions, it is necessary to fix a boundary collar and consider pre-index sets. We have avoided using this in the proof of our main theorems.
\end{remark} 

\subsection{Metric}
\label{subsec:metric}

Let $g$ denote a smooth ``sc-metric'' on $X$, meaning a Riemannian metric on $X^\circ$ of the form $g \in C^\infty(X; \operatorname{Sym}^2 {}^{\mathrm{sc}} T^* X)$, i.e.\ a smooth section of the symmetric product of two copies of the sc-cotangent bundle. 
We assume that the boundary collar has been chosen such that $g$ differs from the exactly conic metric $g_0 = \mathrm{d} \rho^2 + \rho^2 h$ on $\dot{X}$ by a well-behaved error:
\begin{equation}
	g -g_0 \in \rho C^\infty(\dot{X}; \operatorname{Sym}^2 {}^{\mathrm{sc}} T^* \dot{X}).
	\label{eq:misc_metrick}
\end{equation}
Thus, $L^2(X,g_0)=L^2(X,g)$ at the level of sets. (One sometimes allows an $\rho^{1+\delta} S^0(\dot{X}; \operatorname{Sym}^2 {}^{\mathrm{sc}} T^* \dot{X})$ term on the right-hand side of \cref{eq:misc_metrick}, for some $\delta>0$, but we will not do so here.)
If $g-g_0$ is decaying faster than \cref{eq:misc_metrick} requires, then it is worth keeping track of. Moreover, the $\mathrm{d} r^2$ terms in $g-g_0$ have a different effect than the other $d^2-1$ terms, so we keep track of these separately. 
Let $\gimel\in \bbN$, $\gimel_0\in \bbN_+$ be such that 
\begin{equation}
g-g_0 \in \rho^{1+\gimel} C^\infty([0,1)_\rho ) \mathrm{d} r^2 + \rho^{1+\gimel_0} C^\infty(\dot{X}; \operatorname{Sym}^2 {}^{\mathrm{sc}} T^* \dot{X})
\label{eq:g}
\end{equation}
We assume without loss of generality that $\gimel\leq \gimel_0$. We may also allow $\gimel,\gimel_0$ to be infinite, in which case the corresponding terms in $g-g_0$ are Schwartz. Notice that $\gimel$ can be $0$, but $\gimel_0\geq 1$; this is because we require the subleading term in the metric to preserve spherical symmetry. Asymptotically Schwarzschild/Kerr spaces have this property.

Let $\triangle_g$ denote the positive semidefinite Laplace--Beltrami operator on $(X,g)$. Then: 

\begin{proposition}
	Given \cref{eq:g}, the Laplacian
	$\triangle_g$ differs from the exactly conic Laplacian 
	\begin{equation}
	\triangle_{g_0} = - \frac{\partial^2}{\partial r^2} - \frac{d-1}{r} \frac{\partial}{\partial r} + \frac{1}{r^2} \triangle_{g_{\partial X}} \in \rho^2\operatorname{Diff}^2_{\mathrm{b}}(X)
	\label{eq:exact_Laplacian_formula}
	\end{equation}
	by an element of $\rho^{3+\gimel} \operatorname{Diff}^2_{\mathrm{b}}([0,1)_\rho) +  \rho^{3+\gimel_0} \operatorname{Diff}^2_{\mathrm{b}}(X)$.
	\label{prop:metric}
\end{proposition}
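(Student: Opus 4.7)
The plan is to compute $\triangle_g$ directly using the coordinate formula $\triangle_g u = -|g|^{-1/2}\partial_\mu(|g|^{1/2} g^{\mu\nu}\partial_\nu u)$ in the collar coordinates $(r,\theta)$, after decomposing $g = g_0 + g_1 + g_2$ with $g_1 = f(\rho)\,\mathrm{d}r^2$ for some $f\in \rho^{1+\gimel}C^\infty([0,1)_\rho)$ and $g_2 \in \rho^{1+\gimel_0}C^\infty(\dot X;\operatorname{Sym}^2{}^{\mathrm{sc}}T^*\dot X)$. I will treat the $g_1$- and $g_2$-contributions separately, exploiting the spherical symmetry and pure-radial character of $g_1$ to produce the first summand in the target space, and using smooth dependence of the sc-Laplacian on the sc-metric to produce the second.

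For the $g_1$ piece, observe that in the $(r,\theta)$ frame $g_0+g_1$ is block-diagonal with $(g_0+g_1)_{rr} = 1+f$, $(g_0+g_1)^{ij} = g_0^{ij}$, and volume factor $|g_0+g_1|^{1/2} = (1+f)^{1/2}|g_0|^{1/2}$. Because the scalar factor $(1+f)^{\pm 1/2}$ depends only on $\rho$, it commutes with $\partial_{\theta^i}$, and a short computation then shows that the angular part of $\triangle_{g_0+g_1}$ coincides with that of $\triangle_{g_0}$. Writing $G=(1+f)^{-1/2}$, the remaining difference reduces to the purely radial expression
\begin{equation*}
\triangle_{g_0+g_1}-\triangle_{g_0} = -(G^2-1)\bigl(\partial_r^2+\tfrac{d-1}{r}\partial_r\bigr) - G(\partial_r G)\partial_r.
\end{equation*}
Since $f\in\rho^{1+\gimel}C^\infty([0,1)_\rho)$ and $\partial_r = -\rho^2\partial_\rho$, one checks $G^2-1\in \rho^{1+\gimel}C^\infty([0,1))$ and $G\partial_r G\in\rho^{2+\gimel}C^\infty([0,1))$. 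Combined with $\partial_r\in\rho\operatorname{Diff}^1_{\mathrm{b}}([0,1)_\rho)$ and $\partial_r^2\in\rho^2\operatorname{Diff}^2_{\mathrm{b}}([0,1)_\rho)$, every term lies in $\rho^{3+\gimel}\operatorname{Diff}^2_{\mathrm{b}}([0,1)_\rho)$, giving the first summand in the statement.

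For the $g_2$ piece, the cleanest route is to work in the sc-frame $\{-\partial_r,(1/r)\partial_{\theta^i}\}$, in which $g_0+g_1$ has matrix $\operatorname{diag}(1+f,h_{ij}(\theta))$, smoothly invertible up to $\partial X$, and $g_2$ contributes a perturbation with all matrix entries in $\rho^{1+\gimel_0}C^\infty(\dot X)$. A Neumann expansion of the inverse, together with the smoothness of $\sqrt{\det}$, shows that the matrix entries of $g^{-1}$ and the log of $\sqrt{|g|}$ (relative to their $g_0+g_1$ counterparts) are perturbed by quantities in $\rho^{1+\gimel_0}C^\infty(\dot X)$. Since the Laplace--Beltrami operator depends on the metric only through these entries and their first coordinate derivatives, the resulting correction is a second-order sc-differential operator whose coefficients, in the sc-frame, lie in $\rho^{1+\gimel_0}C^\infty(\dot X)$. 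Using the inclusion $\operatorname{Diff}^2_{\mathrm{sc}}(X)\subseteq \rho^2\operatorname{Diff}^2_{\mathrm{b}}(X)$ then yields $\triangle_{g_0+g_1+g_2}-\triangle_{g_0+g_1}\in\rho^{3+\gimel_0}\operatorname{Diff}^2_{\mathrm{b}}(X)$, and adding the two contributions gives the claim.

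The main obstacle is keeping the bookkeeping tight enough to achieve the $\rho^{3+\gimel}$ order (rather than merely $\rho^{2+\gimel}$) for the first summand: this relies on the fact that $\partial_r$ applied to an $\rho^{1+\gimel}C^\infty$ function actually gains an extra power of $\rho$ (landing in $\rho^{2+\gimel}C^\infty$) and, simultaneously, that each bare $\partial_r$ or $\partial_r^2$ carries $\rho$ or $\rho^2$ respectively into the b-calculus. A secondary point of care is verifying that the $g_1$ correction really contains no $\partial_\theta$ derivatives, which hinges on the cancellation between the $(1+f)^{1/2}$ factor coming from $|g|^{1/2}$ and the $(1+f)^{-1/2}$ factor from $|g|^{-1/2}$ in the angular part; this is what forces the $g_1$-contribution into $\operatorname{Diff}^2_{\mathrm{b}}([0,1)_\rho)$ rather than the larger space $\operatorname{Diff}^2_{\mathrm{b}}(X)$.
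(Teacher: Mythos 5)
Your decomposition $g = g_0 + g_1 + g_2$ and the exact computation of $\triangle_{g_0+g_1}-\triangle_{g_0}$ are correct, and this first step is genuinely different from — and arguably cleaner than — the paper's proof. The paper computes $\triangle_g - \triangle_{g_0}$ in a single pass through the five terms of the coordinate formula, and the dangerous angular contribution (the term $\calB$, produced when $\partial_\theta$ hits $\det g_{\partial X}$ against the purely radial factor $F$ in $\det g$) only disappears through an explicit cancellation $\calB+\calB'=0$ between two of those contributions. In your arrangement that cancellation is manifest, because the $\theta$-independent factors $(1+f)^{\pm1/2}$ and $r^{\pm(d-1)}$ commute through the angular divergence, so the $g_1$-correction is purely radial by inspection. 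Your order counting for that piece ($G^2-1\in\rho^{1+\gimel}C^\infty$, $G\,\partial_rG\in\rho^{2+\gimel}C^\infty$, $\partial_r\in\rho\operatorname{Diff}^1_{\mathrm{b}}$, $\partial_r^2\in\rho^2\operatorname{Diff}^2_{\mathrm{b}}$) is right and lands in $\rho^{3+\gimel}\operatorname{Diff}^2_{\mathrm{b}}([0,1)_\rho)$ as claimed.

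The second step, however, has a genuine gap. The inclusion $\operatorname{Diff}^2_{\mathrm{sc}}(X)\subseteq\rho^2\operatorname{Diff}^2_{\mathrm{b}}(X)$ is false: $\operatorname{Diff}^2_{\mathrm{sc}}(X)$ contains $1$ and $\calV_{\mathrm{sc}}(X)=\rho\calV_{\mathrm{b}}(X)$, and for instance $\partial_r=-\rho^2\partial_\rho$ lies in $\rho\operatorname{Diff}^1_{\mathrm{b}}$ but not in $\rho^2\operatorname{Diff}^2_{\mathrm{b}}$. Consequently, knowing only that $\triangle_g-\triangle_{g_0+g_1}$ is a second-order sc-operator with sc-frame coefficients in $\rho^{1+\gimel_0}C^\infty(\dot X)$ yields $\rho^{3+\gimel_0}\operatorname{Diff}^2_{\mathrm{b}}$ for the principal part but only $\rho^{2+\gimel_0}\operatorname{Diff}^1_{\mathrm{b}}$ for the first-order part — one order short of the claim. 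What you actually need, and what is true, is the stronger statement that the correction has \emph{no} zeroth-order term (since $\triangle_g1=0$) and that its first-order sc-frame coefficients lie in $\rho^{2+\gimel_0}C^\infty(\dot X)$, one order better than the metric perturbation itself. This extra order is there because those coefficients arise either from sc-derivatives of the perturbed entries (and $\calV_{\mathrm{sc}}$ maps $\rho^{1+\gimel_0}C^\infty$ into $\rho^{2+\gimel_0}C^\infty$) or from $\delta(g^{-1})$ multiplied by $V(\log|g_0+g_1|^{1/2})\in\rho\,C^\infty$ for $V\in\calV_{\mathrm{sc}}$; but verifying it is precisely the term-by-term bookkeeping in the coordinate formula that the paper carries out. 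As written, your appeal to smooth dependence of the Laplacian on the metric discards exactly this gain, so the conclusion $\triangle_g-\triangle_{g_0+g_1}\in\rho^{3+\gimel_0}\operatorname{Diff}^2_{\mathrm{b}}(X)$ does not follow from what you have stated, even though it is correct.
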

See \S\ref{sec:tedium} for the proof.

A particularly important term in $\triangle_g$ is the subleading, i.e.\ $O(1/r)$, term in the coefficient of $\partial_r^2$. So, let $\frakm\in \bbR$ be such that 
\begin{equation}
	\triangle_g - \triangle_{g_0} - \frac{\frakm}{r} \frac{\partial^2}{\partial r^2} \in \rho^{\max\{4,3+\gimel\} } \operatorname{Diff}^2_{\mathrm{b}}([0,1)_\rho) +  \rho^{3+\gimel_0} \operatorname{Diff}^2_{\mathrm{b}}(X).
	\label{eq:misc_018}
\end{equation}
The reason for calling the coefficient $\frakm$ is that, in the proof of Price's law in \cite{HintzPrice}, this term is proportional to the black hole's mass. Note that $\gimel\neq 0\Rightarrow \frakm=0$.

\subsection{Operators}
\label{subsec:op}

Let $P = \{P(\sigma)\}_{\sigma\geq 0 }$ denote a family of differential operators $P(\sigma) \in \operatorname{Diff}^2(X^\circ)$ of the form 
\begin{equation}
	P(\sigma) = \triangle_g + 2i \sigma (1-\chi) \frac{\partial}{\partial r} + \frac{i\sigma(d-1)}{r} + L + \sigma Q + \sigma^2 R,
	\label{eq:Pform}
\end{equation}
where 
\begin{itemize}
	\item $\chi\in C_{\mathrm{c}}^\infty(X^\circ)$ satisfies $\operatorname{supp}(1-\chi) \Subset \dot{X}$, so that $(1-\chi)\partial_r$ is a well-defined vector field on $X$,
	\item $\rho^{-3} L, \rho^{-2} Q \in \operatorname{Diff}^1_{\mathrm{b}}(X)$, and
	\item $R \in \rho^2 C^\infty(X)$. 
\end{itemize}

Some aspects of the limiting absorption principle are simplified by assuming that $P$ is a symmetric operator on $C_{\mathrm{c}}^\infty(X^\circ)$ with respect to the $L^2(X,g)$-inner product, so that $P=P^*$.  However, this rules out applications to wave propagation on stationary \emph{but non-static} black hole spacetimes. Indeed, in Price's law, the relevant $P(\sigma)$ is what Hintz calls $\widehat{\square}_g(\sigma)$ in \cite[Eq.\ 2.4]{HintzPrice}. This is defined by throwing out the time derivatives in $e^{i\sigma t_*} \square_g e^{-i \sigma t_*}$, where $t_*$ is a certain coordinate on the spacetime. Viewed as a stationary differential operator on spacetime, it is symmetric with respect to the spacetime metric. However, it is not symmetric when viewed as an operator on the spatial slices (with respect to the natural inner product), unless the angular momentum parameter $\fraka$ of the black hole is zero.
So, we will not require $P=P^*$.

We can find $\beth,\beth_0,\beth_1,\beth_2,\beth_3,\beth_4 \in \bbN\cup\{\infty\}$, which may be zero,  such that the following conditions hold:
\begin{align}
	\rho^{-3} (\triangle_{g_0} - \triangle_g) &\in  \rho^{\beth}\; \operatorname{Diff}^2_{\mathrm{b}}( [0,1)_\rho) + \rho^{\beth_0} \operatorname{Diff}^2_{\mathrm{b}}(\dot{X}),  \label{eq:misc_a} \\
	\rho^{-3} L &\in \rho^{\beth}\, \operatorname{Diff}^1_{\mathrm{b}}( [0,1)_\rho) + \rho^{\beth_0} \operatorname{Diff}^1_{\mathrm{b}}(\dot{X}), \label{eq:misc_b} \\
	\rho^{-2} Q &\in  \rho^{\beth_1}\! \operatorname{Diff}^1_{\mathrm{b}}([0,1)_\rho )+ \rho^{\beth_2} \operatorname{Diff}^1_{\mathrm{b}}(\dot{X}) 
\end{align}
and
\begin{equation}
	\rho^{-2} R \in \rho^{\beth_3} C^\infty([0,1)_\rho) + \rho^{\beth_4} C^\infty(X).  
\end{equation}
By \Cref{prop:metric}, the first of these, \cref{eq:misc_a}, will be satisfied as long as $\beth\leq \gimel$ and $\beth_0 \leq \gimel_0$, where the constants $\gimel\geq0$, $\gimel_0\geq 1$ are as in \S\ref{subsec:metric}. We may assume without loss of generality that $\beth\leq \beth_0$, $\beth_1\leq \beth_2$, and $\beth_3\leq \beth_4$. 

It will be useful to relate the class of operators considered here to the conjugations of spectral families of Schr\"odinger operators. Given any family $\{O(\sigma)\}_{\sigma \geq 0} \subset \operatorname{Diff}^2(X^\circ)$, let 
\begin{equation}
\hat{O}(\sigma) = e^{i\sigma r} r^{i\sigma \frakm/2} \circ O(\sigma)\circ e^{-i\sigma r} r^{-i\sigma \frakm/2}, \qquad \check{O}(\sigma) = e^{-i\sigma r} r^{-i\sigma \frakm/2} \circ O(\sigma)  \circ e^{i\sigma r} r^{i\sigma \frakm/2}, 
\label{eq:misc_025}
\end{equation}
where by $e^{\pm i \sigma r}$ we mean the multiplication operator $\bullet \mapsto e^{\pm i \sigma r} \bullet$, and likewise for $r^{\alpha}$. 
Then, it is a matter of computation to prove:
\begin{proposition}
	Let $\beth=\gimel,\beth_0=\gimel_0$.
	\begin{enumerate}[label=(\Roman*)]
		\item Suppose that $P(\sigma)$ has the form described above. Then, $\check{P}(\sigma)$ has the form 
		\begin{equation}
		\check{P}(\sigma) = P_0 - \sigma^2 + \sigma P_1 + \sigma^2 P_2 
		\label{eq:Pcheck_form}
		\end{equation}
		for $P_0 = \triangle_g + L$ and $P_1,P_2$ such that $P_1 \in \rho^{2+\min\{\beth,\beth_1\} }\operatorname{Diff}^1_{\mathrm{b}}([0,1)_\rho ) + \rho^{2+\min\{\beth_0,\beth_2\}} \operatorname{Diff}^1_{\mathrm{b}}(X)$,
		$P_2 \in \rho^{1+\min\{\beth,\beth_1,1+\beth_3\} }C^\infty([0,1)_\rho ) + \rho^{1+\min\{\beth_0,\beth_2,1+\beth_4\}} C^\infty(X)$.
		\item Conversely, suppose that we are given some Schr\"odinger operator $P_0 = \triangle_g + L$ for  $L$ as in \cref{eq:misc_b}. Then, defining $P_0(\sigma) = P_0-\sigma^2$ and 
		\begin{equation} 
		P(\sigma) = \hat{P}_0(\sigma),
		\end{equation} 
		$P(\sigma)$ has the form described above, with $\beth_1,\beth_3=\max\{0,\gimel-1\}$, $\beth_2=\gimel_0$, and $\beth_4= \gimel_0-1\geq 0$. 
		\label{it:Schrodinger_form_II}
	\end{enumerate}
	\label{prop:Schrodinger_form}
\end{proposition}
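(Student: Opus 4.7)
The plan is to reduce both parts to a direct computation of conjugations by the phase $e^{\pm i\phi}$ with $\phi = \sigma r + \tfrac12 \sigma\frakm \log r$. The essential identities are $e^{-i\phi}\partial_r e^{i\phi} = \partial_r + i\phi'$ and $e^{-i\phi}\partial_r^2 e^{i\phi} = \partial_r^2 + 2i\phi'\partial_r + i\phi'' - (\phi')^2$, with $\phi' = \sigma + \sigma\frakm/(2r)$ and $\phi'' = -\sigma\frakm/(2r^2)$. Multiplications by functions and tangential (angular) differentiations commute with $e^{\pm i\phi}$, and the cutoff $\chi$ allows us to regard $\partial_r$ as a smooth vector field in a collar of $\partial X$ at the cost of a compactly supported smooth remainder.

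For part (I), first decompose $\triangle_g = \triangle_{g_0} + \tfrac{\frakm}{r}\partial_r^2 + E$, where $E$ has the regularity guaranteed by \cref{prop:metric} and \cref{eq:misc_018}. The key cancellations among singular-in-$\sigma$ terms are: (i) the $-2i\sigma\partial_r$ from $e^{-i\phi}(-\partial_r^2)e^{i\phi}$ cancels the prescribed $2i\sigma(1-\chi)\partial_r$ in $P(\sigma)$ modulo a compactly supported smooth remainder; (ii) the $-i\sigma(d-1)/r$ from conjugating $-\tfrac{d-1}{r}\partial_r$ cancels the prescribed $i\sigma(d-1)/r$; and (iii) the $+\sigma^2\frakm/r$ arising inside $-(\phi')^2$ in $e^{-i\phi}(-\partial_r^2)e^{i\phi}$ precisely cancels the $-\sigma^2\frakm/r$ produced by $e^{-i\phi}\bigl(\tfrac{\frakm}{r}\partial_r^2\bigr)e^{i\phi}$. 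Cancellation (iii) is the reason for incorporating the logarithmic factor $r^{\pm i\sigma\frakm/2}$ into the conjugating phase. Everything else --- the conjugations of $L,Q,R$, the conjugation of $E$, and the subleading residues of (i)--(iii) --- is absorbed into $\sigma P_1 + \sigma^2 P_2$; verifying the claimed weighted b-classes reduces to tracking whether each contribution originates from the $[0,1)_\rho$-factor (inheriting $\beth,\beth_1$, or $\beth_3$) or from $\dot X$ (inheriting $\beth_0,\beth_2$, or $\beth_4$), and then taking minima.

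Part (II) is the inverse, obtained by running the same calculation with $\phi$ replaced by $-\phi$: conjugating $P_0 - \sigma^2 = \triangle_g + L - \sigma^2$ by $e^{i\phi}(\cdot)e^{-i\phi}$ yields a $\sigma$-independent piece equal to $\triangle_g + L$, a singular $\sigma$-linear piece $2i\sigma\partial_r + i\sigma(d-1)/r$ (identified with $2i\sigma(1-\chi)\partial_r + i\sigma(d-1)/r$ up to a compactly supported smooth remainder), a $+\sigma^2$ that cancels $-\sigma^2$, and higher $\sigma$-remainders forming $\sigma Q + \sigma^2 R$ of the declared regularity. The orders $\beth_1 = \beth_3 = \gimel$ and $\beth_2 = \gimel_0$ can be read off $E$ and $L$ using \cref{eq:misc_b} with $\beth = \gimel$, $\beth_0 = \gimel_0$; the value $\beth_4 = \gimel_0 - 1$ accounts for the fact that $R$ is weighted by $\rho^2$ rather than $\rho^3$, losing one order relative to $L$. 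The main obstacle in either direction is purely mechanical bookkeeping of the six simultaneous decay indices; the sole genuinely non-routine observation is cancellation (iii), which determines the logarithmic phase correction and reflects the long-range Coulomb-like effect of the leading mass perturbation in $g$.
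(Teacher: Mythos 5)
Your proposal is correct and follows essentially the same route as the paper's proof in \S\ref{sec:tedium}: a direct computation of the conjugation by $e^{\pm i\sigma r}r^{\pm i\sigma\frakm/2}$ with bookkeeping of radial versus angular decay orders, and you correctly single out the $\sigma^{2}\frakm/r$ cancellation between the conjugated $-\partial_r^2$ and $\frakm r^{-1}\partial_r^2$ pieces, which is precisely the ``one key cancellation'' the paper flags (and what makes $\beth_3=\gimel$ rather than $\gimel-1$ attainable in part (II)). The only nit is the wording of your cancellation (iii): the $+\sigma^2\frakm/r$ arises from $+(\phi')^2$ in $e^{-i\phi}(-\partial_r^2)e^{i\phi}$, i.e.\ the $-(\phi')^2$ of the conjugation identity with its sign flipped by the leading minus.
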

The proof is contained in \S\ref{sec:tedium}.

So, the spectral families of Schr\"odinger operators fall into the framework here. 

For applications to wave propagation on non-ultrastatic spacetimes, it is important to allow greater generality. This is why we do not assume that our operator $P(\sigma)$ arises from  \Cref{prop:Schrodinger_form}.\ref{it:Schrodinger_form_II}, instead allowing any family of operators of the form \cref{eq:Pform}.

\begin{lemma} For each $k\in \bbN$, 
	$\operatorname{Diff}_{\mathrm{b}}^k(X)\subseteq \operatorname{Diff}^k_{\mathrm{b}}(X^+_{\mathrm{res}})$.
\end{lemma}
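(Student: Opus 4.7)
The plan is to reduce the containment to a local check on a set of vector field generators. Observe first that $X^+_{\mathrm{res}} = [[0,\infty)_\sigma \times X; \{0\}\times\partial X]$ is a blowup of a p-submanifold, with blowdown $\beta: X^+_{\mathrm{res}} \to [0,\infty)_\sigma \times X$; composing with projection onto $X$ gives a smooth map $\pi: X^+_{\mathrm{res}} \to X$, so $\pi^*$ is an inclusion $C^\infty(X) \hookrightarrow C^\infty(X^+_{\mathrm{res}})$. Every $V \in \calV_{\mathrm{b}}(X)$ extends to a vector field $\tilde V$ on $[0,\infty)_\sigma \times X$ not touching the $\sigma$-variable; since $V$ is tangent to $\partial X$, $\tilde V$ is tangent to the blown-up submanifold $Y = \{0\}\times\partial X$, and therefore admits a unique smooth lift $\hat V$ to $X^+_{\mathrm{res}}$ by the standard theory of p-submanifold blowups. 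The filtered $C^\infty$-algebraic structure of $\operatorname{Diff}^\bullet_{\mathrm{b}}(X)$ then reduces the lemma to showing $\hat V \in \calV_{\mathrm{b}}(X^+_{\mathrm{res}})$ for every $V \in \calV_{\mathrm{b}}(X)$.

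By $C^\infty(X)$-linearity it suffices to treat a local set of generators. Away from $\partial X$ these are interior vector fields on $X$, which lift trivially to interior vector fields on $X^+_{\mathrm{res}}$ that meet only $\mathrm{zf}^\circ$ and are tangent there since they do not involve $\partial_\sigma$. Near $\partial X$, $\calV_{\mathrm{b}}(X)$ is generated over $C^\infty(X)$ by $\rho\partial_\rho$ and the angular derivatives $\partial_{\theta^j}$. Tangency to $\mathrm{bf}$ and $\mathrm{zf}$ away from $\mathrm{tf}$ is automatic, since $\beta$ is a diffeomorphism there; the substantive check is tangency to the new face $\mathrm{tf}$.

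For that, introduce polar coordinates $(R, \psi, \theta)$ on $X^+_{\mathrm{res}}$ near $\mathrm{tf}$, with $\sigma = R\cos\psi$, $\rho = R\sin\psi$, $R \geq 0$, $\psi \in [0, \pi/2]$; here $R$ is a boundary-defining-function for $\mathrm{tf}$, and $\psi = 0$, $\psi = \pi/2$ respectively cut out $\mathrm{bf}$ and $\mathrm{zf}$. A direct chain-rule computation yields
\[
\rho\partial_\rho \;=\; R\sin^2\psi\, \partial_R \;+\; \sin\psi\cos\psi\, \partial_\psi,
\]
which is smooth, has $\partial_R$-coefficient vanishing at $R=0$, and has $\partial_\psi$-coefficient vanishing at both $\psi=0$ and $\psi=\pi/2$; hence $\widehat{\rho\partial_\rho} \in \calV_{\mathrm{b}}(X^+_{\mathrm{res}})$. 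The angular lifts $\widehat{\partial_{\theta^j}} = \partial_{\theta^j}$ are unchanged by the blowup and manifestly tangent to every boundary hypersurface. (One may also verify these tangencies in the projective charts $(\sigma, \rho/\sigma, \theta)$ and $(\sigma/\rho, \rho, \theta)$ if desired.) This completes the verification.

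The only genuine obstacle is tangency to the exceptional face $\mathrm{tf}$, which is precisely what the polar-coordinate computation makes transparent; everything else is bookkeeping following from the facts that $V \in \calV_{\mathrm{b}}(X)$ is tangent to $\partial X$ and that $\tilde V$ does not differentiate in $\sigma$.
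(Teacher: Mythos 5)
Your proposal is correct and follows essentially the same route as the paper: reduce to generators of $\calV_{\mathrm{b}}(X)$, lift them through the blowdown, and check tangency to the boundary hypersurfaces of $X^+_{\mathrm{res}}$ (the paper simply asserts this tangency, whereas you verify it explicitly, and your polar-coordinate computation $\rho\partial_\rho = R\sin^2\psi\,\partial_R + \sin\psi\cos\psi\,\partial_\psi$ checks out). No gaps.
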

\begin{proof}
	Any b-vector field on $X$ lifts to a smooth vector field on $X^+_{\mathrm{res}}$, and the lift is tangent to the boundary of $X^+_{\mathrm{res}}$. So, $\calV_{\mathrm{b}}([0,\infty)_\sigma \times X) \subseteq \calV_{\mathrm{b}}(X^+_{\mathrm{res}})$. Thus, 
	\begin{equation}
		\operatorname{Diff}^k_{\mathrm{b}}(X) \subseteq \operatorname{Diff}^k_{\mathrm{b}}([0,\infty)_\sigma \times X) \subseteq \operatorname{Diff}^k_{\mathrm{b}}(X^+_{\mathrm{res}}) 
	\end{equation}
	for any $k\in \bbN$. 
\end{proof}

Consequently, $P \in \operatorname{Diff}_{\mathrm{b}}^{2,-1,-2,0}(X^+_{\mathrm{res}})$, where the orders are the differential order, the decay order at $\mathrm{bf}$, the decay order at $\mathrm{tf}$, and the decay order at $\mathrm{zf}$. 

\subsection{The Sommerfeld problem}
\label{subsec:Sommerfeld}

Assuming that $P(\sigma) = \hat{P}_0(\sigma)$ for $P_0(\sigma)=P_0 - \sigma^2$ the spectral family of a Schr\"odinger operator of the form described in \Cref{prop:Schrodinger_form}, let us recall some basic solvability theory for $P(\sigma)$.

First, recall that, since $P_0$ is essentially self-adjoint on $\calS(X)$ with respect to the $L^2(X,g)$ inner product (this follows easily from the theory of deficiency indices, owing to the fact that $\operatorname{ran}(P_0\pm i)^\perp\subset L^2(X,g)$ is a subset of $\ker_{\calS'}(P_0\mp i)$, but $\ker_{\calS'}(P_0\mp i)=\ker_{\calS}(P_0\mp i)$ consists only of Schwartz functions by ellipticity in the sc-calculus, and $\ker_{\calS}(P_0\mp i)$ is empty by the usual symmetry argument), we have, for all $E\in \bbC\backslash \bbR$, a well-defined two-sided inverse 
\begin{equation}
R(E) \in \Psi_{\mathrm{sc}}^{-2,0}(X).
\end{equation}
In other words, for any $f\in \calS'(X)$, the unique solution $u\in \calS'(X)$ to $P_0u-Eu=f$ is given by $u=R(E)f$. Every other solution of the PDE fails to be tempered.
Moreover, $R(E)$ is a pseudodifferential operator in the Parenti--Shubin--Melrose ``sc-calculus'' which regularizes by two orders and does not worsen decay, the set of such operators being $\Psi^{-2,0}_{\mathrm{sc}}$.

If $E\in \bbR$, it may be the case that $E$ lies in the spectrum of $P_0$, in which case the resolvent $R(E)$ will not be well-defined. In fact, if $E\geq 0$, then it must be in the spectrum. For $E>0$, in lieu of the resolvent one has the \emph{limiting absorption principle}.
One particularly weak form of this principle states that, for any $E>0$, each of the two limits
\begin{equation}
R(E\pm i0) = \lim_{\epsilon\to 0^+} R(E\pm i \epsilon) : \calS(X)\to \calS'(X) 
\end{equation}
exists in the strong operator topology. That is, for any $f\in \calS(X)$, the Schwartz functions $R(E\pm i\epsilon)f$ converge as $\epsilon \to 0^+$ to some tempered distributions in the topology of $\calS'(X)$. Moreover, the ``limiting resolvent'' $R(E\pm i0)$ is a right inverse to $P_0 - E = P_0(\sigma)$ for $\sigma=E^{1/2}$, so $P_0(\sigma)R(E\pm i0) f = f$ for all $f\in \calS(X)$; it produces solutions to the PDE. 

The particular solution $u = R(\sigma^2\pm i0) f$ to $P_0(\sigma)u=f$ produced by the limiting resolvent is distinguished in the set of all solutions by its oscillatory behavior $\sim \exp(\pm i \sigma r)$ at spatial infinity (the ``Sommerfeld radiation condition'').  
Thus, the \emph{Sommerfeld problem}, that of solving $P_0u - Eu=f$ for $u$ with ``outgoing'' oscillatory behavior at infinity is well-posed and solved by the limiting resolvent.

A much more precise version of the limiting absorption principle is contained in \cite{VasyLA}. For instance, it, when applied to the case at hand (and stated without assuming that $P$ is the spectral family of a Schr\"odinger operator), contains:
\begin{proposition}
	Suppose that $P$ satisfies the hypotheses in the previous subsection.
	Let $\ell,r\in \bbR$ satisfy the inequalities $\ell<-1/2$ and $r>-1/2-\ell$.
	Then, for each $\sigma>0$, 
	\begin{equation}
	P(\sigma) : \{u\in H_{\mathrm{b}}^{r,\ell}(X): P(\sigma) u \in  H_{\mathrm{b}}^{r,\ell+1}(X) \}\to H_{\mathrm{b}}^{r,\ell+1}(X)
	\label{eq:Sommerfeld}
	\end{equation}
	is Fredholm. If $P=P^*$, then it is invertible. 
	Moreover, if $K\Subset \bbR^+$, the operator norm of $P(\sigma)^{-1} : H_{\mathrm{b}}^{r,\ell+1}(X)\to H_{\mathrm{b}}^{r,\ell}(X)$ is uniformly bounded for $\sigma\in K$.
	\label{prop:Vasy_absorption}
\end{proposition}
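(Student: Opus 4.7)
The plan is to apply the Fredholm theory and radial-point propagation machinery for sc-differential operators with a spectral parameter developed in \cite{VasyLA} (see also \cite{Vasy, VasyLagrangian}) to our class of operators $P(\sigma)$. The main task is to verify that for each fixed $\sigma>0$, $P(\sigma)$ falls into that framework, after which the conclusions are essentially direct quotations. First, $P(\sigma)$ is a second-order sc-differential operator on $X$: its sc-principal symbol at fiber infinity of ${}^{\mathrm{sc}}T^* X$ coincides with that of $\triangle_g$, which is sc-elliptic there. At the zero section above $\partial X$, the scattering symbol reads
\begin{equation}
    p_\sigma(\xi) = \lvert \xi \rvert^2_{g^{-1}} + 2i\sigma \xi_r - \sigma^2,
\end{equation}
up to symbols vanishing at $\partial X$ arising from $L$, $\sigma Q$, $\sigma^2 R$ and from the compactly-supported cutoff $\chi$. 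Via \Cref{prop:Schrodinger_form}, this $P(\sigma)$ is, modulo a higher-decay remainder, the conjugate $\hat{P}_0(\sigma)$ of the spectral family of a Schr\"odinger operator; geometrically, the conjugation by $e^{i\sigma r} r^{i\sigma \frakm/2}$ shifts the characteristic set of $P_0-\sigma^2$ so that one of the two radial sets at $\partial X$ is carried to the zero section, leaving a single radial set at which the Sommerfeld radiation condition must be imposed. The sign of the $2i\sigma \xi_r$ term fixes which radial set remains and its source/sink character.

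Combining the sc-elliptic estimate away from the remaining radial set with the radial-point propagation estimate of \cite{VasyLA} then yields a semi-Fredholm estimate
\begin{equation}
    \norm{u}_{H_{\mathrm{b}}^{r,\ell}(X)} \leq C \bigl( \norm{P(\sigma) u}_{H_{\mathrm{b}}^{r,\ell+1}(X)} + \norm{u}_{H_{\mathrm{b}}^{-N,-N}(X)} \bigr),
\end{equation}
valid precisely under the hypotheses stated: the constraint $\ell < -1/2$ is the below-threshold condition allowing propagation \emph{out of} the radial set treated as a source, and $r > -1/2 - \ell$ is the corresponding condition ensuring the dual estimate (applied to $P(\sigma)^*$ with shifted weight $-\ell-1$) also holds above threshold. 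The Fredholm property of \cref{eq:Sommerfeld} follows by the usual compact-embedding argument once the two semi-Fredholm estimates are in hand. When $P=P^*$, invertibility is obtained via Melrose's boundary pairing identity \cite{MelroseSC}: the radial-point estimate forces any kernel element to be ``purely outgoing'' at $\partial X$, whereupon $\langle P(\sigma) u, u\rangle_{L^2(X,g)} = 0$ together with the absence of a nontrivial purely outgoing solution at strictly positive energy (by the asymmetry between the imposed outgoing condition and the incoming character of $\bar u$) gives $u=0$; the same reasoning applied to $P(\sigma)^*$ rules out the cokernel.

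Finally, the uniform bound on compact subsets $K \Subset \bbR^+$ is built into the estimates: all symbolic data entering the elliptic and radial-point estimates (the location of the radial set, the subprincipal quantity at it, and the coefficients of $P(\sigma)$) depend continuously on $\sigma>0$, so the constants $C=C(\sigma)$ may be taken locally bounded, and on $K$ this yields a uniform operator-norm bound for the inverse. I expect the genuine technical obstacle, were one to prove the proposition from scratch, to be the verification of the radial-point propagation estimate at the correct threshold $-1/2$ with the correct sign of the subprincipal symbol at the radial set --- that is, verifying that the $+2i\sigma(1-\chi)\partial_r$ term (as opposed to the sign-reversed version) makes the remaining radial set a source rather than a sink, which is what selects the ``$+i0$'' limiting resolvent over the ``$-i0$'' one and puts the weight constraint on the correct side of $-1/2$.
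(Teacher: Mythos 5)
Your overall strategy --- reduce everything to the Fredholm/limiting-absorption machinery of \cite{VasyLA} --- is the same as the paper's, but the way you carry out the reduction is genuinely different. The paper's proof (in \S\ref{sec:tedium}) is purely algebraic: it observes (i) that conjugation by $r^{\pm i\sigma\frakm/2}$ is an isomorphism on all the relevant Sobolev spaces and hence harmless, and (ii) that for each \emph{fixed} $\sigma>0$ one may write $\check{P}(\sigma)=\check{P}(\sigma;\sigma)$, where $\check{P}(\,\cdot\,;\omega)=P_0+\omega P_1+\omega^2 P_2-(\,\cdot\,)^2$ is, for frozen $\omega$, the honest spectral family of a Schr\"odinger operator; differentiating the identity $\check{P}(\sigma)=\check{P}(\sigma)^*$ at $\sigma=0$ shows $P_1=P_1^*$ and $P_2=P_2^*$, so the frozen operator is symmetric whenever $P$ is, and Vasy's Theorem 1.1 then applies verbatim, uniformity clause included. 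You instead reopen the microlocal black box and re-sketch the interior of Vasy's argument (sc-ellipticity at fiber infinity, the radial set translated to the zero section by the conjugation, the threshold conditions $\ell<-1/2$ and $r>-1/2-\ell$ for the direct and dual radial-point estimates, and the compact-embedding argument for Fredholmness). That is a legitimate alternative, and your identification of where the two thresholds come from is accurate; what the paper's route buys is that none of this needs to be re-verified for the extra terms $\sigma P_1+\sigma^2P_2$, since the frozen operator is literally of the form Vasy treats.

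One step of your sketch is genuinely incomplete as written: in the symmetric case, the boundary pairing argument only shows that a kernel element $u$ has vanishing outgoing data and is therefore Schwartz; concluding $u=0$ from this requires the absence of embedded eigenvalues at strictly positive energy, which is a separate, nontrivial input (a unique-continuation-at-infinity statement), and your parenthetical about the ``asymmetry between the imposed outgoing condition and the incoming character of $\bar u$'' does not supply it. This is exactly the point that is packaged inside \cite[Thm.\ 1.1]{VasyLA}, which is why the paper is content to verify its hypotheses and cite it rather than reprove it.
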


The (almost immediate) proof from \cite[Thm.\ 1.1]{VasyLA} is contained in \S\ref{sec:tedium}. Henceforth, we will add to our assumptions that \cref{eq:Sommerfeld} is invertible. As stated in the proposition, this holds whenever $P=P^*$.

\begin{remark} Part of \cite[Thm.\ 1.1]{VasyLA} is that, when $P(\sigma)$ arises from conjugating the spectral family of a Schr\"odinger operator, the inverse of \cref{eq:Sommerfeld} is given by 
	\begin{equation}
	P(\sigma)^{-1} = e^{i\sigma r}r^{i\sigma\frakm/2} R(\sigma^2-i0)e^{-i\sigma r} r^{-i\sigma \frakm/2}
	\label{eq:resolv_def}
	\end{equation}
	on $\calS(X)$.\footnote{Actually, Vasy does not conjugate by $r^{i\sigma \frakm/2}$, but since multiplication by this function is an isomorphism of the Sobolev spaces appearing in Vasy's theorem, this additional conjugation does not change the result.} Thus, \Cref{prop:Vasy_absorption} directly defines the conjugated limiting resolvent via the problem it solves. 
	
	The connection with the Sommerfeld radiation condition is that, if $u_0 \in H_{\mathrm{b}}^{r,\ell}(X)$ for high $r$, then $u_0$ is non-oscillatory at infinity (the b-Sobolev regularity implying that such oscillations are suppressed by at least $\ell$ powers of $\rho$), which means that 
	\begin{equation}
	R(\sigma^2 -i0) f = e^{-i\sigma r}r^{-i\sigma\frakm/2} P(\sigma)^{-1}  (e^{i \sigma r} r^{i\sigma\frakm/2} f)
	\end{equation}
	has the correct oscillatory behavior at infinity.
\end{remark}

Even when $P(\sigma)$ does not arise from conjugating the spectral family of the Schr\"odinger operator, we \emph{define} $R(\sigma^2-i0)$ by \cref{eq:resolv_def}.

Similar statements apply to $R(\sigma^2 +i0)$.

Here, we prefer to work with the conormal function spaces $\calA^\bullet(X)$ instead of the b-Sobolev spaces $H_{\mathrm{b}}^\bullet(X)$. These are related by 
\begin{equation}
H_{\mathrm{b}}^{\infty,\ell}(X) \subseteq \calA^{\ell+d/2}(X) \subseteq H_{\mathrm{b}}^{\infty,\ell-}(X) = \bigcap_{\varepsilon>0} H_{\mathrm{b}}^{\infty,\ell-\varepsilon}(X),\quad \ell\in \bbR, 
\label{eq:Sobolev}
\end{equation}
which is a consequence of the Sobolev embedding theorems. The shift in the decay order comes from the convention that the indexing of $\calA^\bullet$ is based on $L^\infty$, while that of $H_{\mathrm{b}}^\bullet$ is based on $L^2$. Specifically, $H_{\mathrm{b}}^{0,0}(X)=L^2(X,g)$, so 
\begin{equation}
\calA^{d/2}(X)\subset \rho^{d/2} L^\infty(X) \subset \rho^{-\varepsilon} L^2(X,g)= H_{\mathrm{b}}^{0,-\varepsilon}(X).
\end{equation}
This is the upper bound in \cref{eq:Sobolev}, except without talking about derivatives.

It therefore follows that, for all $\sigma>0$, 
\begin{equation} 
P(\sigma)^{-1} : \calA^{\alpha+1}(X) \to \calA^{\alpha-}(X)
\label{eq:misc_030}
\end{equation} 
for all $\alpha< (d-1)/2$. 

We will need to know that $P(\sigma)^{-1}$ varies smoothly in $\sigma$ as a map $\calA^{\alpha+1}(X) \to \calA^{\alpha-}(X)$ in the sense that, whenever $f\in \calA^{\alpha+1}(X)$, then 
\begin{equation}
P(\sigma)^{-1} f \in C^\infty(\bbR^+_\sigma;\calA^{\alpha-}(X)).
\label{eq:misc_Pfs}
\end{equation}
The argument showing this is standard, so we only illustrate the idea without covering all of the details. The starting point is the usual resolvent identity
\begin{multline}
P(\sigma_2)^{-1} - P(\sigma_1)^{-1} = P(\sigma_1)^{-1} P(\sigma_1) P(\sigma_2)^{-1} - P(\sigma_1)^{-1} P(\sigma_2) P(\sigma_2)^{-1} \\ = P(\sigma_1)^{-1}(P(\sigma_1) - P(\sigma_2))P(\sigma_2)^{-1}. 
\label{eq:misc_0ll}
\end{multline}
(The manipulations are justified because the compositions $P(\sigma_\bullet)^{-1}P(\sigma)P(\sigma_\bullet)^{-1}$ in the middle of \cref{eq:misc_0ll} are well-defined, mapping $\calA^{\alpha+1}(X)\to \calA^{\alpha-}(X)$.) We may write $P(\sigma_1) - P(\sigma_2) = (\sigma_1-\sigma_2) T(\sigma_1,\sigma_2) $ for some smooth two-parameter family of $T\in \operatorname{Diff}^{1,-1}_{\mathrm{b}}(X)$. So, \cref{eq:misc_0ll} says
\begin{equation}
P(\sigma_2)^{-1} - P(\sigma_1)^{-1} = (\sigma_1-\sigma_2) P(\sigma_1)^{-1} T(\sigma_1,\sigma_2) P(\sigma_2)^{-1}.
\label{eq:misc_oyy}
\end{equation}
The composition $P(\sigma_1)^{-1} T(\sigma_1,\sigma_2) P(\sigma_2)^{-1}$ maps $\calA^{\alpha+1}(X)\to \calA^{\alpha-}(X)$, and each seminorm of the codomain is, for $\sigma_1$ near $\sigma_2$, uniformly bounded in terms of finitely many seminorms of the domain (by the uniformity clause in \Cref{prop:Vasy_absorption}). Thus, as $\sigma_2 \to \sigma_1$, the right-hand side of \cref{eq:misc_oyy}, when applied to $f\in \calA^{\alpha+1}(X)$, goes to $0$ in $\calA^{\alpha-}(X)$.  This shows that $P(\sigma)^{-1} f \in C^0(\bbR^+_\sigma;\calA^{\alpha-}(X))$. So, \cref{eq:misc_oyy} implies 
\begin{equation}
\frac{\mathrm{d}}{\mathrm{d} \sigma} P(\sigma)^{-1} f =- P(\sigma)^{-1} T(\sigma,\sigma) P(\sigma)^{-1} f= - P(\sigma)^{-1} \frac{\mathrm{d} P}{\mathrm{d} \sigma} P(\sigma)^{-1} f.
\label{eq:misc_034} 
\end{equation}
Applying \cref{eq:misc_034} inductively allows upgrading continuity to smoothness. 

The limiting absorption principle says nothing about the $\sigma\to 0$ limit; for instance, \Cref{prop:Vasy_absorption} requires $0\notin K$.
For the analysis of such low-energy phenomena, the key result upon which we will build is the following corollary of \cite{VasyLagrangian}:
\begin{proposition}
	Suppose that $\ker_{\calA^{d-2}(X)} P(0) = \{0\}$. Let $\ell<-1/2$, $s>-1/2-\ell$, $\nu \in (\ell+2-d/2,\ell+d/2)$. Then, there exists a $\sigma_0>0$ and $C>0$ such that
	\begin{equation}
	\lVert (\rho+\sigma)^\nu u \rVert_{H_{\mathrm{b}}^{s,\ell} } \leq C  \lVert (\rho+\sigma)^{\nu-1} P(\sigma)u \rVert_{H_{\mathrm{b}}^{s,\ell+1} }
	\end{equation}
	holds for all $\sigma \in (0,\sigma_0)$ and $u\in H_{\mathrm{b}}^{s,\ell}(X)$.  
	\label{prop:Vasy_low}
\end{proposition}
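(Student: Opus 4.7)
The statement is a direct corollary of the uniform low-energy Fredholm estimates of \cite{VasyLagrangian} applied to our family $\{P(\sigma)\}_{\sigma\geq 0}$, and the plan is to verify that the hypotheses of that reference are met in our setup and to match up notation.

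First, I would check that $P$ lies in the class treated by Vasy. From \Cref{prop:metric} together with the structural conditions on $L$, $Q$, $R$ from \S\ref{subsec:op}, we already have $P\in\operatorname{Diff}^{2,-1,-2,0}_{\mathrm{b}}(X^+_{\mathrm{res}})$, and the three normal operators --- at $\mathrm{zf}$ the operator $P(0)=\triangle_g+L$, at $\mathrm{bf}$ a scattering operator whose principal part encodes the outgoing oscillation through the $2i\sigma(1-\chi)\partial_r$ term, and at $\mathrm{tf}$ a rescaled Euclidean model obtained by the blowup --- are exactly of the type for which \cite{VasyLagrangian} provides uniform radial-point and propagation estimates. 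The hypothesis $\ker_{\calA^{d-2}(X)} P(0)=\{0\}$ then plays the role of the no-zero-resonance condition: the decay rate $\rho^{d-2}$ is the borderline regularity between an $L^2$ bound state and a merely polyhomogeneous resonance in dimension $d\geq 3$, and its triviality is precisely what is needed to invert the $\mathrm{zf}$ normal operator on the admissible range of b-Sobolev weights.

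Second, I would match weights. Since $\rho+\sigma$ vanishes only on $\mathrm{tf}$, and there to first order, $(\rho+\sigma)^\nu$ is a boundary weight adapted to $\mathrm{tf}$, while the b-Sobolev weight $\ell$ in $H^{s,\ell}_{\mathrm{b}}(X)$ controls the behavior at $\mathrm{bf}$; together they form the combined weight appearing in Vasy's estimate. The admissible interval $\nu\in(\ell+2-d/2,\ell+d/2)$ has length $d-2$, which matches the range between the bound-state and resonance thresholds after accounting for the shift from the $L^\infty$-based indexing of $\calA^\bullet(X)$ to the $L^2$-based indexing of $H^{s,\ell}_{\mathrm{b}}(X)$ recorded in \cref{eq:Sobolev}. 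The gain of one order of $\rho$ on the right-hand side (from $\ell$ to $\ell+1$) together with the loss of one order of $(\rho+\sigma)$ (from $\nu$ to $\nu-1$) is the standard scattering gain at $\mathrm{bf}$ combined with the corresponding concession at $\mathrm{tf}$ in the radial-point framework. Granting these identifications, the inequality is the direct output of the cited reference for $\sigma\in(0,\sigma_0)$ with $\sigma_0$ small.

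The main obstacle, and the only step that really requires care, is confirming that our slightly more general class --- in particular the $\sigma Q$ and $\sigma^2 R$ terms in $P(\sigma)$, and the absence of the symmetry assumption $P=P^*$ --- does not spoil Vasy's argument. These extra terms, however, vanish to at least order $\sigma$ and decay at least like $\rho^2$, so they contribute only subleading corrections at each of $\mathrm{bf}$, $\mathrm{tf}$, and $\mathrm{zf}$, leaving the symbolic structure at the radial sets and all three normal operators unchanged. Consequently \cite{VasyLagrangian} applies essentially verbatim, with only cosmetic changes of notation, and \Cref{prop:Vasy_low} follows.
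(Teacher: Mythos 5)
Your proposal is correct and takes essentially the same route as the paper: both reduce the estimate to the uniform low-energy theorem of \cite{VasyLagrangian} (the paper invokes Thm.\ 2.5 there to absorb the extra $\sigma Q+\sigma^2 R$ terms), and both use the hypothesis $\ker_{\calA^{d-2}(X)}P(0)=\{0\}$ to verify the triviality of the kernel of the $\mathrm{zf}$ normal operator, which the paper does explicitly via the Sobolev embedding $H_{\mathrm{b}}^{\infty,(d-4)/2}(X)\subseteq \calA^{d-2}(X)$ of \cref{eq:Sobolev}. The one step you elide is that Vasy's estimate is stated in the second-microlocal norms $H_{\mathrm{sc,b,res}}^{s,r,\ell}$, so the paper must additionally invoke the equivalence $\lVert\bullet\rVert_{H_{\mathrm{b}}^{s,\ell}}\approx\lVert\bullet\rVert_{H_{\mathrm{sc,b,res}}^{s,s+\ell,\ell}}$ to obtain the b-Sobolev form claimed; this is bookkeeping rather than a gap in the argument.
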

This is \cite[Theorem 2.11]{HintzPrice} (for real $\sigma$), but stated in greater generality. Again, see \S\ref{sec:tedium} for the details.

Going forwards, we will always assume $\ker_{\calA^{d-2}(X)} P(0) = \{0\}$. Thus, when we say that $P(\sigma)$ is as in \S\ref{sec:op}, we are including this assumption.

Let us extract a mapping property of $P(\sigma)$ from the foregoing estimates. 

\begin{proposition}
	Let $0<\alpha<(d-1)/2$. 
	For each $f(\sigma) \in \sigma L^\infty([0,1)_\sigma; \calA^{\alpha+1}(X)) $, the function $u=P(\sigma)^{-1} f$, which is well-defined by \cref{eq:misc_030}, satisfies 
	\begin{equation} 
		u\in L^\infty([0,1)_\sigma; H_{\mathrm{b}}^{s,\ell}(X))
	\end{equation} 
	for all $s,\ell\in \bbR$ such that $\ell<\alpha-d/2$.
	\label{prop:nice_low_energy} 
\end{proposition}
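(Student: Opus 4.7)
The plan is to apply \Cref{prop:Vasy_absorption} away from $\sigma=0$ and \Cref{prop:Vasy_low} near $\sigma=0$, exploiting the $\sigma$-prefactor in $f$ to absorb the singular weight $(\rho+\sigma)^{-1}$ appearing on the right-hand side of the latter estimate. First I would write $f(\sigma)=\sigma g(\sigma)$ with $g \in L^\infty([0,1)_\sigma; \calA^{\alpha+1}(X))$; by \cref{eq:Sobolev}, $g$ is then uniformly bounded in $H^{s_1, m}_{\mathrm{b}}(X)$ for every $s_1\in \bbR$ and every $m<\alpha+1-d/2$. Given $s\in \bbR$ and $\ell<\alpha-d/2$, write $\ell = \alpha-d/2-\varepsilon$ with $\varepsilon>0$ small, and observe that $0<\alpha<(d-1)/2$ ensures $\ell<-1/2$, as required by both propositions. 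The a priori membership $u\in H^{\infty,\ell}_{\mathrm{b}}(X)$ for each fixed $\sigma>0$, needed to invoke \Cref{prop:Vasy_low}, is furnished by \cref{eq:misc_030} combined with \cref{eq:Sobolev}.

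Let $\sigma_0>0$ be as in \Cref{prop:Vasy_low}. For $\sigma\in [\sigma_0,1)$, the uniformity clause in \Cref{prop:Vasy_absorption} gives $\|u\|_{H^{s_1,\ell}_{\mathrm{b}}} \leq C \|f\|_{H^{s_1,\ell+1}_{\mathrm{b}}} \leq C\|g\|_{H^{s_1,\ell+1}_{\mathrm{b}}}$ for any $s_1>-1/2-\ell$. For $\sigma\in (0,\sigma_0)$, I apply \Cref{prop:Vasy_low} with $\nu=0$: one verifies
\begin{equation*}
0 \in (\ell+2-d/2,\, \ell+d/2) = (\alpha+2-d-\varepsilon,\, \alpha-\varepsilon)
\end{equation*}
provided $\varepsilon<\alpha$ and $\alpha<d-2+\varepsilon$, both valid for small $\varepsilon$ since $\alpha>0$ and $\alpha<(d-1)/2\leq d-2$ for $d\geq 3$. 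The estimate then yields
\begin{equation*}
\|u\|_{H^{s_1,\ell}_{\mathrm{b}}} \leq C \bigl\|(\rho+\sigma)^{-1} f\bigr\|_{H^{s_1,\ell+1}_{\mathrm{b}}} = C \Bigl\|\tfrac{\sigma}{\rho+\sigma}\, g\Bigr\|_{H^{s_1,\ell+1}_{\mathrm{b}}}.
\end{equation*}

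The only real step is then to check that $(\rho,\sigma)\mapsto \sigma/(\rho+\sigma)$ acts as a multiplier on $H^{s_1,\ell+1}_{\mathrm{b}}(X)$ with norm bounded uniformly in $\sigma$. This reduces to observing that each iterated b-derivative $(\rho\partial_\rho)^k(\sigma/(\rho+\sigma))$ depends only on the single ratio $\rho/\sigma$ and is a bounded function of it (a simple induction), while the angular derivatives vanish identically; the function itself satisfies $0\leq \sigma/(\rho+\sigma)\leq 1$. Combining the low- and moderate-energy estimates yields a uniform bound in $H^{s_1,\ell}_{\mathrm{b}}(X)$ for $s_1$ large, and the result for arbitrary $s\in \bbR$ follows from the inclusion $H^{s_1,\ell}_{\mathrm{b}}(X)\subseteq H^{s,\ell}_{\mathrm{b}}(X)$ for $s\leq s_1$. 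I expect the multiplier verification and the bookkeeping of parameter ranges in \Cref{prop:Vasy_low} to be the only genuine work; everything else is direct citation.
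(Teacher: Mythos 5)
Your proposal is correct and follows essentially the same route as the paper: embed the conormal data into b-Sobolev spaces via \cref{eq:Sobolev}, apply \Cref{prop:Vasy_low} with $\nu=0$ (which is admissible precisely because $0<\alpha<(d-1)/2$ lets one take $\ell$ just below $\alpha-d/2$), and absorb the weight $(\rho+\sigma)^{-1}$ into the prefactor $\sigma$ of $f$ using the uniform multiplier bound on $\sigma/(\rho+\sigma)$. The only cosmetic difference is that you handle $\sigma\in[\sigma_0,1)$ by an explicit appeal to the uniformity clause of \Cref{prop:Vasy_absorption}, whereas the paper disposes of that region by citing the already-established continuity of $P(\sigma)^{-1}f$ in $\sigma$ on $(0,1)$; both are fine, and your implicit reduction to $\varepsilon$ small (i.e.\ $\ell$ near $\alpha-d/2$) is justified by the monotonicity of $H^{s,\ell}_{\mathrm{b}}$ in $\ell$, exactly as in the paper.
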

\begin{proof}
	We already know, by the discussion in the previous subsection, that $u\in L^\infty_{\mathrm{loc}} ((0,1]_\sigma; \calA^{\alpha-}(X))$. By \cref{eq:Sobolev}, this implies 
	\begin{equation} 
		u\in L^\infty_{\mathrm{loc}}((0,1]_\sigma; H_{\mathrm{b}}^{s,\ell}(X)).
	\end{equation} 
	So, it is only the boundedness in the $\sigma\to 0^+$ limit that needs to be understood.
	By \cref{eq:Sobolev}, we have  
	\begin{equation} 
	f(\sigma) \in \sigma L^\infty([0,1)_\sigma; H_{\mathrm{b}}^{s,\alpha+1-d/2-}(X)),
	\end{equation} 
	for any $s\in \bbR$. Let $\ell<\alpha-d/2$. Then, $f\in \sigma L^\infty([0,1)_\sigma; H_{\mathrm{b}}^{s,\ell+1}(X))$.
	Taking $s>-1/2-\ell$ (it sufficing to consider this case), and choosing some $\nu \in (\ell+2-d/2,\ell+d/2)$, \Cref{prop:Vasy_low} says (note that the hypothesis $\ell<-1/2$ is satisfied, since $\alpha<(d-1)/2$) that
	\begin{equation}
	\lVert (\rho+\sigma)^\nu u \rVert_{H_{\mathrm{b}}^{s,\ell} } \lesssim  \lVert (\rho+\sigma)^{\nu-1} P(\sigma)u \rVert_{H_{\mathrm{b}}^{s,\ell+1} } = \lVert (\rho+\sigma)^{\nu-1} f \rVert_{H_{\mathrm{b}}^{s,\ell+1} },
	\label{eq:misc_k77}
	\end{equation}
	at least if we restrict attention to $\sigma$ is sufficiently small.
	
	Using the lower bound $0<\alpha$ in the proposition statement,  the allowed $\ell$ includes at least the nonempty interval $(-d/2,\alpha-d/2)$. Choosing such $\ell$, the interval of allowed $\nu$ then includes $(\alpha+2-d,\ell+d/2)\supseteq [-(d-3)/2,\ell+d/2)\ni  0$. So, \cref{eq:misc_k77} gives
	\begin{equation}
	\lVert u \rVert_{H_{\mathrm{b}}^{s,\ell} } \lesssim   \lVert (\rho+\sigma)^{-1} f \rVert_{H_{\mathrm{b}}^{s,\ell+1} } = \Big\lVert \frac{\sigma}{\rho+\sigma} \sigma^{-1} f \Big\rVert_{H_{\mathrm{b}}^{s,\ell+1}} \lesssim \rVert \sigma^{-1} f \rVert_{H_{\mathrm{b}}^{s,\ell+1} },
	\end{equation}
	using the lemma that $\sigma/(\rho+\sigma)$ is, on each b-Sobolev space, a uniformly bounded one-parameter family of operators. This latter fact follows from the observation that $|(\rho\partial_\rho)^k (\sigma / (\rho+\sigma))| \leq C_k$ for all $\sigma\in [0,1]$ and $k\in \bbN$; that is, $\sigma/(\rho+\sigma)$ is a symbol on $X$ whose various symbolic seminorms are uniform in $\sigma$. 
	
	Since $f\in \sigma L^\infty([0,1)_\sigma; H_{\mathrm{b}}^{s,\ell+1}(X))$, we conclude that $u\in L^\infty([0,1)_\sigma; H_{\mathrm{b}}^{s,\ell}(X))$.
\end{proof}

We can summarize the conclusion of the proposition as stating that $u\in L^\infty([0,1)_\sigma; \calA^{\alpha-}(X))$. So, 
\begin{equation}
	P^{-1} : \sigma L^\infty([0,1)_\sigma; \calA^{\alpha+1}(X)) \to L^\infty([0,1)_\sigma; \calA^{\alpha-}(X)).
\end{equation}

Using the usual resolvent identity \cref{eq:misc_034} and the fact that $\mathrm{d} P/\mathrm{d} \sigma :  L^\infty([0,1)_\sigma; \calA^{\beta}(X)) \to L^\infty([0,1)_\sigma; \calA^{\beta+1}(X))$ for all $\beta\in \bbR$, it can be concluded that 
\begin{equation}
	\frac{\mathrm{d}^k P^{-1}}{\mathrm{d} \sigma^k} : \sigma^{k+1} L^\infty([0,1)_\sigma; \calA^{\alpha+1}(X)) \to L^\infty([0,1)_\sigma; \calA^{\alpha-}(X)).
	\label{eq:resolvent_mapping_main}
\end{equation}
for all $k\in \bbN$, $\alpha$ as above. For example, one application of \cref{eq:misc_034} yields
\begin{equation}
	\frac{\mathrm{d}}{\mathrm{d} \sigma} P(\sigma)^{-1}  =- P(\sigma)^{-1} \frac{\mathrm{d} P}{\mathrm{d} \sigma} P(\sigma)^{-1} : \sigma^2 \calA^{\alpha+1} \overset{P^{-1}}{\to} \sigma \calA^{\alpha-} \overset{P'}{\to} \sigma \calA^{1+\alpha-} \overset{P^{-1}}{\to} \calA^{\alpha-} ,
\end{equation}
with all the bounds uniform as $\sigma\to 0^+$. 
\section{Analysis at $\mathrm{bf}$}
\label{sec:bf}

We now prove the lemma used in the final step of the proof of the main theorem describing the asymptotic behavior of the solution of the Sommerfeld problem for $\sigma$-dependent forcing $f\in \calS([0,1)_\sigma; \calA^{1+\calE}(X) )$, for any index set $\calE\subseteq \bbC\times \bbN$. Despite this being the \emph{final} step of the proof, we discuss it first, the reason being that it is the easiest, owing to the assumed Schwartz behavior of $f$ as $\sigma\to 0^+$. The rest of this paper consists of the reduction of the full problem to this special case.

As a corollary of \Cref{prop:Vasy_low}, we get an $O(\sigma^\infty)$ estimate on the resolvent with respect to the seminorms of $\calA^{\alpha-}$:
\begin{proposition}
	Let $\alpha \in (0,(d-1)/2)$ and $f\in \calS([0,1)_\sigma; \calA^{1+\alpha}(X))$. 
	Then, 
	\begin{equation} 
		P(\sigma)^{-1} f \in \calS([0,1)_\sigma ; \calA^{\alpha-}(X)), 
	\end{equation}   
	where $P(\sigma)^{-1} f$ is defined by \cref{eq:misc_030}. 
	\label{prop:bf_1}
\end{proposition}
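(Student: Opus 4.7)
The plan is to upgrade the $L^\infty$-in-$\sigma$ mapping property \cref{eq:resolvent_mapping_main} to a Schwartz-in-$\sigma$ mapping property via the Leibniz rule. The key observation is that the $k$-th $\sigma$-derivative of the operator $P(\sigma)^{-1}$ costs exactly $k+1$ powers of $\sigma$ when mapping $\calA^{1+\alpha}$ into $\calA^{\alpha-}$, and Schwartz forcing in $\sigma$ provides arbitrarily many such powers to pay that cost.

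Set $u = P(\sigma)^{-1} f$. The smoothness $u \in C^\infty((0,1)_\sigma; \calA^{\alpha-}(X))$ is already in hand from the discussion leading up to \cref{eq:misc_034}, so what needs to be shown is that, for each $N, k \in \bbN$ and each Fr\'echet seminorm $\|\cdot\|$ on $\calA^{\alpha-}(X)$, the quantity $\sigma^{-N}\|\partial_\sigma^k u\|$ stays bounded as $\sigma \to 0^+$. Applying Leibniz to the product $P(\sigma)^{-1} f(\sigma)$ yields
\begin{equation*}
\partial_\sigma^k u = \sum_{j=0}^{k} \binom{k}{j} \Big(\frac{d^j P^{-1}}{d\sigma^j}\Big)\bigl(\partial_\sigma^{k-j} f\bigr).
\end{equation*}
For the $j$-th term, the hypothesis $f \in \calS([0,1)_\sigma; \calA^{1+\alpha}(X))$ gives $\partial_\sigma^{k-j} f \in \sigma^{N+j+1} L^\infty([0,1)_\sigma; \calA^{1+\alpha}(X))$ for every $N \in \bbN$. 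Factor out $\sigma^N$ (which commutes with the operator) and apply \cref{eq:resolvent_mapping_main} to the remaining $\sigma^{j+1} L^\infty$-valued quantity: the output lies in $\sigma^N L^\infty([0,1)_\sigma; \calA^{\alpha-}(X))$. Summing over $j$ produces the bound $\|\partial_\sigma^k u\| \lesssim_{N,k} \sigma^N$, which is precisely Schwartz decay at $\sigma=0$.

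I do not anticipate serious difficulties, since the proof is essentially bookkeeping. The one point deserving attention is the verification that each individual seminorm of $\calA^{\alpha-}(X)$ in the codomain is controlled uniformly in $\sigma$ by finitely many seminorms of $\calA^{1+\alpha}(X)$ in the domain; this uniformity is what makes \cref{eq:resolvent_mapping_main} a genuine statement about $L^\infty$-in-$\sigma$ spaces rather than merely a pointwise-in-$\sigma$ statement, and it is inherited from the uniform constant $C$ in \Cref{prop:Vasy_low}. This uniformity is preserved under repeated differentiation in $\sigma$ via the resolvent identity \cref{eq:misc_034}, exactly as used to derive \cref{eq:resolvent_mapping_main}, so no new estimate is needed.
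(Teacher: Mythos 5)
Your proposal is correct and is essentially the paper's own proof: the paper applies the same Leibniz expansion of $\partial_\sigma^k P(\sigma)^{-1}f$, distributes the Schwartz powers of $\sigma$ from $f$ so that each term $\binom{k}{j}\sigma^{j+1}\partial_\sigma^j P^{-1}(\sigma^{-\kappa-j-1}\partial_\sigma^{k-j}f)$ falls under the mapping property \cref{eq:resolvent_mapping_main}, and concludes uniform boundedness in $L^\infty([0,1)_\sigma;\calA^{\alpha-}(X))$. Your closing remark on the uniformity of seminorm control built into \cref{eq:resolvent_mapping_main} matches how the paper justifies that estimate via \Cref{prop:Vasy_low} and the resolvent identity \cref{eq:misc_034}.
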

Cf.\ \cite[Theorem 2.14]{HintzPrice}.
\begin{proof}
	We have, for any $k,\kappa\in \bbN$, 
	\begin{equation}
		\sigma^{-\kappa} \frac{\partial^k}{\partial \sigma^k} P(\sigma)^{-1} f(-;\sigma) = \sum_{j=0}^k \binom{k}{j} \frac{\partial^j P(\sigma)^{-1}}{\partial \sigma^j} \Big(\sigma^{j+1} \Big(\underbrace{\sigma^{-\kappa-j-1}\frac{\partial^{k-j} f}{\partial \sigma^{k-j}}}_{\in\calS([0,1)_\sigma; \calA^{1+\alpha}(X)) } \Big)\Big).
	\end{equation}
	Via \cref{eq:resolvent_mapping_main}, each term on the right-hand side is in $L^\infty([0,1)_\sigma; \calA^{\alpha-}(X))$. 
\end{proof}

But, since we want asymptotics, we want estimates of $P(\sigma)^{-1} f$ with respect to the seminorms of the polyhomogeneous function spaces $\smash{\calA^{\calE}}$, not the conormal function spaces $\calA^{\alpha-}$. So, we need to upgrade the previous result.  This can be done using a standard ``normal operator'' argument, varieties of which will be used throughout the other sections of this paper. The version required in this section is the simplest among them, so it also serves us as a warm-up.

The operation $\uplus$ (defined in \S\ref{sec:index}) on (pre)index sets satisfies the following lemma, which we will need shortly:
\begin{lemma}
	For an index set $\calE$ as above, and for any $a\in \bbC$ and $f\in \calA^{((1+a,0)\uplus (1+\calE))\cup \calE}([0,1)_\rho)$, 
	\begin{equation}
	\rho^a \int^{1}_\rho s^{-a-1}f(s) \dd s \in \calA^{(a,0)\uplus  \calE}([0,1)_\rho)  
	\end{equation}
	holds.
	\label{lem:uplus_mapping}
\end{lemma}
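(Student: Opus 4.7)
My plan is to prove this by direct computation on the model terms in the asymptotic expansion of $f$, using integration by parts to handle the logarithms, and then collecting the output terms.

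First, I would write $f = \sum_{(j,k)\in \calF,\;\Re j \leq N} f_{j,k}\, s^j (\log s)^k + r_N(s)$ for any large $N$, where $\calF = ((1+a,0)\uplus(1+\calE))\cup\calE$ is the input index set and $r_N \in \calA^N([0,1)_\rho)$. Linearity reduces the problem to (a) computing $\rho^a \int^1_\rho s^{-a-1} s^j (\log s)^k \dd s$ for each model term, and (b) showing $\rho^a \int^1_\rho s^{-a-1} r_N(s) \dd s$ is conormal of sufficiently high order (which follows by splitting the integral into $[\rho,1/2]$ and $[1/2,1]$ and estimating trivially).

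Second, for the model terms, a standard induction on $k$ using integration by parts gives the closed forms
\begin{equation*}
\rho^a \int^1_\rho s^{j-a-1}(\log s)^k \dd s = \frac{(-1)^k k!}{(j-a)^{k+1}} \rho^a - \sum_{k'=0}^{k} \frac{(-1)^{k-k'} k!}{k'!\,(j-a)^{k-k'+1}} \rho^j(\log\rho)^{k'} \qquad (j\neq a),
\end{equation*}
while for $j=a$ the integral collapses to $-\frac{1}{k+1}\rho^a (\log\rho)^{k+1}$. The boundary evaluation at $s=1$ produces the constant multiple of $\rho^a$ (since $\log 1 = 0$ kills all $k'\geq 1$ contributions there), while the lower limit $s=\rho$ produces terms at the same power $j$ but with log order $\leq k$, or a single boosted log term when $j=a$.

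Third, the bulk of the work is bookkeeping: I would verify term by term that each contribution lands in $(a,0)\uplus\calE$. Inputs $(j,k)\in\calE$ with $j\neq a$ contribute $(a,0)$ (in the output explicitly) and $(j,k')$ with $k'\leq k$ (in $\calE$, hence in the output). Inputs $(a,\kappa)\in\calE$ contribute $(a,\kappa+1)$, landing in the third-type set $\{(a+\delta,\kappa+1): (a,\kappa)\in\calE,\delta\in\bbN\}$ with $\delta=0$. The distinguished element $(1+a,0)$ and, more generally, the elements $(1+a+\delta,\kappa+1)$ of the input that were produced by the $\uplus$ operation on $(1+a,0)$ and $1+\calE$, contribute precisely the $(a,0)$-piece and those output entries that $(a,0)\uplus \calE$ was defined to accommodate---this is exactly why the input is packaged in the form $((1+a,0)\uplus(1+\calE))\cup \calE$ rather than the naive $\calE \cup (1+\calE)$.

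The step I expect to be delicate is this final matching: one must carefully track how the shift $j \mapsto j-a$ under integration interacts with the log-augmenting rule $k \mapsto k+1$ in the $j=a$ case, and check that the specific pre-index set definition of $\uplus$ (in which the third-type set depends on $(j,\kappa)\in \calE$ with the \emph{same} first coordinate $j$ as the distinguished element) is exactly what propagates cleanly through the integration. Once the matching is verified, the polyhomogeneity of the output follows immediately from the model-term computation together with the conormal estimate on the remainder $r_N$, upon letting $N\to\infty$.
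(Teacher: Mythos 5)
Your proposal is correct and follows essentially the same route as the paper's proof: decompose $f$ into model terms $f_{j,k}\rho^j(\log\rho)^k$ plus a high-order conormal remainder, integrate each model term explicitly (picking up a $\rho^a$ contribution from the endpoint $s=1$, terms $(j,k')$ with $k'\le k$ from the endpoint $s=\rho$, and the single log boost $(a,k+1)$ when $j=a$), and then observe that the resulting index set $(a,0)\uplus\bigl(((1+a,0)\uplus(1+\calE))\cup\calE\bigr)$ collapses to $(a,0)\uplus\calE$. The only cosmetic difference is that the paper first records the general mapping $\calA^{\calG}\to\calA^{(a,0)\uplus\calG}$ and then applies the absorption identity, whereas you match the index sets directly on the given input; the content is the same.
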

\begin{proof}
	Suppose that $f\in \calA^{\calG}([0,1)_\rho)$ for some index set $\calG$. Then, we can write 
	\begin{equation}
		f(\rho) = F+ \sum_{(j,k)\in \calG, \Re j \leq \Re a} f_{j,k} \rho^j (\log \rho)^k 
	\end{equation} 
	for some $f_{j,k}\in \bbC$ and $F\in \calA^{\Re a+} \cap \calA^{\calG}([0,1)_\rho) $. So, 
	\begin{equation}
			\rho^a \int^{1}_\rho s^{-a-1}f(s) \dd s   = \rho^a \int_\rho^1 s^{-a-1} F(s) \dd s + \sum_{(j,k)\in \calG, \Re j \leq \Re a} f_{j,k} \rho^a \int_\rho^1 s^{j-a-1} (\log s)^k \dd s .
	\end{equation}
	The first term on the right-hand side is in $\calA^{(a,0)\cup \calG}([0,1)_\rho)$. On the other hand, 
	\begin{equation}
		\rho^a \int_\rho^1 s^{j-a-1} (\log s)^k \dd s \in 
		\begin{cases}
			\calA^{(a,0)\cup \calG} & (j\neq a) \\ 
			\calA^{(a,k+1) } & (j=a). 
		\end{cases}
	\end{equation}
	So, 
	\begin{equation}
	\rho^a \int^{1}_\rho s^{-a-1}f(s) \dd s \in \calA^{(a,0)\uplus \calG}([0,1)_\rho).
	\end{equation}
	If $\calG = ((1+a,0)\uplus (1+\calE))\cup \calE$, then 
	\begin{align} 
		\begin{split} 
		(a,0)\uplus \calG&=(a,0)\uplus (((1+a,0)\uplus (1+\calE))\cup \calE) \\ 
		&= (a,0) \uplus ( (1+ (a,0)\uplus \calE) \cup \calE ) = (a,0) \uplus \calE,
		\end{split} 
	\end{align} 
	so we are done.
\end{proof}

The \emph{normal operator} of $P$ at $\mathrm{bf}$ is defined by 
\begin{equation}
N_{\mathrm{bf}}(P) = 2i\sigma \frac{\partial}{\partial r} + \frac{i\sigma(d-1)}{r}.
\label{eq:Nbf_form}
\end{equation}
The sense in which this deserves to be called the ``normal operator'' is that 
\begin{equation}
P- N_{\mathrm{bf}}(P) \in \operatorname{Diff}_{\mathrm{b}}^{2,-2,-2,0}(X^+_{\mathrm{res}});
\end{equation}
in $\operatorname{Diff}_{\mathrm{b}}^{m,s,\ell,q}(X^+_{\mathrm{res}})$, $m$ is the differential order, $s$ is the growth order at $\mathrm{bf}$, $\ell$ is the growth order at $\mathrm{tf}$, and $q$ is the growth order at $\mathrm{zf}$.
Thus, as measured in orders of b-decay, $P- N_{\mathrm{bf}}(P)$ is faster decaying than $P\in \operatorname{Diff}_{\mathrm{b}}^{2,-1,-2,0}(X^+_{\mathrm{res}})$ at $\mathrm{bf}$.

We will see that the asymptotic behavior of the resolvent output at $\mathrm{bf}$ is governed by the ``indicial roots'' of $N_{\mathrm{bf}}(P)$. These are defined to be the roots of the polynomial in $a$ formed by replacing $r\partial_r$ in $rN_{\mathrm{bf}}(P)$ by $-a$. (This terminology comes from the theory of regular singular ODE.) Concretely, this polynomial is $-2i \sigma a +i\sigma(d-1) $. So, the only indicial root is 
\begin{equation} 
	a = (d-1)/2,
\end{equation} 
which, fortunately, does not depend on $\sigma$. This is why solutions of the Helmholtz equation decay like $1/r^{(d-1)/2}$ as $r\to\infty$. 

\begin{proposition}
	Let $\alpha >0 $ and $\calE$ denote an index set such that $\calE\subset \bbC_{>0}\times \bbN$, where $\bbC_{>\beta}=\{z\in \bbC:\Re z>\beta\}$.   
	Suppose that 
	$f\in \calS([0,1)_\sigma; \calA^{1+\calE,1+\alpha}(X))$, $u\in \calS([0,1)_\sigma ;\calA^{0+}(X))$, and also that $Pu=f$. Then, 
	\begin{equation} 
		u\in 
		\begin{cases}
		\calS([0,1)_\sigma; \calA^{(a,0)\uplus \calE,\alpha}(X)) & (\alpha\neq a) \\
		\calS([0,1)_\sigma; \calA^{(a,0)\uplus \calE,\alpha-}(X)) & (\alpha=a)
		\end{cases}
	\end{equation}  
	holds, where $a=(d-1)/2$, as above. 
	This applies, in particular, to the solution $u=P^{-1} f$ to the Sommerfeld problem, when $P^{-1}$ exists.
	\label{prop:bf_2}
\end{proposition}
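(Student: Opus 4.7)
The plan is to run a normal-operator iteration at $\mathrm{bf}$. The normal operator $N_{\mathrm{bf}}(P) = 2i\sigma\, r^{-a}\, \partial_r \circ r^{a}$ with $a = (d-1)/2$ is, up to the prefactor $2i\sigma$, a first-order linear ODE in $r$ (with $\sigma, \theta$ as parameters). Switching to the boundary-defining coordinate $\rho = 1/r$, the equation $N_{\mathrm{bf}}(P) v = h$ admits the particular solution
\begin{equation*}
v(\rho, \sigma, \theta) = -\frac{1}{2i\sigma}\rho^a \int_\rho^1 s^{-a-2}\, h(1/s, \sigma, \theta)\,\dd s,
\end{equation*}
unique up to an additive homogeneous solution $c(\sigma, \theta)\rho^a$. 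By \Cref{lem:uplus_mapping}, applied after pulling one factor of $s$ out of the integrand, this inverse maps $\calA^{1+\calG, 1+\beta}(X)$ into $\calA^{(a,0) \uplus \calG, \beta}(X)$ for any index set $\calG \subset \bbC_{>0}\times \bbN$ and any $\beta > 0$; the $(a, 0)$ summand and the $\uplus$ operation encode the logarithmic resonance at the unique indicial root $a$. The prefactor $(2i\sigma)^{-1}$ is harmless, as every object in sight is Schwartz in $\sigma$ as $\sigma \to 0^+$.

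The next step is to exploit the splitting $P = N_{\mathrm{bf}}(P) + (P - N_{\mathrm{bf}}(P))$, where $P - N_{\mathrm{bf}}(P) \in \operatorname{Diff}^{2,-2,-2,0}_{\mathrm{b}}(X^+_{\mathrm{res}}) \subseteq \rho^2 \operatorname{Diff}^2_{\mathrm{b}}(X)$ gains one full order of $\rho$-decay at $\mathrm{bf}$ over $P \in \operatorname{Diff}^{2,-1,-2,0}_{\mathrm{b}}$. The equation $Pu = f$ rewrites as $N_{\mathrm{bf}}(P) u = f - (P - N_{\mathrm{bf}}(P)) u$, with $(P - N_{\mathrm{bf}}(P)): \calA^{\calG, \gamma}(X) \to \calA^{2+\calG, 2+\gamma}(X)$. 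Starting from the given $u \in \calS([0,1)_\sigma; \calA^{0+}(X))$, I would iterate: at each pass, feed the current description of $u$ through $(P - N_{\mathrm{bf}}(P))$, combine with the polyhomogeneous expansion of $f$ (index set $1+\calE$, conormal order $1+\alpha$), and apply the inversion above to extract an improved description of $u$ with polyhomogeneous index set contained in $(a, 0) \uplus \calE$ and conormal remainder improved by one order. Since $u$ and the constructed particular solution differ by an element of $\ker N_{\mathrm{bf}}(P) = C^\infty(\bbR_\sigma^+\times \partial X) \cdot \rho^a$, which already sits inside the target index set, $u$ itself lies in the claimed class. After at most $\lceil \alpha \rceil$ iterations the conormal order reaches $\alpha$, as desired.

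The main obstacle, requiring the most care, is the index-set bookkeeping across the iterations. Although naive composition of $\uplus(a,0)$ with itself could in principle produce additional log powers at the indicial root, the new contributions arise only at real parts strictly beyond $\alpha$ and are therefore absorbed into the conormal remainder $\calA^\alpha$; one must verify this carefully at each pass, using the closure of index sets under integer shifts and the specific structure of $\uplus$. Preservation of the Schwartz-in-$\sigma$ behavior is routine, since each inversion introduces only a single factor of $\sigma^{-1}$, trivially dominated by the Schwartz decay inherited from $f$ and $u$.
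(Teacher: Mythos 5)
Your proposal is correct and follows essentially the same route as the paper: split off $N_{\mathrm{bf}}(P)$, invert it by explicit radial integration against $\rho^a$, apply \Cref{lem:uplus_mapping} after multiplying by $r$, and induct on the conormal order, with the homogeneous piece $c(\sigma,\theta)\rho^a$ absorbed into the index set $(a,0)\uplus\calE$. The index-set stabilization you flag is exactly what the closing computation in the proof of \Cref{lem:uplus_mapping} provides, namely $(a,0)\uplus\bigl(((1+a,0)\uplus(1+\calE))\cup\calE\bigr)=(a,0)\uplus\calE$, so the iteration does not enlarge the index set at any order (your sign in the particular solution is off, but immaterially so).
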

The reason why the index set $(a,0)$ appears in $u$ is because, when integrating, we get a constant term. 
\begin{proof}
	We will discuss the $\alpha\neq a$ case. The $\alpha=a$ case is similar, except one needs to allow an arbitrarily small loss of decay.
	
	It suffices to restrict attention to $\dot{X}^+_{\mathrm{res}} \subset X^+_{\mathrm{res}}$. 
	Let $S$ denote the set of $\beta\in \bbR$ such that 
	\begin{equation} 
	u\in 
	\calS([0,1)_\sigma; \calA^{(a,0)\uplus \calE,\beta \wedge \alpha}(\dot{X})) ,
	\end{equation} 
	where the `$\wedge$' means minimum. 
	Because we are assuming that $u\in \calS([0,1)_\sigma ;\calA^{0+}(X))$, we know that $(-\infty,\epsilon]\subset S$ for some $\epsilon>0$.  Our goal is to show that $S=\bbR$.
	Suppose that $\beta \in S$. 
	
	Writing $P=N_{\mathrm{bf}}(P)  + (P-N_{\mathrm{bf}}(P))$, we have 
	\begin{equation}
		N_{\mathrm{bf}}(P) u = Pu - (P-N_{\mathrm{bf}}(P)) u = f - (P-N_{\mathrm{bf}}(P)) u. 
		\label{eq:misc_004}
	\end{equation}
	Because $P-N_{\mathrm{bf}}(P) \in \rho_{\mathrm{tf}}^2 \rho_{\mathrm{bf}}^2 \operatorname{Diff}^2_{\mathrm{b}}(\dot{X}_{\mathrm{res}}^+)$, we have $(P-N_{\mathrm{bf}}(P)) u \in \calS([0,1)_\sigma; \calA^{(a+2,0)\uplus(2+\calE),2+\beta\wedge \alpha}(\dot{X}))$. So, letting $f_0=f - (P-N_{\mathrm{bf}}(P)) u$, 
	so that \cref{eq:misc_004} reads $N_{\mathrm{bf}}(P) = f_0$, 
	\begin{equation} 
		f_0 \in \calS([0,1)_\sigma; \calA^{((2+a,0)\uplus(2+\calE) )\cup (1+\calE),\min\{1+\alpha,2+\beta\}}(X)).
	\end{equation}
	Using the explicit formula \cref{eq:Nbf_form}, \cref{eq:misc_004} reads 
	\begin{equation}
		(r\partial_r + a)u=f_1 
	\end{equation}
	for $f_1 = -i r f_0  / 2 \sigma \in \calS([0,1)_\sigma; \calA^{((1+a,0)\uplus (1+\calE))\cup \calE,\min\{\alpha,1+\beta\}}(X))$. 
	This first-order ODE can be solved explicitly via integrating in the radial direction. Rewriting it in terms of $\rho=1/r$, it becomes $(\rho\partial_\rho -a) u = -f_1$. 
	That is, $\partial_\rho (\rho^{-a} u) = -\rho^{-1-a} f_1$. 
	So, 
	\begin{equation}
	u(\sigma,\rho,\theta) = \rho^a\Big(  c(\sigma,\theta)  +  \int_\rho^{1/2} \frac{f_1(\sigma,s,\theta)}{s^{1+a}} \dd s \Big)
	\end{equation}
	for some $c(\sigma,\theta) :(0,1)_\sigma\times \partial X_\theta \to \bbC$. This satisfies $c(\sigma,\theta) = 2^{a} u(\sigma,1/2,\theta) \in \calS([0,1)_\sigma; C^\infty(\partial X_\theta))$, so $c$ is Schwartz as $\sigma\to 0^+$. 
	Then, applying \Cref{lem:uplus_mapping} gives 
	\begin{equation}
		u\in
		\calS([0,1)_\sigma; \calA^{(a,0)\uplus \calE,\alpha\wedge(1+\beta-)} (\dot{X})),
	\end{equation}
	i.e.\ that $1+\beta- \in S$.

	Proceeding inductively, we conclude that $S=\bbR$. 
\end{proof}

\section{Lemma producing $O(\rho_{\mathrm{zf}})$-quasimodes}
\label{sec:1}

We discuss in \S\ref{sec:0} the mapping properties of $P(0)^{-1}$ with respect to function spaces
\begin{equation}
\calA^\bullet(X;\ell) + \sum_{j=0}^{\ell-1} \calA_{\mathrm{c}}^\bullet([0,1) ) \calY_j,
\label{eq:b934k}
\end{equation} 
whose definition is in that appendix.
In this section, we apply the fruits of that discussion to the construction of $O(\rho_{\mathrm{zf}})$-quasimodes, the first step in the proof of our main theorem. 
For this section and the ones that follow, let $P$ be as in \S\ref{sec:op}, and fix $\ell\in \bbN$, which counts the number of harmonics (meaning $\calY_j$'s) we keep track of separately from the rest.

What we want to do, given forcing $f$ which is polyhomogeneous on $X^+_{\mathrm{res}}$, is find a polyhomogeneous $u$ such that $Pu-f$ is decaying at $\mathrm{zf}$ relative to $f$, without being much worse elsewhere. This is accomplished by the following lemma. The function spaces used in the statement of the lemma are denoted
\begin{equation}
\calA^\bullet(X^+_{\mathrm{res}};\ell) + \sum_{j=0}^{\ell-1} \calA_{\mathrm{c}}^\bullet([0,1)^+_{\mathrm{res}} ) \calY_j. 
\label{eq:b934z}
\end{equation}
These are defined analogously to the spaces in \cref{eq:b934k}. Namely, the first term contains functions on $X^+_{\mathrm{res}}$ that, near the large-$r$ boundary (i.e.\ on $\dot{X}^+_{\mathrm{res}}=[0,1)^+_{\mathrm{res}}\times \partial X_\theta$), are orthogonal to the harmonics in $\calY_0\cup\cdots\cup \calY_{\ell-1}$. The sum in \cref{eq:b934z} contains functions on $\dot{X}^+_{\mathrm{res}}$ independent of $\theta$ (and localized near the boundary) times the elements of the $\calY_j$'s. 

For simplicity, we work only with index sets in this section (and those that follow, except \S\ref{sec:Price}), not pre-index sets. However, we wrote \S\ref{sec:0} using pre-index sets. So, we employ the following mild change in notation: we use $\calI[\calF_\bullet,\ell]$, $\calI_j[\calF_\bullet]$ to refer to the index sets generated by the pre-index sets in
\Cref{thm:phg0} that we denoted using the same symbols.

\begin{lemma}
	Suppose that  
	\begin{equation} 
	f\in \calA^{1+\calE,2+\calF,(0,0)}(X_{\mathrm{res}}^+;\ell) + \sum_{j=0}^{\ell-1} \calA^{1+\calE_j,2+\calF_j,(0,0)}_{\mathrm{c}}([0,1)^+_{\mathrm{res}}) \calY_j,
	\end{equation} 
	where the index sets are listed at $\mathrm{bf}$, $\mathrm{tf}$, $\mathrm{zf}$, respectively. 
	Suppose moreover that $\calF_\bullet \subset \bbC_{>0}\times \bbN$. Let\footnote{We use ``$\calF_\bullet$'' as shorthand for the tuple of all of $\calF,\calF_0,\cdots$. The index sets $\calI_\bullet$, as defined in \S\ref{sec:0}, depend on the index sets $\calF_\bullet$ (which we called $\calE_\bullet$ in \S\ref{sec:0}) and on the parameters $\beth,\beth_0\geq 0$ appearing in \S\ref{subsec:op}.}
	\begin{align}
		\begin{split} 
			\calI^+[\calF_\bullet,\ell] &= \calF\cup \calI[\calF_\bullet,\ell]\cup (1+\beth_0+\calI_\cup) \cup (1+\beth_2\wedge(1+\beth_4) +\calI_\cup ), \\ 
			\calI^+_j[\calF_\bullet] &= \calF_j\cup \calI_j[\calF_\bullet]\cup (1+\beth_0+\calI_\cup) \cup (1+\beth_2\wedge(1+\beth_4) +\calI_\cup ).
		\end{split}  
		\label{eq:I+def}
	\end{align}
	Then, there exists 
	\begin{equation}
	u\in \calA^{\infty,\calI[\calF_\bullet,\ell],(0,0)}(X_{\mathrm{res}}^+;\ell) + \sum_{j=0}^{\ell-1} \calA^{\infty,\calI_j[\calF_\bullet],(0,0)}_{\mathrm{c}}([0,1)^+_{\mathrm{res}}) \calY_j
	\label{eq:misc_j66}
	\end{equation}
	such that 
	\begin{equation}
	Pu-f \in \calA^{1+\calE,2+\calI^+[\calF_\bullet,\ell],(1,0)}(X_{\mathrm{res}}^+;\ell) + \sum_{j=0}^{\ell-1} \calA^{1+\calE_j,2+\calI_j^+[\calF_\bullet],(1,0)}_{\mathrm{c}}([0,1)^+_{\mathrm{res}}) \calY_j.
	\label{eq:misc_j77}
	\end{equation}
	\label{lem:zf_pre}
\end{lemma}
Thus, $u$ is an $O(\rho_{\mathrm{zf}})$-quasimode (though we may have worsened decay at $\mathrm{tf}$). 

\begin{proof}
	The restriction $f|_{\mathrm{zf}^\circ}$ is well-defined, since $f$ is smooth at $\mathrm{zf}^\circ$. (We will drop the `$\circ$' in $f|_{\mathrm{zf}^\circ}$.) This satisfies
	\begin{equation} 
	f|_{\mathrm{zf}} \in \calA^{2+\calF}(X;\ell) + \sum_{j=0}^{\ell-1} \calA^{2+\calF_j}_{\mathrm{c}}([0,1)_\rho) \calY_j.
	\label{eq:misc_072}
	\end{equation}  
	Via the solvability theory of $N_{\mathrm{zf}}(P) = P(0)= \triangle_g +L$ embedded in \Cref{thm:phg0}, we have a well-defined element 
	\begin{equation}
	N_{\mathrm{zf}}(P)^{-1} f|_{\mathrm{zf}} \in \calA^{\calI[\calF_\bullet,\ell]} (X;\ell) + \sum_{j=0}^{\ell-1} \calA_{\mathrm{c}}^{\calI_j[\calF_\bullet] } ([0,1)_\rho )\calY_j.
	\label{eq:misc_073}
	\end{equation}
	Indeed, the most important fact about $N_{\mathrm{zf}}(P)$ is the fact that it is invertible as a map $\calA^{0+}(X)\to \calA^{2+}(X)$, 
	\begin{equation}
		N_{\mathrm{zf}}(P)^{-1} : \calA^{2+}(X)\to \calA^{0+}(X) ,
	\end{equation}
	owing to the assumption of the  absence of a resonance or bound state at zero energy; see \cite[Eq.\ 2.9]{HintzPrice} for the $d=3$, asymptotically Euclidean case, the general case being analogous. Then, \Cref{thm:phg0} says, for $f|_{\mathrm{zf}}$ as in \cref{eq:misc_072}, that \cref{eq:misc_073} holds. 
	
	Let $u = \chi_0(r\sigma) \smash{N_{\mathrm{zf}}(P)^{-1}} f|_{\mathrm{zf}}$ for $\chi_0 \in C_{\mathrm{c}}^\infty(\bbR)$ identically $1$ near the origin. Then, $\chi_0(r\sigma)$ is identically $1$ near $\mathrm{zf}$ and vanishing near $\mathrm{bf}$. So, if we interpret $N_{\mathrm{zf}}(P)^{-1} f|_{\mathrm{zf}}$ as a function on $X^{+\circ}_{\mathrm{res}}$ that is just independent of $\sigma$, then \cref{eq:misc_j66} holds.
	
	We now check that \cref{eq:misc_j77} holds.
	Decompose 
	\begin{equation} 
	Pu - f = (\chi_0 f|_{\mathrm{zf}} - f) +[N_{\mathrm{zf}}(P),\chi_0 ]N_{\mathrm{zf}}(P)^{-1} f|_{\mathrm{zf}}  + (P-N_{\mathrm{zf}}(P) ) u.
	\end{equation} 
	Let us analyze each piece of this. 
	\begin{itemize}
		\item It immediately follows from the stated regularity of $f|_{\mathrm{zf}}$ that 
		\begin{equation} 
		\chi_0 f|_{\mathrm{zf}} \in \calA^{\infty,2+\calF,(0,0)}(X_{\mathrm{res}}^+;\ell) + \sum_{j=0}^{\ell-1} \calA_{\mathrm{c}}^{\infty,2+\calF_j,(0,0)}([0,1)^+_{\mathrm{res}}) \calY_j.
		\end{equation} 
		So, 
		\begin{equation}
		\chi_0 f|_{\mathrm{zf}} - f \in  \calA^{1+\calE,2+\calF,(1,0)}(X_{\mathrm{res}}^+;\ell) + \sum_{j=0}^{\ell-1} \calA_{\mathrm{c}}^{1+\calE_,2+\calF_j,(1,0)}([0,1)^+_{\mathrm{res}}) \calY_j,
		\end{equation}
		where the improvement at $\mathrm{zf}$ of one order comes from the fact that $f$ agrees with $\chi_0 f|_{\mathrm{zf}}$ at $\mathrm{zf}$. 
		\item Because $[N_{\mathrm{zf}}(P),\chi_0 ]$ is supported away from $\mathrm{zf} \cup \mathrm{bf}$, we have 
		\begin{equation}
		[N_{\mathrm{zf}}(P),\chi_0 ] \in \rho_{\mathrm{zf}}^\infty \rho_{\mathrm{bf}}^\infty( \rho_{\mathrm{tf}}^2 \operatorname{Diff}_{\mathrm{b}}^1 ([0,1)^+_{\mathrm{res}} )  + \rho^{3+\beth_0} \operatorname{Diff}_{\mathrm{b}}^1(X^+_{\mathrm{res}})   ).
		\end{equation}
		(Recall that $\beth_0$ is as in \cref{eq:misc_a}, \cref{eq:misc_b}.)
		Therefore,
		\begin{multline} 
		[N_{\mathrm{zf}}(P),\chi_0 ]N_{\mathrm{zf}}(P)^{-1} f|_{\mathrm{zf}} \in \calA^{\infty,(2+\calI[\calF_\bullet,\ell])\cup(3+\beth_0+\calI_\cup),\infty  } (X^+_{\mathrm{res}};\ell) \\ + \sum_{j=0}^{\ell-1} \calA_{\mathrm{c}}^{\infty,(2+\calI_j[\calF_\bullet])\cup (3+\beth_0+\calI_\cup)),\infty } ([0,1)^+_{\mathrm{res}} )\calY_j,
		\end{multline} 
		where $\calI_\cup = \calI[\calF_\bullet,\ell]\cup\bigcup_{j=0}^{\ell-1} \calI_j[\calF_\bullet]$. 
		\item Finally, note that
		\begin{multline}
		\sigma^{-1}(P(\sigma) - N_{\mathrm{zf}}(P)) = 2 i (1-\chi) \frac{\partial}{\partial r} + \frac{i(d-1)}{r} + Q + \sigma R \\ \in \rho_{\mathrm{bf}}\rho_{\mathrm{tf}} \operatorname{Diff}_{\mathrm{b}}^1([0,1)^+_{\mathrm{res}} ) + \rho_{\mathrm{tf}}^{(2+\beth_2)\wedge (3+\beth_4)}\rho_{\mathrm{bf}}^{2+\beth_2 \wedge \beth_4} \operatorname{Diff}^1_{\mathrm{b}}(X^+_{\mathrm{res}}) .
		\end{multline}
		So,  
		\begin{multline} 
		\sigma^{-1}(P(\sigma) - N_{\mathrm{zf}}(P)) u \in  \calA^{\infty,(1+\calI[\calF_\bullet,\ell])\cup (2+\beth_2\wedge(1+\beth_4)+\calI_\cup )  ,(0,0)} (X^+_{\mathrm{res}};\ell ) \\ 
		+ \sum_{j=0}^{\ell-1} \calA_{\mathrm{c}}^{\infty,(1+\calI_j[\calF_\bullet])\cup(2+\beth_2\wedge(1+\beth_4)+\calI_\cup ),(0,0) } ([0,1)^+_{\mathrm{res}} )\calY_j.
		\end{multline} 
	\end{itemize}
	Combining everything, we get \cref{eq:misc_j77}.
\end{proof}

We actually want to apply this with more general index sets at $\mathrm{zf}$: 
\begin{lemma}
	Let $\kappa\in \bbN$. 
	Suppose that  $\calF_\bullet,\calJ_\bullet\subset \bbC_{>0}\times \bbN$ are some index sets, and that
	\begin{equation} 
		f\in \calA^{1+\calE,2+\calF,(0,\kappa)\cup \calJ}(X_{\mathrm{res}}^+;\ell) + \sum_{j=0}^{\ell-1} \calA^{1+\calE_j,2+\calF_j,(0,\kappa)\cup \calJ_j}_{\mathrm{c}}([0,1)^+_{\mathrm{res}}) \calY_j.
	\end{equation} 
	Then, there exists 
	\begin{equation}
		u \in \calA^{\infty,\calI[\calF_\bullet+(0,\kappa),\ell],(0,\kappa)}(X_{\mathrm{res}}^+;\ell) + \sum_{j=0}^{\ell-1} \calA^{\infty,\calI_j[\calF_\bullet+(0,\kappa)],(0,\kappa)}_{\mathrm{c}}([0,1)^+_{\mathrm{res}}) \calY_j
		\label{eq:misc_po1}
	\end{equation}
	such that 
	\begin{equation}
	Pu-f \in  \calA^{1+\calE,2+\calI[\calF_\bullet + (0,\kappa),\ell]^+,(1,\kappa)\cup \calJ}(X_{\mathrm{res}}^+;\ell) + \sum_{j=0}^{\ell-1} \calA^{1+\calE_j,2+\calI_j[\calF_\bullet + (0,\kappa)]^+,(1,\kappa)\cup \calJ_j}_{\mathrm{c}}([0,1)^+_{\mathrm{res}}) \calY_j
	\end{equation}
	for $\calI_\bullet^+$ as above.
	\label{lem:zf_main}
\end{lemma}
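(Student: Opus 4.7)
The plan is to reduce to \Cref{lem:zf_pre} by handling the $\log\rho_{\mathrm{zf}}$-powers one at a time. The key observation is that $N_{\mathrm{zf}}(P)=\triangle_g+L$ is independent of $\sigma$, so it commutes with multiplication by $(\log\sigma)^k$; consequently a log-by-log application of the zero-energy solvability theory of \Cref{thm:phg0} suffices.

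First I would decompose $f$ near $\mathrm{zf}$ according to its polyhomogeneous expansion. Because the $\mathrm{zf}$-index set of $f$ is $(0,\kappa)\cup \calJ$ with $\calJ\subset \bbC_{>0}\times \bbN$, we may write
\begin{equation}
f = \sum_{k=0}^{\kappa} f_k\,(\log\rho_{\mathrm{zf}})^k + \tilde f,
\end{equation}
where each coefficient $f_k$ is independent of $\sigma$ and inherits the polyhomogeneous structure at $\partial X$, i.e.\ $f_k\in \calA^{2+\calF}(X;\ell) + \sum_{j=0}^{\ell-1}\calA_{\mathrm{c}}^{2+\calF_j}([0,1))\calY_j$, while $\tilde f$ lies in $\calA^{1+\calE,2+\calF,(1,\kappa)\cup \calJ}(X_{\mathrm{res}}^+;\ell) + \sum_j \calA^{1+\calE_j,2+\calF_j,(1,\kappa)\cup \calJ_j}_{\mathrm{c}}([0,1)^+_{\mathrm{res}})\calY_j$. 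Using \Cref{thm:phg0}, each $f_k$ has a solution $u_k=N_{\mathrm{zf}}(P)^{-1}f_k$ lying in the space appearing in \cref{eq:misc_po1} with $\kappa=0$.

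Then I would set
\begin{equation}
u = \chi_0(\sigma r)\sum_{k=0}^{\kappa} u_k\,(\log\sigma)^k,
\end{equation}
for $\chi_0\in C_{\mathrm{c}}^\infty(\bbR)$ identically $1$ near the origin. Multiplication by $(\log\sigma)^k$ adds $k$ log powers to the $\mathrm{tf}$- and $\mathrm{zf}$-index sets, which is exactly the passage from $\calI[\calF_\bullet,\ell]$ to $\calI[\calF_\bullet+(0,\kappa),\ell]$ and from $(0,0)$ to $(0,\kappa)$; this verifies \cref{eq:misc_po1}. The error $Pu-f$ decomposes as in the proof of \Cref{lem:zf_pre} into three pieces,
\begin{equation}
Pu-f = \Big(\chi_0\sum_{k=0}^{\kappa} f_k(\log\sigma)^k - f\Big) + \sum_{k=0}^{\kappa}[N_{\mathrm{zf}}(P),\chi_0]u_k(\log\sigma)^k + (P-N_{\mathrm{zf}}(P))u,
\end{equation}
where the first term picks up one order of $\rho_{\mathrm{zf}}$ because the $(0,k)$-coefficients of $f$ have been exactly subtracted, leaving $\mathrm{zf}$-index set $(1,\kappa)\cup\calJ$; the second is supported away from $\mathrm{zf}\cup \mathrm{bf}$ and only contributes to the tf-asymptotics; the third gains one order at $\mathrm{zf}$ via the factorization $\sigma^{-1}(P-N_{\mathrm{zf}}(P))\in \rho_{\mathrm{bf}}\rho_{\mathrm{tf}}\operatorname{Diff}^1_{\mathrm{b}}([0,1)^+_{\mathrm{res}})+\rho_{\mathrm{tf}}^{(2+\beth_2)\wedge(3+\beth_4)}\rho_{\mathrm{bf}}^{2+\beth_2\wedge \beth_4}\operatorname{Diff}^1_{\mathrm{b}}(X^+_{\mathrm{res}})$ exploited in \Cref{lem:zf_pre}.

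The main obstacle is purely bookkeeping: tracking how multiplication by $(\log\sigma)^k$ interacts with the index-set operations $\calI,\calI_j$ coming from \Cref{thm:phg0}, and verifying that summing the contributions over $k=0,\ldots,\kappa$ reproduces the claimed index sets $\calI[\calF_\bullet+(0,\kappa),\ell]$ and $\calI_j[\calF_\bullet+(0,\kappa)]$. This is immediate once one recognizes that $+(0,\kappa)$ is precisely the shift induced on the zf-data by the extra log factors, and that $\calI[\cdot,\ell]$ behaves linearly with respect to this shift — so the argument is structurally identical to the $\kappa=0$, $\calJ=\varnothing$ case already handled.
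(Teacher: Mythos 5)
Your overall strategy — peel off the $\log$-powers at $\mathrm{zf}$ one at a time and apply the $\kappa=0$ result to each coefficient — is the right one, and it is essentially what the paper does. But as written there is a genuine gap coming from the mismatch between the two logarithms you use. You expand $f=\sum_k f_k(\log\rho_{\mathrm{zf}})^k+\tilde f$ in powers of $\log\rho_{\mathrm{zf}}$, but then reassemble $u=\chi_0\sum_k u_k(\log\sigma)^k$ with powers of $\log\sigma$. Since (with $\rho_{\mathrm{zf}}=\sigma/\rho_{\mathrm{tf}}$) one has $\log\sigma=\log\rho_{\mathrm{zf}}+\log\rho_{\mathrm{tf}}$, the first error term is
\begin{equation}
\chi_0\sum_{k=0}^{\kappa}f_k(\log\sigma)^k-f
=\chi_0\sum_{k=1}^{\kappa}f_k\sum_{j=0}^{k-1}\binom{k}{j}(\log\rho_{\mathrm{zf}})^j(\log\rho_{\mathrm{tf}})^{k-j}
+(\chi_0-1)\sum_k f_k(\log\rho_{\mathrm{zf}})^k-\tilde f ,
\end{equation}
and the first sum on the right has $\mathrm{zf}$-index set $(0,\kappa-1)$, not $(1,\kappa)\cup\calJ$: you have only lowered the log-count, not gained a power of $\rho_{\mathrm{zf}}$. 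This breaks the conclusion (and the iteration in \S\ref{sec:arg}, which needs genuine decay at $\mathrm{zf}$ at each step). The alternative of multiplying $u_k$ by $(\log\rho_{\mathrm{zf}})^k$ instead fails for the opposite reason: $N_{\mathrm{zf}}(P)$ does not commute with functions of $\rho_{\mathrm{zf}}$ (which depends on $r$), so $(\log\rho_{\mathrm{zf}})^k u_k$ no longer solves the model equation.

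The fix, which is the one step your argument is missing, is to first convert the $\log\rho_{\mathrm{zf}}$-expansion into a $\log\sigma$-expansion \emph{before} inverting: write $(\log\rho_{\mathrm{zf}})^{\varkappa'}=(\log\sigma-\log\rho_{\mathrm{tf}})^{\varkappa'}$ and regroup, so that $f=F+\sum_{\varkappa}(\log\sigma)^{\varkappa}f_\varkappa$ with $f_\varkappa=\sum_{\varkappa'\ge\varkappa}\binom{\varkappa'}{\varkappa}(-\log\rho_{\mathrm{tf}})^{\varkappa'-\varkappa}\tilde f_{\varkappa'}$. The price is that $f_\varkappa$ acquires extra logarithms at $\mathrm{tf}$, i.e.\ its $\mathrm{tf}$-index set is $2+\calF_\bullet+(0,\kappa-\varkappa)$ rather than $2+\calF_\bullet$; this is exactly where the shift $\calF_\bullet\mapsto\calF_\bullet+(0,\kappa)$ in the statement comes from, and your claim that $f_k\in\calA^{2+\calF}(X;\ell)+\cdots$ with no shift, together with the assertion that ``$\calI[\cdot,\ell]$ behaves linearly with respect to this shift,'' papers over precisely this bookkeeping. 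Once the coefficients are taken from the $\log\sigma$-expansion, multiplication by $(\log\sigma)^\varkappa$ commutes with $P$ and with $N_{\mathrm{zf}}(P)$, \Cref{lem:zf_pre} applies to each $f_\varkappa$, and the subtraction at $\mathrm{zf}$ is exact, yielding the claimed $(1,\kappa)\cup\calJ_\bullet$.
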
  
\begin{proof}
	We can find $\tilde{f}_\varkappa$ in the same spaces as $f$, except with index set $(0,0)$ at $\mathrm{zf}$,  such that $f = F+ \sum_{\varkappa=0}^\kappa (\log \rho_{\mathrm{zf}} )^{\varkappa} \tilde{f}_\varkappa$ for 
	\begin{equation}
		F \in \calA^{1+\calE,2+\calF,\calJ}(X_{\mathrm{res}}^+;\ell) + \sum_{j=0}^{\ell-1} \calA^{1+\calE_j,2+\calF_j,\calJ_j}_{\mathrm{c}}([0,1)^+_{\mathrm{res}}) \calY_j.
	\end{equation} 
	We can choose the boundary-defining-functions $\rho_\bullet$ such that $\rho_{\mathrm{zf}}=\sigma/\rho_{\mathrm{tf}}$, and then 
	\begin{equation}
		f = F + \sum_{\varkappa=0}^\kappa (\log \sigma)^\varkappa \sum_{\varkappa'=\varkappa}^\kappa \underbrace{\binom{\varkappa'}{\varkappa} (-\log \rho_{\mathrm{tf}} )^{\varkappa'-\varkappa} \tilde{f}_{\varkappa'} }_{f_\varkappa}.
	\end{equation}
	Call the $\varkappa$th summand, without the $(\log \sigma)^\varkappa$, ``$f_\varkappa$,'' as indicated. Then,
	\begin{equation}
		f_\varkappa \in \calA^{1+\calE,2+\calF + (0,\kappa-\varkappa),(0,0)}(X_{\mathrm{res}}^+;\ell) + \sum_{j=0}^{\ell-1} \calA^{1+\calE_j,2+\calF_j+ (0,\kappa-\varkappa),(0,0)}_{\mathrm{c}}([0,1)^+_{\mathrm{res}}) \calY_j. 
	\end{equation}
	We now apply \Cref{lem:zf_pre} to get 
	\begin{equation}
		u_\varkappa \in \calA^{\infty,\calI[\calF_\bullet+(0,\kappa-\varkappa),\ell],(0,0)}(X_{\mathrm{res}}^+;\ell) + \sum_{j=0}^{\ell-1} \calA^{\infty,\calI_j[\calF_\bullet+(0,\kappa-\varkappa)],(0,0)}_{\mathrm{c}}([0,1)^+_{\mathrm{res}}) \calY_j
	\end{equation}
	satisfying the conclusion of the lemma, with $f_\varkappa$ in place of $f$ and $\calF+(0,\kappa-\varkappa)$ in place of $\calF$. Then, defining $u=\sum_{\varkappa=0}^\kappa (\log \sigma)^\varkappa u_\varkappa$, we see that $u$ satisfies \cref{eq:misc_po1} and 
	\begin{multline}
		Pu-f = - F + \sum_{\varkappa=0}^\kappa (\log \sigma)^\varkappa(Pu_\varkappa-  f_\varkappa) \\ \in \calA^{1+\calE,2+\calI[\calF_\bullet + (0,\kappa),\ell]^+,(1,\kappa)\cup \calJ}(X_{\mathrm{res}}^+;\ell) + \sum_{j=0}^{\ell-1} \calA^{1+\calE_j,2+\calI_j[\calF_\bullet + (0,\kappa)]^+,(1,\kappa)\cup \calJ_j}_{\mathrm{c}}([0,1)^+_{\mathrm{res}}) \calY_j. 
		\label{eq:misc_06j}
	\end{multline}	
\end{proof}
\begin{remark*}
	The reason for the awkward rewriting of the logarithmic terms $(\log \rho_{\mathrm{zf}})^\varkappa \tilde{f}_\varkappa$ above is that $N_{\mathrm{zf}}$ commutes with $\log \sigma$ but not $\log \rho_{\mathrm{zf}}$.
\end{remark*}

\section{Lemma upgrading $O(\rho_{\mathrm{zf}})$-quasimodes to  $O(\sigma^{0+})$-quasimodes}
\label{sec:2}

Given $f\in \calA^{1+\calE,2+\calF,(0,\kappa)\cup \calJ}(X^+_{\mathrm{res}} )$ and index sets $\calF ,\calJ\subset \bbC_{>0}\times \bbN$, we can now find some $u\in \calA^{\calE, \calI ,(0,\kappa) }(X^+_{\mathrm{res}} )$ such that 
\begin{equation}
Pu - f  \in \calA^{1+\calE,2+\calI^+ ,(1,\kappa)\cup \calJ}(X^+_{\mathrm{res}} ),
\end{equation}
where the index sets $\calI,\calI^+\subset \bbC_{>0}\times \bbN$ are defined in the previous section.
Thus, $Pu-f$ is suppressed by some (typically one in the Euclidean case) order at $\mathrm{zf}$. In this sense, $u$ is an $O(\sigma^{0+})$-quasimode in $\mathrm{zf}^\circ$, where $O(\sigma^{0+})$ means $O(\sigma^\varepsilon)$ for some $\varepsilon>0$. But it is not an $O(\sigma^{0+})$-quasimode at $\mathrm{zf}\cap \mathrm{tf}$, let alone at $\mathrm{tf}$, because the index set $\calI^+$ may be much larger than $\calF$. For example, even if $\calF=\varnothing$, in which case $f|_{\mathrm{zf}}$ is Schwartz, it will typically be the case that the set $\calI$ is nonempty  (for example, Coulomb's law results in solutions of Poisson's equation on $\bbR^3$ decaying like $1/r$ even if the forcing is Schwartz); so, $Pu-f$ will generally not be Schwartz. So, relative to $f$, $Pu-f$ may be large at $\mathrm{tf}$.

Our next task is to solve away the error at $\mathrm{tf}$ to a high degree, though we may find later that it is only useful to solve away a few terms. This process will end up costing us logarithmic losses at $\mathrm{zf}$. Since these logarithms are the ultimate source of Price's law, as we have remarked several times already, this loss is worth noting. The analytic back-end of this section is \S\ref{sec:tf}, in the same sense that the analytic back-end of the previous section was \S\ref{sec:0}.
 
Our goal is, given $g\in \calA^{1+\calE,2+\calI^+ ,(1,\kappa)\cup \calJ}(X^+_{\mathrm{res}} )$,
to find $v\in \calA^{\calE^+,\calI^+,\calK}(X^+_{\mathrm{res}} )$ such that 
\begin{equation}
Pv-g \in \calA^{1+\calE^{++}, K-,\calK^+} (X^+_{\mathrm{res}}), 
\end{equation}
where $\calK,\calK^+$ are some index sets such that $\calK,\calK^+\subset \bbC_{> 0} \times \bbN$ and $K\in \bbR$ is hopefully large (though $\calK,\calK^+$ may depend on $K$), and where $\calE^+,\calE^{++}$ are some not too large index sets. 
Applying this with $g=f-Pu$, the function $w=u+v$ satisfies 
\begin{equation} 
Pw-f = Pv-g \in  \calA^{1+\calE^{++}, K-, \calK^+} (X^+_{\mathrm{res}}).
\end{equation} 
So, $w$, as opposed to $u$, is an honest $O(\sigma^{0+})$-quasimode (at least modulo logarithmic losses at $\mathrm{zf}$ when $\min \Pi \calE^{++}\geq \min \Pi \calE$, which will be the case when we use this below). 

In order to carry this out, we can use the following lemma. It is phrased using the sequences $\{b_j\}_{j=0}^\infty,\{c_j\}_{j=0}^\infty \subset [0,\infty)$, defined in \S\ref{sec:0}, \S\ref{sec:tf} in terms of the eigenvalues $\lambda_j$ of the boundary Laplacian, 
\begin{align*}
b_j &= 2^{-1}(2-d + ((d-2)^2+4\lambda_j)^{1/2}) \\ 
c_j &= 2^{-1}(d-2 + ((d-2)^2+4\lambda_j)^{1/2}),
\end{align*}
as well as the constants $\beth_\bullet\geq 0$ defined in \S\ref{subsec:op}.

We also reference the set $\calB_{\geq \ell}[\calG]$ defined in \S\ref{sec:tf}, for each index set $\calG$, to be the smallest pre-index set containing $(b_j,0)\uplus \calG$ for every $j\geq \ell$. See \cref{eq:uplus} for the definition of the $\uplus$ operation. Actually, here we let $\calB_{\geq \ell}[\calG]$ denote the index set generated by that pre-index set. This is just to simplify our computations a bit.
\begin{lemma} 
	Suppose that $\calE_\bullet,\calF_\bullet,\calJ_\bullet$ are some index sets such that $\calJ_\bullet \subset \bbC_{>0}\times \bbN$. 
	Fix $\ell\in \bbN$ and $K_j\leq c_j+ \min\Pi \calJ_j$ and $K\leq c_\ell+\min \Pi \calJ$. Let 
	\begin{equation}
	f \in \calA^{1+\calE,2+\calF,\calJ}(X^+_{\mathrm{res}};\ell) + \sum_{l=0}^{\ell-1} \calA_{\mathrm{c}}^{1+\calE_l,2+\calF_l,\calJ_l}([0,1)^+_{\mathrm{res}} ) \calY_l, 
	\label{eq:misc_090}
	\end{equation}
	$\calE^+ = (2^{-1}(d-1),0)\uplus \calE$, $\calE_l^+ = (2^{-1}(d-1),0)\uplus \calE_l$, and 
	\begin{align}
	\begin{split} 
	\calK &= \bigcup_{(j,k)\in \calF, \Re j< K} (j+\calB_{\geq \ell}[ \calJ+(-j,k_j)])\\ 
	\calK_l & = \bigcup_{(j,k)\in \calF, \Re j< K}  (b_l+j,0)\uplus ( \calJ_l +(0,k_{l,j}))
	\end{split} 
	\label{eq:misc_094}
	\end{align}
	where $k_{\bullet,j}$ is the largest $k$ such that $(j,k) \in \calF_\bullet$.
	Then, 
	there exists
	\begin{equation}
	u\in \calA^{\calE^+,\calF, \calK}(X^+_{\mathrm{res}};\ell ) + \sum_{l=0}^{\ell-1} \calA_{\mathrm{c}}^{\calE_l^+,\calF_l,\calK_l}([0,1)^+_{\mathrm{res}} ) \calY_l
	\end{equation}
	satisfying
	\begin{equation}
	Pu-f \in \calA^{1+\calE^{++},2+\calF^+, \calK_\cup}(X^+_{\mathrm{res}};\ell ) + \sum_{l=0}^{\ell-1} \calA_{\mathrm{c}}^{1+\calE_l^{++},2+\calF_l^+,  \calK_\cup}([0,1)^+_{\mathrm{res}} ) \calY_l
	\label{eq:misc_095}
	\end{equation}
	for
	\begin{align}
	\calE^{++}_\bullet &=\calE_\bullet\cup ((2+\beth)\wedge(1+ \beth_1\wedge \beth_3) + \calE^+_\bullet)  \cup ((2+\beth_0)\wedge(1+ \beth_2\wedge \beth_4) + \calE^+_{\cup}) ,
	\label{eq:misc_0uu} \\
	\calF^+_\bullet &= \calF_{\bullet,\geq K}\cup (1+\beth\wedge\beth_1 \wedge(1+\beth_3) + \calF_\bullet)\cup (1+\beth_0\wedge\beth_2\wedge(1+\beth_4) + \calF_\cup),
	\label{eq:misc_097}
	\end{align}
	where $\calF_{\bullet,\geq K} = \calF_\bullet \cap (\bbC_{\geq K}\times \bbN)$ consists of the indices $(j,k)$ in $\calF$ whose first component $j$ satisfies $\Re j\geq K$. 
	\label{lem:Osig}
\end{lemma}
At this point, keeping track of subscripts in parallel arguments becomes annoying, so we make liberal use of `$\bullet$'. In any individual expression, $\bullet$ either means $j\in \{0,\dots,\ell-1\}$ in all locations or, in all locations, it means one of $\ell$ or a lack of a subscript (only one of which will make sense), or sometimes `rem.'

\begin{remark}[Decay at $\mathrm{zf}$]
	We have stated the previous lemma with no assumptions whatsoever on $\calE_\bullet,\calF_\bullet$, but the result is not useful for the intended applications without imposing some restrictions. Indeed:
	\begin{itemize}
		\item if $\min \Pi \calF_l \leq - b_l$ or $\min \Pi \calF \leq -b_\ell$, then $Pu-f$ might not even be decaying at $\mathrm{zf}$, as then $\calK_\cup$ can have terms $(j,k)$ with $\Re j\leq 0$. 
		\item If $\calE_\bullet$ is too big, then we may have $\calE_\bullet^{++}\supsetneq \calE_\bullet$, and then $Pu-f$ may be worse at $\mathrm{bf}$ then $f$. 
	\end{itemize} 
	These issues do not occur as long as $\calE_\bullet,\calF_\bullet$ satisfy reasonable assumptions. For example, the easiest way to ensure that  
	$\calE_\bullet^{++}$ is not much larger than $\calE_\bullet$ 
	is to require 
\begin{equation}
\calE_\bullet=(2^{-1}(d+1),0)  \subseteq \bbC_{>2^{-1}(d-1)} \times \bbN.  
\label{eq:misc_109}
\end{equation}
Indeed, if this holds, then $\calE_\bullet^+ = (2^{-1}(d-1),0)$. Then $\calE_\cup^+ =  (2^{-1}(d-1),0)$ as well, so \cref{eq:misc_0uu} gives $\calE_\bullet^{++}=(2^{-1}(d+1),0)$ back. We will eventually assume \cref{eq:misc_109}, but we do not do this just yet, so as to keep the discussion general.

Regarding $\calF_\bullet$, note that $\min \Pi \calK_\cup \geq \min\{\Pi \calJ_\bullet,\Pi \calF_\cup\}$. So, as long as $\calF_\bullet\subset \bbC_{>0}\times \bbN$, then $\min \Pi \calK_\cup >0$. 
So, in the application we have in mind,  the index set $\calK_\cup$ does imply decay at $\mathrm{zf}$. 
\label{rem:index_cancellation}
\end{remark}

\begin{remark}[A finer observation about $\calK$]
	The indices $(j,k)\in \calK_\cup $ such that $j\notin \calJ_\bullet$ necessarily satisfy $j \geq   \min \Pi \calF_\cup$.
	This means that, though the $\mathrm{zf}$ index set $\calK_\cup$ appearing in \cref{eq:misc_095} can be bigger than $\calJ_\bullet$, the difference is only in indices that are subleading and maybe some extra logarithmic terms on top of indices already present. Moreover, if $\calF_\bullet \subset \bbC_{\geq 1}\times \bbN$, then the indices with new $j$ are all subleading by at least one full order. 
	\label{eq:K_reqs}
\end{remark}

\begin{remark*}[On $u$ being a $O(\sigma^{0+})$-quasimode]
	When $K_\bullet>\min \{\Pi \calF_\cup\}$, the index set $\calF_\cup^+$ is better than $\calF_\cup$.
	Combining this observation with the previous remark:  when also the assumptions in \Cref{rem:index_cancellation} regarding $\calE_\bullet,\calF_\bullet$ are satisfied, $Pu-f= O(\sigma^{0+})$ (with respect to the relevant spaces of polyhomogeneous functions), as desired.
\end{remark*}

\begin{remark*}[Keeping $\calF_\bullet^+$ small]
		We will want to keep $\calF_\bullet^+$ small. Specifically, if we want to factor out a $\sigma^\varepsilon$ from $Pu-f$ and repeat the analysis with $\sigma^{-\varepsilon}(Pu-f)$ in place of $f$ (which is what we do in \S\ref{sec:arg}), then 
		\begin{itemize}
			\item the restriction on $\calF_\bullet$ imposed in \Cref{rem:index_cancellation}, when applied to  $\sigma^{-\varepsilon}(Pu-f)$, means that we want $\calF_\bullet^+ \subset \bbC_{>-b_\bullet+\varepsilon}\times \bbN$.  This means we should take $K_\bullet > - b_\bullet+\varepsilon$. 
			\item  Another restriction on $K_\bullet$ comes from wanting to apply \Cref{lem:zf_main} to $\sigma^{-\varepsilon}(Pu-f)$. Then, we want $\calF_{\bullet}^+\subseteq \bbC_{>\varepsilon}\times \bbN$. For this, we want $K_\bullet>\varepsilon$. 
		\end{itemize}
		As long as $\varepsilon \in (0,1]$, neither of these requirements are in tension with our required upper bound $K_\bullet \leq c_\bullet + \min \Pi \calJ_\bullet$, since $b_\bullet\geq 0$ and the interval $(\varepsilon,c_\bullet+\min \Pi \calJ_\bullet]$ leaves ample room for choosing $K_\bullet$. For example, the smallest of these intervals is 
		\begin{equation} 
			(\varepsilon,c_0+\min \Pi \calJ_\bullet]\supseteq (1,d-2+\min \Pi \calJ_\bullet]\neq \varnothing.
		\end{equation} 
	In summary, \emph{as long as our goal is to keep the $\calF_\bullet$ sets describing $\sigma^{-\varepsilon}(Pu-f)$ (that is, $\calF^+_\bullet-(\varepsilon,0)$) within $\bbC_{>0}\times \bbN$, then we can do this.}

	Actually, the situation is even better, because we can keep the $\calF$-index sets describing $\sigma^{-\varepsilon}(Pu-f)$ within $\bbC_{\geq 1}\times \bbN$. For this, we want $\calF_\bullet^+\subset \bbC_{\geq 1+\varepsilon} \times \bbN$, which would mean taking $K_\bullet \geq 1+\varepsilon$. Then, the bound of admissible $K_\bullet$ is 
	\begin{equation}
	[1+\varepsilon,c_\bullet + \min \Pi \calJ_\bullet]\supseteq [1+\varepsilon,1+ \min \Pi \calJ_\bullet], 
	\end{equation} 
	and this is nonempty if $\varepsilon \leq \min \Pi \calJ_\bullet$. We will never take  $\varepsilon > \min \Pi \calJ_\bullet$ (we cannot factor out more $\sigma$ than $Pu-f$ has at $\mathrm{zf}^\circ$, because then $\sigma^{-\varepsilon}(Pu-f)$ will not have the right orders at $\mathrm{zf}$). In fact, in \S\ref{sec:arg}, we will take $\varepsilon = \min \Pi \calJ_\bullet$ if this is less than one. In that case, the interval of allowed $K_0$ is a singleton, $\{1+\min \Pi \calJ_\bullet\}$, and the other $K_\bullet$ have larger admissible intervals. 
	
	For the reader worried about having such a small interval of allowed $K_0$, let us point that actually slightly smaller $K_0\in (1,2)$ will work. This is because if $\calF_0$ does not have any $(j,k)$ with $j=K_0$, and if $K_0 < 1+\min \Pi \calF_\cup$, then $\calF_0^+$ will \emph{automatically} be in $\bbC_{\geq K_0+\delta}\times \bbN$ for some $\delta>0$ having to do with the next index in $\calF_0$ to the right of $j=K_0$. For example, in the asymptotically Euclidean case, the index sets $\calF_\bullet,\calJ_\bullet$ usually have only $(j,k)$ with $j\in \bbN$. Then, once $K_0>1$, we automatically have $\calF_0^+\subseteq \bbC_{\geq 2}\times \bbN$ (assuming that $\calF_\cup \subset \bbC_{\geq 1}\times \bbN$). Factoring out a $\sigma^\varepsilon$ for $\varepsilon \leq 1$ then results in $\sigma^{-\varepsilon}(Pu-f)$ being described by a $\calF$-index set in $\bbC_{\geq 1}\times \bbN$. In this case, $\min \Pi \calJ_\bullet =1$, so the upper bound on allowed $K_0$ is $2$. Thus, the whole interval $(1,2]$ of $K_0$ is allowed and works, not just the endpoint $K_0=2$.
	
	Moreover, 
	demanding that the $\calF$-index sets describing $\sigma^{-\varepsilon}(Pu-f)$ be in $\bbC_{\geq 1}\times \bbN$ is slight overkill for what we need in \S\ref{sec:arg}. In \S\ref{sec:arg}, all we need is that the $\calF$-index sets be in $\bbC_{\geq \delta}\times \bbN$ for some constant $\delta\in(0,1)$. Then, the interval of allowed $K_\bullet$ becomes 
	\begin{equation}
	[\delta+\varepsilon,c_\bullet + \min \Pi \calJ_\bullet]\supseteq [\delta+\varepsilon,1+ \min \Pi \calJ_\bullet], 
	\end{equation} 
	which, for $\varepsilon \leq \min\{1,\Pi \calJ_\bullet\}$, is nonempty and not a singleton. (And the previous paragraph still applies, so actually slightly smaller $K_\bullet$ work if there are no borderline indices in $\calF_\bullet$.) 
	
	For the reader surprised at how much wiggle room we have, we point out that this is related to \S\ref{rem:mechanism}.
\end{remark*}

Before proving the lemma, we explain the idea of the proof in the case $\calJ,\calJ_l = (1,0)$. (The general case just involves more bookkeeping.) 
Let $\tilde{f} = \tilde{\chi}(\rho) f$, where $\tilde{\chi} \in C_{\mathrm{c}}^\infty((-1,1)_\rho )$ is identically $1$ near the origin. 
Then, we can consider $\tilde{f}$ as a function on $\dot{X}^+_{\mathrm{res}}$, which we embed in 
\begin{equation}
\tilde{X}^+_{\mathrm{res}} = [ [0,1)_\sigma \times \partial X_\theta \times [0,\infty]_{\hat{r}} ; \{\sigma=0,\hat{r}=0\} ]
\end{equation}
via the natural embedding, $(\sigma,\theta,r)\mapsto (\sigma,\theta,\hat{r}=r\sigma)$; see \Cref{fig:Xtilde+res}. Note that  closure of $\{\sigma=0,\hat{r} \in (0,\infty)\}\subset [0,1)_\sigma \times \partial X_\theta \times [0,\infty]_{\hat{r}}$ in $\tilde{X}^+_{\mathrm{res}}$ is identifiable with $\mathrm{tf}$.
So, we can write
\begin{equation}
\tilde{f} \in \calA^{1+\calE,2+\calF,(1,0),\infty} (\tilde{X}^+_{\mathrm{res}} ;\ell) + \sum_{l=0}^{\ell-1} \calA_{\mathrm{c}}^{1+\calE_l ,2+\calF_l,(1,0),\infty } (\widetilde{[0,1)}^+_{\mathrm{res}} )\calY_l,
\end{equation} 
where the last `$\infty$' in the superscripts denotes Schwartz behavior at $\{\sigma>0,\hat{r}=0\}\subset \tilde{X}^+_{\mathrm{res}}$. (Really, $\tilde{f}$ is supported away from this face, so ``Schwartz'' is an understatement.)

\begin{figure}[t]
	\begin{center}
		\begin{tikzpicture} 
		\fill[gray!5] (5,1.5) -- (0,1.5) -- (0,0)  arc(90:0:1.5) -- (5,-1.5) -- cycle;
		\draw[dashed] (5,1.5) -- (5,-1.5);
		\node (ff) at (.75,-.75) {$\mathrm{zf}$};
		\node (zfp) at (-.35,.75) {$\mathrm{tf}$};
		\node (pf) at (2.5,1.8) {$\mathrm{bf}$};
		\node (mf) at (3.25,-1.8) {$\operatorname{cl}_{\tilde{X}^+_{\mathrm{res}}}\{\hat{r}=0,\sigma>0\}$};
		\draw[dashed, gray!30] (4.5,-1.5) -- (4.5,1.5);
		\draw[dashed, gray!30] (4,-1.5) to[out=90, in=270] (3.5,1.5);
		\draw[dashed, gray!30] (3.5,-1.5) to[out=90, in=270] (2.5,1.5);
		\draw[dashed, gray!30] (3,-1.5) to[out=90, in=270] (1.5,1.5);
		\draw[dashed, gray!30] (2.5,-1.5) to[out=90, in=270] (.85,1.5);
		\draw[dashed, gray!30] (2,-1.5) to[out=90, in=270] (.25,1.5);
		\draw[->, darkred] (.1,.1) -- (.1,.5) node[right] {$\hat{r}$};
		\draw[->, darkred] (.1,1.4) -- (1.2,1.4) node[below] {$\sigma$};
		\draw[->, darkred] (.1,1.4) -- (.1,1) node[right] {$1/\hat{r}$};
		\draw[->, darkred] (.1,.1) to[out=0, in=150] (1,-.23) node[above] {$\sigma/\hat{r}$};
		\draw[->, darkred] (1.6,-1.4) -- (2.2,-1.4) node[above] {$\sigma$};
		\draw[->, darkred] (1.6,-1.4) to[out=90, in=-60] (1.375,-.65) node[right] {$\hat{r}/\sigma$};
		\draw (5,1.5) -- (0,1.5) -- (0,0)  arc(90:0:1.5)  -- (5,-1.5);
		\node at (2.5,0) {$\tilde{X}^+_{\mathrm{res}} $};
		\end{tikzpicture}
		\qquad 
		\begin{tikzpicture} 
		\fill[gray!5] (5,1.5) -- (0,1.5) -- (0,0)  arc(90:0:1.5) -- (5,-1.5) -- cycle;
		\fill[darkgray!25] (5,1.5) -- (0,1.5) -- (0,0)  arc(90:45:1.5) to[out=35, in=180] (5,.5) -- cycle;
		\draw[dashed, darkblue] (0,0)  arc(90:45:1.5) to[out=35, in=180] (5,.5);
		\draw[dashed] (5,1.5) -- (5,-1.5);
		\node (ff) at (.75,-.75) {$\mathrm{zf}$};
		\node (zfp) at (-.35,.75) {$\mathrm{tf}$};
		\node (pf) at (2.5,1.8) {$\mathrm{bf}$};
		\node (mf) at (3.25,-1.8) {$\operatorname{cl}_{\tilde{X}^+_{\mathrm{res}}}\{\hat{r}=0,\sigma>0\}$};
		\draw (5,1.5) -- (0,1.5) -- (0,0)  arc(90:0:1.5)  -- (5,-1.5);
		\node[darkblue] at (3,0) {$r=1$};
		\node at (1,.75) {$\dot{X}^+_{\mathrm{res}} $};
		\node at (3.5,-.75) {$\tilde{X}^+_{\mathrm{res}}\backslash \dot{X}^+_{\mathrm{res}} $};
		\end{tikzpicture}
	\end{center}
	\caption{The mwc $\tilde{X}^+_{\mathrm{res}}$ (left) and the image of the embedding $\dot{X}^+_{\mathrm{res}}\hookrightarrow \tilde{X}^+_{\mathrm{res}}$ (right). Lines of constant $\sigma$ are depicted as dashed gray lines in the left figure. Cf.\ \Cref{fig}.}
	\label{fig:Xtilde+res}
\end{figure}

We can consider $N_{\mathrm{tf}}$ as a differential operator on $\tilde{X}^{+\circ}_{\mathrm{res}}$. Indeed, since $N_{\mathrm{tf}}$, considered as a differential operator on $X_{\mathrm{res}}^{+\circ}$, commutes with $\sigma$, the formula 
\begin{equation}
N_{\mathrm{tf}} = \sigma^2 \hat{N}_{\mathrm{tf}}, \quad \hat{N}_{\mathrm{tf}}  = - \frac{\partial^2}{\partial \hat{r}^2} - \Big( \frac{d-1}{\hat{r}} + 2i \Big) \frac{\partial}{\partial \hat{r}} + \frac{1}{\hat{r}^2} \triangle_{\partial X} - \frac{i(d-1)}{\hat{r}}
\label{eq:Ntf_form}
\end{equation}
can be read as holding on $(\tilde{X}^+_{\mathrm{res}})^\circ$ where the partial derivatives are taken with respect to the coordinate system $(\sigma,\hat{r},\theta) \in \bbR^+_\sigma \times \bbR^+_{\hat{r}}\times \partial X_\theta$. 
Using the canonical diffeomorphism $\mathrm{tf}^\circ =\bbR^+_{\hat{r}}\times \partial X_\theta$, we are \emph{also} considering $\hat{N}_{\mathrm{tf}}$ as an operator on $\mathrm{tf}^\circ$, given by the same formula \cref{eq:Ntf_form}.

Unfortunately, $N_{\mathrm{tf}}$ does not commute with a boundary-defining-function 
\begin{equation} 
\rho_{\mathrm{tf}} \in C^\infty(\tilde{X}^+_{\mathrm{res}};[0,\infty))
\end{equation}
of $\mathrm{tf}$.
(We previously used ``$\rho_{\mathrm{tf}}$'' to refer to a boundary-defining-function of $\mathrm{tf}$ in $X^+_{\mathrm{res}}$, but since $\tilde{X}^+_{\mathrm{res}}\cong X^+_{\mathrm{res}}$ in a neighborhood of $\mathrm{tf}$, no confusion will arise from changing notation here.) 
This has the annoying consequence that, in the polyhomogeneous expansion 
\begin{equation}
\tilde{f} \sim \sum_{(j,k) \in \calF} \rho_{\mathrm{tf}}^{2+j} (\log\rho_{\mathrm{tf}})^k \tilde{f}_{j,k}, 
\end{equation}
though $\tilde{f}_{j,k} = \tilde{f}_{j,k}(\hat{r},\theta) \in C^\infty(\mathrm{tf}^\circ)$ is a function of $\hat{r}$ and $\theta$ alone, we should not expect
\begin{equation}
\hat{N}_{\mathrm{tf}}^{-1} \tilde{f} \sim \sum _{(j,k) \in \calF} \rho_{\mathrm{tf}}^{2+j} (\log\rho_{\mathrm{tf}})^k \hat{N}_{\mathrm{tf}}^{-1} \tilde{f}_{j,k}.
\end{equation}
Fortunately, we can instead expand in $\sigma$: 
\begin{equation}
\tilde{f} \sim \sum_{(j,k) \in \calF} \sigma^{2+j} (\log\sigma)^k \tilde{f}_{j,k}
\end{equation}
for some other $\tilde{f}_{j,k} = \tilde{f}_{j,k}(\hat{r},\theta) \in C^\infty(\mathrm{tf}^\circ)$. That is, for any $\beta\in \bbR$, 
\begin{equation}
\tilde{f} - \sum_{(j,k) \in \calF, \, \Re j\leq \beta} \sigma^{2+j} (\log\sigma)^k \tilde{f}_{j,k} \in \calA^{1+\calE,2+\beta,(1,0),\infty}(\tilde{X}^+_{\mathrm{res}}).
\label{eq:misc_h1v}
\end{equation}
More specifically, $\tilde{f}_{j,k} \in \calA^{1+\calE,(-1-j,k_j)}(\mathrm{tf})$; see \Cref{lem:joint} (note that what we call $\calF$ in that lemma is $2+\calF$ here). The cost of expanding in $\sigma$ rather than in $\rho_{\mathrm{tf}}$ is that each term in the expansion is worse than the previous at $\mathrm{zf}$. Fortunately, this does not mean that the \emph{error} that results from truncating the expansion (that is, the function in \cref{eq:misc_h1v}) needs to be worse at $\mathrm{zf}$. The point is that the error term is the same as what it was when we were expanding in $\rho_{\mathrm{tf}}$; all we have done is shuffled around some terms in the truncated expansion.  Again, see \Cref{lem:joint} for details.

Having set up these ``joint'' expansions, we can solve away each $\tilde{f}_{j,k}$, this procedure being facilitated by the fact that 
\begin{equation}
[N_{\mathrm{tf}}, \sigma^{2+j} (\log \sigma)^k] = 0.
\end{equation}
The expression ``$\hat{N}_{\mathrm{tf}}^{-1}$'' means the inverse of $\hat{N}_{\mathrm{tf}}$ on some suitable function spaces in which the $\tilde{f}_{j,k}$ lie, as discussed in \S\ref{sec:tf}.\footnote{The reason why we need to restrict $K_\bullet$ to not be too big is that if $j$ is large, $\tilde{f}_{j,k}$ will not lie in the domain of \Cref{prop:Ntf_main}. However, this does not seem to be an essential point, as we will remark below, in \Cref{rem:conf}.}

Let us now carry out the details, keeping track of things harmonic-by-harmonic: 
\begin{proof}[Proof of \Cref{lem:Osig}]
	Define $\tilde{f}$ as above, and let $\tilde{f}_{l}$ denote the component of $\tilde{f}$ in $\calY_l$ and let $\tilde{f}_{\mathrm{rem}}$ denote the component orthogonal to $\calY_0,\ldots,\calY_{\ell-1}$. 
	Letting 
	\begin{equation}
	\tilde{f}_\bullet \sim \sum_{(j,k) \in \calF_\bullet} \sigma^{2+j} (\log\sigma)^k \tilde{f}_{\bullet,j,k}
	\end{equation}
	denote the joint $\sigma\to 0^+$ expansion of $\tilde{f}_\bullet$, we have 
	\begin{equation}
	\tilde{f}_{\mathrm{rem},j,k} \in \calA^{1+\calE,\calJ+(-2-j,k_{j}-k)}(\mathrm{tf};\ell), \quad  
	\tilde{f}_{l,j,k} \in \calA^{1+\calE_l,\calJ+(-2-j,k_{l,j}-k) }([0,\infty]_{\hat{r}} )\calY_l 
	\end{equation}
	according to \Cref{lem:joint}, where $k_{\bullet,j}$ is the largest $k$ such that $(j,k)$ is in $\calF_\bullet$.  In the superscripts, the index sets are listed at bf first and zf second. Now let
	\begin{equation}
	\tilde{v} = \underbrace{\sum_{(j,k) \in \calF,   \Re j< K_{\mathrm{rem}}} \sigma^{j} (\log \sigma )^k \hat{N}_{\mathrm{tf}}^{-1} \tilde{f}_{\mathrm{rem},j,k}}_{\tilde{v}_{\mathrm{rem}}} + \sum_{l=0}^{\ell-1}\underbrace{ \sum_{(j,k) \in \calF_l,   \Re j< K_l} \sigma^{j} (\log \sigma )^k \hat{N}_{\mathrm{tf}}^{-1} \tilde{f}_{l,j,k}}_{\tilde{v}_l}, 
	\end{equation}
	where $K_\bullet$ is to-be-determined and $\tilde{v}_{\bullet,j,k}=\hat{N}_{\mathrm{tf}}^{-1} \tilde{f}_{\bullet,j,k} \in C^\infty(\mathrm{tf}^\circ)$ solves the PDE 
	\begin{equation} 
		\hat{N}_{\mathrm{tf}}\tilde{v}_{\bullet,j,k} = \tilde{f}_{\bullet,j,k}.
	\end{equation} 
	Here, $\hat{N}_{\mathrm{tf}}$ is read as an operator on $\mathrm{tf}$, as discussed above. We are using \Cref{prop:Ntf_main} to define $\hat{N}_{\mathrm{tf}}^{-1} \tilde{f}_{\bullet,j,k}$. Indeed, since we are assuming that $K_l \leq c_l +\Pi \calJ_l$, the functions $\tilde{f}_{\cdots}$ to which $\hat{N}_{\mathrm{tf}}$ is applied satisfy 
	\begin{equation} 
		\tilde{f}_{l,j,k} \in \calA^{1+\calE_l,-2-K_l+}([0,\infty]_{\hat{r}})\calY_l \subset \calA^{1+\calE_j,-2-c_l+}([0,\infty]_{\hat{r}})\calY_l,
	\end{equation} 
	and analogously for $\tilde{f}_{\mathrm{rem},j,k}$. This is what is required for $\hat{N}_{\mathrm{tf}}^{-1} \tilde{f}_{\bullet,j,k}$ to make sense.

	To write this more simply, let 
	\begin{equation}
	f_{\mathrm{approx}} =  \underbrace{\sum_{(j,k) \in \calF,   \Re j< K} \sigma^{2+j} (\log \sigma )^k \tilde{f}_{\mathrm{rem},j,k}}_{f_{\mathrm{rem},\mathrm{approx}}} + \sum_{l=0}^{\ell-1} \underbrace{\sum_{(j,k) \in \calF_l,   \Re j<K_l} \sigma^{2+j} (\log \sigma )^k \tilde{f}_{l,j,k}}_{f_{l,\mathrm{approx}}}.
	\end{equation}
	Then, $\tilde{v} = N_{\mathrm{tf}}^{-1} f_{\mathrm{approx}}$, where $N_{\mathrm{tf}}^{-1} = \sigma^{-2} \hat{N}_{\mathrm{tf}}^{-1}$.  
	By \Cref{prop:Ntf_main}, 
	\begin{align}
	\begin{split} 
	\hat{N}_{\mathrm{tf}}^{-1} \tilde{f}_{\mathrm{rem},j,k} 
	&\in \calA^{(2^{-1}(d-1),0) \uplus  \calE  , \calB_{\geq \ell}[\calJ+(-j,k_j-k)]  }(\mathrm{tf};\ell),  \\
	\hat{N}_{\mathrm{tf}}^{-1} \tilde{f}_{l,j,k} 
	&\in
	\calA^{(2^{-1}(d-1),0) \uplus \calE_l  ,  (b_l,0) \uplus (\calJ+(-j,k_{l,j}-k) )}([0,\infty]_{\hat{r}}) \calY_l.
	\end{split}
	\end{align}
	So, 
	\begin{multline}
	\tilde{v} \in \sum_{(j,k)\in \calF, \Re j< K } \sigma^j (\log \sigma)^k \calA^{(2^{-1}(d-1),0) \uplus \calE  , \calB_{\geq \ell}[ \calJ+(-j,k_j-k) ] }(\mathrm{tf};\ell) \\ 
	+ \sum_{l=0}^{\ell-1}  \sum_{(j,k)\in \calF_l, \Re j< K_l }   \sigma^j (\log \sigma)^k  \calA^{(2^{-1}(d-1),0) \uplus \calE_l  ,  (b_l,0) \uplus (\calJ+(-j,k_{l,j}-k) )}([0,\infty]_{\hat{r}}) \calY_l.
	\end{multline}
	Notice that 
	\begin{multline}
	\sigma^j (\log \sigma)^k \calA^{(2^{-1}(d-1),0) \uplus  \calE  , \calB_{\geq \ell}[ \calJ+(-j,k_j-k) ] }(\mathrm{tf};\ell) \\ \subseteq \calA^{(2^{-1}(d-1),0) \uplus \calE ,(j,k) ,j+ \calB_{\geq \ell}[ \calJ+(-j,k_j) ],-\infty }(\tilde{X}^+_{\mathrm{res}} ;\ell)
	\end{multline}
	and 
	\begin{multline}
	\sigma^j (\log \sigma)^k  \calA^{(2^{-1}(d-1),0) \uplus \calE_l  ,  (b_l,0) \uplus (\calJ+(-j,k_{l,j}-k)) }([0,\infty]_{\hat{r}})  \\ \subseteq   \calA^{(2^{-1}(d-1),0) \uplus  \calE_l  , (j,k) , (b_l+j,0)\uplus (\calJ+(0,k_{l,j})) ,-\infty }(\widetilde{[0,1)}^+_{\mathrm{res}} ).
	\end{multline}
	So, 
	\begin{equation}
	\tilde{v} \in \calA^{\calE^+,\calF,\calK,-\infty}(\tilde{X}^+_{\mathrm{res}};\ell ) + \sum_{l=0}^{\ell-1} \calA^{\calE^+_l,\calF_l, \calK_l,-\infty}  (\widetilde{[0,1)}^+_{\mathrm{res}} )\calY_l
	\end{equation}
	for $\calE_\bullet^+ = (2^{-1}(d-1),0) \uplus \calE_\bullet$, $\calK,\calK_l$ as specified.

	Now let $v = \chi_1(\sigma/\hat{r} )  \tilde{v}$ for $\chi_1 \in C_{\mathrm{c}}^\infty( (-\infty,1))$ such that $\chi_1=1$ identically near the origin. We can interpret $v$ as a function on $\dot{X}^+_{\mathrm{res}}$ and therefore on $X^+_{\mathrm{res}}$. We have 
	\begin{equation}
	v \in \calA^{\calE^+,\calF,\calK}(X^+_{\mathrm{res}};\ell ) + \sum_{l=0}^{\ell-1} \calA^{\calE^+_l,\calF_l, \calK_l}_{\mathrm{c}}  ([0,1)^+_{\mathrm{res}} )\calY_l.
	\end{equation}
	Let $v_{\mathrm{rem}}$ denote the first term on the right-hand side and $v_l$ the $l$th term in the sum. That is, $v_{\bullet} = \chi_1(\sigma/\hat{r}) \tilde{v}_\bullet$. 
	
	Let us now examine 
	\begin{equation}
	Pv_\bullet  - f_\bullet = (\chi_1 f_{\bullet,\mathrm{approx}} -f_\bullet ) + [N_{\mathrm{tf}} ,\chi_1 ] \tilde{v}_\bullet + (P-N_{\mathrm{tf}}) v_\bullet,
	\end{equation}
	piece-by-piece. Here, $f_\bullet$ denote the portions of $f$ in each of the terms on the right-hand side of \cref{eq:misc_090}.
	\begin{itemize}
		\item We split $\chi_1 f_{\bullet,\mathrm{approx}} -f_\bullet  = \chi_1( f_{\bullet,\mathrm{approx}}  - \tilde{f}_\bullet) + (\chi_1 \tilde{f}_\bullet-f_\bullet)$. Beginning with the first term, we have, from \cref{eq:misc_h1v} (applied with $f_\bullet$ in place of $f$), that 
		\begin{equation} 
			\tilde{f}_\bullet-f_{\bullet,\mathrm{approx}} \in \calA^{1+\calE_\bullet,2+K_\bullet-,\calJ_\bullet,-\infty}.
		\end{equation} 
		(Here, we are omitting the ``$(X^+_{\mathrm{res}};\ell)$'' or ``$([0,1)^+_{\mathrm{res}} ) \calY_l$'' from the notation, as it depends on $\bullet$.) Multiplying by $\chi_1$ then yields 
		\begin{align}
		\begin{split} 
		\chi_1(\tilde{f}_{\mathrm{rem}}-f_{\mathrm{rem},\mathrm{approx}}) &\in  \calA^{1+\calE,2+K_{\mathrm{rem}}-,\calJ}(X^+_{\mathrm{res}};\ell), \\ 
		\chi_1(\tilde{f}_l-f_{l,\mathrm{approx}}) &\in  \calA^{1+\calE_l,2+K_l-,\calJ_l}_{\mathrm{c}}([0,1)^+_{\mathrm{res}} ) \calY_l.
		\end{split}
		\end{align}
		The term $\chi_1 \tilde{f}_\bullet-f_\bullet$ vanishes identically near $\mathrm{tf}$, so 
		\begin{equation}
		\chi_1 \tilde{f}_\bullet-f_\bullet \in \calA^{1+\calE_\bullet, \infty, \calJ_\bullet } .
		\end{equation}
		So, overall, $\chi_1 f_{\bullet,\mathrm{approx}} -f_\bullet \in \calA^{1+\calE_\bullet,K_\bullet-,\calJ_\bullet}$. Actually, since the left-hand side is polyhomogeneous, we can improve this to $\chi_1 f_{\bullet,\mathrm{approx}} -f_\bullet \in \calA^{1+\calE_\bullet,2+\calF_{\bullet,\geq K_\bullet},\calJ_\bullet}$.
		\item On the other hand, $[N_{\mathrm{tf}},\chi_1 ] \in \operatorname{Diff}_{\mathrm{b}}^{2,-\infty,-\infty,0}(X^+_{\mathrm{res}})$, where the two $-\infty$ superscripts denote Schwartz coefficients at $\mathrm{tf}\cup \mathrm{bf}$. So, $[N_{\mathrm{tf}},\chi_1 ] \tilde{v}_\bullet \in \calA^{\infty,\infty ,\calK_\bullet}$.
		\item Finally, 
		\begin{multline}
		P - N_{\mathrm{tf}} = (\triangle_g-\triangle_{g_0}) + L + \sigma Q + \sigma^2 R \\ \in
		\rho^{(3+\beth)\wedge(2+ \beth_1\wedge \beth_3)  }_{\mathrm{bf}} \rho_{\mathrm{tf}}^{3+\beth\wedge \beth_1 \wedge (1+\beth_3) } \operatorname{Diff}^2_{\mathrm{b}}([0,1)^+_{\mathrm{res}} ) + \rho_{\mathrm{bf}}^{(3+\beth_0)\wedge(2+ \beth_2\wedge \beth_4)} \rho_{\mathrm{tf}}^{3+\beth_0\wedge\beth_2\wedge(1+\beth_4)} \operatorname{Diff}^2_{\mathrm{b}}(\dot{X})
		\end{multline}
		near $\partial X$,
		so $(P-N_{\mathrm{tf}}) v_{\mathrm{rem}} \in \calA^{1+\calE^{++0},2+\calF^{+0},\calK}(X^+_{\mathrm{res}};\ell) + \sum_{l=0}^{\ell-1}  \calA^{1+\calE^{++1},2+\calF^{+1},\calK}([0,1)^+_{\mathrm{res}}) \calY_l $
		for 
		\begin{align}
		\begin{split} 
		\calE^{++0} &= ((2+\beth)\wedge(1+ \beth_1\wedge \beth_3)  + \calE^+), \\ 	\calE^{++1} &=  ((2+\beth_0)\wedge(1+ \beth_2\wedge \beth_4) + \calE^+ ), 
		\end{split} 
		\intertext{and} 
		\begin{split}  
		\calF^{+0} &= (1+\beth\wedge\beth_1 \wedge(1+\beth_3) + \calF), \\
		\calF^{+1} &=  (1+\beth_0\wedge\beth_2\wedge(1+\beth_4) + \calF) .
		\end{split} 
		\end{align}
		Likewise, 
		\begin{multline} 
			(P-N_{\mathrm{tf}}) v_{l} \in \calA^{1+\calE^{++1}_l,\calF^{+1}_l,\calK_l}(X^+_{\mathrm{res}};\ell)+ \calA^{1+\calE^{++0}_{l},\calF^{+0}_l,\calK_l}_{\mathrm{c}}([0,1)^+_{\mathrm{res}} ) \calY_l 
			\\ + \sum_{j=0,j\neq l}^{\ell-1} \calA^{1+\calE^{++1}_{l},\calF^{+1}_l,\calK_l}_{\mathrm{c}}([0,1)^+_{\mathrm{res}} ) \calY_j
		\end{multline} 
		for 
		\begin{align}
		\begin{split} 
		\calE^{++0}_l &=  ((2+\beth)\wedge(1+ \beth_1\wedge \beth_3)  + \calE^+_l), \\ 
		\calF^{+0}_l &= 1+\beth\wedge\beth_1 \wedge(1+\beth_3) + \calF_l,
		\end{split} 
		\intertext{and }
		\begin{split} 
		\calE^{++1}_l &=  ((2+\beth_0)\wedge(1+ \beth_2\wedge \beth_4) + \calE^+_l ) , \\
		\calF^{+1}_l &=  (1+\beth_0\wedge\beth_2\wedge(1+\beth_4) + \calF_l) .
		\end{split} 
		\end{align}
	\end{itemize}
	So, all in all, 
	\begin{equation}
	Pv- f \in \calA^{1+\calE^{++},2+ \calF^+, \calK_\cup}(X^+_{\mathrm{res}};\ell)+ \sum_{l=0}^{\ell-1} \calA_{\mathrm{c}}^{1+ \calE^{++}_l,2+ \calF_l^+, \calK_\cup }([0,1)^+_{\mathrm{res}})\calY_l
	\end{equation}
	(where we used $\calK_\bullet \supseteq \calJ_\bullet$).
\end{proof}

\begin{lemma}
	Suppose that $\calE,\calF,\calG$ are index sets and $f\in \calA^{\calE,\calF,\calG}(\dot{X}^+_{\mathrm{res}})$. Then, there exist $f_{j,k}(\hat{r},\theta) \in  \calA^{\calE,(-j,k_j-k)+\calG}([0,\infty]_{\hat{r}}\times \partial X_\theta ) = \calA^{\calE,(-j,k_j-k)+\calG}(\mathrm{tf})$ such that
	\begin{equation}
	f \sim \sum_{(j,k)\in \calF} \sigma^j (\log \sigma)^k f_{j,k}
	\end{equation}
	in the sense that, for every $\beta\in \bbR$, 
	\begin{equation}
	f- \sum_{(j,k)\in \calF,\, \Re j\leq \beta} \sigma^j (\log \sigma)^k f_{j,k} \in \calA^{\calE,\beta,\calG}(\dot{X}^+_{\mathrm{res}}).
	\end{equation}
	Here, $k_j$ is the largest $k$ such that $(j,k)\in \calF$. 
	\label{lem:joint}
\end{lemma}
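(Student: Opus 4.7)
The plan is to directly convert the polyhomogeneous expansion of $f$ at $\mathrm{tf}$ from powers of $\rho_{\mathrm{tf}}$ to powers of $\sigma$, using the relation $\sigma=\rho_{\mathrm{tf}}\rho_{\mathrm{zf}}$ between boundary-defining-functions recalled just before \Cref{lem:Osig}. Polyhomogeneity of $f$ at $\mathrm{tf}$ with index set $\calF$, together with polyhomogeneity at $\mathrm{bf}$ and $\mathrm{zf}$ (with index sets $\calE,\calG$), gives coefficients $f^{\mathrm{tf}}_{j,k}\in\calA^{\calE,\calG}(\mathrm{tf})$ with
\begin{equation*}
f \sim \sum_{(j,k)\in\calF}\rho_{\mathrm{tf}}^{\,j}(\log\rho_{\mathrm{tf}})^{k}\, f^{\mathrm{tf}}_{j,k},
\end{equation*}
where, by definition of polyhomogeneity, for every $\beta\in\bbR$ the remainder obtained by removing terms with $\Re j\leq \beta$ lies in $\calA^{\calE,\beta,\calG}(\dot X^+_{\mathrm{res}})$.

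Next, I would substitute $\rho_{\mathrm{tf}}=\sigma/\rho_{\mathrm{zf}}$ into each term and expand $(\log\sigma-\log\rho_{\mathrm{zf}})^k$ via the binomial theorem. Collecting contributions to a fixed $\sigma^j(\log\sigma)^{k'}$, define
\begin{equation*}
f_{j,k'}(\hat r,\theta) \;=\; \sum_{k:\,k'\leq k\leq k_j}\binom{k}{k'}(-\log\rho_{\mathrm{zf}})^{k-k'}\rho_{\mathrm{zf}}^{-j}\, f^{\mathrm{tf}}_{j,k},
\end{equation*}
where $k_j$ denotes the largest $k$ with $(j,k)\in\calF$. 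Each summand lies in $\calA^{\calE,(-j,k-k')+\calG}(\mathrm{tf})$ by the rules for multiplying by $\rho_{\mathrm{zf}}^{-j}(\log\rho_{\mathrm{zf}})^{k-k'}$, and since $(-j,k-k')+\calG\subseteq(-j,k_j-k')+\calG$ for $k\leq k_j$, the sum belongs to the claimed space $\calA^{\calE,(-j,k_j-k')+\calG}(\mathrm{tf})$.

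For the remainder estimate, the key observation is that truncating the $\sigma$-expansion at $\Re j\leq\beta$ recovers exactly the same partial sum as truncating the original $\rho_{\mathrm{tf}}$-expansion at $\Re j\leq\beta$. Indeed, a swap in the order of the $k$ and $k'$ summations gives
\begin{equation*}
\sum_{\substack{(j,k')\in\calF\\ \Re j\leq\beta}}\sigma^{j}(\log\sigma)^{k'} f_{j,k'} \;=\; \sum_{\substack{(j,k)\in\calF\\ \Re j\leq\beta}} f^{\mathrm{tf}}_{j,k}\,\sigma^{j}\rho_{\mathrm{zf}}^{-j}(\log\sigma-\log\rho_{\mathrm{zf}})^{k} \;=\; \sum_{\substack{(j,k)\in\calF\\ \Re j\leq\beta}}\rho_{\mathrm{tf}}^{\,j}(\log\rho_{\mathrm{tf}})^{k} f^{\mathrm{tf}}_{j,k}.
\end{equation*}
Thus $f-\sum_{\Re j\leq\beta}\sigma^{j}(\log\sigma)^{k} f_{j,k}$ coincides with the original $\mathrm{tf}$-truncation remainder and therefore lies in $\calA^{\calE,\beta,\calG}(\dot X^+_{\mathrm{res}})$, as required.

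There is no serious analytic obstacle; the entire argument is combinatorial manipulation of the expansion together with tracking how the operation $\rho_{\mathrm{zf}}^{-j}(\log\rho_{\mathrm{zf}})^{k-k'}\cdot$ acts on index sets. The only point that requires care is the index-set bookkeeping in the definition of $f_{j,k'}$, specifically that one must allow the full range $k'\leq k\leq k_j$ (which is precisely why the shift $(-j,k_j-k)+\calG$, with $k_j$ the maximal log-power of $\calF$ at order $j$, appears rather than the naive $(-j,0)+\calG$).
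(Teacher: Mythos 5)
Your proposal is correct and follows essentially the same route as the paper: expand at $\mathrm{tf}$ in $\rho_{\mathrm{tf}}$, substitute $\rho_{\mathrm{tf}}=\sigma/\rho_{\mathrm{zf}}$, binomially expand the logarithm, and collect coefficients of $\sigma^j(\log\sigma)^{k'}$, with identical index-set bookkeeping. The only point left implicit in your write-up, which the paper makes explicit, is that one must choose the boundary-defining-functions (e.g.\ $\rho_{\mathrm{tf}}=\sigma+\rho$, $\rho_{\mathrm{zf}}=\hat r/(1+\hat r)$) so that $\rho_{\mathrm{zf}}$ depends only on $\hat r$, ensuring the $f_{j,k'}$ genuinely live on $\mathrm{tf}$.
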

\begin{remark}
	Analogues exist if we restrict attention to functions orthogonal to the first few spherical harmonics near $\partial X$.
\end{remark}
\begin{proof}
	Let $\tilde{f}_{j,k} \in \calA^{\calE,\calG}(\mathrm{tf})$ denote the terms in the polyhomogeneous expansion of $f$ at $\mathrm{tf}$, so 
	\begin{equation}
	f \sim \sum_{(j,k)\in \calF} \rho_{\mathrm{tf}}^j (\log \rho_{\mathrm{tf}} )^k \tilde{f}_{j,k}, 
	\end{equation}
	in the usual sense that, for every $\beta\in \bbR$, 
	\begin{equation}
	f -\sum_{(j,k)\in \calF, \, \Re j\leq \beta} \rho_{\mathrm{tf}}^j (\log \rho_{\mathrm{tf}} )^k \tilde{f}_{j,k} \in \calA^{\calE,\beta,\calG}(\dot{X}^+_{\mathrm{res}})
	\end{equation}
	holds. Note that we can consider $\tilde{f}_{j,k}$ as an element of $\calA^{\calE,(0,0),\calG}(\dot{X}^+_{\mathrm{res}})$ which only depends on $\hat{r},\theta$. 
	
	We have $\sigma \in \rho_{\mathrm{tf}} \rho_{\mathrm{zf}} C^\infty(X^+_{\mathrm{res}};\bbR^+)$. That is, there exists some positive $\varsigma \in C^\infty(X^+_{\mathrm{res}})$ such that $\sigma \varsigma = \rho_{\mathrm{tf}} \rho_{\mathrm{zf}}$.
	For the sake of this computation, it is useful to note that we can take $\rho_{\mathrm{zf}}$ as a function of $\hat{r}$ alone (though $\hat{r},\theta$ would also suffice) and, simultaneously, that we can take $\varsigma= 1$. Indeed, these conditions are met by choosing $\rho_{\mathrm{tf}} = \sigma+\rho$ and $\rho_{\mathrm{zf}} = \sigma / (\sigma+\rho) = \hat{r}/(1+\hat{r})$. 
	We have
	\begin{equation}
	\rho_{\mathrm{tf}}^j (\log \rho_{\mathrm{tf}} )^k \tilde{f}_{j,k}  = \Big(\frac{\sigma}{\rho_{\mathrm{zf}}} \Big)^j \Big(\log\Big(\frac{\sigma }{\rho_{\mathrm{zf}}} \Big)\Big)^k \tilde{f}_{j,k} = \sigma^j \sum_{\kappa=0}^k (\log \sigma)^\kappa \binom{k}{\kappa} (-\log \rho_{\mathrm{zf}})^{k-\kappa} \frac{\tilde{f}_{j,k} }{ \rho_{\mathrm{zf}}^j }. 
	\end{equation}
	Thus, for fixed $j$, 
	\begin{align}
	\begin{split} 
	\sum_{k \text{ s.t. }(j,k)\in \calF} \rho_{\mathrm{tf}}^j (\log \rho_{\mathrm{tf}} )^k \tilde{f}_{j,k} &= 
	\sigma^j \sum_{k \text{ s.t. }(j,k)\in \calF} \sum_{\kappa=0}^k (\log \sigma)^\kappa \binom{k}{\kappa} (-\log \rho_{\mathrm{zf}})^{k-\kappa} \frac{\tilde{f}_{j,k} }{ \rho_{\mathrm{zf}}^j } \\ 
	&= 
	\sigma^j  \sum_{\kappa=0}^{k_j} (\log \sigma)^\kappa \sum_{k=\kappa}^{k_j}  \binom{k}{\kappa} (-\log \rho_{\mathrm{zf}})^{k-\kappa} \frac{\tilde{f}_{j,k} }{ \rho_{\mathrm{zf}}^j } = \sigma^j \sum_{\kappa=0}^{k_j} (\log \sigma)^\kappa f_{j,\kappa}
	\end{split}
	\end{align}
	for 
	\begin{equation}
	f_{j,\kappa} = \sum_{k=\kappa}^{k_j}  \binom{k}{\kappa} (-\log \rho_{\mathrm{zf}})^{k-\kappa} \frac{\tilde{f}_{j,k} }{ \rho_{\mathrm{zf}}^j }  \in \calA^{\calE,(-j,k_j-\kappa)+\calG}(\mathrm{tf}) .
	\end{equation}
	Note that $f_{j,\kappa}$ depends only on the coordinates $\hat{r},\theta$.
	
\end{proof}

\begin{remark}\label{rem:conf}
	If we drop the requirement above that $K_\bullet \leq c_\bullet+\Pi \calJ_\bullet$, then \Cref{lem:Osig} should still hold, except with slightly more logarithmic terms; we just need to use a modified form of \Cref{prop:Ntf_main} in which the first handful of harmonics (the $\calY_j$ for which $c_j< K_j-\min \Pi \calJ_j$) are dealt with by hand. The point is that if one is studying a second-order regular singular ODE $Pu=f$, an existence statement holds for $u$ even if $f$ blows up very badly at the singular point. 
	However, one loses uniqueness in this case. The resultant logarithmic singularities should be fictitious. 
	This is all to say that the restriction on $K_\bullet$ does not appear to be essential, but an optimal algorithm should involve it.
	
	To see how the existence works, consider the (unweighted) regular singular ordinary differential operator $P=(x\partial_x-1)(x\partial _x)$. Its indicial roots are $0,1$. If $f(x)=1/x$ (note that this has worse blowup at $x=0$ than either indicial powers $x^c$, $c\in \{0,1\}$), then $u(x) = 2^{-1}x^{-1} +c_0+c_1 x$ solves $Pu=f$. If $f(x)=1$ instead, then $u(x)= - \log(x)+c_0+c_1 x$ works, so we have picked up a log term.	In each of these cases, we have two real degrees-of-freedom $c_0,c_1$. For our actual $P$, another boundary condition is supplied somewhere. This eliminates one degree-of-freedom. So, one real degree-of-freedom is left undetermined. 
\end{remark}

\section{Main proof}
\label{sec:arg}

This section contains the proof of \Cref{thm:A}.
The first step is the construction of $O(\sigma^{0+})$-quasimodes, which is just a matter of combining the lemmas in the previous two sections. What we want to do is, given $f\in \calA^{1+\calE,2+\calF, \calJ}(X_{\mathrm{res}}^+)$ for appropriate index sets $\calE,\calF$ and $\calJ\subset \bbC_{\geq 0}\times \bbN$, find  
\begin{equation}
u \in \calA^{\calE^+,\calF^+,\calJ^+ }(X_{\mathrm{res}}^+)
\end{equation}
solving the quasimode construction problem 
\begin{equation} 
Pu=f \bmod \sigma^{0+} \calA^{1+\calE^{++},2+\calF^{++},\calJ^{++}}(X_{\mathrm{res}}^+)
\end{equation} 
for some index sets $\calE^+,\calF^+,\calJ^+,\calE^{++},\calF^{++},\calJ^{++}$ which differ from $\calE,\calF,\calJ$ in terms of which logarithmic and subleading terms are present, so that $\min \Pi \calI = \min \Pi \calI^+ = \min \Pi \calI^{++}$ for each $\calI=\calE,\calF,\calJ$, where, as before, $\Pi:\bbC\times \bbN\to \bbC$ denotes the projection onto the first factor. Indeed:

\begin{proposition}
	Suppose that $\calE_\bullet,\calF_\bullet,\calJ_\bullet$ are index sets such that $\min \Pi \calE_\bullet> (d-1)/2$, $\calF_\bullet\subseteq \bbC_{\geq 1}\times \bbN$, and $\calJ_{\bullet} \subset \bbC_{\geq 0}\times \bbN$, and let
	\begin{equation}
		f\in \calA^{1+\calE,2+\calF, \calJ}(X^+_{\mathrm{res}};\ell) + \sum_{l=0}^{\ell-1}\calA_{\mathrm{c}}^{1+\calE_l,2+\calF_l,\calJ_l}([0,1)^+_{\mathrm{res}})\calY_l.
	\end{equation}
	Let $\varepsilon  = \min \{1, \Pi \calJ_{\bullet} \cap (\bbC_{>0}\times \bbN) \}$. There exist index sets $\calE^+_\bullet,\calF^+_\bullet,\calJ^+_\bullet$, $\calE^{\mathrm{new}}_\bullet,\calF^{\mathrm{new}}_\bullet,\calJ^{\mathrm{new}}_\bullet$ 
	\begin{itemize}
		\item 
		satisfying the same hypotheses as their predecessors, with
		\item $\calI_\bullet\subseteq \calI_\bullet^+ \subseteq \calI_\bullet^{\mathrm{new}}$ for each $\calI\in \{\calE,\calF\}$,
	\end{itemize} 
	such that there exist 
	\begin{align}
	\begin{split} 
		u&\in \calA^{\calE^+,\calF^+,\calJ^+}(X^+_{\mathrm{res}};\ell) + \sum_{l=0}^{\ell-1}\calA_{\mathrm{c}}^{\calE_l^+,\calF_l^+,\calJ_l^+}([0,1)^+_{\mathrm{res}})\calY_l \\
		g &\in \calA^{1+\calE^{\mathrm{new}},2+\calF^{\mathrm{new}},\calJ^{\mathrm{new}}}(X^+_{\mathrm{res}};\ell) + \sum_{l=0}^{\ell-1}\calA_{\mathrm{c}}^{1+\calE_l^{\mathrm{new}},2+\calF_l^{\mathrm{new}},\calJ_l^{\mathrm{new}}}([0,1)^+_{\mathrm{res}})\calY_l
		\end{split}
		\label{eq:misc_136}
	\end{align}
	such that $Pu=f+\sigma^\varepsilon g$. 
	\label{prop:O_sig_eps}
\end{proposition}  
\begin{proof}
	Without loss of generality, we can assume that $(0,0)\in \calJ_\bullet$. Let $\kappa\in \bbN$ be the largest nonnegative integer such that $(0,\kappa)\in \calJ_\bullet$ for some $\bullet$. 
	Now, \Cref{lem:zf_main} gives $w \in \calA^{\infty,\calI[\calF_\bullet+(0,\kappa)],(0,\kappa)}(X_{\mathrm{res}}^+;\ell) + \sum_{l=0}^{\ell-1} \calA^{\infty,\calI_l[\calF_\bullet+(0,\kappa)],(0,\kappa)}_{\mathrm{c}}([0,1)^+_{\mathrm{res}}) \calY_l$
	such that $Pw=f+h$ for some
	\begin{equation}
		h\in \calA^{1+\calE,2+\calI[\calF_\bullet + (0,\kappa)]^+,(1,\kappa)\cup \calJ_{>0}}(X_{\mathrm{res}}^+;\ell) + \sum_{l=0}^{\ell-1} \calA^{1+\calE_l,2+\calI_l[\calF_\bullet + (0,\kappa)]^+,(1,\kappa)\cup \calJ_{l,>0}}_{\mathrm{c}}([0,1)^+_{\mathrm{res}}) \calY_l, 
	\end{equation}
	where $\calI,\calI_l,\calI^+,\calI_l^+$ are as in \Cref{lem:zf_main}, and where by $\calJ_{\bullet,>0}$ we mean to throw out the terms $(0,\kappa)$. 
	Referring to the definitions of $\calI_\bullet$ in \Cref{lem:zf_pre}, we have $\min \Pi \calI_\bullet\geq \min\{\Pi \calF_\bullet,1+\Pi \calF_\cup,c_\bullet \}$ (see \cref{eq:index_set_defs}). Since all of the $c_j$'s are $\geq d-2\geq 1$, this means that $\calI_\bullet\subset \bbC_{\geq 1 }\times \bbN$, and it then follows from the definition of $\calI^+_\bullet$ that the same holds for it:
	\begin{equation}
	\calI_\bullet[\cdots]^+\subset \bbC_{\geq 1}\times \bbN\label{eq:blah}
	\end{equation}

	Now we apply \Cref{lem:Osig} with $h$ in place of $f$. In this way, we get 
	\begin{equation}
		v \in \calA^{\calE^+,\calI[\calF_\bullet + (0,\kappa)]^+,\calK}(X_{\mathrm{res}}^+;\ell) + \sum_{l=0}^{\ell-1} \calA^{\calE_l^+,\calI_l[\calF_\bullet + (0,\kappa)]^+, \calK_l}_{\mathrm{c}}([0,1)^+_{\mathrm{res}}) \calY_l, 
	\end{equation}
	where $\calK_\bullet \subset \bbC_{>0}\times \bbN$  and $\calE_\bullet^+$ are as in that lemma, such that 
	\begin{equation}
	Pv - h \in \calA^{1+\calE^{++},2+ \calI[\calF_\bullet+(0,\kappa)]^{++},\calK_\cup  }(X^+_{\mathrm{res}};\ell )  + \sum_{l=1}^{\ell-1} \calA_{\mathrm{c}}^{1+\calE^{++}_l,2+ \calI_l[\calF_\bullet+(0,\kappa)]^{++}, \calK_\cup  }([0,1)_{\mathrm{res}}^+ ) \calY_l, 
	\end{equation}
	where $\calE^{++}_\bullet$ and $\calI_\bullet[\cdots]^{++} = (\calI_\bullet[\cdots]^{+})^+$ are defined  as in \Cref{lem:Osig}. There are parameters $K_\bullet$ going into that lemma, which we will say more about shortly. If $K_\bullet$ are sufficiently small/negative, then the hypotheses of that lemma are satisfied, so we may apply it (though it might be trivial if $K_\bullet$ is too small/negative).

	Now let $u=w-v$. This lies in the space in \cref{eq:misc_136}, for $\calE^+$ as above, 
	\begin{equation} 
	\calF_\bullet^+ = \calF_\bullet\cup \calI[\calF_\bullet + (0,\kappa)] \cup \calI[\calF_\bullet + (0,\kappa)]^+, 
	\end{equation} 
	and $\calJ_\bullet^+ = \calJ_\bullet \cup \calK_\bullet$. Note that each of these index sets contains their predecessors, by construction.

	We can write $Pu=Pw-Pv = f+\sigma^\varepsilon g$, for $g=-\sigma^{-\varepsilon}(Pv-h)$. This lies in the space in \cref{eq:misc_136} for
	\begin{align}
	\begin{split} 
	\calE^{\mathrm{new}}_\bullet &= \calE_\bullet\cup \calE^{++}_\bullet=\calE_\bullet^{++} \\
	\calF^{\mathrm{new}}_\bullet &= \calF_\bullet\cup( \calI[\calF_\bullet + (0,\kappa)]^{++}-(\varepsilon,0)) \\ 
	\calJ_\bullet^{\mathrm{new}} &=  (\calK_\cup-(\varepsilon,0)).
	\end{split} 
	\end{align}
	We have defined $\calE^{\mathrm{new}}_\bullet,\calF^{\mathrm{new}}_\bullet$ so that they contain their predecessors. 
	
	The fact that $\calE_\bullet^{\mathrm{new}}$ satisfies $\min \Pi \calE_\bullet^{\mathrm{new}} > (d-1)/2$ follows immediately from its definition; see \cref{eq:misc_0uu}.
	
	Similarly, $\calF_\bullet^{\mathrm{new}} \subset \bbC_{\geq 1} \times \bbN$ if $K_\bullet \geq 1+\varepsilon$. The required upper bound on $K_\bullet$ (coming from the hypotheses of \Cref{lem:Osig}) is 
	\begin{equation*} 
	K_\bullet \leq c_\bullet + \min \Pi \calJ_{\bullet,>0}.
	\end{equation*} 
	Since $c_\bullet \geq 1$, we can take $K_\bullet = 1+\varepsilon$ without violating the required upper bound on $K_\bullet$. (Note that we are applying $\Cref{lem:Osig}$ with $\calJ_{\bullet,>0}$ in place of $\calJ$.)
	
	Finally, $\calJ_{\bullet}^{\mathrm{new}}$ is a subset of $\bbC_{\geq 0}\times \bbN$ if $\calK_\cup$ does not contain any $(j,k)$ with $j<\varepsilon$. For this, we can cite \Cref{rem:index_cancellation}, which says 
	\begin{equation} 
		\min \Pi \calK_\cup  \geq \min\{ \Pi \calJ_{\cup,>0}, \Pi \calI_\cup[\calF_\cup+(0,\kappa)]^+ \}\geq \min\{\varepsilon,\Pi \calI_\cup[\calF_\cup+(0,\kappa)]^+ \}\geq  \min\{\varepsilon,1\} =\varepsilon ,
	\end{equation}
	where the last `$\geq$' used \cref{eq:blah}. 
\end{proof}

We can now iterate this construction to get $O(\sigma^\infty)$-quasimodes. To simplify the induction as much as possible (and to ensure that our index sets at $\mathrm{bf}$ do not become progressively worse), we assume that 
\begin{equation} 
	\calE_\bullet = (2^{-1}(d+1),0)
\end{equation} 
for each choice of $\bullet$. This means that $f\in \calA^{1+\calE_\cup,\dots}$ decays like $\rho_{\mathrm{bf}}^{2^{-1}(d+3)}$ at $\mathrm{bf}$. Then, \cref{eq:misc_0uu} gives $\calE^{\mathrm{new}}_\bullet = \calE_\bullet$.

\begin{theorem}[Construction of $O(\sigma^\infty)$-quasimodes]
	Suppose that 
	\begin{equation}
	f\in \calA^{(2^{-1}(d+3),0),2+\calF,(0,0) }(X^+_{\mathrm{res}} ;\ell) + \sum_{l=1}^{\ell-1} \calA_{\mathrm{c}}^{(2^{-1}(d+3),0),2+\calF_l,(0,0) }([0,1)^+_{\mathrm{res}}) \calY_l, 
	\end{equation}
	where $\calF_\bullet\subset \bbC_{\geq 1}\times \bbN$. Then, there exist index sets $\calI_\bullet,\calK_\bullet\subset \bbC_{>0}\times \bbN $ and an element 
	\begin{equation}
	u\in \calA^{(2^{-1}(d-1),0),\calI, (0,0)\cup \calK}(X^+_{\mathrm{res}} ;\ell) + \sum_{l=1}^{\ell-1} \calA_{\mathrm{c}}^{(2^{-1}(d-1),0),\calI_l,(0,0)\cup \calK_l }([0,1)^+_{\mathrm{res}}) \calY_l
	\label{eq:misc_g66}
	\end{equation}
	such that 
	\begin{equation} 
	Pu-f \in  \calA^{(2^{-1}(d+3),0),\infty,\infty }(X^+_{\mathrm{res}}) = \calS([0,1)_\sigma; \calA^{(2^{-1}(d+3),0 )}(X) ).
	\label{eq:misc_n55}
	\end{equation} 
	\label{thm:C}
\end{theorem}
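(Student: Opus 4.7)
The plan is to iterate \Cref{prop:O_sig_eps} and then Borel-sum the resulting sequence of partial quasimodes in the polyhomogeneous category on $X^+_{\mathrm{res}}$. Setting $f_0 := f$, a single application of \Cref{prop:O_sig_eps} produces $\tilde u_1$ and a forcing $f_1 := f_0 - P\tilde u_1$ with $\min\Pi\calJ_1 \geq \varepsilon_1 > 0$, where $\calJ_n$ denotes the zf index set of $f_n$. To iterate without losing cumulative zf gain, one applies \Cref{prop:O_sig_eps} at step $n$ to $\sigma^{-m_n}f_n$, where $m_n := \min\Pi\calJ_n$; this is legitimate because $P(\sigma)$ commutes with multiplication by functions of $\sigma$ alone, all $\sigma$-dependence in \cref{eq:Pform} being polynomial rather than differential. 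Rescaling the output back by $\sigma^{m_n}$ yields $\tilde u_{n+1}$ and $f_{n+1}$ with $\min\Pi\calJ_{n+1} \geq m_n + \varepsilon_{n+1}$, and telescoping gives
\begin{equation}
	P\Big(\sum_{k=1}^{N}\tilde u_k\Big) - f_0 = -f_N
\end{equation}
with $f_N$ of ever-increasing zf decay.

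Throughout the iteration the hypothesis $\calE_\bullet = (2^{-1}(d+1),0)$ is preserved, as remarked just before the theorem: \cref{eq:misc_0uu} forces $\calE^{\mathrm{new}}_\bullet = \calE_\bullet$ in this case, so each $\tilde u_n$ has bf index set $(2^{-1}(d-1),0)$ and each $f_n$ has bf index set $(2^{-1}(d+1),0)$. The tf index sets $\calI^{(n)}$ of the $\tilde u_n$ and the subleading zf index sets $\calK^{(n)}$ are built from their predecessors via the explicit combinatorial operations in \cref{eq:I+def}, \cref{eq:misc_094}, \cref{eq:misc_097} together with the $(b_j,0)\uplus\cdot$ combinator from \S\ref{sec:tf}; each of these introduces only finitely many new indices below any fixed real-part threshold, so the unions $\calI := \bigcup_n \calI^{(n)}$ and $\calK := \bigcup_n \calK^{(n)}$ are bona fide index sets in $\bbC_{>0}\times\bbN$. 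The decomposition into multipoles $\calY_l$ is preserved at every step since the lemmas in \S\ref{sec:1}--\S\ref{sec:2} respect it.

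Once $m_N \to \infty$, Borel summation (the standard Melrose-type construction for asymptotic summation of polyhomogeneous expansions on a manifold with corners) produces a function $u$ in the space \cref{eq:misc_g66} with $u \sim \sum_n \tilde u_n$, in the sense that $u - \sum_{k=1}^{N}\tilde u_k$ has zf index set contained in $\bbC_{\geq m_{N+1}}\times\bbN$. Applying $P$ and passing $N\to\infty$ yields $Pu - f \in \calA^{(2^{-1}(d-1),0),\infty,\infty}(X^+_{\mathrm{res}})$, which is \cref{eq:misc_n55}. The principal obstacle is verifying that the rescaling by $\sigma^{-m_n}$ at each step preserves the tf-regularity hypothesis $\calF_\bullet \subset \bbC_{>0}\times\bbN$ of \Cref{prop:O_sig_eps}; this is arranged by first invoking \Cref{lem:Osig} to clear sufficiently many leading tf terms before rescaling, so the input tf index sets sit in $\bbC_{>m_n}\times\bbN$ whenever needed.
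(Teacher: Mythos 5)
Your overall architecture is the same as the paper's: iterate \Cref{prop:O_sig_eps}, peel off a positive power of $\sigma$ from the error at each step, and asymptotically sum the resulting series on $X^+_{\mathrm{res}}$. However, there is a genuine gap at the step on which the whole induction hinges: you never justify that $m_N \to \infty$. \Cref{prop:O_sig_eps} only guarantees \emph{some} $\varepsilon_{n}>0$ depending on the data at step $n$, with no uniform lower bound, so a priori one could have $\sum_n \varepsilon_n < \infty$ and the zf gain $m_n$ could stall at a finite value; writing that $f_N$ has ``ever-increasing zf decay'' and then saying ``once $m_N\to\infty$'' assumes exactly what must be proved. The paper closes this loop using \Cref{rem:index_cancellation}: the only genuinely new zf indices created at each step (the $\calK_\bullet$ of \Cref{lem:Osig}) satisfy $\min\Pi\calK_\bullet \geq 1$, i.e.\ they sit at least one full order above the current floor. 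Hence the set of indices with real part in $[0,1)$ relative to the running floor is finite and does not grow, each step strictly consumes it, and therefore the floor advances by at least $1$ after finitely many steps ($N(n) = \#\{j\in[0,1): j \in \Pi\calJ_{\cup n}\}$ in the paper's notation). Some argument of this kind is indispensable.

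A secondary issue is your choice to rescale by the full accumulated gain $\sigma^{-m_n}$ rather than by $\sigma^{-\varepsilon_n}$. This is what creates the tf-hypothesis problem you flag at the end ($\calF_\bullet - (m_n,0)$ need not lie in $\bbC_{>0}\times\bbN$), and the proposed patch --- pre-clearing tf terms with \Cref{lem:Osig} until the tf index sets sit in $\bbC_{>m_n}\times\bbN$ --- is not free: each such application generates new zf contributions $\calK_\bullet$ and, when $\beth=\beth_1=\beth_3=0$, improves the tf index set by only one order per pass, so it requires the same kind of convergence bookkeeping you are trying to avoid. The paper sidesteps this entirely: in the proof of \Cref{prop:O_sig_eps} the exponent $\varepsilon$ is chosen \emph{small enough} (and $K_\bullet$ large enough) that the shifted index sets $\calF_\bullet^{\mathrm{new}},\calJ_\bullet^{\mathrm{new}}$ satisfy the same hypotheses as their predecessors, so the iteration can be applied verbatim to $f_{n+1}=-\sigma^{-\varepsilon_n}(Pu_n-f_n)$ with no extra clearing step. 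I would recommend adopting that normalization and then supplying the counting argument above for $\alpha_n=\sum_{k<n}\varepsilon_k\to\infty$.
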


\begin{proof}
	We will recursively define a strictly increasing sequence $\{\alpha_n\}_{n=0}^\infty$ with $\alpha_0=0$ and $\lim_{n\to\infty} \alpha_n=\infty$ and a sequence of functions $\{u_n\}_{n=0}^\infty$ (depending on $\sigma,x$) such that, in a sense to be made precise, 
	\begin{equation}
	u \sim \sum_{n=0}^\infty \sigma^{\alpha_n} u_n.
	\label{eq:misc_j41}
	\end{equation} 
	In the process, we will also define a sequence $\{f_n\}_{n=0}^\infty$ of functions. The first of these is just $f_0 = f$. The recursive step of the algorithm involves defining $\alpha_{n+1},u_n,f_{n+1}$ in terms of $\alpha_n,f_n$.  At each step in the algorithm, it will be the case that 
	\begin{equation}
	f_n \in \calA^{(2^{-1}(d+3),0),2+\calF_{n},\calJ_n }(X^+_{\mathrm{res}} ;\ell) + \sum_{l=1}^{\ell-1} \calA_{\mathrm{c}}^{(2^{-1}(d+3),0),2+\calF_{l,n},\calJ_{l,n} }([0,1)^+_{\mathrm{res}}) \calY_l
	\label{eq:misc_5hh}
	\end{equation}
	for index sets $\calF_{\bullet,n} \subset \bbC_{\geq 1} \times \bbN$ and $\calJ_{\bullet,n} \subseteq \bbC_{\geq 0} \times \bbN$.

	Suppose that we have defined $\alpha_n,f_n$ for some $n\in \bbN$. Note that $f_n$, by \cref{eq:misc_5hh}, satisfies the hypotheses of \Cref{prop:O_sig_eps}. That proposition then states that there exists some 
	\begin{equation}
	u_{n} \in \calA^{(2^{-1}(d-1),0),\calI_{n},\calK_n  }(X^+_{\mathrm{res}} ;\ell) \\ + \sum_{l=1}^{\ell-1} \calA_{\mathrm{c}}^{(2^{-1}(d-1),0), \calI_{l,n}, \calK_{l,n} }([0,1)^+_{\mathrm{res}}) \calY_l,
	\end{equation}
	where $\calI_{\bullet,n}\subset \bbC_{\geq 1}\times \bbN$ and $\calK_{\bullet,n}\subset \bbC_{\geq 0}\times \bbN$,
	such that, for some $\varepsilon>0$ (specified in the proposition),  the function $f_{n+1}$ defined by $f_{n+1} = - \sigma^{-\varepsilon} ( Pu_{n} - f_{n})$ satisfies 
	\begin{equation}
	 f_{n+1}\in \calA^{(2^{-1}(d+3),0),2+\calF^+_{n},\calJ_{n}^+ }(X^+_{\mathrm{res}} ;\ell)  + \sum_{l=1}^{\ell-1} \calA_{\mathrm{c}}^{(2^{-1}(d+3),0),2+\calF_{l,n}^+, \calJ_{l,n}^+ }([0,1)^+_{\mathrm{res}}) \calY_l,
	\end{equation}
	where the $\calF^+_\bullet,\calJ^+_\bullet$ satisfy the same hypotheses as their predecessors. So, \cref{eq:misc_5hh} holds with $n+1$ in place of $n$, with $\calF_{\bullet,n+1}=\calF^+_{\bullet,n}$ and $\calJ_{\bullet,n+1}=\calJ^+_{\bullet,n}$. 
	
	Let $\alpha_{n+1}=\alpha_n+\varepsilon$. 
	By construction, $\alpha_{n+1}>\alpha_n$. 
	\textit{Claim: $\alpha_n\to \infty$ as $n\to\infty$.} To prove this, use \Cref{eq:K_reqs}. Indeed, it must be the case that $\calJ_{\bullet,n}^++(\varepsilon,0)$ differs from $\calJ_{\bullet,n}$ only in that the former can have more elements in $\bbC_{\geq 1}\times \bbN$ and more logarithmic terms  above indices already present. It follows that, for some $N=N(n)\in \bbN$, specifically 
	\begin{equation}
		N(n) = \# \{j \in [0,1) : j\in \Pi \calJ_{\cup n}\},
	\end{equation}
	$\alpha_{n+N}\geq \alpha_n+1$. So, the set of $\alpha_n$ is unbounded, proving the claim.\footnote{In the asymptotically Euclidean case, the $\alpha_n$'s are increasing integers, so the fact that they go to infinity is obvious. The argument above is to handle the general asymptotically conic case, where the $\alpha_n$'s may be quite arbitrary.}
	
	An asymptotic summation argument therefore shows that there exists a polyhomogeneous $u$ whose $\sigma\to 0^+$ asymptotic expansion is given by \cref{eq:misc_j41}. 
	Specifically, $u$ satisfies \cref{eq:misc_g66} for 
	\begin{equation}
		\calI_\bullet = \bigcup_{n=0}^\infty \Big[ (\alpha_n,0) + \calI_{\bullet,n} \Big], \qquad \calK_\bullet = \bigcup_{n=0}^\infty \Big[ (\alpha_n,0) + \calK_{\bullet,n} \Big], 
	\end{equation}
	which are well-defined index sets because the real parts of the first components of the elements of the $\calI_{\bullet,n},\calK_{\bullet,n}$ are bounded below.

	Then, 
	\begin{align}
		\begin{split} 
		Pu-f \sim  - f +\sum_{n=0}^\infty \sigma^{\alpha_n} Pu_n  &=  \sum_{n=0}^\infty \sigma^{\alpha_n} (P u_n-f_n+ \sigma^{\alpha_{n+1}-\alpha_n} f_{n+1}) \\
		&= \sum_{n=0}^\infty \sigma^{\alpha_n} (P u_n-f_n-(Pu_n-f_n)) = 0 , 
		\end{split} 
	\end{align}
	where $f_{-1}=0$. This means that \cref{eq:misc_n55} holds. 
\end{proof}
\begin{remark}
	It may seem surprising that $Pu-f \in \calA^{2^{-1}(d+3)}$ but $u_\bullet \in \calA^{2^{-1}(d-1)}$, since $P$ has order $-1$ at $\mathrm{bf}$ and therefore ought only induce one order of decay --- you would naturally expect that $Pu-f \in \calA^{2^{-1}(d+1)}$ corresponds to $u_\bullet \in \calA^{2^{-1}(d-1)}$. The reason why $Pu-f$ is one $\rho_{\mathrm{bf}}$ better than expected is because $N_{\mathrm{tf}}$ already contains $N_{\mathrm{bf}}$ in it, so $P-N_{\mathrm{tf}}$ is order $-2$ at $\mathrm{bf}$. When we are constructing $u$ term-by-term, the error after each step arises from applying $P-N_{\mathrm{tf}}$ to the portion of $u$ constructed so far. This gives two extra orders of decay.
	This improvement can be tracked back to \cref{eq:misc_0uu}.
\end{remark}

We can now complete the proof of the main theorem. 

\begin{theorem}
	Let $P$ be as in \S\ref{sec:op}. Then, letting $P(\sigma)^{-1}$ for $\sigma>0$ denote the inverse in \Cref{prop:Vasy_absorption} (which we are just assuming exists in the case $P\neq P^*$ where we cannot prove it), and supposing that $f$ is as in \Cref{thm:C}, we have 
	\begin{equation}
		P(\sigma)^{-1} f \in \calA^{(2^{-1}(d-1),0),\calI, (0,0)\cup \calK}(X^+_{\mathrm{res}} ;\ell) + \sum_{l=1}^{\ell-1} \calA_{\mathrm{c}}^{(2^{-1}(d-1),0),\calI_l,(0,0)\cup \calK_l }([0,1)^+_{\mathrm{res}}) \calY_l 
	\end{equation}
	where $\calI_\bullet,\calK_\bullet$ are as in \Cref{thm:C}. 
	\label{thm:main}
\end{theorem}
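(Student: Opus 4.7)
The plan is to combine \Cref{thm:C} with the asymptotic analysis at $\mathrm{bf}$ developed in \S\ref{sec:bf}. First I would apply \Cref{thm:C} to produce a quasimode $u$ lying in the target function space and satisfying $Pu = f + F$ for some Schwartz error
\[
F \in \calA^{(2^{-1}(d-1),0),\infty,\infty}(X^+_{\mathrm{res}}) = \calS([0,1)_\sigma;\calA^{(2^{-1}(d-1),0)}(X)).
\]
The theorem then reduces to establishing the identity $P(\sigma)^{-1} f = u - P(\sigma)^{-1} F$ and showing that $P(\sigma)^{-1} F$ contributes nothing beyond the target space.

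For the identity, I would observe that for each $\sigma > 0$, the function $u(-;\sigma)$ is smooth in $X^\circ$ and has b-decay controlled by its $\mathrm{bf}$-index set $(2^{-1}(d-1),0)$, so by the Sobolev embedding \cref{eq:Sobolev} it lies in the domain of $P(\sigma)$ from \Cref{prop:Vasy_absorption}. Inverting $Pu = f+F$ then yields $u = P(\sigma)^{-1} f + P(\sigma)^{-1} F$.

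To analyze the error, I would invoke \Cref{prop:bf_1} with some $\alpha \in (0, (d-1)/2)$ to get $P(\sigma)^{-1} F \in \calS([0,1)_\sigma; \calA^{\alpha-}(X))$, and then upgrade to polyhomogeneity via \Cref{prop:bf_2}. Since the only indicial root of $N_{\mathrm{bf}}(P)$ is $(d-1)/2 > 0$ (valid for $d \geq 3$) and $F$ is Schwartz in both $\sigma$ and $\rho$, applying the proposition with $\calE = \varnothing$ and arbitrarily large conormal order yields
\[
P(\sigma)^{-1} F \in \calS([0,1)_\sigma; \calA^{(2^{-1}(d-1),0)}(X)) \subseteq \calA^{(2^{-1}(d-1),0),\infty,\infty}(X^+_{\mathrm{res}}).
\]
Being Schwartz at both $\mathrm{tf}$ and $\mathrm{zf}$ and matching the $\mathrm{bf}$-index set of $u$, this error preserves membership in the target space upon subtraction from $u$. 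The main subtlety is verifying compatibility of the a priori regularity $P(\sigma)^{-1} F \in \calS([0,1)_\sigma; \calA^{0+}(X))$ required by \Cref{prop:bf_2}, which is furnished by \Cref{prop:bf_1}, and ensuring that the indicial root $(d-1)/2$ determined by $N_{\mathrm{bf}}(P)$ matches the $\mathrm{bf}$-index set of the target space --- a matching that was built into the design of \Cref{thm:C}.
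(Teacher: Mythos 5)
Your proposal follows essentially the same route as the paper's proof: apply \Cref{thm:C} to produce a quasimode whose error $F$ is $O(\sigma^\infty)$, solve away $F$ using the $\mathrm{bf}$-analysis of \S\ref{sec:bf} (\Cref{prop:bf_1} and \Cref{prop:bf_2}), and identify the resulting exact solution with $P(\sigma)^{-1}f$ via the injectivity of the map in \Cref{prop:Vasy_absorption}, using the Sobolev embedding \cref{eq:Sobolev} to check that the constructed solution lies in the relevant space. One inaccuracy worth noting: $F$ is Schwartz in $\sigma$ but \emph{not} in $\rho$ --- at $\mathrm{bf}$ it is only polyhomogeneous with index set $(2^{-1}(d-1),0)$ --- so the invocation of \Cref{prop:bf_2} should be made with $1+\calE\supseteq (2^{-1}(d-1),0)$ rather than with $\calE=\varnothing$; the paper's own proof glosses over the same point, and the overall structure of your argument is unaffected.
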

\begin{proof}
	Applying \Cref{thm:C}, we get $v$ in the set on the right-hand side of \cref{eq:misc_g66} such that $g=Pv-f$ is in the set on the right-hand side of \cref{eq:misc_n55}. 
	
	We apply \Cref{prop:bf_2} with $-g$ in place of $f$ to get $w \in \calS([0,1)_\sigma; \calA^{(2^{-1}(d-1),0)}(X) )$ such that $Pw=-g$. 
	Then, $u=v+w$ satisfies $Pu=f$, and $u$ is also in the set on the right-hand side of \cref{eq:misc_g66}. In particular, 
	\begin{equation} 
		u(-;\sigma) \in \calA^{2^{-1}(d-1)}(X)
	\end{equation} 
	for each $\sigma>0$. By the injectivity of $P$ between the spaces in \Cref{prop:Vasy_absorption} (recall that this is an assumption that we are making, though one that is automatically satisfied when $P(\sigma)$ is the conjugated spectral family of a Schr\"odinger operator), this implies $u = P(\sigma)^{-1} f$.
\end{proof}

\Cref{thm:A} is an immediate corollary. 

\section{A partially worked-through example (Price's law)}
\label{sec:Price}

It is instructive to work through the first several steps of the algorithm in the Euclidean case. We will not keep track of multipole expansions or other detailed pieces of information like precise index sets, except as necessary for getting Price's law (really the potential scattering analogue). This simplifies the discussion considerably.
To simplify our discussion further, we use $r$ to denote the usual Euclidean radial coordinate, and we allow a singularity at the origin, $r=0$. This will not change things in a serious way, and if the reader prefers they may just replace $r$ with $\langle r \rangle=(1+r^2)^{1/2}$ where necessary.

Let $d=3$, and consider  
\begin{equation}
P(\sigma) = \triangle+2i\sigma \partial_r + \frac{2i\sigma }{r} +V  
\end{equation}
for $V\in \langle r \rangle^{-3} C^\infty(\overline{\bbR^3})$. This is just the Euclidean Laplacian with a potential term, except conjugated by $e^{i\sigma r}$. (And studying the conjugated PDE is equivalent to studying the unconjugated PDE, for conjugated forcing.)

Let $f\in C_{\mathrm{c}}^\infty(\bbR^3)$. We will produce a non-oscillatory $u$ (a function of both $\sigma>0$ and $x\in \bbR^3$) such that \begin{equation} 
	Pu-f = O(\sigma^{2})
\end{equation} 
in a suitable function space. Then, 
\begin{equation}
P(\sigma)^{-1} f = u - \underbrace{P(\sigma)^{-1}( Pu-f)}_{\text{Controlled by \Cref{prop:nice_low_energy}.}}
\end{equation}
Some of this discussion will recapitulate the argument in \cite{HintzPrice}, but our algorithm allows one to keep going beyond the point where Hintz stops. Our goal here is just to illustrate how our algorithm works in a familiar setting. We stop after a couple of steps -- hence why we produce only an $O(\sigma^2)$ quasimode, not an $O(\sigma^\infty)$ quasimode. Subsequent rounds look similar.

The idea to construct $u$ is to build it as a finite sum
\begin{equation}
u= \sum_{j} \sigma^j \underbrace{ \sum_k  u_{j,k} (\log \sigma)^k}_{u_j}
\label{eq:misc_174}
\end{equation}
of some explicit terms $u_{j,k} \in \calA^{-\infty,-\infty,(0,0)} (X^+_{\mathrm{res}})$, each of which is constructed as in \Cref{prop:O_sig_eps}. (Note that $u_{j,k}$ is allowed to depend on $\sigma$, but is required to be a smooth function of $\rho_{\mathrm{zf}}$ near $\mathrm{zf}$.)
The $u_{j,k}$ are chosen so that $P(\sigma)u_{j,k}$ cancels to leading order with the low energy terms in 
\begin{equation}
-f+P \sum_{j'<j} u_{j',k}  \sigma^{j'} (\log \sigma)^k \overset{\mathrm{def}}{=}f_j \sigma^j . 
\label{eq:misc_0174}
\end{equation}
Finding $u_{j,k}$ will require applying $P(0)^{-1}$ to $f_j|_{\mathrm{zf}}$. (This is slightly incorrect, because $f_j$ may have $\log \sigma$ terms, so we may need to split \begin{equation}
	f_j=\sum_k f_{j,k}(\log \sigma)^k
\end{equation} 
into bits with different powers of $\log \sigma$ and then look at $f_{j,k}|_{\mathrm{zf}}$. We will see an example of this at the end of this section.)

The key piece of numerology to keep in mind throughout is that $P(0)^{-1}g$ makes sense whenever $g$ has at least \emph{two} orders of decay as $r\to\infty$: 
\begin{equation}
g\in \calA^{2+}(X), \text{ e.g. if }g\in \calA^{\calF}(X) , \calF\subset \bbC_{>0}\times \bbN.
\end{equation}
(This is consistent with Coulomb's law, the integral in which stops making sense if the charge density is decaying only like $1/\langle r \rangle^2$.)
So, the game is to choose $u_{j,k}$ so that the left-hand side  of \cref{eq:misc_0174} is decaying at $\mathrm{zf}$ \emph{and} $\mathrm{tf}$ to the requisite orders. In order to do this, we juggle solving away error terms at $\mathrm{zf},\mathrm{tf}$. We keep solving away the error terms at $\mathrm{zf}$ until we encounter terms that are too slowly decaying. Then we solve away terms at $\mathrm{tf}$ (producing logs in the process).

Let $\calY_j$ denote the span of the spherical harmonics $Y^m_j$ for $m\in \{-j,\dots,j\}$. Thus, $\calY_0$ consists of the s-orbitals, $\calY_1$ consists of the p-orbitals, $\calY_2$ consists of the d-orbitals, etc.
The $c_j,b_j$ used throughout this paper (see \cref{eq:misc_168z}, \cref{eq:misc_168}) are then $c_j=1+j$ and $b_j=j$, as discussed in \Cref{ex:Euclidean_example}.
\begin{itemize}
	\item (Step 0.) Let $\tilde{v}$ denote the decaying solution to $P(0) \tilde{v}=f$. Such a $\tilde{v}$ exists and is unique (because we are assuming the absence of a bound state/resonance at zero energy). \Cref{thm:phg0}, (applied with $\calE_\bullet=\varnothing$, $\beth,\beth_0=0$) describes this in detail.\footnote{\Cref{lem:zf_pre} includes the relevant information, except in that the lemma is phrased in terms of index sets, instead of pre-index sets.}
	Here is what we can make use of: 
	\begin{itemize}
		\item $\tilde{v}=O(1/r)$ as $r\to\infty$ (this being consistent with Coulomb's law). Indeed, all of the index sets defined in that theorem are contained in $\bbC_{\geq d-2}\times \bbN = \bbC_{\geq 1}\times \bbN$. 
		\item  The $O(1/r)$ term is spherically symmetric. So, it has the form $q/r$ for some $q\in \bbC$ (possibly zero). Indeed, applying \Cref{thm:phg0} with $\ell=1$, the index set $\calI$ describing the other harmonics (p-waves, d-waves, f-waves, etc.) is contained in $\bbC_{\geq c_1}\times \bbN = \bbC_{\geq 2}\times \bbN$. 
		\item \emph{The $O(1/r^{3-})$ term has no s-wave, i.e.\ $\calY_0$, component if the potential $V$ has no $1/r^3$ term in its large-$r$ expansion.} In other words, the $O(1/r^2)$ term consists of just p-waves and higher.\footnote{If the PDE is spherically symmetric modulo lower-order terms, that is if $\beth_0\gg 1$, then the $O(1/r^2)$ term consists of just a p-wave. Cf.\ \Cref{ex:sym}.}
		
		Indeed, in this case, we can take $\beth,\beth_0=1$. (Recall that $\beth$ is describing the orders below $1/r^3$ at which $V$ has a nonzero term, and $\beth_0\geq \beth$ is the same but only for terms which fail to be spherically symmetric.) If we dig into the definition of $\calI_\bullet$, we see that the pre-index set describing the s-wave component has the form 
		\begin{equation}
			\calI_0 =  \{(1,0)\} \uplus (2+\dot{\calI}_\cup), 
		\end{equation}
		where $\calI_\cup \subseteq \bbC_{\geq 1}\times \bbN$ is given by \cref{eq:misc_238} and $\dot{\calI}_\cup$ is the index set it generates. This is \Cref{ex:spherical_breaking}. So, $(2,0)\notin \calI_0$. This means that the s-wave term has no $O(1/r^2)$ component: it has the form $q/r+O(1/r^{3-})$.

		The reason why we have worked with pre-index sets in \S\ref{sec:0} is exactly so as to make this sort of statement. If $\calI_0$ were an index set, then $(1,0)\in \calI_0$ would imply $(2,0)\in \calI_0$. 
		\item \emph{The $O(1/r^2)$ term in $\tilde{v}$ can be computed explicitly.} 
		This involves actually going through the argument used to prove \Cref{prop:zf_computation}. This proposition states the existence of an asymptotic expansion, but the proof is constructive. 
		Here is how that proof went: we can rearrange $P(0)\tilde{v}=f$ as $\triangle \tilde{v}=g$ for $g=f-V\tilde{v}$.
		So, $\tilde{v}$ is given by applying (the exact) Coulomb's law to $g$. The $f$ term in $g$ is compactly supported, so we can apply the vanilla multipole expansion from electrostatics to conclude that its contribution to $\tilde{v}$ is such that the s-wave has no $O(1/r^k)$ term unless $k=1$, the p-wave has no $O(1/r^k)$ term unless $k=2$, and so on; see \Cref{ex:Coulomb}.

		So, it suffices to work out the decaying solution $\bar{v}$ to 
		\begin{equation} 
				\triangle \bar{v} = -V\tilde{v},
		\end{equation} 
		which is given by Coulomb's law with charge distribution $\propto V\tilde{v}$. 
		The leading order term in the large-$r$ expansion of $V\tilde{v}$ is $V_0(\theta)q/r^4$, where $V=r^{-3}V_0+O(r^{-4})$ as $r\to\infty$. To understand its contribution to $\bar{v}$, we can separate variables and study the resultant ODEs (or equivalently use the Mellin transform, which is what we do in \S\ref{sec:0}). This gives, for each harmonic $\calY_j$, a $O(1/r^2)$ term proportional to $q$ and the $j$-wave component of $V_0$. 
		\item The previous item shows also that, \emph{if $V_0$ has no non-s-wave term}, or if $q=0$, then the $O(1/r^{3-})$ part of $\tilde{v}$ consists only of the s-wave term above and an additional p-wave term at $O(1/r^2)$. Thus, 
		\begin{equation}
			\tilde{v} = \frac{q}{r} + \frac{q\frakm+\bfa\cdot \hat{\bfr}}{r^2} + O\Big(\frac{1}{r^{3-}}\Big)
		\end{equation}
		for some $\bfa\in \bbR^3$, where $\frakm$ is the constant of proportionality from the previous item.\footnote{Note that $\frakm$ is not what we called `$\frakm$' elsewhere in this paper, where $\frakm$ is a term in the metric. However, these terms enter the low-energy algorithm in the same way (up to proportionality factors), which is why we are using the same symbol.}
		
		Indeed, in this case, we can apply \Cref{prop:zf_computation} with $\beth=0$ and $\beth_0=1$ (reflecting the fact that terms breaking the spherical symmetry only enter at sub-subleading order, whereas the $V_0$ term in $V$ enters at subleading order). This suffices to tell us that all of the terms we have not computed explicitly lie in $\bbC_{\geq c_2}\times \bbN=\bbC_{\geq 3}\times \bbN$.
	\end{itemize}
	If this seems like a lot of information to keep track of, keep in mind that $\tilde{v}$ will be input to all later steps in the algorithm. 
	
	Normally, we would cut off $\tilde{v}$ away from $\mathrm{zf}$ to form a new function $v$, and this would be the final output of this round of \S\ref{sec:1}. This cutoff turns out not to matter here, for reasons which we will explain in the next paragraph. So, we skip this step. (Nothing would go wrong were we to include the cutoff. We are skipping it merely for simplicity.)
	
	This completes our first go-through of \S\ref{sec:1}, \S\ref{sec:0}. Next, we are supposed to go through \S\ref{sec:2} to improve our ``$O(\rho_{\mathrm{zf}})$-quasimode'' $\tilde{v}$ at $\mathrm{tf}$. The reason this is typically necessary is that we formed $\tilde{v}$ by inverting $P(0)$, but the terms $2i\sigma(\partial_r+1/r)$ in $P$ are as important as the Laplacian at $\mathrm{tf}$ (in the sense of b-decay), so there is no reason to ``trust'' $\tilde{v}$ away from $\mathrm{zf}$. However, going through \S\ref{sec:2} turns out to be unnecessary at this time --- that is, it suffices to apply \Cref{lem:Osig} with $K$ so negative that the $f_{\mathrm{approx}}$ in the proof of that proposition is zero, making the proposition trivial. The reason is that if we define $u_0=\tilde{v}$ and 
	\begin{equation}
		f_1 = \sigma^{-1}(P\tilde{v}- f), 
	\end{equation}
	then $f_1$ has enough decay to feed into $P(0)^{-1}$. Recall that this means $O(1/r^{2+})$ as $r\to\infty$. That this holds is a bit lucky\footnote{By ``lucky,'' we mean that it is not a consequence of \Cref{lem:zf_pre}. The index set $\calI^+_0$ coming from this lemma has $(1,0)$ in it. The lemma says that $2+\calI^+$ describes $P\tilde{v}-f$ at $\mathrm{tf}$ (here $\tilde{v}$ is what we called $u$ in that lemma). So, $(3,0)$ is a possible index in $P\tilde{v}-f$ at $\mathrm{tf}$, according to that lemma. This implies that $(2,0)$ is an index in $f_1=\sigma^{-1}(\tilde{v}-f)$ at $\mathrm{tf}$. We saw above that this term is actually absent, but the reason is an algebraic coincidence. Note that we do not make use of this algebraic coincidence in the proof of \Cref{thm:A}, \Cref{thm:C}.}:
	\begin{equation}
		\sigma^{-1}(P\tilde{v} -f) = 2i \Big(\partial_r+\frac{1}{r} \Big) \tilde{v} = 2i \Big(\partial_r+\frac{1}{r} \Big) \frac{q}{r} + O \Big(\frac{1}{r^3}\Big) = O \Big(\frac{1}{r^3}\Big),
		\label{eq:misc_1ju}
	\end{equation}
	because $\partial_r+1/r$ kills $1/r$. Here, the big-$O$ term results from applying $(\partial_r+1/r) \in \operatorname{Diff}^{1,-1}_{\mathrm{b}}(X)$ to $\tilde{v}-q/r \in \calA^2(X)$. So, $f_1|_{\mathrm{zf}}=f_1$ has \emph{three} orders of decay, more than the required $2+$. 
	
	For later reference, note that the big-$O$ term stands for 
	\begin{align}
		\begin{split} 
		O\Big(\frac{1}{r^3}\Big) &= 2i \Big(\partial_r+\frac{1}{r} \Big) \frac{q\frakm+\bfa\cdot \hat{\bfr}}{r^2} + O\Big(\frac{1}{r^{4-}}\Big)\\
		&= -2i\cdot  \frac{q\frakm+\bfa\cdot \hat{\bfr}}{r^3} +O\Big(\frac{1}{r^{4-}}\Big).
		\end{split} 
		\label{eq:misc_1jj}
	\end{align}
	
	By the way, the requirement that $f_1|_{\mathrm{zf}}\in \calA^{2+}(X)$ is encoded in \S\ref{sec:arg} in the requirement that the index sets $\calF$ all lie in $\bbC_{>0}\times \bbN$.

	So, we can continue on to the next step. The final output of this step is $u_0=u_{0,0}=\tilde{v}$. 
	\item (Step 1.)  Next, let $\tilde{v}$ denote the decaying solution to $P(0)\tilde{v} = -f_1|_{\mathrm{zf}}=-f_1$. 
	Again, \Cref{thm:phg0} tells us all we need to know about $\tilde{v}$. Note: to match our notation with that in \S\ref{sec:1}, we call $\calF$ what we called $\calE$ in \S\ref{sec:0}. In Step 0, we took these index sets to be empty, but now they are nonempty, because $f_1$ is not Schwartz at $\mathrm{bf}$. 
	\begin{itemize}
		\item 
		\emph{Most basically, $\tilde{v}= O(1/\langle r \rangle^{1-})$.} The reason why we have $\langle r \rangle^{1-}$ here and not $\langle r \rangle$ has to do with the index set $2+\calF$ describing the decay of $f_1|_{\mathrm{zf}}$. Note that 
		\begin{equation} (1,0)\in \calF.
		\end{equation}  
		Indeed, \cref{eq:misc_1ju}, \cref{eq:misc_1jj} tell us that (unless $q,\frakm,\bfa$ are all zero), $f_1$ has a $1/r^3$ term in it. 
		
		This means that, when we form $\calI_0$ (as before, see \Cref{ex:spherical_breaking} for the explicit formula, to all orders), we take $\{(c_0,0)\}\uplus \calF=\{(1,0)\}\uplus \calF$, which contains 
		\begin{equation}
			\{(1,0)\}\uplus (1,0) = \{ (1+\delta,0),(1+\delta,1) : \delta\in \bbN \}\ni (1,1). 
		\end{equation}
		The appearance of the index $(1,1)$ means we can have a $r^{-1} \log r$ term in the s-wave part of $P(0)^{-1} f_1$. 
		The proof shows that \emph{this term is proportional to the $O(1/r^3)$ s-wave term in $f_1$, which is proportional to $q\frakm$.}
		\item The index set $\calI_1$ (if we take $\beth,\beth_0=0$ in \Cref{ex:spherical_breaking}) has the form 
		\begin{equation}
			\calI_1 \subseteq (1,0) \cup \bbC_{\geq 2} \times \bbN. 
		\end{equation}
		So, the p-wave part of $\tilde{v}$ has possible logarithmic terms only starting with $r^{-2} \log r$. 
		\item If $\beth_0\geq 1$, then the other index sets $\calI_\bullet$ besides $\calI_0,\calI_1$ are all in $\bbC_{\geq 2} \times \bbN$.
	\end{itemize}
	To summarize, $\tilde{v}$ has the form 
	\begin{equation}
		\tilde{v} = \frac{Cq\frakm \log r}{r} + \frac{q_1}{r} + \frac{\calY_1}{r} + \calA^{2-}(X)
	\end{equation}
	for some $q_1\in \bbC$,
	where $C\in \bbC$ is a (nonzero) constant of proportionality (that can be worked out explicitly).
	
	At this point, it is worth explaining why we cannot just take $u_1=\tilde{v}$, like we took $u_0=\tilde{v}$ in the previous step. Suppose we tried defining 
	\begin{equation}
		f_2= \sigma^{-1}(P\tilde{v}- f_1) = 2i \Big(\partial_r+\frac{1}{r} \Big) \tilde{v} = \frac{Cq\frakm}{r^2} + \calA^{3-}(X),
	\end{equation}
	where $C$ is some (new) constant.
	The problem is that $\partial_r+1/r$ did not kill off $r^{-1} \log r$, as it kills of $1/r$. Thus, $f_2$ has a $1/r^2$ term in it, which means that $f_2|_{\mathrm{zf}}$ falls \emph{outside} the domain of $P(0)^{-1}$. 
	Cutting off $\tilde{v}$ near $\mathrm{bf}$ (which we usually do) does not help, since the issue is at $\mathrm{zf}$. So, we need to use the tools in \S\ref{sec:2}, specifically \Cref{lem:Osig}. 
	
	At this point we would usually cut off $\tilde{v}$ away from $\mathrm{zf}$. As in Step 0, we skip this. Unlike in Step 0, this will produce a fictitious singularity at $\mathrm{bf}$, but since our focus is on low-energy, we will allow this small fictitious singularity for the sake of simplifying the exposition. (The fictitious singularity is only a logarithm.) 
	
	We now use \Cref{lem:Osig}, where what we called $f$ in that lemma is now $g=P\tilde{v}-f_1$. The three index sets in that lemma, describing 
	\begin{equation}
		g = \frac{\sigma C q \frakm}{r^2} + \calA^{3-,4-,(1,0)}(X^+_{\mathrm{res}})
		\label{eq:misc_187}
	\end{equation}
	at $\mathrm{bf},\mathrm{tf},\mathrm{zf}$ respectively, are $\calE_\bullet,\calF_\bullet$, and $\calJ_\bullet=(1,0)$. We can say the following about $\calE_\bullet,\calF_\bullet$:
	\begin{itemize}
		\item The index set $1+\calE_\bullet$ describes the decay of $g$ at $\mathrm{bf}$. According to to \cref{eq:misc_187}, this will satisfy $\calE_\bullet \subset (1,0) \cup \bbC_{\geq 2}\times \bbN$. That is, $g$ has $O(1/r^2)$ decay at $\mathrm{bf}$ (without log terms) and then an $O(1/r^{3-})$ remainder.

		\textit{Note:} if we had used the cutoff version $v$ of $\tilde{v}$, then  $g=Pv-f_1$ would have the same index set as $f_1$ at $\mathrm{bf}$, and this would be in $\bbC_{\geq 3}\times \bbN$, as can be seen from \cref{eq:misc_1ju}. So, $\calE$ would be in $\bbC_{\geq 2}\times \bbN$. In particular, $(1,0)\notin \calE$. This shows why we cut-off $\tilde{v}$ in general. Failing to do so results in fictitiously large index sets at $\mathrm{bf}$. In fact, because $(1,0)=(2^{-1}(d-1),0)$, failing to cut-off gives an extra logarithmic term at $\mathrm{bf}$ in this step, when we apply \Cref{lem:Osig}. This should be contrasted with the $\calE_\bullet$ in \S\ref{sec:arg}, which we take to be $(2^{-1}(d+1),0)=(2,0)$ by the end of the section.
		\item The index set $2+\calF_\bullet$ describes the decay of $g$
		at $\mathrm{tf}$. So, $\calF_0\subseteq (1,0)\cup \bbC_{\geq 2}\times \bbN$; indeed, the $1/r^2$ in \cref{eq:misc_187} gives the leading term two orders of decay at $\mathrm{tf}$, and $\sigma$ gives another order of decay at $\mathrm{tf}$; so $2+\calF_0$ should have $(3,0)$ in it, and all the other terms are higher-order.  The other $\calF_\bullet$ are better.
	\end{itemize}
	 Recall that $K_\bullet$'s are parameters in \Cref{lem:Osig}. 
	 Take $K_0$, the s-wave $K$, to be $K_0\in (1,2]$; the point is that $K_0$ is just big enough to capture the problematic $(1,0)$ term in $\calF_0$, without being too big. Here, too big would mean $K_0>c_0+1=2$. 
	 
	 We can take the other $K_\bullet$ to be $0$ or $1$, or the same as $K_0$. The simplest choice is $K_\bullet\ll 1$, so that what we called $f_{\mathrm{approx}}$ in the proof of \Cref{lem:Osig} (we should really call it $g_{\mathrm{approx}}$ now) consists only of the leading s-wave term $C\sigma q\frakm/r^2$ in $g=P\tilde{v}-f_1$:
	\begin{equation}
		f_{\mathrm{approx}}  = \frac{C \sigma^3 q\frakm}{\hat{r}^2} = \sigma^3 \tilde{f}_{1,0}, \quad \tilde{f}_{1,0} = \frac{C q\frakm}{\hat{r}^2}
	\end{equation}
	near $\mathrm{zf}$. (This is modified away from $\mathrm{zf}$ by terms involving the cutoff used to form $v$ from $\tilde{v}$.) As expected (see the index sets in \S\ref{sec:2}), $\tilde{f}_{1,0}=1/\hat{r}^2$ lies in $\calA^{*,-1-K-}$ near $\hat{r}=0$. 
	
	The $r^{-2}$-decay of $g$ corresponds to a $\hat{r}^{-2}$-\emph{blowup} of $\tilde{f}_{1,0}$ at $\hat{r}=0$. This might look bad, but it is not so bad. Indeed, the s-wave indicial roots of the operator $\smash{\hat{N}_{\mathrm{tf}}}$ at $\hat{r}=0$ are $-1$ and $0$, which correspond to $-3$ and $-2$ decay rates in the forcing, i.e.\ $\hat{r}^{-3}$ and 
	\begin{equation} 
		\hat{r}^{-2}\sim \tilde{f}_{1,0}
	\end{equation} 
	blowups. The best forcing would be blowing up only like $\hat{r}^{-1}$ at $\hat{r}=0$, or not blowing up at all, but $\hat{r}^{-2}$ is the next best thing. The only consequence of hitting the indicial root is the occurrence of logarithmic terms.
	
	Now one applies $N_{\mathrm{tf}}^{-1} = \sigma^{-2} \hat{N}_{\mathrm{tf}}^{-1}$ to $f_{\mathrm{approx}}$ and cuts off away from $\mathrm{tf}$. Concretely, this amounts to solving the ODE 
	\begin{equation}
		\Big( - \frac{\mathrm{d}^2}{\mathrm{d} \hat{r}^2} - \Big(\frac{2}{\hat{r}} + 2i \Big) \frac{\mathrm{d}}{\mathrm{d} \hat{r}}-\frac{2i}{\hat{r}} \Big) \hat{N}_{\mathrm{tf}}^{-1} \tilde{f}_{1,0} = \frac{Cq\frakm}{\hat{r}^2 }
	\end{equation}
	with the recessive boundary condition at $\hat{r}=0$ and the non-oscillatory boundary condition at $\hat{r}=\infty$. As discussed in the appendix, this can be done by solving the homogeneous ODE (a form of Bessel's equation) and then using Duhamel's principle.
	Alternatively, we can write down $\tilde{w}=\hat{N}_{\mathrm{tf}}^{-1} \tilde{f}_{1,0} $ term-by-term. Specifically, 
	\begin{equation}
		\tilde{w} \sim \sum_{j,k} \tilde{w}_{1,0,j,k} \hat{r}^j (\log \hat{r})^k  ,\quad \tilde{w}_{1,0,j,k}\in \bbC, 
	\end{equation}
	where the indices $(j,k)$ are to-be-determined. The leading coefficient is then easily seen to be such that 
	\begin{equation}\tilde{w}_{1,0,j,k}
		 \Big(- \frac{\mathrm{d}^2}{\mathrm{d} \hat{r}^2} - \frac{2}{\hat{r}} \frac{\mathrm{d}}{\mathrm{d} r}\Big)  \hat{r}^j (\log \hat{r})^k = \frac{Cq\frakm}{\hat{r}^2}. 
	\end{equation}
	So, $j=0$, $k=1$, and 
	\begin{equation} 
		\tilde{w}_{1,0,j,k}=-Cq\frakm. 
	\end{equation} 
	So, $\tilde{w}$ is $-Cq\frakm \log \hat{r}$ plus better terms.
	Note the appearance of the logarithm, signaled by $k=1$. \emph{This is the origin of Price's law}. 
	The actual contribution of $\tilde{w}$ to $u_1$ will be $\sigma \tilde{w}$.
	Quickly, we explain the factor of $\sigma$:
	\begin{itemize}
		\item  $N_{\mathrm{tf}}=\sigma^2 \hat{N}_{\mathrm{tf}}$, capturing the fact that $P$ has order $-2$ at $\mathrm{tf}$, 
		\item $f_{\mathrm{approx}} = \sigma^3 \tilde{f}_{1,0}$, so we have one extra $\sigma$ on top of those that $N_{\mathrm{tf}}^{-1}$ absorbs.
	\end{itemize}
	So, the contribution to $u_1$ has the form
	\begin{equation}
		-C q \frakm  \sigma \log \hat{r} \propto \sigma \log (r\sigma) = \sigma (\log r+\log \sigma), 
	\end{equation}
	plus terms with better decay at $\mathrm{zf}$. 	Note that this is order $1-$ at $\mathrm{zf}$ (more specifically, $O(\rho_{\mathrm{zf}}\log \rho_{\mathrm{zf}}$). So, when we add $\tilde{w}$ to $\tilde{v}$, we will have worsened our $O(\rho_{\mathrm{zf}})$-quasimode $\tilde{v}$ to an $O(\rho_{\mathrm{zf}}^{1-})$-quasimode. But this is a tolerable loss. Any $O(\rho_{\mathrm{zf}}^{0+})$-quasimode would do. The point is that we have bettered our quasimode at $\mathrm{tf}$.
	
	Now let $u_1=-\tilde{v} + \sigma \tilde{w}$. \emph{This is the final product of step one}. Taking $u=u_0+\sigma u_1$ (we will modify this momentarily), 
	\begin{equation}
		Pu-f=(Pu_0 -f)-\sigma P\tilde{v} + \sigma^2 P\tilde{w} = \sigma f_1 - \sigma P\tilde{v}+\sigma^2 P\tilde{w}=\sigma(-g+\sigma P\tilde{w}).
	\end{equation}
	The point of $f_{\mathrm{approx}}$ was that $g\approx f_{\mathrm{approx}}$, where the $\approx$ is up to negligible terms. Negligible means that we can factor out a $\sigma$ from them without getting something with $1/r^2$ decay (or less decay).
	So, 
	\begin{align}
		\begin{split} 
		Pu-f &= \sigma^2 \Big(\frac{f_{\mathrm{approx}}-g}{\sigma}\Big) + \sigma(-f_{\mathrm{approx}}+ \sigma P\tilde{w} ) \\ 
		&= \sigma^2 \Big(\frac{f_{\mathrm{approx}}-g}{\sigma}\Big) + \sigma^2( (P-N_{\mathrm{tf}})\tilde{w} ) = \sigma^2 \Big(\frac{f_{\mathrm{approx}}-g}{\sigma}\Big) + \sigma^2 V\tilde{w} 
		\end{split} 
	\end{align}
	is the sum of a $O(\sigma^2 \rho_{\mathrm{tf}}^{3-})$  term and $-C q\frakm  V \sigma^2 \log (r\sigma) $. Note that this has the requisite amount of decay for the next step to proceed. Indeed, we need strictly more than $2$ orders of $\rho_{\mathrm{tf}}$, after factoring out the $\sigma^2$. We have $3-$ orders. This is what using \Cref{lem:Osig} accomplished.
	\item (Step $2-$.) Next, we modify $u$ by adding to it a term $u_{2,1}$ to cancel out with the $-C q\frakm V \sigma^2 \log \sigma$ term in $Pu-f$. Easily enough, this is 
	\begin{equation}
		u_{2,1} = C q \frakm P(0)^{-1}  V 
	\end{equation}
	near $\mathrm{zf}$. As we were just stating, $P(0)^{-1}  V $ makes sense.

	Conceivably, this might cancel with the $O(\sigma^2 \log \sigma)$ contribution to $u$ found in the previous part, but since $u_{2,1}$ depends on all of $V$ and not just the leading term in its large-$r$ expansion, such cancellations generically do not occur. Thus, we have reached the key observation giving Price's law: \emph{if the subleading term in $P(0)$ is nonzero, then the resolvent output generically has a $\sigma^2 \log \sigma$ singularity.}
	
	So, set $u=u_0+\sigma u_1 + \sigma^2 \log \sigma u_{2,1}$. 
	\item (Step 2.) We now keep going. We just take $f_2 = \sigma^{-2}(Pu-f)$ in place of $f_1$ and proceed as in the previous step.
	
\end{itemize}

Let us end this discussion by remarking on cutoffs. We have already indicated why cutting off $P(0)^{-1}\bullet$ away from $\mathrm{zf}$ is necessary to not produce fictitious singularities at $\mathrm{bf}$. We tolerated these fictitious singularities in the exposition above. However, when proving a full polyhomogeneity statement, we need to asymptotically sum the right-hand side of \cref{eq:misc_174} (which now has infinitely many terms). This cannot be done if the $u_{j,k}$ have progressively worse index sets at $\mathrm{bf}$ as $j\to \infty$. 
This is one reason why cutting off in \S\ref{sec:1} is necessary.

In \S\ref{sec:2}, we make use of another cutoff, localizing near $\mathrm{tf}$. If we were to use this above, it would get rid of the singularity at $r=0$.

If actually $V\in \langle r \rangle^{-4}C^\infty(\overline{\bbR^3})$, i.e.\ if $\beth\geq 1$, then the log terms above are zero. We can then repeat the argument above. For every extra order of decay of $V$, it takes one more step to get a $\log r/r$ term in the forcing, which means the $\log \sigma$ singularity is accompanied by an extra $\sigma$. So, the resolvent is one order more regular at $\sigma=0$. Taking the Fourier transform, this results in solutions of the wave equation having one more order of temporal decay.
This is the connection between the tails of the potential (or the metric, which enters the algorithm above in the analogous way) and the temporal decay rate of waves. This was proven by Morgan \cite{MorganThesis, Morgan}.

In odd $d\geq 5$ dimensions, the algorithm works similarly to that above. In even $d\geq 4$ dimensions, one encounters logarithmic singularities even if $V=0$; see the exact formula for the free resolvent. Recall the coincidence 
\begin{equation}
	(P-N_{\mathrm{zf}}) \frac{1}{r} = \Big(\partial_r + \frac{1}{r}\Big)\frac{1}{r}= 0 
	\label{eq:misc_196}
\end{equation}
we made use of above, in Step 0.\footnote{When the metric is variable, then instead equality in \cref{eq:misc_196} holds only up to sufficiently fast decaying terms.} What is going on here is that, when $d=3$, the indicial root $c_0=d-2$ describing Coulomb's law in $d$-dimensions coincides with the indicial root $2^{-1}(d-1)$ describing decay at $\mathrm{bf}$. If it were not for this coincidence, then we would have needed to make use of \S\ref{sec:2} in Step 0. This would then introduce logarithmic terms at $\mathrm{zf}$. When $d\geq 5$ is odd, the indicial root $d-2$ is larger than $2^{-1}(d-1)$ by an integral amount, so the same phenomenon is encountered in some Step $k$, $k\geq 1$. In each subsequent step, after Step 0, we decrease the decay rate of the dangerous term by $1$. Right when that decay rate becomes borderline, $P-N_{\mathrm{zf}}$ kills it, averting the appeal to \S\ref{sec:2}. However, if $d\geq 4$ is \emph{even}, then the indicial root $2^{-1}(d-1)$ of $P-N_{\mathrm{zf}}$ is a half-integer, whereas $d-2$ is an integer, so the problematic term survives. It must then be solved away by \Cref{lem:Osig}, resulting in logarithmic terms.

\appendix

\section{Analysis at exactly zero energy}
\label{sec:0}

A familiar fact from the theory of electrostatics is that given $\rho\in C_{\mathrm{c}}^\infty(\bbR^3)$, say representing the distribution of charge-density in some region of space, the unique decaying solution  
\begin{equation}
\phi(x) = \int_{\bbR^3} \frac{\rho(y)}{4\pi |x-y|} \dd^3 y
\label{eq:coulomb}
\end{equation}
to Poisson's equation $\triangle \phi = \rho$, the voltage generated by that charge-density,
satisfies $\phi \in \smash{\langle r \rangle^{-1}}C^\infty(\overline{\bbR^3})$,
where $\smash{\overline{\bbR^3}} = \bbR^3 \cup \infty \bbS^2$ is the radial compactification of $\bbR^3$, $r$ is the spherical radial coordinate, and $\langle r \rangle = (1+r^2)^{1/2}$ is the Japanese bracket. Prosaically, this means that, in the $r\to\infty$ limit, $\phi$ admits a full asymptotic expansion in terms of negative powers $r$: there exist $\phi_1(\theta),\phi_2(\theta),\cdots \in C^\infty(\bbS^2_\theta)$ such that 
\begin{equation}
\phi(x)  - \sum_{k=1}^K \frac{\phi_k(\theta)}{r^k} \in \langle r \rangle^{-K-1} C^\infty(\overline{\bbR^3}\backslash \{0\}) \subset  \langle r \rangle^{-K-1} L^\infty_{\mathrm{loc}}(\overline{\bbR^3}\backslash \{0\})
\end{equation}
for all $K\in \bbN$. It turns out that each coefficient $\phi_k$ satisfies $\phi_k\in \operatorname{span}_\bbC\{Y^{-k}_k,\cdots,Y^0_k,\cdots,Y^k_k\}$, where, for each $\ell\in \bbN$ and $m\in \{-\ell,\cdots,\ell\}$, $Y^m_\ell(\theta)\in C^\infty(\bbS^2_\theta)$ denotes the usual spherical harmonic. This is the \emph{multipole expansion}. Explicit formulae can be written down for all the $\phi_k$'s. For instance, $\phi_0$ is proportional to the ``monopole moment'' of $\rho$, $\phi_1$ to the ``dipole moment,'' and so on; see \cite{Jackson}.

The following two generalizations are less well-known:
\begin{itemize}
	\item For any $\alpha \in \bbN_{\geq 3}$ and $\rho \in \langle r \rangle^{-\alpha} C^\infty(\overline{\bbR^3})$, the voltage $\phi$, defined by the same formula \cref{eq:coulomb}, also admits an asymptotic expansion as $r\to\infty$, but (unless $\rho$ is Schwartz), it is necessary to have 
	logarithmic terms in the expansion, 
	\begin{equation} 
	\phi_{j,1}(\theta) r^{-j} \log(r)
	\end{equation} 
	for certain $j\in \bbN_+$. 
	Moreover, rather than each spherical harmonic $\smash{Y_k^m}$ showing up in the expansion only proportional to $1/r^{k+1}$, they can also show up in later terms (but not in earlier ones).   
	\item  If one solves instead an equation such as $\triangle \phi + V \phi = \rho$ for 
	\begin{equation} 
	V \in \langle r \rangle^{-3} C^\infty_{\mathrm{c}}[0,1)_{1/r} + \calS(\bbR^3),
	\label{eq:perf_V}
	\end{equation} 
	i.e.\ a short-range potential which is spherically symmetric modulo Schwartz terms, then the situation is similar. 
	
	For $V\in \langle r \rangle^{-3} C^\infty(\overline{\bbR^3})$ not of the form \cref{eq:perf_V}, two new phenomena can occur:
	\begin{enumerate}[label=(\roman*)]
		\item it may now be possible to find logarithmic terms of the form $\phi_{j,\kappa}(\theta) r^{-j} \log(r)^\kappa$ for $\kappa \geq 2$, not just $\kappa=1$, and
		\item more seriously, it may be the case that the spherical harmonics $Y_k^m$ are no longer only found proportional to $r^{-j} (\log r)^\kappa$ for $j\geq k+1$; they may be present in less-decaying terms as well. In fact, the $\phi_{j,\kappa}$ may no longer be linear combinations of \emph{finitely} many spherical harmonics at all. 
	\end{enumerate}
	However, these problems only kick in late in the $r\to\infty$ expansion. Since they are tied to symmetry-breaking terms in $V$, the more rapidly decaying these terms, the later in the expansion problems arise. 
	It is the essence of the multipole expansion that only finitely many spherical harmonics are present at each individual order. So, in the present situation, one should really speak only of ``partial'' multipole expansions. One still has an asymptotic expansion in the $r\to\infty$ limit, it just fails to be a multipole expansion past some particular order. 
\end{itemize}
As the presence of logarithmic terms indicates, these results require a different proof than that presented in physics textbooks for the multipole expansion for Laplace's equation, which is based on manipulations of \cref{eq:coulomb}. For instance, if the source $\rho$ is not Schwartz, then its high-order multipole moments will not be defined, as the integrals which define them in the Schwartz case fail to be convergent. Moreover, if $V\neq 0$, then \cref{eq:coulomb} no longer applies.

In each of the situations described above, $\phi$ is polyhomogeneous on $\overline{\bbR^3}$,  with some index set $\calE\subset \bbC\times \bbN$ keeping track of precisely which pairs $(j,\kappa)$ are present in the asymptotics. Moreover, for each spherical harmonic $Y$, one can compute an index set $\calE_Y\subsetneq \calE$ keeping track in which terms that spherical harmonic shows up. This subset depends, in a rather complicated way, on the decay rate of the source $\rho$, the decay rate of the potential, and the decay rate of symmetry breaking-terms. Regardless, it can be computed. 

Our goal of this section is to carry out this computation in the natural level of generality, that of Schr\"odinger operators on (compact) manifolds-with-boundary equipped with an exactly-conic structure at their boundary, which is precisely the setup introduced in \S\ref{sec:op}.  To keep track of the relevant data, we use the function spaces
\begin{equation}
\calA^{\calE,\alpha}(X;\ell) + \sum_{l=0}^{\ell-1} \calA_{\mathrm{c}}^{\calE_l,\alpha_l}([0,1)_\rho) \calY_l, \label{eq:func_spaces}
\end{equation}
where $\ell\in \bbN$, $\calE,\calE_l$ are pre-index sets, and $\alpha,\alpha_l \in \bbR$. These are defined as follows. For each $l\in \bbN$, each $\calY_l$ is a (positive dimensional) subspace of some eigenspace of the boundary Laplacian\footnote{As elsewhere in this paper, Laplacians are positive semidefinite.} $\triangle_{\partial X} \in \operatorname{Diff}^2(\partial X)$, and $\calA^{\calE,\alpha}(X;\ell)$ consists of those elements $v$ of $\calA^{\calE,\alpha}(X)$ such that 
\begin{equation} 
\langle v(\rho,\theta), Y(\theta) \rangle_{L^2(\partial X_\theta)} =0
\end{equation} 
for all $\rho \in (0,1/2)$ and $Y\in \calY_l$ for $l \in \{0,\cdots,\ell-1\}$. In particular, $\calA^{\calE,\alpha}(X;0) = \calA^{\calE,\alpha}(X)$. Note that these function spaces depend on the precise choice of exactly-conic structure imposed on $X$. An $f$ lies in the space \cref{eq:func_spaces} if and only if we can write
\begin{equation}
f = F + \sum_{l=0}^{\ell-1} \sum_{Y\in \operatorname{onb}(\calY_l)} f_Y(\rho) Y(\theta)  
\end{equation}
for some $F\in \calA^{\calE,\alpha}(X;\ell)$ and $f_Y\in \calA_{\mathrm{c}}^{\calE_l,\alpha_l}([0,1)_\rho)$, where $\operatorname{onb}(\calY_l)$ is, for each $l$, some orthonormal basis of $\calY_l$. 
We require that $\calY_j\perp \calY_k$ as subspaces of $L^2(\partial X)$ if $j\neq k$.
For each $l\in \bbN$, let $\lambda_l$ be the eigenvalue under $\triangle_{\partial X}$ of the elements of $\calY_l$. 
Note that some $\lambda_l$'s may be repeated.
We require that the sequence $\lambda_0,\lambda_1,\lambda_2,\cdots$ be non-decreasing, and we require that each eigenspace of $\triangle_{\partial X}$ be the direct sum of some of the $\calY_l$'s. 
The simplest two possibilities are when
\begin{itemize}
	\item $\calY_0,\calY_1,\calY_2,\cdots$ are the eigenspaces of the boundary Laplacian $\triangle_{g_{\partial X}}$, listed in order of increasing eigenvalue, or
	\item or each $\calY_l$ is given by $\calY_l = \bbC Y_l$, where $Y_l$ is the $l$th eigenfunction of the boundary Laplacian, listed in order of increasing eigenvalue (where, if two eigenfunctions share an eigenvalue, their ordering is arbitrary).
\end{itemize}
All other cases are intermediate to these two.

\begin{theorem}
	Let $P\in \operatorname{Diff}(X^\circ)$ have the form described in \S\ref{subsec:0main}, with $\aleph,\aleph_0=\infty$. Suppose that $f\in \calA^{\calE}(X;\ell) + \sum_{l=0}^{\ell-1} \calA_{\mathrm{c}}^{\calE_l}([0,1)_\rho) \calY_l$. Then, if $u\in \calA^{0+}(X)$ satisfies $Pu=f$,
	\begin{equation}
	u\in \calA^{\calI}(X;\ell) + \sum_{l=0}^{\ell-1} \calA^{\calI_l}_{\mathrm{c}}[0,1) \calY_l
	\label{eq:main}
	\end{equation}
	holds, 
	where $\calI,\calI_l$ are defined by \cref{eq:index_set_defs}.
	\label{thm:phg0}
\end{theorem}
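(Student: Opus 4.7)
The plan is a normal-operator iteration at the boundary face $\partial X$, modeled on the Melrose b-calculus. Since $u$ is smooth in the interior by elliptic regularity, the problem reduces to analysis on a collar neighborhood $\dot{X} = [0,1)_\rho \times \partial X$. Decompose $P = N_0 + P_1$ on $\dot{X}$, where $N_0 = \triangle_{g_0}$ is the exactly conic Laplacian and $P_1$ is the remainder; under the hypothesis $\aleph,\aleph_0 = \infty$ all components of $P_1$ are Schwartz at $\partial X$, including its symmetry-breaking (non-spherically-symmetric) pieces. Expand $u$ and $f$ in an orthonormal eigenbasis of $\triangle_{\partial X}$ refining the blocks $\calY_0, \calY_1, \ldots$. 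Then $N_0$ is block-diagonal, acting on the $\lambda_j$-eigenspace as the Euler operator
\begin{equation}
-\partial_r^2 - (d-1)r^{-1} \partial_r + \lambda_j r^{-2},
\end{equation}
with indicial roots in $\rho = 1/r$ equal to $-b_j$ and $c_j = d-2+b_j$. The assumption $u \in \calA^{0+}(X)$ forces the growing mode away at each step.

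The iteration reads $N_0 u = f - P_1 u$ and proceeds as follows. Suppose inductively that $u$ is determined modulo error with current pre-index sets $\calI_l^{(n)}$ on the $\calY_l$ blocks and $\calI^{(n)}$ on the orthogonal complement. The right-hand side is then known with pre-index sets built from those of $f$ together with shifts of $\calI_\bullet^{(n)}$ by the decay of $P_1$; projecting block-by-block and integrating the resulting inhomogeneous Euler ODEs explicitly against the two fundamental solutions produces the next term in each block of $u$, with a logarithmic factor appearing precisely when a source exponent coincides with an indicial root $c_j$. A Borel summation of the resulting formal series yields a polyhomogeneous $u^\sharp$ with $Pu^\sharp - f \in \calA^\infty(X)$. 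Since $u - u^\sharp$ lies in $\calA^{0+}(X)$ and solves a Schwartz-right-hand-side equation, repeating the same argument upgrades $u - u^\sharp$ to $\calA^\infty(X)$, giving \eqref{eq:main}. Pre-index sets (rather than index sets) are natural here because each spherical harmonic enters at a specific indicial power, and subsequent increases in $\rho$-power are produced only via the action of $P_1$ or via the Euler-ODE integration, not by default.

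The hard part will be the block-wise bookkeeping ensuring that the pre-index sets produced by the iteration are exactly $\calI, \calI_l$ from \eqref{eq:index_set_defs}. One needs to verify: (i) resonances at $c_l$ for $l < \ell$ contribute only to $\calI_l$, never to $\calI$; (ii) for $j \geq \ell$, resonances at $c_j$ contribute only to $\calI$; and (iii) the extended-union operation $\uplus$ correctly encodes the compounding of logarithmic orders through successive resonant hits. The decisive role of the hypothesis $\aleph_0 = \infty$ enters in (i)--(ii): the symmetry-breaking part of $P_1$ being Schwartz means that couplings between distinct $\calY_l$ blocks, and between the $\calY_l$'s and the orthogonal complement, occur only at Schwartz order, so no finite-order exponent can migrate across blocks and contaminate the sharp separation claimed in \eqref{eq:main}.
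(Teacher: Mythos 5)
Your overall architecture (restrict to the collar, split $P=\triangle_{g_0}+P_1$, iterate $\triangle_{g_0}u=f-P_1u$ with block-by-block inversion of the model operator at the indicial roots $-b_j,c_j$, logs at resonances, then asymptotically sum) is the same as the paper's; the paper performs the model inversion via the Mellin transform rather than explicit Euler-ODE integration, which is an equivalent device. The gap is in your reading of the hypotheses, and it invalidates exactly the bookkeeping you flag as ``the hard part.'' The hypothesis $\aleph,\aleph_0=\infty$ does \emph{not} make $P_1$ Schwartz: in \S\ref{subsec:0main} the perturbation is $Q+E$ with $Q\in r^{-3-\beth}\operatorname{Diff}^2_{\mathrm{b}}[0,1)_{1/r}+r^{-3-\aleph}S\operatorname{Diff}^2_{\mathrm{b}}[0,1)_{1/r}$ and $E\in r^{-3-\beth_0}\operatorname{Diff}^2_{\mathrm{b}}(X)+S^{-3-\aleph_0}\operatorname{Diff}^2_{\mathrm{b}}(X)$, and setting $\aleph,\aleph_0=\infty$ only removes the merely symbolic (non-smooth-coefficient) parts — its role is to yield \emph{full} polyhomogeneity with no conormal remainder. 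The smooth, symmetry-breaking term $E$ still decays only at the finite order $3+\beth_0$ (possibly $\beth_0=0$). Consequently your claims (i)--(ii) and your final paragraph — that couplings between distinct $\calY_l$ blocks and the orthogonal complement occur only at Schwartz order, so ``no finite-order exponent can migrate across blocks'' — are false in general. The definition \cref{eq:index_set_defs} of the target index sets contains the coupling terms $(c_l,0)\uplus(1+\beth_0+\calI_\cup^{(k)})$ and $\calC_\ell\uplus(1+\beth_0+\calI_\cup^{(k)})$ precisely because exponents do migrate across blocks, delayed by $1+\beth_0$ orders at each pass; an argument that cannot produce these terms cannot prove membership in $\calI,\calI_l$ for the actual solutions (your decoupled index sets would be too small, i.e.\ you would be asserting a sharper separation that fails when $\beth_0<\infty$).

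A correct version of your iteration must therefore track two distinct decay rates: the gain $1+\beth$ per step coming from the radial-only perturbation $Q$ (which governs how fast the conormal error improves and hence why the recursion \cref{eq:indexsets} stabilizes below any fixed order after finitely many steps), and the shift $1+\beth_0$ at which the angular perturbation $E$ feeds the union $\calI_\cup$ of all blocks back into each individual block. This is what the recursive definition of $\calI^{(k)},\calI_l^{(k)}$ encodes, and what the inductive step in \S\ref{subsec:0main} verifies. Your remaining steps (fixing integration constants by the a priori bound $u\in\calA^{0+}(X)$, resonance logs via $\uplus$, and upgrading the remainder) are fine in outline, but the decoupling premise must be abandoned and replaced by this coupled bookkeeping.
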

\begin{remark*}
	Note that when $P$ differs from $\triangle_{g_0}$ by Schwartz terms, and when $f$ is Schwartz, then this reproduces the usual multipole expansion from electrostatics. So, the index sets are sharp, at least in this case. Though we will not do so, it should be possible to show that $\calI,\calI_l$ are sharp in general, in the sense that, for each choice of parameters $\calE,\alpha,\calE_l,\beth,\beth_0$ involved in the assumptions on $P,f$, there exist some $P,f$ of the hypothesized forms and some $u$ solving $Pu=f$ such that the index sets in \cref{eq:main} are optimal. 
\end{remark*}
\begin{remark*}
	In the asymptotically Euclidean case, one can shrink $\calI,\calI_l$ if the terms which break the spherical symmetry only involve finitely many spherical harmonics at each order. That is, if the coefficients of the PDE admit a multipole expansion, then so do the solutions. This improvement is a consequence of the Clebsch--Gordan decomposition and rather special to the asymptotically Euclidean case. 
\end{remark*}

The proof of \Cref{thm:phg0} consists of two basic steps:
\begin{enumerate}[label=(\Roman*)] 
	\item First, the result is proven for $P=\triangle_{g_0}$, solving the inhomogeneous problem $Pu=f$ explicitly via the Mellin transform. In this case, the result follows from the mapping properties of the Mellin transform. In preparation for the next step, it will be necessary to to handle the case when $f$ is only partially polyhomogeneous. This is not contained in \Cref{thm:phg0}, but it is a straightforward extension.
	\item  For general $P$, we write $P=\triangle_{g_0} + L$ for some $L$ which decays faster than $\triangle_{g_0}$ at $\partial X$ (in the sense of regular singular differential operators). Then, when solving $Pu=f$, we rearrange to get 
	\begin{equation} 
	\triangle_{g_0} u = f - L u.
	\end{equation} 
	Suppose that $u$ is known to be symbolic or partially polyhomogeneous, with the symbolic error kicking in at order $\alpha\in \bbR$. (We are assuming in \Cref{thm:phg0} that $u\in \calA^{0+}(X)$, so this assumption holds for some $\alpha>0$.) Then, $Lu$ is partially polyhomogeneous as well. Applying $\triangle_{g_0}^{-1}$ (making sense of this on appropriate function spaces using the solvability theory of the Laplacian), we write 
	\begin{equation}
	u = \triangle_{g_0}^{-1}(f-Lu) = \triangle_{g_0}^{-1} f  - \triangle_{g_0}^{-1} L u.
	\label{eq:induc}
	\end{equation}
	The term $\triangle_{g_0}^{-1} f$ is understood from step (I) and fully polyhomogeneous. 
	Likewise, from step (I), 
	\begin{equation} 
		w=\triangle_{g_0}^{-1} Lu
	\end{equation} 
	is partially polyhomogeneous, but because $L$ is faster decaying than $\triangle_{g_0}$, the composition $w$ will be faster decaying than $u$. This applies also to the symbolic errors involved, so, in $w$, the symbolic error kicks in at some order $\geq \alpha+1$. But then \cref{eq:induc} implies that the symbolic error in $u$ only kicks in at order $\geq \alpha+1$ as well, at least one order higher than we started out assuming. A straightforward inductive argument then allows us to take $\alpha\to\infty$ and therefore conclude full polyhomogeneity. 
\end{enumerate}

Part of Price's law states that gravitational radiation from spherically symmetric black holes radiates away at a rate which depends on the angular pattern according to which the radiation is distributed. Monopole (i.e.\ spherically symmetric) radiation radiates away more slowly than dipole radiation, dipole radiation radiates away more slowly than quadropole radiation, and so on. 
As a leading order asymptotic, it is known, from Hintz \cite{HintzPrice}, that Price's law applies to radiation on any asymptotically sub-extremal Kerr background. However, it is to be expected in this context that e.g.\ dipole radiation includes a quadropole component which decays more slowly than pure quadropole radiation, and moreover that the index sets describing the long-time asymptotic profile are more complicated than in the spherically symmetric case. These subleading effects are not studied in \cite{HintzPrice}. We would like to investigate these delicate effects, which depend on the fact that Kerr is only axially symmetric. This requires the multipole expansions whose systematic discussion we have included here.

\subsection{Some mapping properties of the exactly conic Laplacian}
We next discuss the mapping properties of $\triangle_{g_0}$ vis-a-vis the function spaces in \cref{eq:func_spaces}. This is step (I) in the outline above. The exactly-conic Laplacian $\triangle_{g_0}$ is given by
\begin{equation}
\triangle_{g_0} = - \frac{\partial^2}{\partial r^2} - \frac{d-1}{r} \frac{\partial}{\partial r} + \frac{1}{r^2} \triangle_{g_{\partial X}}
\end{equation}
in $\dot{X} = [0,1)_\rho\times \partial X_\theta$. 

This operator is an elliptic element of $\rho^2 \operatorname{Diff}^2_{\mathrm{b}}(X)$, i.e.\ is regular singular at $\partial X$, so its solvability theory is governed by its indicial roots, which are the roots of the polynomial $-c^2 + (d-2) c + \lambda \in \bbR[c]$ for $\lambda\geq 0$ any eigenvalue of the boundary Laplacian $\triangle_{\partial X}=\triangle_{g_{\partial X}}$. If $c$ is a root of this polynomial, then this corresponds (as we will see below) to $O(\rho^{c-})$ bounds for elements of $\ker \triangle_{g_0}$ or $O(\rho^{2+c-})$ bounds for  $\operatorname{coker} \triangle_{g_0}$. For each value of $\lambda$, the two roots of the given quadratic polynomial are $2^{-1}(d-2\pm ((d-2)^2+4\lambda)^{1/2}$. 
Each of the $\calY_l$ in \cref{eq:func_spaces} is a set of eigenfunctions of $\triangle_{\partial X}$ with shared eigenvalue $\lambda_l$. 
For each $l\in \bbN$, let 
\begin{align}
	c_l&=\frac{1}{2}(d-2+\sqrt{(d-2)^2+4\lambda_l})
	\label{eq:misc_168z}\\ 
	\intertext{denote the positive of these two solutions  for $\lambda=\lambda_l$.  Also, let } 
	-b_l &= \frac{1}{2}(d-2-\sqrt{(d-2)^2+4\lambda_l})
	\label{eq:misc_168} 
\end{align}
denote the corresponding negative solution. (The sign on the left-hand side in \cref{eq:misc_168} is chosen for later convenience; $c_l,b_l\geq 0$.)
In particular, $c_0 = d-2$ and $b_0 = 0$. 

Note that $-b_k \leq -b_j < c_j \leq c_k$ whenever $j\leq k$. Thus, the interval $(-b_j,c_j)$ widens as $j$ grows. This is important in low energy scattering theory. Also, via Weyl's law, $\lim_{l\to\infty} b_l, \lim_{l\to\infty} c_l = \infty$. So, the set $\cup_{l\in \bbN}\{-b_l,c_l\}$ is discrete. 

\begin{example}[Euclidean case]
	In the Euclidean case, $\partial X= \bbS^{d-1}$, and $g_{\partial X}$ is the standard metric on the $(d-1)$-sphere. So, assuming that we take $\calY_l$ to denote the $l$th eigenspace of $\triangle_{\partial X}$, we have $\lambda_l = l(l+d-2)$. Then, one computes that $c_l = d-2+l$ and $b_l = l$. 
	\label{ex:Euclidean_example}
\end{example}

\begin{lemma}
	Let $\ell\in \bbN$. 
	For $\alpha \in ( - b_\ell,c_\ell)$ and $\beta \in (0,d-2)$, we have a well-defined inverse 
	\begin{equation} 
	\triangle_{g_0}^{-1} : \calA^{2+\alpha}(X;\ell) + \calA^{2+\beta}(X) \to \calA^{\alpha-}(X;\ell) + \calA^{\beta-}(X). 
	\end{equation} 
	That is, for each $f\in \calA^{2+\alpha}(X;\ell) + \calA^{2+\beta}(X) $, there exists a unique $u\in \calA^{\alpha-}(X;\ell) + \calA^{\beta-}(X)$ such that $\triangle_{g_0} u=f$. 
\end{lemma}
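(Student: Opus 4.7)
The plan is to combine angular spectral decomposition with a radial Mellin transform, reducing everything to the scalar indicial polynomial. First I would observe that $\calA^{2+\beta}(X) = \calA^{2+\beta}(X;0)$ and $(0,d-2) = (-b_0, c_0)$, so the second summand is just the $\ell=0$ case of the first; by linearity, it suffices to show that for each $\ell\in\bbN$ and $\alpha\in(-b_\ell,c_\ell)$, the operator $\triangle_{g_0} : \calA^{\alpha-}(X;\ell) \to \calA^{2+\alpha}(X;\ell)$ is bijective.

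For existence, I would expand $f$ near $\partial X$ in an $L^2(\partial X)$-orthonormal basis $\{Y_{l,m}\}$ of eigenfunctions of $\triangle_{g_{\partial X}}$, obtaining $f|_{\dot{X}} = \sum_{l\geq\ell}\sum_m f_{l,m}(\rho)Y_{l,m}(\theta)$, where the summation starts at $l=\ell$ thanks to the orthogonality constraint. After a boundary-collar cutoff (the complementary interior piece being handled by standard elliptic theory on the compact manifold-with-boundary), each radial factor satisfies a regular-singular ODE whose Mellin symbol is the indicial polynomial $p_l(c) = -c^2+(d-2)c+\lambda_l$, with simple zeros precisely at $c_l$ and $-b_l$. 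Using the Mellin calculus from \S\ref{sec:Mellin}, I would solve the $(l,m)$ component by dividing by $p_l(c)$ in Mellin variables and inverting along the contour $\Re c = \alpha - \epsilon$ for some small $\epsilon > 0$; the monotonicity $-b_l\leq -b_\ell<\alpha<c_\ell\leq c_l$ for all $l\geq\ell$ ensures this contour sits in an $l$-independent open gap between indicial roots, so $|p_l(c)|^{-1}$ is uniformly controlled there. Reassembling the harmonic sum, with absolute convergence coming from Weyl's asymptotics for $\lambda_l$, produces $u\in\calA^{\alpha-}(X;\ell)$ with $\triangle_{g_0} u = f$.

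For uniqueness, suppose $\triangle_{g_0} u = 0$ with $u$ in the target sum space. The regular-singular theory yields a polyhomogeneous boundary expansion of $u$ in the model radial solutions $\rho^{c_l}Y$ and $\rho^{-b_l}Y$. Working one angular harmonic at a time: for components in $\calY_l$ with $l<\ell$, the orthogonality of the $\calA^{\alpha-}(X;\ell)$ summand restricts the component to the $\calA^{\beta-}(X)$ piece, and $-b_l<\beta$ forbids the $\rho^{-b_l}$-mode; for $l\geq\ell$, either summand can contribute, but $-b_l\leq -b_\ell<\alpha$ together with $-b_l\leq 0<\beta$ forbids the $\rho^{-b_l}$-mode on both. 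Consequently $u$ decays at least like $\rho^{d-2}$ at $\partial X$ in every harmonic, and the standard b-Fredholm/indicial-root injectivity theorem for the exact conic Laplacian in the weight gap $(-b_\ell, c_\ell)$ forces $u\equiv 0$.

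The main obstacle is ensuring the uniformity of Mellin inversion across all angular harmonics, since the harmonic sum must converge in every polyhomogeneous seminorm; the bounds produced harmonic-by-harmonic depend on the distance from the contour to the nearest indicial root as well as on $l$. The key fact that unlocks this is that the contour sits in the uniform positive-width strip $(-b_\ell+\delta, c_\ell-\delta)$ for any small $\delta>0$, independent of $l\geq\ell$; combined with Weyl's-law control on the polynomial growth of $p_l(c)$ in $l$, this gives the absolute summability needed to reassemble $u$ as a polyhomogeneous function, with everything else reducing to bookkeeping on the function spaces $\calA^{\alpha-}(X;\ell)$.
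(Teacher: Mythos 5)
Your argument follows the paper's route almost exactly --- cutoff to the boundary collar, Mellin transform, division by the indicial polynomial, and contour shifting in the root-free strip $(-b_\ell,\alpha)$ --- the only cosmetic difference being that you work harmonic-by-harmonic and invoke Weyl's law where the paper applies the resolvent $(-c^2+(d-2)c+\triangle_{\partial X})^{-1}$ directly on $H^m(\partial X)$ with the operator norm bounded by the distance to the spectrum. The observation that $\calA^{2+\beta}(X)=\calA^{2+\beta}(X;0)$ with $(0,d-2)=(-b_0,c_0)$ is a tidy simplification, and the existence half is fine (the paper likewise defers the compactly supported correction term to the standard solvability theory of the Laplacian).

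The gap is in the last step of uniqueness. Having correctly excluded the $\rho^{-b_l}$ modes and concluded that a kernel element $u$ decays like $\rho^{(d-2)-}$ at $\partial X$, you appeal to ``the standard b-Fredholm/indicial-root injectivity theorem \ldots in the weight gap.'' No such formal theorem exists: b-elliptic theory in a weight gap yields Fredholmness, not injectivity, and a generic elliptic b-operator with the same indicial roots can perfectly well have kernel in that gap. The input that actually closes the argument is specific to the Laplacian: the paper shifts the Mellin contour to conclude $u\in\calA^{c_0-}(X)\subset C^0(X)$ with $u$ vanishing at $\partial X$, and then invokes the maximum principle on the compact manifold $X$ to force $u$ to be constant, hence zero (an energy identity $\int_X|\nabla u|^2\,\mathrm{d}V_g=0$, with the boundary term controlled by the decay $u=O(\rho^{(d-2)-})$, would also do). As written, you have black-boxed precisely the one non-formal step; naming the maximum principle (or the Green's identity) there completes the proof.
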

The key ingredient, beyond elementary aspects of the analysis of the Laplacian, is the Mellin transform $\calM$, which we define with the conventions in \cref{eq:Mellin} and some key properties of which we summarize in \S\ref{sec:Mellin}. 
\begin{proof}
	First, one shows that $\triangle_{g_0}$ has trivial kernel acting on $\calA^{\alpha-}(X;\ell) + \calA^{\beta-}(X)$.
	Since the operator has real coefficients, its kernel in this space has a basis consisting of real-valued functions. Suppose that $w\in \calA^{\alpha-}(X;\ell) + \calA^{\beta-}(X)$ is a real-valued function satisfying $\triangle_{g_0} w=0$. Fix $\chi \in C_{\mathrm{c}}^\infty [0,1/2)$ that is identically $1$ on a neighborhood of the origin. Let $f= \rho^{-2} \triangle_{g_0} (\chi w)$. Integrating-by-parts, $\calM f  (c,\theta) = (-c^2 + (d-2) c + \triangle_{\partial X}) \calM(\chi w)(c,\theta)$
	as long as $c$ has sufficiently negative real part. Moreover, as long as $c$ does not lie in the discrete set of points $\cup_{l\in \bbN} \{-b_l,c_l\}$, the resolvent $(-c^2 + (d-2) c + \triangle_{\partial X})^{-1} : H^m(\partial X_\theta) \to H^{m}(\partial X_\theta)$ is well-defined, giving 
	\begin{equation}
	\calM(\chi w)(c,\theta)  = (-c^2 + (d-2) c + \triangle_{\partial X})^{-1} \calM f (c,\theta).
	\label{eq:id_099}
	\end{equation}
	Since $f = \rho^{-2} \triangle_{g_0} (\chi w) = \rho^{-2} [\triangle_{g_0},\chi] w \in C_{\mathrm{c}}^\infty(\dot{X}^\circ)$, the Mellin transform $\calM f (c,\theta)$ is defined for all $c$. 
	The right-hand side of \cref{eq:id_099} therefore defines a meromorphic function of $c$, defined on the entire complex plane $\bbC_c$, save for the poles. We will call this extension 
	\begin{equation} 
	\calM(\chi w)_{\mathrm{ext}} :  \bbC_c \backslash \cup_{l\in \bbN} \{-b_l,c_l\}\to \bbC.
	\end{equation} 
	The possible poles, all of which are simple (as follows from the functional calculus applied to $\triangle_{\partial X}$),  all lie in $\cup_{l\in \bbN} \{-b_l,c_l\}$. 
	Any apparent singularities of $\calM(\chi w)_{\mathrm{ext}}$ at the $b_0,b_1,b_2,\cdots$ are necessarily removable:
	\begin{itemize}
		\item 
		That this holds for $b_\ell,b_{\ell+1},b_{\ell+2},\cdots$ follows from the fact that $w \in \calA^{-b_\ell+\varepsilon}(X)$ for some $\varepsilon>0$
		(which follows from the assumptions on $\alpha,\beta$) and therefore that the Mellin transform  $\calM(\chi w)$ is already well-defined and analytic when $\Re c <- b_\ell + \varepsilon$. 
		\item 
		That this holds for the remaining $b_l$'s, $b_0,\ldots,b_{\ell-1}$, follows from a stronger argument. By assumption, $w= u+v$ for $u\in \calA^{-b_\ell+}(X;\ell)$ and $v\in \calA^{0+}(X)$. The identity
		\begin{equation}
		\calM(\chi w)(c,\theta) = \calM (\chi u)(c,w) + \calM(\chi v)(c,\theta) 
		\label{eq:j67}
		\end{equation}
		holds for all $c$ with sufficiently negative real part. We know, a priori, that $\calM(\chi v)(c,\theta)$ is analytic in a neighborhood of the left half of the complex plane. On the other hand, 
		\begin{equation}
		\calM (\chi u)(c,w)  =  (-c^2 + (d-2) c + \triangle_{\partial X})^{-1} \calM (\rho^{-2} \triangle_{g_0}(\chi u) ) (c,\theta)
		\end{equation}
		if $c$ has sufficiently negative real part. Because $\triangle_{g_0} w=0$ and therefore $\triangle_{g_0} u = - \triangle_{g_0} v \in \calA^{2+}(X) $, we have 
		\begin{equation} 
		\rho^{-2} \triangle_{g_0}(\chi u) = \rho^{-2} [\triangle_{g_0},\chi] u - \rho^{-2} \chi \triangle_{g_0} v \in \calA^{0+}(X).
		\end{equation} 
		So, $\calM (\rho^{-2} \triangle_{g_0}(\chi u) ) (c,\theta)$ is also analytic in some small neighborhood of the left half of the complex plane. It follows that $(-c^2 + (d-2) c + \triangle_{\partial X})^{-1} \calM (\rho^{-2} \triangle_{g_0}(\chi u) ) (c,\theta)$ is meromorphic on some neighborhood of the left half of the complex plane, with possible simple poles at $c\in \cup_{l\in \bbN} \{-b_l,c_l\}$.

		But, since $u$ is orthogonal to all of the $\calY_0,\cdots,\calY_{\ell-1}$ in $\{\rho<1/2\}$, the function  $\rho^{-2} \triangle_{g_0} (\chi u)$ is as well, for all $\rho$, since $\operatorname{supp} \chi \subset \{\rho<1/2\}$. Therefore 
		\begin{equation} 
		\calM (\rho^{-2} \triangle_{g_0}(\chi u) ) (c) \perp \calY_0,\cdots,\calY_{\ell-1}
		\end{equation} 
		for all $c$. Consequently, the residues of $(-c^2 + (d-2) c + \triangle_{\partial X})^{-1} \calM (\rho^{-2} \triangle_{g_0}(\chi u) ) (c,\theta)$ at the various points $b_0,\cdots,b_{\ell-1}$ are all zero. (That is, unless some of these are equal to $b_\ell$, which can happen if $\calY_{\ell-1}$ is a proper subspace of some eigenspace of the boundary Laplacian. To avoid having to repeat this caveat, when we say that a pole at $b_l$ or $c_l$ is absent, we mean that it is absent or that $b_{l+1}=b_l$ or $c_{l+1}=c_l$, in which case we can think of the pole as being associated with the index $l+1$ instead of $l$.)
		
		So, both terms on the right-hand side of \cref{eq:j67} extend analytically to a neighborhood of the left half of the complex plane, and thus the same applies to $\calM(\chi w)(c,\theta)$. 
	\end{itemize}

	If, for $m\in \bbN$, we define the norm on $H^{m}(\partial X)$ using $\triangle_{\partial X}$, then we have the operator-norm bound 
	\begin{equation} 
	\lVert (-c^2 + (d-2) c + \triangle_{\partial X})^{-1} \rVert_{H^m(\partial X) \to H^m(\partial X)} \leq d(-c^2 + (d-2) c, \sigma(\triangle_{\partial X}) )^{-1} 
	\end{equation} 
	for functional analytic reasons. Consequently, if $\gamma_0,\gamma_1$ are real numbers such that $\gamma_0<\gamma_1$ and $[\gamma_0,\gamma_1]$ is disjoint from $\cup_{l\in \bbN}\{-b_l,c_l\}$, we have, for all $c\in \bbC$ satisfying $\Re c \in [\gamma_0,\gamma_1]$, the bound 
	\begin{equation}
	\lVert \calM(\chi w )_{\mathrm{ext}} (c,\theta) \rVert_{H^m(\partial X_\theta) } \leq C(\gamma_0,\gamma_1) \langle \Im c \rangle^{-2} \lVert \calM f (c,\theta) \rVert_{H^m(\partial X_\theta) } 
	\label{eq:mkk}
	\end{equation} 
	for some $C(\gamma_0,\gamma_1)>0$. Since $\calM(\chi w )_{\mathrm{ext}} (c,\theta)$ is analytic at each $b_l$, the same estimate, possibly with a larger value of $C(\gamma_0,\gamma_1)$, holds for any $\gamma_0,\gamma_1 \in \bbR $ satisfying $\gamma_0<\gamma_1$ and $\gamma_1<c_0=d-2$.

	We apply the Mellin inversion formula \cref{eq:Mellin_inversion} to $\chi w$, noting that \cref{eq:mkk} justifies shifting the contour to the right, as long as we replace $\calM (\chi w) (c,\theta)$ by the analytic extension $\calM (\chi w)_{\mathrm{ext}} (c,\theta)$. So, 
	\begin{equation}
	\chi w(\rho,\theta)  = \frac{1}{2\pi i} \int_{\gamma-i \infty}^{\gamma+i\infty}  \rho^{c} \calM (\chi w)_{\mathrm{ext}} (c,\theta)  \dd c
	\end{equation}
	for any $\gamma<d-2$. It follows that $\chi w \in \calA^{c_0-}(X) \subseteq \calA^{0+}(X)$, as discussed in \S\ref{sec:Mellin}. Therefore, $w\in C^0(X)$. 
	
	Since $X$ is compact, continuity implies that, unless $w=0$, it must be that $w$ (which recall is real-valued) attains a global extremum somewhere in the interior of $X$. But then the maximum principle for Laplace--Beltrami operators would state that $w$ is constant. Since $w\in \calA^{0+}(X)$, $w$ being constant implies that $w=0$ identically. This completes the proof that the kernel of $\triangle_{g_0}$ acting on $\calA^{\alpha-}(X;\ell) + \calA^{\beta-}(X)$ is trivial. 
	
	In order to show that $\triangle_{g_0}:\calA^{\alpha-}(X;\ell) + \calA^{\beta-}(X)\twoheadrightarrow \calA^{2+\alpha}(X;\ell) + \calA^{2+\beta}(X)$, one also uses the Mellin transform. It suffices to consider the cases (i) $f\in \calA^{2+\alpha}(X;\ell)$, (ii) $f\in \calA^{2+\beta}(X)$ individually.
	\begin{enumerate}[label=(\roman*)]
		\item If $f\in \calA^{2+\beta}(X)$, then the existence of $u\in \calA^{\beta-}(X)$ satisfying $\triangle_{g_0} u=f$ follows from the standard solvability theory of the Laplacian, e.g.\ \cite[Eq.\ 2.9]{HintzPrice}  for the $d=3$, asymptotically Euclidean case, the general case being analogous. 
		\item If $f\in \calA^{2+\alpha}(X;\ell)$, then we define a meromorphic function $ 
		M(c,\theta)$ by 
		\begin{equation}
		M(c,\theta) = (-c^2 + (d-2) c + \triangle_{\partial X})^{-1} \calM(\rho^{-2} \chi f)(c,\theta),
		\end{equation} 
		where $\chi$ is as above. 
		As $ \calM(\rho^{-2} \chi f)(c,\theta)$ is well-defined and analytic in $c$ for $\Re c < \alpha$, the function $M(c,\theta)$ has at worst poles there. But actually, because $f$ is orthogonal to each $\calY_0,\ldots,\calY_{\ell-1}$, it follows that the possible poles at $ b_0,\ldots,b_{\ell-1}$ (some of which may be $\geq \alpha$) are all absent, as well as those at $c_0,\ldots,c_{\ell-1}$. So, $M(c,\theta)$ is well-defined and analytic for $\Re c<\alpha$ as well.
		(However, we cannot rule out poles at $b_\ell,b_{\ell+1},\cdots$.)

		Analogous estimates  to those above show that 
		\begin{equation}
		u_0 = \frac{1}{2\pi i } \int_{\gamma-i\infty}^{\gamma+i\infty} \rho^c M(c,\theta) \dd c 
		\label{eq:id_jh6}
		\end{equation}
		is, for each $\gamma \in (-b_\ell,\alpha)$, a well-defined element of $\calA^{\gamma}(\dot{X};\ell)$ that does not depend on $\gamma$. Thus, $u_0 \in \calA^{\alpha-}(\dot{X};\ell)$. 
		
		Let $w= \chi u_0 \in \calA^{\alpha-}(X;\ell)$. This satisfies $\triangle_{g_0}w  = [\triangle_{g_0},\chi] u + \chi \triangle_{g_0} u_0$. We compute that
		\begin{align}
		\begin{split} 
		\triangle_{g_0} u_0 &= \frac{\rho^2}{2\pi i } \int_{\gamma-i\infty}^{\gamma+i\infty} \rho^c (-c^2 + (d-2) c + \triangle_{\partial X}) M(c,\theta) \dd c  \\ &= \frac{\rho^2}{2\pi i } \int_{\gamma-i\infty}^{\gamma+i\infty} \rho^c \calM(\rho^{-2} \chi f)(c,\theta) \dd c	 = \chi f. 
		\end{split} 
		\end{align}
		So, $\triangle_{g_0} w = f + f_0 $ for $f_0 = (\chi^2-1) f + [\triangle_{g_0},\chi] u \in C_{\mathrm{c}}^\infty(\dot{X}^\circ)$. 
		
		By the case (i) already discussed, there exists a $v \in \calA^{c_0-}(X)\subset \calA^{\beta-}(X) $ such that $\triangle_{g_0} v = -f_0$, so the function $u=w+v$, which lies in the desired function space, satisfies $\triangle_{g_0} u=f$. 
	\end{enumerate} 
\end{proof}

In order to get more detailed asymptotics, it is necessary to utilize the mapping properties of the Mellin transform on (partially) polyhomogeneous functions, as described in \S\ref{sec:Mellin}. 
Suppose that $f$ is a meromorphic function on a strip $\{z\in \bbC: \Re z \in (\gamma_0,\gamma_1)\}$ such that, for each $c$ in the strip, the pole at $f$ at $c$ is at worst of order $k+1$, where $k$ is the largest nonnegative integer such that $(c,k)\in \calE$. Then, we say that the poles of $f$ are ``given'' by the pre-index set $\calE$.

\begin{lemma}
	Suppose that $u  \in \calA^{0+}(X)$ satisfies $\triangle_{g_0} u =f$ for some function $f\in \calA^{2+\calE,2+\alpha}(X;\ell) + \sum_{l=0}^{\ell-1} \calA^{2+\calE_l,2+\alpha_l}_{\mathrm{c}}([0,1)_\rho) \calY_l$, where $\calE,\calE_l$ are pre-index sets and $\alpha,\alpha_l\in \bbR$.  
	Then, 
	\begin{equation} 
	u \in \calA^{\calC_\ell\uplus  \calE,\alpha-}(X;\ell) + \sum_{l=0}^{\ell-1} \calA^{\{(c_l,0)\} \uplus \calE_l, \alpha_l- }([0,1)_\rho) \calY_l,
	\end{equation}
	where $\calC_\ell\uplus \calE = \bigcup_{l=\ell}^\infty (\{(c_l,0)\}\uplus \calE)$. 
	\label{lem:Lap_basic}
\end{lemma}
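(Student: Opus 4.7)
The plan is to refine the Mellin-inversion construction from the preceding lemma, tracking the polyhomogeneous structure of the resulting integral rather than only its conormal regularity. By linearity, it suffices to treat the two forcing types separately; by the uniqueness established in the preceding lemma, the given $u \in \calA^{0+}(X)$ must coincide with the solution constructed there via Mellin inversion, so it suffices to refine the regularity statement for that construction.

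Fix $\chi \in C_{\mathrm{c}}^\infty([0,1/2))$ identically $1$ near the origin. As derived in the preceding lemma,
\[
\calM(\chi u)(c,\theta) = \bigl(-c^2+(d-2)c+\triangle_{\partial X}\bigr)^{-1}\bigl(\calM(\rho^{-2}\chi f)(c,\theta) + E(c,\theta)\bigr),
\]
where $E(c,\theta)$, coming from the commutator $\rho^{-2}[\triangle_{g_0},\chi]u \in C_{\mathrm{c}}^\infty(\dot X^\circ)$, is entire in $c$. By the Mellin mapping properties reviewed in \S\ref{sec:Mellin}, $\calM(\rho^{-2}\chi f)(c,\theta)$ extends meromorphically to $\{\Re c < \alpha\}$ with poles encoded by $\calE$ (respectively $\calE_l$ in the second case). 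For $f \in \calA^{2+\calE, 2+\alpha}(X;\ell)$, the orthogonality of $f$ to $\calY_0,\ldots,\calY_{\ell-1}$ transfers pointwise in $c$ and cancels the apparent resolvent poles at $\{-b_l, c_l\}_{l<\ell}$, leaving the meromorphic structure of $\calM(\chi u)$ on $\{\Re c < \alpha\}$ confined to elements of $\calE$ and to $\{-b_l, c_l\}_{l\geq \ell}$.

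The next step is a contour shift. I would apply Mellin inversion along an initial contour $\Re c = \gamma_0$ with $\gamma_0 < \min \Re \Pi \calE$ and $\gamma_0 < -b_\ell$, which is valid since $\chi u \in \calA^\varepsilon(X)$ for some $\varepsilon > 0$, and then shift rightward to $\Re c = \alpha - \delta$. The residues picked up are precisely at: elements $(j,k) \in \calE$ with $\Re j < \alpha - \delta$, each contributing terms $\rho^j (\log\rho)^{k'}$ with $k' \leq k$; simple poles of the resolvent at $c_l$ for $\ell \leq l$ with $c_l < \alpha - \delta$, each contributing a $\rho^{c_l}$ term lying in $\calY_l$, via the identification of the resolvent residue with the spectral projector onto the $\lambda_l$-eigenspace; and simple poles at $-b_l$ for $\ell \leq l$, which would in principle contribute $\rho^{-b_l}$ terms in $\calY_l$ but whose residues must vanish by the hypothesis $u \in \calA^{0+}(X)$ (since $-b_l \leq 0$, such terms would violate positive decay). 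Log orders increment by one at coincidences between $\calE$-poles and indicial roots $c_l$, which is exactly the combinatorics encoded by the $\uplus$ operation \eqref{eq:uplus}; together with the closure forcing integer shifts, this produces the claimed index set $\calC_\ell \uplus \calE$. The integral along the shifted contour is conormal of order $\alpha -$ via the resolvent-norm bound $\lVert (-c^2+(d-2)c+\triangle_{\partial X})^{-1}\rVert = O(\langle \Im c \rangle^{-2})$ on strips away from the indicial set, as already used in the preceding lemma.

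The second case $f \in \calA_{\mathrm{c}}^{2+\calE_l, 2+\alpha_l}([0,1)_\rho) \calY_l$ is handled identically, with the scalar resolvent $(-c^2+(d-2)c+\lambda_l)^{-1}$ contributing only the simple pole at $c_l$ under the rightward shift (the $-b_l$ residue again vanishing by $u \in \calA^{0+}$), yielding the index set $(c_l, 0) \uplus \calE_l$. The main technical obstacle will be the careful pole-order accounting at coincidences between $\calE$-poles and indicial roots, verifying that log orders bump by exactly one; this reduces to a direct Laurent-expansion computation of the product of the resolvent with $\calM(\rho^{-2}\chi f)$ at each candidate pole, entirely parallel to the analytic-extension argument in the preceding lemma.
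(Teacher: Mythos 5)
Your proposal is correct and follows essentially the same route as the paper: Mellin-transform the equation for $\chi u$, use the orthogonality to $\calY_0,\dots,\calY_{\ell-1}$ to remove the resolvent poles at $\{-b_l,c_l\}_{l<\ell}$, and shift the inversion contour rightward, reading the index set off the residues and obtaining the conormal remainder from the $O(\langle \Im c\rangle^{-2})$ resolvent bound on strips. The only cosmetic difference is that the paper takes the initial inversion contour just to the right of the imaginary axis, so the poles at $-b_l\leq 0$ for $l\geq\ell$ are never crossed, whereas you start further left and argue that those residues vanish because $\calM(\chi u)$ is already analytic on $\{\Re c<\varepsilon\}$; both are valid.
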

Here, we use $\{(c_l,0)\}$ to denote the pre-index set consisting of \emph{just} the single point $(c_l,0)$, whereas elsewhere we have used the standard abbreviation in which $(c_l,0)$ stands for the index set generated by this point. 
The $\uplus$ operation was defined in \S\ref{sec:bf}.
\begin{proof}
	Let $\chi$ be as in the previous lemma, and let $w= \chi u \in C^\infty(\dot{X^\circ})$. It suffices to show that $w\in \calA^{\calC_\ell \uplus \calE,\alpha-}(\dot{X};\ell) + \sum_{l=0}^{\ell-1} \calA^{(c_l,0) \uplus \calE_l, \alpha_l- }([0,1)_\rho) \calY_l$. 
	
	Taking the Mellin transform, $\calM w(c,\theta)$ is defined and analytic in $\Re c<\varepsilon$ for some $\varepsilon>0$, and the Mellin inversion formula reads
	\begin{equation}
	w(\rho,\theta) = \frac{1}{2\pi i} \int_{\gamma-i\infty}^{\gamma+i\infty} \rho^c \calM w(c,\theta) \dd c = \calM^{-1}( \calM w(-,\theta))(\rho), 
	\label{eq:Mel_inv_ex}
	\end{equation}
	for any $\gamma<\varepsilon$. 
	
	Now let 
	$g=\triangle_{g_0} w = [\triangle_{g_0},\chi] u + \chi f$. The Mellin transform $\calM_{\rho\to c} (\rho^{-2} g(\rho,\theta))(c)$ is defined and analytic if $c$ has sufficiently negative real-part, and, for $c$ with sufficiently negative real part, taking the Mellin transform of the PDE $g=\triangle_{g_0} w$ yields $(-c^2+ (d-2)c + \triangle_{\partial X})\calM w(c,\theta) = \calM_{\rho\to c}(\rho^{-2} g(\rho,\theta))(c)$. Therefore we can write
	\begin{equation} 
	\calM w (c,\theta) = \calR(c) \calM_{\rho\to c}(\rho^{-2} g(\rho,\theta))(c,\theta),
	\label{eq:id_022}
	\end{equation} 
	where $\calR(c)=(-c^2 + (d-2) c + \triangle_{\partial X})^{-1}$, as long as $c$ has sufficiently negative real part and $c \notin \cup_{l\in \bbN} \{-b_l,c_l\}$.

	We can write $g=G + \sum_{l=0}^{\ell-1} g_l$, where $G\in \calA_{\mathrm{c}}^{2+\calE,2+\alpha}(\dot{X};\ell)$, and $g_{l} \in \calA_{\mathrm{c}}^{2+\calE_l,2+\alpha_l}[0,1) \calY_l$. Then, \cref{eq:id_022} becomes
	\begin{equation}
	\calM w (c,\theta) =  \calR(c) \calM_{\rho\to c}(\rho^{-2} G(\rho,\theta))(c)  + \sum_{l=0}^{\ell-1} \calR(c) \calM_{\rho\to c}(\rho^{-2} g_l (\rho,\theta))(c) .
	\end{equation}
	\begin{itemize}
		\item First consider the term $\calR(c) \calM_{\rho\to c}(\rho^{-2} G(\rho,\theta))(c)$. Since $\rho^{-2} G\in \calA_{\mathrm{c}}^{\calE,\alpha}(\dot{X};\ell)$,  the mapping properties of the Mellin transform show that $\calM_{\rho\to c}(\rho^{-2} G(\rho,\theta))(c)$ is meromorphic in $\{z\in \bbC: \Re z<\alpha\}$ with poles given by $\calE$. Consequently, $\calR(c) \calM_{\rho\to c}(\rho^{-2} G(\rho,\theta))$ is meromorphic on the same domain, with poles given by $\calE \uplus \calC$, where $\calC$ is the pre-index set consisting of all of the $(-b_l,0)$ and $(c_l,0)$ for all $l\in \bbN$. 
		
		However, because $\calM_{\rho\to c}(\rho^{-2} G(\rho,\theta))(c)$ is orthogonal to $\calY_0,\ldots,\calY_{\ell-1}$ in $L^2(\partial X_\theta)$, for each $c$, the poles at $-b_{\ell-1},\ldots,-b_0$ and $c_0,\ldots,c_{\ell-1}$ are absent. Thus, in the strip $\{z\in \bbC: \Re z \in (0,\alpha)\}$, the poles are given by $\calE\uplus \calC_\ell$.  
		\item Secondly, consider $\calR(c) \calM_{\rho\to c}(\rho^{-2} g_l (\rho,\theta))(c)$. 
		Arguing analogously, this is meromorphic in $\{z\in \bbC: \Re z \in (0,\alpha_l)\}$ with poles given by $\calE_l \uplus \{(c_l,0)\}$, where the point is because $g_l(\rho,-)\in \calY_l$ for each $l$, we can apply $\calR(c)$ to $\calM_{\rho\to c}(\rho^{-2} g_l (\rho,\theta))(c)$ unless $c\in \{-b_l,c_l\}$.
	\end{itemize}
	Using \cref{eq:Mel_inv_ex}, we write $w=W + \sum_{l=0}^{\ell-1} w_l$, where
	\begin{align}
	W(\rho,\theta) &= \calM^{-1}_{c\to \rho} (\calR(c) \calM_{\rho\to c}(\rho^{-2} G(\rho,\theta))(c)) \\
	w_l(\rho,\theta)  &= \calM^{-1}_{c\to \rho} (\calR(c) \calM_{\rho\to c}(\rho^{-2} g_l(\rho,\theta))(c))
	\end{align}
	for each $l\in \{0,\ldots,\ell-1\}$. Each of these inverse Mellin transforms is initially taken along a vertical contour slightly right of the imaginary axis. The mapping properties of the Mellin transform then say that $
	W \in \calA^{\calC_\ell\uplus \calE , \alpha-}(\dot{X};\ell)$, $w_l \in \calA^{(c_l,0)\uplus \calE_l,\alpha_l-}([0,1)_\rho) \calY_l$. 
\end{proof}

\subsection{More general operators}
\label{subsec:0main}

Now suppose that $P\in \operatorname{Diff}_{\mathrm{b}}^2(X)$ has the form $P = \triangle_{g_0}   +  Q  + E$, for some $Q,E\in \operatorname{Diff}_{\mathrm{b}}^2(X)$, which, on $\dot{X} = \smash{[0,1)_{1/r}} \times \smash{\partial X_\theta}$, have the form 
\begin{itemize}
	\item $Q \in r^{-3-\beth} \operatorname{Diff}_{\mathrm{b}}^2[0,1)_{1/r} + r^{-3-\aleph}S  \operatorname{Diff}_{\mathrm{b}}^2[0,1)_{1/r}$, so $Q$ involves only partial derivatives in the radial direction $r$,  
	and 
	\item $E\in r^{-3-\beth_0} \operatorname{Diff}_{\mathrm{b}}^2(X) + S^{-3-\aleph_0 } \operatorname{Diff}_{\mathrm{b}}^2(X)$
\end{itemize}
for some $\aleph,\aleph_0\in \bbR^{\geq 0} \cup \{\infty\}$ and nonnegative integers $\beth,\beth_0\in \bbN \cup \{\infty\}$. 
Without loss of generality, we may assume that $\aleph,\beth_0 \in [\beth,\aleph_0]$. Thus, we keep track of four orders, the order $\beth$ below subleading at which $P$ differs from $\triangle_{g_0}$, the order $\beth_0$ below subleading where $\theta$-dependent terms appear, and the analogues $\aleph,\aleph_0$ regarding merely symbolic coefficients. 

In the following proposition, we use $\dot{\calI}$ to denote the index set generated by a pre-index set $\calI$. 
\begin{proposition}
	Suppose that $u\in \calA^{0+}(X)$ satisfies $Pu=f$ for some $f\in \calA^{2+\calE,2+\alpha}(X;\ell) + \sum_{l=0}^{\ell-1} \calA^{2+\calE_l,2+\alpha_l}_{\mathrm{c}}[0,1) \calY_l$. 
	Suppose that $\calI,\calI_0,\ldots,\calI_{\ell-1}$ are pre-index sets such that 
	\begin{itemize}
		\item $\calC_\ell \uplus (\calE \cup (1+\beth_0 + \dot{\calI}_\cup) \cup (1+\beth+\dot{\calI})) \subseteq \calI$,  where $\calI_\cup = \calI\cup \calI_0\cup \cdots \cup \calI_{\ell-1}$, and 
		\item $\{(c_l,0)\}\uplus (\calE_l\cup (1+\beth_0+\dot{\calI}_\cup) \cup (1+\beth+\dot{\calI}_l))$ for every $l=0,\ldots,\ell-1$.
	\end{itemize}
	Then, 
	\begin{equation} 
	u\in \calA^{\calI,\gamma-}(X;\ell) + \sum_{l=0}^{\ell-1} \calA^{\calI_l,\gamma_l-}_{\mathrm{c}}[0,1) \calY_l
	\end{equation} 
	for any $\gamma \leq \min\{\alpha,1+\aleph_0+\Pi \calI_\cup,1+\aleph+\Pi\calI\}$ and $\gamma_l \leq \{\alpha_l, 1+\aleph_0 + \Pi \calI_{\cup -}, 1+\aleph + \Pi \calI_l\}$ satisfying $\max\{\gamma,\gamma_0,\ldots,\gamma_{\ell-1}\} \leq 1+\beth_0 + \gamma_\wedge$ for $\gamma_\wedge = \min\{\gamma,\gamma_0,\ldots,\gamma_{\ell-1}\}$. 
	\label{prop:zf_computation}
\end{proposition}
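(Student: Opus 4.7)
The plan is to iterate the preceding inversion lemma for $\triangle_{g_0}^{-1}$ after rearranging the equation as $\triangle_{g_0} u = f - Qu - Eu$, exploiting the self-consistency of the index sets $\calI,\calI_l$ to absorb polyhomogeneous contributions of $Qu$ and $Eu$ at each step while steadily improving the conormal error of $u$.

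\textbf{Base case.} Since $u\in \calA^{0+}(X)$, decomposing $u$ in the collar into its part orthogonal to $\calY_0,\ldots,\calY_{\ell-1}$ and its $\calY_l$-components (and cutting off near $\partial X$) places $u\in \calA^{\calI,0+}(X;\ell)+\sum_{l=0}^{\ell-1}\calA^{\calI_l,0+}_{\mathrm{c}}[0,1)\calY_l$ trivially.

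\textbf{Inductive step.} Assume $u$ already lies in the target space with conormal orders $(\beta,\beta_l)$ satisfying $\beta\leq\gamma$, $\beta_l\leq\gamma_l$, and $\max\{\beta,\beta_l\}\leq 1+\beth_0+\beta_\wedge$ where $\beta_\wedge=\min\{\beta,\beta_l\}$. Split $Q=Q_{\mathrm{phg}}+Q_{\mathrm{sym}}$ according to $r^{-3-\beth}\operatorname{Diff}^2_{\mathrm{b}}+r^{-3-\aleph}S\,\operatorname{Diff}^2_{\mathrm{b}}$, and similarly $E=E_{\mathrm{phg}}+E_{\mathrm{sym}}$. Two features are crucial: (i) $Q$ is purely radial, so it preserves the splitting $u = U+\sum u_l Y_l$; the $Q_{\mathrm{phg}}$ term produces new phg terms of index sets $1+\beth+\calI$, $1+\beth+\calI_l$ together with conormal error $\calA^{3+\beth+\beta_\bullet-}$, while $Q_{\mathrm{sym}}$ produces a purely conormal error whose order, once $\beta_\bullet$ exceeds the smallest real part in $\Pi\calI_\bullet$, is governed by the \emph{leading} phg coefficient of $u$, giving a bound of order $3+\aleph+\Pi\calI_\bullet$; (ii) $E$ mixes the decomposition, so after re-decomposing $Eu$ its contribution to every component is characterized by the common index set $1+\beth_0+\calI_\cup$ and common conormal order $1+\beth_0+\beta_\wedge$ (the source of the coupling constraint), with the analogous $E_{\mathrm{sym}}$ error bounded by $1+\aleph_0+\Pi\calI_\cup$. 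Combining, $f-Qu-Eu\in \calA^{2+\calE',2+\alpha'}(X;\ell)+\sum_l \calA^{2+\calE'_l,2+\alpha'_l}_{\mathrm{c}}[0,1)\calY_l$ where $\calE'=\calE\cup(1+\beth+\calI)\cup(1+\beth_0+\calI_\cup)$, $\calE'_l=\calE_l\cup(1+\beth+\calI_l)\cup(1+\beth_0+\calI_\cup)$, and $\alpha'=\min\{\alpha,1+\beth+\beta,1+\beth_0+\beta_\wedge,1+\aleph+\Pi\calI,1+\aleph_0+\Pi\calI_\cup\}$ (analogously $\alpha'_l$). The hypothesized inclusions $\calC_\ell\uplus\calE'\subseteq\calI$ and $(c_l,0)\uplus\calE'_l\subseteq\calI_l$ then allow us to apply the previous lemma to $u=\triangle_{g_0}^{-1}(f-Qu-Eu)$, yielding $u\in\calA^{\calI,\alpha'-}(X;\ell)+\sum_l\calA^{\calI_l,\alpha'_l-}_{\mathrm{c}}[0,1)\calY_l$.

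\textbf{Iteration.} Replacing $(\beta,\beta_l)$ by $(\max(\beta,\alpha'),\max(\beta_l,\alpha'_l))$ strictly improves the conormal orders (by at least $1+\beth$ in the radial phg channel) until they saturate at the prescribed $(\gamma,\gamma_l)$; termination in finitely many steps is forced by the ceilings $1+\aleph+\Pi\calI$ and $1+\aleph_0+\Pi\calI_\cup$, which cap $\alpha'$ as soon as $\beta$ and $\beta_\wedge$ exceed $\min\Re\Pi\calI$ and $\min\Re\Pi\calI_\cup$ respectively, and the coupling constraint $\max\leq 1+\beth_0+\min$ is visibly preserved by the update. The principal obstacle is precisely the bookkeeping separating the phg contributions of $Q_{\mathrm{phg}},E_{\mathrm{phg}}$ (which feed back into $\calI,\calI_l$ and so do not limit $\gamma$) from the symbolic contributions of $Q_{\mathrm{sym}},E_{\mathrm{sym}}$ (which do not lie in $\calI$ and which produce the $\aleph$-ceilings), together with tracking the mixing of components by the non-radial remainder $E$.
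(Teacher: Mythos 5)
Your proposal is correct and follows essentially the same route as the paper: rearrange to $\triangle_{g_0}u = f - Qu - Eu$, use the mapping properties of the polyhomogeneous and symbolic parts of $Q$ (radial, hence diagonal in the $\calY_l$-decomposition) and $E$ (mixing, hence governed by $\calI_\cup$ and the coupling constraint $\max \leq 1+\beth_0+\gamma_\wedge$), invoke the closure hypotheses on $\calI,\calI_l$ to absorb the polyhomogeneous feedback, and bootstrap the conormal error upward by $1+\beth$ per step via the $\triangle_{g_0}^{-1}$ lemma until it saturates at the $\aleph$-ceilings defining $\gamma,\gamma_l$. This matches the paper's inductive argument in all essentials.
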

Note that such $\gamma,\gamma_l$ exist, as one solution is 
\begin{equation}
\gamma,\gamma_l = \min\{\alpha,\alpha_l,1+\aleph_0+\Pi \calI_\cup,1+\aleph+\Pi\calI,1+\aleph + \Pi \calI_l : l=0,\ldots,\ell-1\}.
\end{equation}
\begin{proof}
	Suppose that 
	\begin{equation} 
	u\in \calA^{\calI,\gamma\wedge \beta-}(X;\ell) + \sum_{l=0}^{\ell-1} \calA^{\calI_l,\gamma_l\wedge \beta-}_{\mathrm{c}}([0,1)) \calY_l, 
	\label{eq:ind}
	\end{equation} 
	where $\gamma\wedge \beta = \min\{\gamma,\beta\}$. 
	Because $u\in \calA^{0+}(X)$, we know this holds for all nonpositive $\beta$, as well as some positive $\beta$. Our goal is to show that it holds for all $\beta$.
	
	Given \cref{eq:ind}, $u\in \calA^{\calI_\cup, \gamma_\wedge\wedge  \beta-}(X)$. Since we have the mapping properties 
	\begin{align}
	\operatorname{Diff}_{\mathrm{b}}^2(X) &: \calA^{\calI_\cup, \gamma_\wedge \wedge \beta-}(X) \to \calA^{\dot{\calI}_\cup, \gamma_\wedge \wedge \beta-}(X) \\ 
	S \operatorname{Diff}_{\mathrm{b}}^2(X) &: \calA^{\calI_\cup, \gamma_\wedge \wedge \beta-}(X) \to \calA^{\min\{\gamma_\wedge,\beta,\Pi \calI_\cup\}-}(X),
	\end{align}
	where $\Pi \calI = \{\Re j:(j,k)\in \calI\}$, 
	we have 
	\begin{multline}
	E u \in \calA^{3+\beth_0+\dot{\calI}_\cup, 3+\beth_0+\gamma_\wedge \wedge \beta-}(X) + \calA^{3+\aleph_0+\min\{\gamma_\wedge,\beta,\Pi \calI_\cup\}- }(X)  \\ 
	= \calA^{3+\beth_0 + \dot{\calI}_\cup,3+ \min\{\beth_0+\gamma_\wedge\wedge \beta,\aleph_0+\Pi \calI_\cup \}-}(X),
	\end{multline}
	using that $\beth_0 \leq \aleph_0$.

	Similarly, we can write $Qu = q + \sum_{l=0}^{\ell-1} q_l$, where 
	\begin{align}
	\begin{split} 
	q&\in \calA^{3+\beth+\dot{\calI},3+\beth+\gamma\wedge \beta-}(X;\ell) + \calA^{3+\aleph+\min\{\gamma_\wedge \beta,\Pi \calI\}-}(X;\ell) \\ &= \calA^{3+\beth+\dot{\calI},3+\min\{\beth+\gamma\wedge \beta, \aleph + \Pi \calI\}-}(X;\ell), 
	\end{split} \\
	\begin{split} 
	q_l &\in \calA_{\mathrm{c}}^{3+\beth+\dot{\calI}_l,3+\beth+\gamma_l\wedge \beta-} [0,1)\calY_l + \calA_{\mathrm{c}}^{3+\aleph+\min\{\gamma_l\wedge \beta,\Pi\calI_l\}-} ([0,1))\calY_l \\
	&= \calA^{3+\beth+\dot{\calI}_l,3+\min\{\beth+\gamma_l\wedge \beta, \aleph + \Pi \calI_l\}-}_{\mathrm{c}}([0,1))\calY_l
	\end{split} 
	\end{align}
	for  each $l\in \{0,\ldots,\ell-1\}$.

	We can rewrite the PDE $Pu=f$ in the form $\triangle_{g_0} u = g$ for $g = f - Qu - Eu$. The computations above show that
	\begin{multline}
	g \in \calA^{(2+\calE)\cup (3+\beth_0+\dot{\calI}_\cup) \cup (3+\beth+\dot{\calI}), \min\{2+\alpha, 3+\beth_0 + \gamma_\wedge\wedge \beta-,3+\aleph_0+\Pi \calI_\cup-,3+\beth+\gamma \wedge \beta-,3+\aleph+\Pi\calI-   \} } (X;\ell) \\ 
	+ \textstyle \sum_{l=0}^{\ell-1} \calA^{(2+\calE_l)\cup (3+\beth_0+\dot{\calI}_\cup) \cup (3+\beth+\dot{\calI}_l), \min\{2+\alpha_l, 3+\beth_0 + \gamma_\wedge \wedge \beta-,3+\aleph_0+\Pi \calI_\cup-,3+ \beth+\gamma_l\wedge\beta-,3+\aleph_l+\Pi\calI_l-   \} }_{\mathrm{c}}([0,1))\calY_l. 
	\end{multline}
	Applying \Cref{lem:Lap_basic}, we can conclude that 
	\begin{equation}
	u \in \calA^{\calC_\ell\uplus(\calE\cup (1+\beth_0+\dot{\calI}_\cup) \cup (1+\beth+\dot{\calI})), \gamma^+- } (X;\ell) 
	+ \textstyle \sum_{l=0}^{\ell-1} \calA^{ \{(c_l,0)\} \uplus (\calE_l\cup (1+\beth_0+\dot{\calI}_\cup) \cup (1+\beth+\dot{\calI}_l)), \gamma^+_l-}_{\mathrm{c}}([0,1))\calY_l, 
	\label{eq:comp_ind_a}
	\end{equation}	
	where 
	\begin{align}
	\begin{split} 
	\gamma^+  &= \min\{\alpha, 1+\beth_0+\gamma_\wedge \wedge \beta, 1+\beth + \gamma\wedge \beta,1+\aleph_0+\Pi \calI_\cup,1+\aleph+\Pi\calI\} \\ & \geq \min\{\gamma,1+\beth+\beta,1+\beth_0+\gamma_\wedge \} 
	\end{split} \\
	\begin{split} 
	\gamma^+_l  &=  \{\alpha_l,1+\beth_0+\gamma_\wedge \wedge \beta, 1+\aleph_0 + \Pi \calI_{\cup -}, 1+\beth + \gamma_l\wedge \beta, 1+\aleph_l + \Pi \calI_l\} \\ &\geq \min\{\gamma_l, 1+\beth + \beta ,1+\beth_0 + \gamma_\wedge\}.
	\end{split} 
	\end{align}
	Since we are assuming that $\gamma,\gamma_l \leq 1+\beth_0 + \gamma_\wedge$, we have $\gamma^+ \geq \gamma\wedge \beta^+$ and $\gamma^+_l \geq \gamma_l \wedge \beta^+$ for $\beta^+ = 1+\beth + \beta$. 
	Using our assumptions regarding $\calI,\calI_l$, the pre-index sets in \cref{eq:comp_ind_a} are subsets of the respective of $\calI,\calI_0,\ldots,\calI_{\ell-1}$. 
	So, \cref{eq:comp_ind_a} gives 
	\begin{equation}
	u\in  \calA^{\calI,\gamma\wedge \beta^+-}(X;\ell) + \sum_{l=0}^{\ell-1} \calA^{\calI_l,\gamma_l \wedge \beta^+-}_{\mathrm{c}}([0,1)) \calY_l.
	\end{equation}
	Applying this argument inductively, we have $u\in  \calA^{\calI,\gamma\wedge \beta^{+\cdots +}-}(X;\ell) + \sum_{l=0}^{\ell-1} \calA^{\calI_l, \gamma_l \wedge \beta^{+\cdots +}-}_{\mathrm{c}}([0,1)) \calY_l$, where $\beta^{+\cdots+} = ((\beta^+)^+)\cdots$. Done enough times, $\beta^{+\cdots+} \geq \max\{\gamma,\gamma_0,\ldots,\gamma_{\ell-1}\}$, so we conclude the result.
\end{proof}

Given $\calE,\calE_l$, it is straightforward to construct minimal pre-index sets $\calI,\calI_l$ with the closure properties required in the proposition, as we now demonstrate.

Let
\begin{equation}
\calI^{(0)} = \calE, \qquad \calI_l^{(0)} = \calE_l, 
\end{equation}
and recursively define pre-index sets
\begin{align}
\begin{split} 
\calI^{(k+1)} &= \calI^{(k)} \cup( \calC_\ell\uplus (\calE\cup (1+\beth_0+\dot{\calI}_\cup^{(k)})\cup (1+\beth+\dot{\calI}^{(k)}))) , \\
\calI_l^{(k+1)} &= \calI_l^{(k)} \cup( \{(c_l,0)\}\uplus( \calE_l \cup (1+\beth_0+\dot{\calI}_\cup^{(k)} ) \cup (1+\beth + \dot{\calI}_l^{(k)}) ))
\label{eq:indexsets}
\end{split} 
\end{align}
for all $k\in \bbN$, where $\calI_\cup^{(k)}= \calI^{(k)}\cup \calI^{(k)}_0\cup\cdots \calI_{\ell-1}^{(k)}$. Then, one takes 
\begin{equation}
\calI = \cup_{k\in \bbN} \calI^{(k)}, \qquad \calI_l = \cup_{k\in \bbN} \calI^{(k)}_l. \label{eq:index_set_defs}
\end{equation}

In order to show that these are pre-index sets (which is not completely obvious), what one wants to show is that, for any $\alpha\in \bbR$, the sets $\{(\gamma,k)\in \calI : \Re \gamma < \alpha\}$, $\{(\gamma,k)\in \calI_l : \Re \gamma < \alpha\}$ are finite. 
This follows from the following claim:  letting 
\begin{equation}
\calI_{<\alpha}^{(k)} = \{(\gamma,k)\in \calI^{(k)} : \Re \gamma < \alpha\} ,\qquad \calI_{l,<\alpha}^{(k)} = \{(\gamma,k)\in \calI_l^{(k)} : \Re \gamma < \alpha\}, 
\label{eq:truncated_ind}
\end{equation}
it is the case that, for each $\alpha$, there exists some $k(\alpha)$ such that these sets are constant in $k$ for $k\geq k(\alpha)$. 
In other words, for $k$ large the definitions \cref{eq:indexsets} are only adding to $\calI_\bullet$ elements $(\gamma,k)$ with $\Re \gamma$ large.

Let $S\subset \bbR$ be the set of $\alpha$ for which this holds. If $\alpha$ is sufficiently negative, then the sets in \cref{eq:truncated_ind} are just empty. So, $S$ is nonempty. 
Indeed,  
\begin{align}
\begin{split} 
\calI^{(k+1)}_{<\alpha} &= \calI^{(k)}_{ < \alpha} \cup ((\calC_\ell)_{<\alpha}\uplus (\calE_{<\alpha}\cup (1+\beth_0+\dot{\calI}_\cup^{(k)})_{<\alpha}\cup (1+\beth+\dot{\calI}^{(k)})_{<\alpha})  \\
&= \calI^{(k)}_{<\alpha} \cup ((\calC_\ell)_{<\alpha}\uplus (\calE_{<\alpha}\cup (1+\beth_0+(\dot{\calI}_\cup^{(k)})_{<\alpha-1-\beth_0})\cup (1+\beth+\dot{\calI}^{(k)}_{<\alpha-1-\beth} )),  
\end{split} 
\intertext{and, likewise,}
\calI^{(k+1)}_{l,<\alpha} &=  \calI^{(k)}_{l,<\alpha} \cup (\{(c_l,0)\}_{<\alpha}\uplus (\calE_{l,<\alpha}\cup (1+\beth_0+(\dot{\calI}_\cup^{(k)})_{<\alpha-1-\beth_0})\cup (1+\beth+\dot{\calI}^{(k)}_{l,<\alpha-1-\beth} )).
\end{align}
So, if $\alpha <d-2$, in which case $(\calC_\ell)_{<\alpha} = \varnothing$ and $(c_l,0)_{<\alpha} = \varnothing$, and if $\alpha< \min\{\Pi \calE,\Pi \calE_l:l=0,\ldots,\ell-1\} = \min \Pi \calE_\cup$, in which case $\calE_{<\alpha},\calE_{l,<\alpha} = \varnothing$, then $\calI_{<\alpha},\calI_{l,<\alpha}=\varnothing$. 
So, $(-\infty,\min\{d-2,\Pi \calE_\cup \}]  \subseteq S$,
and we can take $k(\alpha) = 0$ for $\alpha \leq \min\{d-2,\Pi \calE_\cup \}$.

More generally, if $\alpha-1-\min\{\beth,\beth_0\} = \alpha-1-\beth \in S$, and if $k \geq k(\alpha-1-\beth)+1$, then we can conclude that 
\begin{equation}
\calI_{<\alpha}^{(k+1)} = \calI^{(k)}_{<\alpha} ,\qquad \calI^{(k+1)}_{l,<\alpha} = \calI^{(k)}_{l,<\alpha}, 
\end{equation}
for all $l\in \{0,\ldots,\ell-1\}$. Thus, $\alpha\in S$, and we can conclude that we may take $k(\alpha) = k(\alpha-1-\beth)+1$. 
Proceeding inductively, we conclude that $S=\bbR$, and we may take 
\begin{equation}
k(\alpha) = \Big\lceil \frac{\alpha - \min\{d-2,\Pi \calE_\cup\} }{1+\beth}  \Big\rceil .
\end{equation}

So, the sets $\calI ,\calI_l$ defined above are well-defined pre-index sets, and they satisfy the hypotheses of the proposition. 
Thus, we get:
\begin{propositionp}
	Suppose that $u\in \calA^{0+}(X)$ satisfies $Pu\in \calA^{2+\calE,2+\alpha}(X;\ell) + \sum_{l=0}^{\ell-1} \calA^{2+\calE_l,2+\alpha_l}_{\mathrm{c}}[0,1) \calY_l$.
	Then, 
	\begin{equation} 
	u\in \calA^{\calI,\gamma-}(X;\ell) + \sum_{l=0}^{\ell-1} \calA^{\calI_l,\gamma-}_{\mathrm{c}}[0,1) \calY_l,
	\end{equation}
	where $\gamma,\gamma_l$ are above and $\calI,\calI_l$ are the previously defined pre-index sets.
\end{propositionp}

\begin{example}
	Consider the case where $\beth_0 = \beth$ and $\calE_0,\ldots,\calE_{\ell-1} = \calE$. In other words, the first subleading terms of $P$ are not assumed to preserve the conic symmetry.

	Then, \cref{eq:indexsets} yields $\calI_\cup^{(k+1)} = \calI_\cup^{(k)} \cup (\calC_0 \uplus \calE) \cup (\calC_0 \uplus(1+\beth+\dot{\calI}_\cup^{(k)}) )$. This single recurrence relation can be solved to yield
	\begin{equation}
	\calI_\cup = (\calC_0\uplus \calE) \cup (\calC_0 \uplus  (1+\beth+\calC_0\dot{\uplus} \calE)  ) \cup (\calC_0 \dot{\uplus} (1+\beth+(\calC_0 \uplus  (1+\beth+\calC_0\dot{\uplus} \calE)  ) ) \cup \cdots .
	\label{eq:misc_238}
	\end{equation}
	(Here, $\dot{\uplus}$ means apply $\uplus$ and then take the index set generated by the result.)
	\Cref{eq:indexsets} also yields $\calI^{(k+1)} = \calI^{(k)} \cup (\calC_\ell \uplus \calE) \cup (\calC_\ell \uplus(1+\beth + \dot{\calI}_\cup^{(k)}))$, which in turn yields 
	\begin{equation} 
		\calI = (\calC_\ell \uplus \calE) \cup ( \calC_\ell\uplus (1+\beth+\dot{\calI}_\cup)).
	\end{equation}
	Similarly, $\calI_l = (\{(c_l,0)\} \uplus \calE) \cup (\{ (c_l,0) \}\uplus (1+\beth+\dot{\calI}_\cup)) $.
	So, this case is understood explicitly. 
	\label{ex:spherical_breaking}
\end{example}
On the other extreme:
\begin{example}
	Consider now the case where $\beth_0=\infty$, which means that all symmetry-breaking terms in $P$ are Schwartz. 
	
	Then, the $1+\beth_0 + \dot{\calI}_\cup^{(k)}$ terms in \cref{eq:indexsets} are always $\varnothing$, so the recursion relations decouple:
	\begin{align}
	\calI^{(k+1)} &= \calI^{(k)} \cup (\calC_\ell \uplus \calE ) \cup (\calC_\ell \uplus (1+\beth + \dot{\calI}^{(k)}) ) \\
	\calI^{(k+1)}_l &= \calI^{(k)}_l \cup (\{(c_l,0)\}  \uplus \calE_l ) \cup (\{(c_l,0)\} \uplus (1+\beth + \dot{\calI}^{(k)}_l) ) .
	\end{align}
	These decoupled recurrence relations are readily solved to yield $\calI = (\calC_\ell \uplus \calE) \cup ( \calC_\ell \uplus (1+\beth + \calC_\ell \dot{\uplus} \calE))  \cup ( \calC_\ell \uplus (1+\beth +  \calC_\ell \dot{\uplus} (1+\beth + \calC_\ell \dot{\uplus} \calE) ))  \cup \cdots$, 
	and similarly for $\calI_l$.  
	\label{ex:sym}
\end{example}

\begin{example}
	 If, in addition, $\beth=\infty$ and $\calE_\bullet = \varnothing$  (meaning that $P$ differs from $\triangle_{g_0}$ by Schwartz terms, and that $f$ is Schwartz), then $\calI = \calC_\ell$ and $\calI_l = \{(c_l,0)\}$.
	 So, in this case we recover the multipole expansion from classical electrostatics. Note the advantage to working with pre-index sets: we have proven that s-waves are \emph{exactly} proportional to $1/r$, p-waves are exactly proportional to $1/r^2$, etc., up to Schwartz terms.
	 \label{ex:Coulomb}
\end{example}

\section{Solvability theory of the model problem at $\mathrm{tf}$}
\label{sec:tf}

We make use of the limiting absorption principle for $\hat{N}_{\mathrm{tf}}$ (the operator defined in \cref{eq:Ntf_form}), involving 
the function spaces 
\begin{equation}
\calA^{\beta,\gamma}(\mathrm{tf}) = \rho_{\mathrm{bf}}^\beta \rho_{\mathrm{zf}}^\gamma \calA^{0,0}(\mathrm{tf}), \qquad \calA^{0,0}(\mathrm{tf}) = \{v\in C^\infty(\mathrm{tf}^\circ) : Lv \in L^\infty\text{ for all }L\in \operatorname{Diff}_{\mathrm{b}}(\mathrm{tf}) \},
\end{equation}
where $\operatorname{Diff}_{\mathrm{b}}(\mathrm{tf}) = \bigcup_{m\in \bbN} \operatorname{Diff}_{\mathrm{b}}^{m,0,0}(\mathrm{tf})$ is the algebra of differential operators generated over $C^\infty(\mathrm{tf})$ by $\hat{r}\partial_{\hat{r}}$ and the differential operators in the angular directions. In $\smash{\operatorname{Diff}_{\mathrm{b}}^{m,\ell,s}(\mathrm{tf})}$, the $m$ is the differential order, $\ell$ is the order at $\mathrm{bf}$, and $s$ is the order at $\mathrm{zf}$. 

To begin our discussion, we recall:
\begin{propositionp}[{\cite[Prop.\ 5.4]{VasyLagrangian}}, {\cite[Thm.\ 2.22]{HintzPrice}}]
	If $\beta<(d-1)/2$ and $\gamma \in (2-d,0)$, then $\hat{N}_{\mathrm{tf}} : \calA^{\beta,\gamma}(\mathrm{tf}) \to \calA^{\beta+1,\gamma-2}(\mathrm{tf})$ is an isomorphism. 
	Moreover, letting 
	\begin{align*}
	b_j &= 2^{-1}(2-d + ((d-2)^2+4\lambda_j)^{1/2}) \\ 
	c_j &= 2^{-1}(d-2 + ((d-2)^2+4\lambda_j)^{1/2}),
	\end{align*}
	as in the previous section, 
	then $\hat{N}_{\mathrm{tf}} : \calA^{\beta,\gamma}(\mathrm{tf};\ell) \to \calA^{\beta+1,\gamma-2}(\mathrm{tf};\ell)$ is an isomorphism under the weakened hypothesis that $\gamma \in (-c_\ell,b_\ell)$. 
	\label{prop:Vasy_tf}
\end{propositionp}

Here, $\calA^{\beta,\gamma}(\mathrm{tf};\ell)$ are the elements of $\calA^{\beta,\gamma}$ orthogonal to $\calY_0,\dots,\calY_{\ell-1}$.

We will not repeat the proof of this result, but it is not difficult to understand the restrictions on $\beta,\gamma$ therein:

\begin{proof}[Proof sketch]
	Since $\mathrm{tf}$ is an exact cone, we can separate variables: for $u(\hat{r},\theta) = v(\hat{r})Y_j(\theta)$ 
	and $f(\hat{r},\theta) = g(\hat{r})Y_j(\theta)$, $Y_j\in \calY_j$, $u$ solves $\hat{N}_{\mathrm{tf}} u=f$ if and only if, letting 
	\begin{equation}
	L_j=- \frac{\mathrm{d}^2}{\mathrm{d} \hat{r}^2} - \Big(\frac{d-1}{\hat{r}}+2i\Big)\frac{\mathrm{d}}{\mathrm{d} \hat{r}} + \frac{\lambda_j }{\hat{r}^2} - \frac{i(d-1)}{\hat{r}},
	\end{equation}
	we have $L_j v = g$. Here, as in previous sections, $\lambda_j$ is the $j$th eigenvalue of the boundary Laplacian (counted with or without multiplicity, depending on our conventions, so that $\triangle_{\partial X} Y_j = \lambda_j Y_j$). Note that 
	\begin{equation}
	L_j : \calA^{\beta,\gamma}([0,\infty]_{\hat{r}}) \to \calA^{\beta+1,\gamma-2}([0,\infty]_{\hat{r}}).
	\end{equation}
	We will assume, for the sake of this discussion, that the only obstructions to the invertibility of $\hat{N}_{\mathrm{tf}}: \calA^{\beta,\gamma}(\mathrm{tf}) \to \calA^{\beta+1,\gamma-2}(\mathrm{tf})$ can be seen already as an obstruction to the invertibility of $L_j$ for some $j\in \bbN$. This is in fact true, as can be proven using the Mellin transform. 
	The ordinary differential operator $L_j$ is regular singular at $\hat{r}=0$ and irregular singular at $\hat{r}=\infty$. It has, up to normalization, a unique element of its kernel which is non-oscillatory at infinity and, up to normalization, a unique element which is recessive (which, we will see, means bounded) at $\hat{r}=0$. Indeed, $L_j$ is a conjugated form of the differential operator appearing in Bessel's ODE, which yields
	\begin{equation}
	\ker L_j = e^{-i\hat{r}} \hat{r}^{-(d-2)/2}\operatorname{span}_\bbC\big\{ J_{\nu}(\hat{r}), Y_\nu(\hat{r}) \big\},
	\end{equation}
	where $\nu = 2^{-1} ((d-2)^2 + 4 \lambda_j)^{1/2}$ and $J_\nu,Y_\nu$ are the usual Bessel functions. We will be able to read the restrictions on $\beta,\gamma$ imposed in \Cref{prop:Vasy_tf} off of the $\hat{r}\to 0$ and $\hat{r}\to\infty$ asymptotics of the elements of $\ker L_j$. 
	
	The element of this kernel which is non-oscillatory at infinity is $e^{-i \hat{r}} \hat{r}^{-(d-2)/2} H_\nu^+(\hat{r})$, where $H_\nu^+(\hat{r})$ is the Hankel function with $e^{i\hat{r}}$-type asymptotics. 
	Since conormality excludes oscillatory asymptotics, the only element of $\ker L_j$ that can lie in $\calA^{\beta,\gamma}(\mathrm{tf})$ is the non-oscillatory solution $e^{-i \hat{r}} \hat{r}^{-(d-2)/2} H_\nu^+(\hat{r})$ and its scalar multiples, no matter what $\beta,\gamma$ are. The small-$\hat{r}$ asymptotics of the Hankel functions show that 
	\begin{equation}
	e^{-i \hat{r}} \hat{r}^{-(d-2)/2} H_\nu^+(\hat{r})  \propto ( 1+o(1)) \hat{r}^{-c_j}
	\end{equation}
	as $\hat{r}\to 0^+$, where $c_j = (d-2)/2+\nu$ is as in the previous subsection. Thus, a sufficient condition on $\beta,\gamma$ for 
	\begin{equation}
		\ker \hat{N}_{\mathrm{tf}} \cap \calA^{\beta,\gamma}(\mathrm{tf})=\{0\}
	\end{equation}
	is that $\gamma>-c_j$. For this to hold for all $j$ means $\gamma>-c_0 = 2-d$. 
	
	Solving $L_j v=g$ for $v$ non-oscillatory, even if $g\in C_{\mathrm{c}}^\infty\smash{(\bbR^+_{\hat{r}})}$ we cannot expect $v$ to decay more rapidly as $\hat{r}\to\infty$ than $\smash{e^{-i \hat{r}} \hat{r}^{-(d-2)/2}} H_\nu^+(\hat{r})$. This is especially clear if $g\in C_{\mathrm{c}}^\infty(\bbR^+_{\hat{r}})$, since then 
	\begin{equation} 
		v(\hat{r}) \propto e^{-i \hat{r}} \hat{r}^{-(d-2)/2} H_\nu^+(\hat{r})
	\end{equation} 
	for $\hat{r}$ sufficiently large.  
	Since 
	\begin{equation}
	e^{-i \hat{r}} \hat{r}^{-(d-2)/2} H_\nu^+(\hat{r}) \propto (1+o(1)) \hat{r}^{-(d-1)/2}
	\end{equation}
	as $\hat{r}\to\infty$, this means that for $\hat{N}_{\mathrm{tf}} : \calA^{\beta,\gamma}(\mathrm{tf}) \to \calA^{\beta+1,\gamma-2}(\mathrm{tf})$ to be surjective it had better be the case that $\beta\leq (d-1)/2$.
	On the other hand, such $v$ will typically have no better decay as $\hat{r}\to 0^+$ than the recessive element of $\ker L_j$, which is $e^{-i \hat{r}} \hat{r}^{-(d-2)/2} J_\nu(\hat{r})$, and this satisfies 
	\begin{equation}
	e^{-i \hat{r}} \hat{r}^{-(d-2)/2} J_\nu(\hat{r}) \propto (1+o(1)) \hat{r}^{\nu - (d-2)/2}
	\end{equation}
	as $\hat{r} \to 0^+$. So, for $\hat{N}_{\mathrm{tf}} : \calA^{\beta,\gamma}(\mathrm{tf}) \to \calA^{\beta+1,\gamma-2}(\mathrm{tf})$ to be surjective it had also better be the case that $\gamma \leq \nu-(d-2)/2=b_j$.  In order for this to hold for all $j$, it just needs to be that $\gamma \leq 0$.

	Conversely, we get surjectivity if both of the conditions $\beta  <(d-1)/2$ and $\gamma < 0$ are met. (Excluding the edge cases is necessary to avoid logarithmic terms when inverting $\hat{N}_{\mathrm{tf}}$.) One method of proving this is an explicit construction of the Greens function for $L_j$ using $H_\nu^+$, $J_\nu$, and their (nonzero) Wronskian. This is just Duhamel's principle.
	
	So, in summary, if $\gamma \in (2-d,0)$ and $\beta<(d-1)/2$, then $\hat{N}_{\mathrm{tf}} : \calA^{\beta,\gamma}(\mathrm{tf}) \to \calA^{\beta+1,\gamma-2}(\mathrm{tf})$ is bijective. 
	If we restrict attention to functions orthogonal to $\calY_0,\dots,\calY_{\ell-1}$, then we can ignore the constraints above coming from $j=0,\dots,\ell-1$, so the interval of allowed $\gamma$ gets larger. 
	The constraint on $\beta$ does not change (since that constraint was the same for all $j$).
\end{proof}

The main proposition of this section is a refinement of the previous result involving the function spaces 
\begin{equation}
\calA^{(\calE,\beta),(\calF,\gamma) }(\mathrm{tf};\ell) + \sum_{j=0}^{\ell-1} \calA^{(\calE_j,\beta), (\calF_j,\gamma_j)} ([0,\infty]_{\hat{r}})  \calY_j ,
\end{equation}
where $\calE,\calE_j$ are index sets, $\calF,\calF_j$ are pre-index sets and $\beta,\gamma,\beta_j,\gamma_j\in \bbR$. This function space consists of all functions $\mathrm{tf}^\circ\to \bbC$ of the form 
\begin{equation}
u = u_{\mathrm{rem}} + \sum_{j=0}^{\ell-1} u_j(\hat{r}) 
\end{equation}
for $u_{\mathrm{rem}} \in \calA^{(\calE,\beta),(\calF,\gamma) }(\mathrm{tf};\ell)$ and $ u_j \in \calA^{(\calE_j,\beta_j),(\calF_j,\gamma_j) }([0,\infty]_{\hat{r}} )\calY_j$. Here, the pre-index sets $\calE,\calE_j$ refer to $\hat{r}\to\infty$ asymptotics and the pre-index sets $\calF,\calF_j$ refer to $\hat{r}\to 0^+$ asymptotics.

 Let $\calB_{\geq \ell}$ denote the smallest pre-index set  containing $(b_j,0)\in \bbC\times \bbN$ for all $j\geq \ell$. 

\begin{proposition}
	Fix $\ell\in \bbN$. 
	Suppose that  $\min \{\Pi \calF, \gamma\}>-c_\ell$ and $\min\{\Pi \calF_j,\gamma_j\}>-c_j$ for each $j\leq \ell-1$.
	If
	\begin{equation} 
	f\in \calA^{(1+\calE,1+\beta),(-2+\calF,-2+\gamma) }(\mathrm{tf};\ell) + \sum_{j=0}^{\ell-1} \calA^{(1+\calE_j,1+\beta_j), (-2+\calF_j,-2+\gamma_j)} ([0,\infty]_{\hat{r}}) \calY_j,
	\end{equation} 
	then $u=\hat{N}_{\mathrm{tf}}^{-1} f$ is well-defined, and $u$ satisfies 
	\begin{equation}
	u\in \calA^{((2^{-1}(d-1)\uplus \calE),\beta-),(\tilde{\calI}[\calF,\ell],\gamma-) }(\mathrm{tf};\ell) + \sum_{j=0}^{\ell-1} \calA^{((2^{-1}(d-1)\uplus \calE_j),\beta_j-), (\tilde{\calI}_j[\calF_j],\gamma_j-)}([0,\infty]_{\hat{r}})  \calY_j,
	\end{equation}
	where $\uplus$ is as in \S\ref{sec:bf} and $\tilde{\calI},\tilde{\calI}_j$ are pre-index sets defined as follows: 
	\begin{itemize}
		\item $\tilde{\calI}_j = \{(b_j,0)\} \uplus \calF_j$, 
		\item and, similarly, $\tilde{\calI}[\calF,\ell]$ is the smallest pre-index set containing $\calF$ and $\{(b_j,0)\}\uplus (1+\tilde{\calI}')$ whenever $\tilde{\calI}'\subseteq \tilde{\calI}$ and $j\geq \ell$. 
	\end{itemize}
	Here, we are abbreviating $(2^{-1}(d-1),0)\uplus \calE= 2^{-1} (d-1)\uplus \calE$.
	\label{prop:Ntf_main}
\end{proposition}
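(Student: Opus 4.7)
The plan is to combine \Cref{prop:Vasy_tf} with separate normal-operator arguments at $\mathrm{bf}$ and at $\mathrm{zf}$, exploiting the separation-of-variables structure of $\hat{N}_{\mathrm{tf}}$. Since $\hat{N}_{\mathrm{tf}}$ commutes with the spectral projections of $\triangle_{\partial X}$, decomposing $f=f_{\mathrm{rem}}+\sum_{j=0}^{\ell-1}f_j$ (with $f_j$ in the $\calY_j$-harmonic and $f_{\mathrm{rem}}\perp\calY_0\oplus\cdots\oplus\calY_{\ell-1}$) yields a corresponding orthogonal decomposition of $u=\hat{N}_{\mathrm{tf}}^{-1}f$. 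The positivity assumptions $\Pi\calF,\Pi\calF_j>0$ and $\gamma,\gamma_j>0$ imply that each piece of $f$ lies in a space $\calA^{\beta+1,\gamma-2}(\mathrm{tf})$ for some $\beta<(d-1)/2$ and $\gamma\in(2-d,0)$, so \Cref{prop:Vasy_tf} gives each $u_\bullet\in\calA^{\beta,\gamma}(\mathrm{tf})$ a priori; the remaining task is to promote each piece to the claimed joint polyhomogeneity with conormal error.

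For the harmonic pieces $u_j$ with $j<\ell$, the PDE reduces to the Bessel-type ODE $L_j u_j=f_j$ from the proof of \Cref{prop:Vasy_tf}, whose two independent kernel elements have explicit expressions in terms of Hankel and Bessel functions with $\hat{r}\to 0^+$ behavior $\hat{r}^{b_j}$ (recessive) and $\hat{r}^{-c_j}$ (dominant), and $\hat{r}\to\infty$ behavior $\hat{r}^{-(d-1)/2}$ (non-oscillatory) and $e^{2i\hat{r}}\hat{r}^{-(d-1)/2}$ (outgoing-oscillatory). Writing $u_j$ through the corresponding Green's-function formula and integrating the polyhomogeneous expansion of $f_j$ at each face term-by-term — the same kind of one-dimensional computation as in the proof of \Cref{lem:uplus_mapping} — yields precisely $(b_j,0)\uplus\calF_j$ at $\mathrm{zf}$ and $(2^{-1}(d-1),0)\uplus\calE_j$ at $\mathrm{bf}$, with conormal errors $\gamma_j-$ and $\beta_j-$ respectively.

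For $u_{\mathrm{rem}}$, full polyhomogeneity is obtained by iterated normal-operator arguments modelled on those in \S\ref{sec:bf} and \S\ref{sec:0}. The normal operator of $\hat{N}_{\mathrm{tf}}$ at $\mathrm{bf}$, keeping only the b-leading $O(\rho_{\mathrm{bf}})$ terms, is the first-order radial operator $-2i\partial_{\hat{r}}-i(d-1)/\hat{r}$, whose sole indicial root is $(d-1)/2$. Following the scheme of \Cref{prop:bf_2} — insert a cutoff supported near $\mathrm{bf}$, solve the first-order ODE by radial integration using \Cref{lem:uplus_mapping}, and iterate on the error — we generate the index set $(2^{-1}(d-1),0)\uplus\calE$ at $\mathrm{bf}$. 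At $\mathrm{zf}$, the normal operator of $\hat{N}_{\mathrm{tf}}$ is exactly the conic Laplacian $\triangle_{g_0}$ on $\bbR^+_{\hat{r}}\times\partial X_\theta$ (the subleading terms $-2i\partial_{\hat{r}}-i(d-1)/\hat{r}$ are one b-order better than $\triangle_{g_0}$ at $\mathrm{zf}$), so \Cref{thm:phg0} applied in its Schwartz-symmetry-breaking special case ($\beth_0=\aleph=\aleph_0=\infty$, $\beth=0$) yields the index set $\tilde{\calI}[\calF,\ell]$; orthogonality of $u_{\mathrm{rem}}$ to $\calY_0,\ldots,\calY_{\ell-1}$ restricts the indicial roots that enter the recursion to $\{b_j:j\geq\ell\}$, matching the definition of $\tilde{\calI}[\calF,\ell]$.

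The chief technical obstacle is to run these two expansions without losing regularity at the opposite face — i.e.\ staying inside the joint space $\calA^{(\cdot,\cdot),(\cdot,\cdot)}(\mathrm{tf})$ rather than expanding first at one face and then the other. As in \Cref{prop:bf_2} and in the proof of \Cref{thm:phg0}, the remedy is to localize each normal-operator inversion with a cutoff supported near the face being expanded; the commutators $[\hat{N}_{\mathrm{tf}},\chi_\bullet]$ are Schwartz at the opposite face and gain one b-order of decay at the face being expanded, so the induction closes and the joint expansion remains consistent. Because both refinements are derived from the same conormal solution, the resulting $\mathrm{bf}$- and $\mathrm{zf}$-expansions automatically agree at the corner $\mathrm{bf}\cap\mathrm{zf}$, giving the stated index sets $\tilde{\calI}$ and $\tilde{\calI}_j$ for $u_{\mathrm{rem}}$ and the $u_j$'s respectively.
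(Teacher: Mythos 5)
Your proposal follows the paper's proof in all essentials: an a priori conormal bound from \Cref{prop:Vasy_tf}, followed by two independent normal-operator bootstraps at the two ends of $[0,\infty]_{\hat r}$ --- radial integration of $-2i\partial_{\hat r}-i(d-1)/\hat r$ at $\hat r=\infty$, run exactly as in \Cref{prop:bf_2}, and inversion of the conic-Laplacian normal operator at $\hat r=0$, with induction on the conormal order closing both arguments. Three small points. First, \Cref{thm:phg0} cannot be cited verbatim for the $\hat r\to 0^+$ step: that theorem concerns the $r\to\infty$ end, where the indicial roots entering the recursion are the $c_j$, whereas at $\hat r=0$ the recessive roots are the $b_j$; what is needed (and what the paper uses) is the mirror image of \Cref{lem:Lap_basic} with $c_j$ and $b_j$ interchanged --- your closing remark about $\{b_j:j\geq\ell\}$ shows you know the right answer, but the citation as written does not deliver it. Second, your ``chief technical obstacle'' is vacuous: the two boundary hypersurfaces of $\mathrm{tf}=[0,\infty]_{\hat r}\times\partial X$ are disjoint, so there is no corner $\mathrm{bf}\cap\mathrm{zf}$ on $\mathrm{tf}$, and the two expansions can be run entirely independently (which is exactly how the paper proceeds: ``it suffices to check $u$ near $\hat r=\infty$ and near $\hat r=0$''); no cutoff-commutator bookkeeping between the two faces is required. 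Third, your explicit Hankel/Bessel Green's-function treatment of the harmonics $j<\ell$ is a legitimate alternative to the paper's uniform bootstrap (and would in addition show the index sets $\tilde{\calI}_j$ are sharp), though in the paper's conventions the oscillatory kernel element behaves like $e^{-2i\hat r}\hat r^{-(d-1)/2}$ rather than $e^{2i\hat r}\hat r^{-(d-1)/2}$ at infinity.
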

We also write $\tilde{\calI}[\calF,\ell]$ as $\calB_{\geq \ell}[ \calF]$. This can be computed explicitly, but we will not do so. 

\begin{proof}
	That the requirements on $\calF,\calF_j,\gamma,\gamma_j$ suffice for the well-definition of $\hat{N}_{\mathrm{tf}}^{-1} u$ follows from \Cref{prop:Vasy_tf}.

	Since we already know that $u\in C^\infty(\mathrm{tf}^\circ)$, it suffices to check $u$ near $\hat{r}=\infty$ and near $\hat{r}=0$.

	The partial polyhomogeneity near $\hat{r}=\infty$ can be proven using the exact same argument as that used to prove \Cref{prop:bf_2}. Here are the details: the least decaying term in $\hat{N}_{\mathrm{tf}}$ is 
	\begin{equation}
	N_{\mathrm{tf}\cap\mathrm{bf}} = -2i \frac{\partial}{\partial \hat{r}} - \frac{i(d-1)}{\hat{r}}. 
	\end{equation}
	The sense in which this is least decaying is that $\hat{N}_{\mathrm{tf}}-N_{\mathrm{tf}\cap\mathrm{bf}} \in \hat{r}^{-2}\operatorname{Diff}_{\mathrm{b}}(\mathrm{tf}\backslash \mathrm{zf})$, whereas $N_{\mathrm{tf}\cap\mathrm{bf}} \in \hat{r}^{-1}\operatorname{Diff}_{\mathrm{b}}(\mathrm{tf}\backslash \mathrm{zf})$. So, we rearrange the definition $\hat{N}_{\mathrm{tf}} u=f$ of $u$ to get 
	\begin{equation} 
		N_{\mathrm{tf}\cap\mathrm{zf} } u = f - (\hat{N}_{\mathrm{tf}} - N_{\mathrm{tf}\cap \mathrm{bf}}) u.
	\end{equation}
	A one-sided inverse of $N_{\mathrm{tf}\cap\mathrm{bf}}$ can be constructed just by integrating: 
	\begin{equation}
	N_{\mathrm{tf} \cap \mathrm{bf}}^{-1} = \Big[ -2i \frac{\partial}{\partial \hat{r}} - \frac{i(d-1)}{\hat{r}} \Big]^{-1} : f(\hat{r}) \mapsto \frac{i}{2\hat{r}^{(d-1)/2}}\int_1^{\hat{r}}f(x)x^{(d-1)/2} \dd x .
	\end{equation} 
	Applying this to both sides of $N_{\mathrm{tf}\cap\mathrm{zf} } u = f - (\hat{N}_{\mathrm{tf}} - N_{\mathrm{tf}\cap \mathrm{bf}}) u$, we get
	\begin{equation}
	u = \frac{1}{\hat{r}^{(d-1)/2}} \Big( C(\theta) +  \frac{i}{2} \int_1^{\hat{r}} (f(x) - (\hat{N}_{\mathrm{tf}} - N_{\mathrm{tf}\cap \mathrm{bf}}) u(x) ) x^{(d-1)/2} \dd x \Big),
	\label{eq:misc_544}
	\end{equation}
	where $C(\theta)$ is an undetermined function of $\theta$ (that has to do with $\hat{r}\to 0^+$ behavior). 
	
	We abbreviate $\calJ[\calE]=2^{-1}(d-1)\uplus \calE$.
	Now, let $S$ denote the set of $B\in \bbR$ such that $u\in \calA^{\calJ[\calE],\beta\wedge B}(\mathrm{tf}\backslash \mathrm{zf};\ell) + \sum_{j=0}^{\ell-1} \calA^{\calJ[\calE_j],\beta_j\wedge B } ((0,\infty]_{\hat{r}}) \calY_j$. We already know, from \Cref{prop:Vasy_tf}, that $B\in S$ if it is sufficiently negative, so $S$ is nonempty. Note that 
	\begin{equation}
	(\hat{N}_{\mathrm{tf}} - N_{\mathrm{tf}\cap \mathrm{bf}}) u(\hat{r}) \in \calA^{2+\calJ[\calE],2+\beta\wedge B}(\mathrm{tf}\backslash \mathrm{zf};\ell) + \sum_{j=0}^{\ell-1} \calA^{2+\calJ[\calE_j],2+\beta_j\wedge B } ((0,\infty]_{\hat{r}}) \calY_j, 
	\end{equation}
	so the factor in the integrand $f_1 = f - (\hat{N}_{\mathrm{tf}} - N_{\mathrm{tf}\cap \mathrm{bf}}) u$ in \cref{eq:misc_544} satisfies 
	\begin{equation}
	f_1 \in \calA^{(1+\calE)\cup (2+\calJ[\calE]),(1+\beta)\wedge (2+ B) }(\mathrm{tf}\backslash \mathrm{zf};\ell) + \sum_{j=0}^{\ell-1} \calA^{(1+\calE_j) \cup (2+\calJ[\calE_j]) ,(1+\beta_j)\wedge(2+B) } ((0,\infty]_{\hat{r}}) \calY_j.
	\end{equation}
	Consequently, \cref{eq:misc_544} yields 
	\begin{multline}
	u \in \calA^{(2^{-1}(d-1),0) \uplus (\calE\cup (1+\calJ[\calE])),\beta\wedge (1+ B)- }(\mathrm{tf}\backslash \mathrm{zf};\ell) \\ + \sum_{j=0}^{\ell-1} \calA^{(2^{-1}(d-1),0) \uplus (\calE_j\cup (1+\calJ[\calE_j])),\beta_j\wedge (1+ B)- } ((0,\infty]_{\hat{r}}) \calY_j.
	\label{eq:misc_5h5}
	\end{multline}
	By construction, $(2^{-1}(d-1),0) \uplus (\calE\cup (1+\calJ[\calE]))\subseteq \calJ[\calE]$, and likewise for the other index sets. Consequently, \cref{eq:misc_5h5} says that $B+1-\varepsilon \in S$ for any $\varepsilon>0$. So, proceeding inductively, we can conclude that $S=\bbR$. 
	
	Consider now the $\hat{r}\to 0^+$ behavior. 
	Let $S$ denote the set of $\Gamma\in \bbR$ such that 
	\begin{equation}
	u \in \calA^{(\tilde{\calI},\gamma\wedge \Gamma-)}(\mathrm{tf}\backslash \mathrm{bf};\ell ) + \sum_{j=0}^{\ell-1} \calA^{\tilde{\calI}_j, \gamma_j\wedge \Gamma-}([0,\infty)_{\hat{r}} )\calY_j.
	\label{eq:misc_222_640}
	\end{equation}
	We want to prove that $S=\bbR$. We already know from \Cref{prop:Vasy_tf} that $S$ is nonempty. 
	Given that \cref{eq:misc_222_640} holds, we show that it holds also for a larger $\Gamma$. To do this, we use 
	\begin{equation} 
	N_{\mathrm{zf}\cap\mathrm{tf}} = - \frac{\partial^2}{\partial \hat{r}^2 } - \frac{d-1}{\hat{r}}\frac{\partial}{\partial \hat{r}}+ \frac{1}{\hat{r}^2}\triangle_{\partial X}.
	\end{equation} 
	Note that $\hat{N}_{\mathrm{tf}} - N_{\mathrm{zf}\cap\mathrm{tf}} \in \operatorname{Diff}_{\mathrm{b}}^{1,-1,1}(\mathrm{tf})$, so $N_{\mathrm{zf}\cap \mathrm{tf}} u = \hat{N}_{\mathrm{tf}}u - (\hat{N}_{\mathrm{tf}}- N_{\mathrm{zf}\cap \mathrm{tf}}) u = f  - ((\hat{N}_{\mathrm{tf}}- N_{\mathrm{zf}\cap \mathrm{tf}} ) u$
	lies in 
	\begin{multline} 
	\calA^{(-2+\calF) \cup (-1+\tilde{\calI}), \min\{-2+\gamma,-1+\gamma\wedge \Gamma\}- }  (\mathrm{tf}\backslash \mathrm{bf} )  \\ + \sum_{j=0}^{\ell-1}  \calA^{ (-2+\calF_j) \cup (-1+\tilde{\calI}_j)  , \min\{-2+\gamma_j,-1+\gamma_j\wedge \Gamma\}-  } ([0,\infty)_{\hat{r}} )\calY_j.
	\end{multline}
	The indicial roots of $N_{\mathrm{zf}\cap\mathrm{tf}}$ are
	readily computed to be $b_j,-c_j$. Indeed, in the previous section we saw that $c_j,-b_j$ were the indicial roots of the normal operator of $N_{\mathrm{zf}}(P)$ at $r\to\infty$, which is $\mathrm{tf}$. That is, $N_{\mathrm{tf}}( N_{\mathrm{zf}}(P) )$ has indicial roots $c_j,-b_j$. But $N_{\mathrm{tf}}( N_{\mathrm{zf}}(P) ) \propto N_{\mathrm{tf}\cap\mathrm{zf}}$, where the proportionality involves a factor of a boundary-defining-function. However, before we were computing indicial roots as $r\to\infty$, whereas now we are computing indicial roots as $\hat{r}\to 0^+$, which accounts for why they are now negative. 
	
	So, 
	inverting $N_{\mathrm{zf}\cap \mathrm{tf}}$ and citing the analogue of \Cref{lem:Lap_basic} (which just has $c_j$ switched with $b_j$, but is otherwise analogous), we conclude 
	\begin{equation}
	u \in \calA^{\tilde{\calI}[\calF \cup (1+\tilde{\calI}[\calF,\ell]),\ell],\min\{\gamma,1+\Gamma\}- }(\mathrm{tf}\backslash \mathrm{bf})  + \sum_{j=0}^{\ell-1} \calA^{(b_j,0)\uplus \calF_j, \min\{\gamma_j,1+\Gamma\}-}([0,\infty )_{\hat{r}} ) \calY_j.
	\end{equation}

	Given the definition of $\tilde{\calI}$, we have $\tilde{\calI}[\calF \cup (1+\tilde{\calI}[\calF,\ell]),\ell] \subseteq \tilde{\calI}[\calF,\ell]$. So, we conclude that $1+\Gamma \in S$. Induction then establishes that $S=\bbR$. 
\end{proof}

In the previous proposition, we required different things of the different $\calF_\bullet,\gamma_\bullet$, whereas in \S\ref{sec:0} we required $u\in \calA^{0+}(X)$, not discriminating between different harmonics.
This is because, in proving the previous proposition, we made use of separation of variables. The model problem at $\mathrm{tf}$ permits separation of variables, whereas that at $\mathrm{zf}$ does not. If the $\beth_0$ in \S\ref{sec:0} is big, then we have approximate separability, so we can weaken the $u\in \calA^{0+}(X)$ requirement on finitely many harmonics. But this does not seem worth doing.

\section{Some reminders about the Mellin transform}
\label{sec:Mellin}

Our conventions on the Mellin transform are as follows: for $\calX$ a Banach space and $f\in C^\infty([0,1);\calX)$, let 
\begin{equation}
	\calM f (c) = \int_0^1 f(\rho) \frac{\dd \rho}{\rho^{c+1}}
	\label{eq:Mellin}
\end{equation}
for $c\in \bbC$ such that the Bochner integral above is absolutely convergent.

If $f\in \calA_{\mathrm{c}}^\gamma(\dot{X})$ for some $\gamma\in \bbR$, then we can consider $f$ as being a function of $\rho$ valued in $C^k(\partial X_\theta)$ for every $k\in \bbN$. Specifically, 
\begin{equation}
	f(\rho,\theta) \in \rho^\gamma L^\infty_{\mathrm{c}}([0,1)_\rho; C^k(\partial X_\theta)).
\end{equation}
Then, $\calM f(c) \in C^\infty (\partial X)$ exists for every $c\in \bbC$ with $\Re c<\gamma$, and so we can write $\calM f(c,\theta)$ to denote this function of $c$ and $\theta\in \partial X$. 
Moreover, $\calM f(c)$ depends analytically on $c$ in this domain. More precisely, we have uniform bounds in strips:
\begin{equation}
	\calM f (c,\theta) \in \langle \Im c \rangle^\kappa \scrA(\{c\in \bbC: \gamma_1< \Re c < \gamma_0\} ; C^k(\partial X)  ) 
	\label{eq:Mellin_dec}
\end{equation}
for any $\gamma_0,\gamma_1$ satisfying $\gamma_1<\gamma_0<\gamma$, $m\in \bbR$, and $\kappa \in \bbR$, where $\scrA(\Omega;\calX)$ is the set of uniformly bounded analytic functions on $\Omega\subset \bbC$ valued in the Banach space $\calX$. The decay as $\Im c\to \pm \infty$ follows from the definition of the Mellin transform via an integration-by-parts argument: for $c\neq 0$ such that $\Re c<\gamma$, 
\begin{align}
	\begin{split} 
	\calM f(c) =\lim_{\epsilon \to 0^+} \int_\epsilon^1 f(\rho) \Big( - \frac{1}{c} \frac{\mathrm{d}}{\mathrm{d} \rho} \frac{1}{\rho^{c}} \Big) \dd \rho  &= \lim_{\epsilon \to 0^+} \Big[\frac{ f(\epsilon)}{c\epsilon^c } +\frac{1}{c}\int_\epsilon^1 \rho f'(\rho) \frac{\dd \rho}{\rho^{c+1}}  \Big] \\ 
	&= \frac{1}{c} \int_0^1 \rho f'(\rho) \frac{\dd \rho}{\rho^{c+1}} =  \frac{1}{c} \calM_{\rho\to c}(\rho f'(\rho)).
	\end{split} 
\end{align} 
Since $\rho f'(\rho)$ is in $\calA_{\mathrm{c}}^\gamma(\dot{X})$ as well, the Mellin transform $\calM_{\rho\to c}(\rho f'(\rho))$ is also defined for $\Re c<\gamma$ and satisfies an $L^\infty$ bound in all vertical strips thereof. This gives \cref{eq:Mellin_dec} for $\kappa={-1}$, and an inductive argument extends the conclusion to all $\kappa$.

One form of the \emph{Mellin inversion formula} states that any $f \in \calA_{\mathrm{c}}^\gamma(\dot{X})$ can be recovered from its Mellin transform via the absolutely convergent contour integral 
\begin{equation}
	f(\rho,\theta) = \calM^{-1}\{\calM f\} =  \frac{1}{2\pi i} \int_{\gamma_0-i\infty}^{\gamma_0+i\infty} \rho^c \calM f(c) \dd c ,
	\label{eq:Mellin_inversion}
\end{equation}
for any $\gamma_0< \gamma$. Indeed, this can be reduced to the one-dimensional Fourier inversion formula. Letting $x=-\log \rho$ and $\xi = ic$, 
\begin{equation}
	\calM f = \int_{-\infty}^\infty e^{-i\xi x} f(e^{-x}) \dd x, \quad (\calM^{-1} M)(\rho,\theta) = \frac{1}{2\pi} \int_{i\gamma_0-\infty}^{i\gamma_0+\infty} e^{i\xi x} M \dd \xi,
\end{equation} 
so the inverse Mellin transform $\calM^{-1}$ is just the inverse Fourier transform rewritten using a logarithmic change of variables. 

When applying the Mellin inversion formula in order to extract asymptotics, we need the following: suppose that $M\in \langle \Im c \rangle^{\kappa}\scrA(\{c\in \bbC: \gamma_1< \Re c < \gamma_0\} ; \calX) $ for some Banach space $\calX$, for every $\kappa\in \bbR$. Then, the inverse Mellin transform $\calM^{-1} M$
satisfies $\calA^\gamma( [0,\infty)_\rho  ;\calX )$ for any $\gamma \in (\gamma_1,\gamma_0)$. 
Indeed, it follows immediately from the assumed estimates that $f \in \rho^\gamma L^\infty_{\mathrm{loc}}([0,\infty);\calX)$, and we can differentiate under the integral sign to see that   
\begin{equation}
	(\rho\partial_\rho)^k \int_{\gamma-i\infty}^{\gamma+i\infty} \rho^c M(c) \dd c =  \int_{\gamma-i\infty}^{\gamma+i\infty} c^k \rho^c M(c) \dd c  \in \rho^\gamma L^\infty_{\mathrm{loc}}([0,\infty);\calX)
\end{equation}
for every $k\in \bbN$. Applying this for $\calX$ the spaces $C^m(\partial X)$, we get that if $M \in \langle \Im c \rangle^{\kappa}\scrA(\{c\in \bbC: \gamma_1< \Re c < \gamma_0\} ; C^m(\partial X))$ for every $\kappa,m\in \bbR$, then $\calM^{-1} M\in \bigcap_{m\in \bbR}\calA^\gamma([0,\infty)_\rho; C^m(\partial X)) = \calA^\gamma(\dot{X})$.

If $\calE$ is an index set and $f\in \calA^{\calE}_{\mathrm{c}}(\dot{X})$, then 
\begin{itemize}
	\item $\calM f(c,\theta)$, which is only initially defined if $\Re c < \min \calE$, can be extended to a meromorphic function on the whole complex plane, and
	\item 
	for any $j\in \bbC$, the pole at $j$ is at worst of order $k+1$, where $k$ is the largest number such that $(j,k)\in \calE$. 
\end{itemize}
Indeed, under these assumptions, for any $\alpha\in \bbR$ there exists some $f_\alpha \in \calA^\alpha_{\mathrm{c}}(\dot{X})$ such that we can write 
\begin{equation}
	f(\rho,\theta) = \sum_{(j,k) \in \calE, \Re j\leq \alpha}  f_{j,k}(\theta)  \rho^j \log(\rho)^k  + f_\alpha  
\end{equation}
for some $f_{j,k}(\theta)\in C^\infty(\partial X_\theta)$. Thus, 
\begin{align}
	\calM f(c,\theta) = \int_0^{1} f(c,\theta) \frac{\dd \rho }{\rho^{c+1}} = \calM f_\alpha(c,\theta)+ \sum_{(j,k) \in \calE, \Re j\leq \alpha}  f_{j,k}(\theta) \int_0^1 \rho^{j-c-1} (\log\rho)^k \dd \rho  . 
\end{align}
We already know that $ \calM f_\alpha(c,\theta)$ is well-defined and analytic on $\Re c < \alpha$, and each of the functions in the summand is a meromorphic function of $c$ with poles of the claimed form, as follows from an explicit computation of the integral.

Conversely, suppose that $M$ is a meromorphic $\calX$-valued function on $\{c\in \bbC: \gamma_1 < \Re c < \gamma_0\}$ with finitely many poles $c_1,\ldots,c_N \in \{c\in \bbC: \gamma_1 < \Re c < \gamma_0\}$, and let $\calE$ denote the set of $(j,k)\in \bbC\times \bbN$ such that $j\in \{c_1,\ldots,c_N\}$ and the order of the pole of $M$ at $j$ is at least $k+1$. This is a pre-index set in the sense that we are using the term. Suppose moreover that there exists some $R>0$ such that $M \in \langle \Im c \rangle^\kappa \scrA(\{c\in \bbC: \gamma_1< \Re c < \gamma_0, |\Im z|>R\} ; \calX) $ for every $\kappa \in \bbR$. Then,
\begin{equation}
	\calM^{-1} M = \frac{1}{2\pi i} \int_{\gamma-i\infty}^{\gamma+i\infty} \rho^c M(c) \dd c 
\end{equation}
is well-defined for $\gamma\in (\gamma_1, \min \{\Re c_1,\ldots,\Re c_N\})$, and, as discussed above, $\calM^{-1} M \in \calA^\gamma([0,\infty)_\rho;\calX)$. The contour can now be shifted to the right, through the possible poles. Specifically, 
\begin{equation}
	\calM^{-1} M  = \frac{1}{2\pi i} \int_{\gamma-i\infty}^{\gamma+i\infty} \rho^c M(c) \dd c + \sum_{\substack{c\in \{c_1,\ldots,c_N\} \\ \Re c < \gamma }} \frac{1}{2\pi i} \oint_{c} \rho^{\zeta} M( \zeta) \dd \zeta 
	\label{eq:Mel_inv_1}
\end{equation}
for any $\gamma\in (\gamma_1,\gamma_0) \backslash \{\Re c_1,\ldots,\Re c_N\}$, where $\oint_c$ denotes a clockwise-oriented circular contour containing the pole $c$ but no other members of $\{c_1,\ldots,c_N\}$. The first term on the right-hand side of \cref{eq:Mel_inv_1} is in $\calA^\gamma([0,\infty)_\rho;\calX)$. On the other hand, 
\begin{equation}
	\frac{1}{2\pi i} \oint_{c} \rho^{\zeta} M( \zeta) \dd \zeta = \operatorname{Res}_{\zeta=c} \rho^{\zeta} M(\zeta) .
\end{equation}
Since $\rho^{\zeta} = \rho^c e^{(\zeta-c)\log \rho} = \sum_{\kappa=0}^\infty \rho^{c} (\log \rho)^\kappa (\zeta-c)^\kappa /\kappa!$, if $M(\zeta) = \sum_{\kappa=-(k+1)}^\infty M_\kappa (\zeta-c)^{\kappa}$ denotes the Laurent series of $M(\zeta)$ around $\zeta=c$, then 
\begin{equation}
	\operatorname{Res}_{\zeta=c} \rho^{\zeta} M(\zeta) = \sum_{\kappa=0}^{k} \frac{M_{-1-\kappa} \rho^{c} (\log \rho)^\kappa }{ \kappa!} \in \calA^{(c,k) }([0,\infty)_\rho;\calX ) . 
\end{equation}
So, $\calM^{-1} M \in \calA^{(c,k),\gamma}([0,\infty)_\rho ; \calX)\subseteq \calA^{\calE,\gamma}([0,\infty)_\rho ; \calX)$. Since $\gamma<\gamma_0$ was arbitrary, we get $\calM^{-1} M \in \calA^{\calE,\gamma_0-}([0,\infty)_\rho ; \calX)$. 
Applied to the case where $\calX= C^m(\partial X)$, then, assuming that the above holds for every $m\in \bbR$, we get $\calM^{-1} M \in \calA^{\calE,\gamma_0-}(\dot{X})$.

\section{Proofs omitted from \S\ref{sec:op}}
\label{sec:tedium}

\begin{proof}[Proof of \Cref{prop:metric}]
	Working in some local coordinate chart on $\partial X$, the metric $g$ can be written as a matrix in block form. Doing so, $g-g_0 = \delta g$ for 
	\begin{equation}
	\delta g \in
	\begin{pmatrix}
	\rho^{-4}(F +  \rho^{1+\gimel_0} C^\infty(\dot{X}))  & \rho^{-2+\gimel_0} C^\infty(\dot{X})  \\ 
	\rho^{-2+\gimel_0} C^\infty(\dot{X})   & \rho^{-1+\gimel_0} C^\infty(\dot{X})	\end{pmatrix}, 
	\label{eq:dg_form}
	\end{equation}
	for $F\in  \rho^{1+\gimel} C^\infty([0,1)_\rho)$, 
	near $\dot{X}$, 
	where the bottom-right entries stand for all angular-angular components of the metric and the upper-left entry is the coefficient of $\mathrm{d} \rho^2$. The other entries are the cross terms. 
	
	In the chosen local coordinate chart, $\rho\in (0,1),\theta\in \partial X$, the exactly conic metric $g_0$ has the form 
	\begin{equation}
	g_0 = 
	\begin{pmatrix}
	\rho^{-4} & 0 \\ 
	0 & \rho^{-2} g_{\partial X}
	\end{pmatrix}, \quad \det g_0 = \rho^{-2(d+1)} \det g_{\partial X}, \quad g_0^{-1} = 
	\begin{pmatrix}
	\rho^4 & 0 \\
	0 & \rho^2 g_{\partial X}^{-1} 
	\end{pmatrix}.
	\end{equation}
	Therefore,  
	\begin{equation}
	g_0^{-1} \delta g \in 
	\begin{pmatrix}
	F +  \rho^{1+\gimel_0} C^\infty(\dot{X})  & \rho^{2+\gimel_0} C^\infty(\dot{X})  \\ 
	\rho^{\gimel_0} C^\infty(\dot{X})   & \rho^{1+\gimel_0} C^\infty(\dot{X})	\end{pmatrix}.
	\end{equation}
	One can then compute that 
	\begin{equation}
	\det (1 + g_0^{-1} \delta g) =1 +  F  + \rho^{1+\gimel_0}C^\infty(\dot{X}), \quad 	\det (1 + g_0^{-1} \delta g)^{-1} = (1 +  F)^{-1}  + \rho^{1+\gimel_0}C^\infty(\dot{X}).
	\end{equation} 	
	So,
	\begin{align} 
	\begin{split}
	\det (g) = \det (g_0)  \det (1+g_0^{-1} \delta g) &= \rho^{-2(d+1)} \det(g_{\partial X})\det (1+g_0^{-1} \delta g) \\ 
	&=\rho^{-2(d+1)} \det (g_{\partial X}) (1+ F) + \rho^{-2d-1+\gimel_0}C^\infty(\dot{X}). \end{split}
	\end{align}
	Note that $\det(g) - \det(g_0) \in \rho^{-2(d+1)} F + \rho^{-2d-1+\gimel_0}C^\infty(\dot{X})$.
	
	Via Cramer's formula for $(1+g_0^{-1} \delta g)^{-1}$, we have 
	\begin{multline}
	(1+g_0^{-1} \delta g)^{-1}  \in ((1 +  F)^{-1}  + \rho^{1+\gimel_0}C^\infty(\dot{X})) \\
	\times \begin{pmatrix}
	1+ \rho^{1+\gimel_0} C^\infty(\dot{X})  & \rho^{2+\gimel_0}C^\infty(\dot{X}) \\ 
	\rho^{\gimel_0} C^\infty(\dot{X}) & (1+F+\rho^{1+\gimel_0} C^\infty(\dot{X}) )\operatorname{Adj}(I_{d-1}+\rho^{1+\gimel_0 } C^\infty(\dot{X}) )+\rho^{2+2\gimel_0} C^\infty(\dot{X})
	\end{pmatrix}
	\\ \subseteq \begin{pmatrix}
	(1+F)^{-1} + \rho^{1+\gimel_0} C^\infty(\dot{X})& \rho^{2+\gimel_0} C^\infty(\dot{X}) \\
	\rho^{\gimel_0} C^\infty(\dot{X})& I_{d-1} + \rho^{1+\gimel_0} C^\infty(\dot{X})
	\end{pmatrix}.
	\end{multline}
	So, 
	\begin{equation}
	(1+g_0^{-1} \delta g)^{-1} - I_d\in  \begin{pmatrix}
	F_1+\rho^{1+\gimel_0} C^\infty(\dot{X})& \rho^{2+\gimel_0} C^\infty(\dot{X}) \\
	\rho^{\gimel_0} C^\infty(\dot{X})&  \rho^{1+\gimel_0} C^\infty(\dot{X})
	\end{pmatrix}
	\end{equation}
	for $F_1 = (1+F)^{-1}-1\in  \rho^{1+\gimel} C^\infty([0,1)_\rho)$.
	Since $g^{-1} -g_0^{-1}= ((1+g_0^{-1} \delta g)^{-1} - I_d) g_0^{-1}$, it follows that 
	\begin{equation}
	g^{-1} - g_0^{-1} \in \begin{pmatrix}
	\rho^4 F_1+\rho^{5+\gimel_0} C^\infty(\dot{X})& \rho^{4+\gimel_0} C^\infty(\dot{X}) \\
	\rho^{4+\gimel_0} C^\infty(\dot{X})&  \rho^{3+\gimel_0} C^\infty(\dot{X})
	\end{pmatrix}.
	\end{equation}
	
	Now turning to the formula for the Laplace--Beltrami operator in local coordinates, 
	\begin{equation}
	\triangle_g = \sum_{i,j=1}^d  \frac{1}{\sqrt{|g|}} \frac{\partial}{\partial x_i}\Big( \sqrt{|g|}g^{ij} \frac{\partial}{\partial x_j} \Big) = \sum_{i,j=1}^d \Big(g^{ij} \frac{\partial^2}{\partial x_i x_j}  + \frac{\partial g^{ij}}{\partial x_i} \frac{\partial}{\partial x_j} + \frac{1}{2|g|} \frac{\partial |g|}{\partial x_i} g^{ij}\frac{\partial}{\partial x_j}\Big),
	\end{equation}
	where $|g|=\det(g)$. Here, $x_1=\rho$, and $x_2,\dots,x_d$ are a local coordinate chart for $\partial X$, not Cartesian coordinates.
	So, $\triangle_g$ differs from $\triangle_{g_0}$ by a sum of five terms:
	\begin{itemize}
		\item 
		\begin{multline}
		\sum_{i,j=1}^d (g^{ij} - g_0^{ij} )\frac{\partial }{\partial x_i \partial x_j} \in (\rho^4 F_1+\rho^{5+\gimel_0} C^\infty(\dot{X}))\frac{\partial^2}{\partial \rho^2} + \rho^{4+\gimel_0} C^\infty(\dot{X}) \frac{\partial}{\partial \rho }\calV(\partial X) \\ 
		+ \rho^{3+\gimel_0} C^\infty(\dot{X})\calV(\partial X)^2,
		\label{eq:horrid_1}
		\end{multline}
		\item 
		\begin{multline}
		\sum_{i,j=1}^d \Big(\frac{\partial }{\partial x_i}(g^{ij}- g_0^{ij}) \Big) \frac{\partial}{\partial x_j} \in 
		\begin{pmatrix}
		\rho^3 F_2+\rho^{4+\gimel_0} C^\infty(\dot{X}) \\ \rho^{3+\gimel_0} C^\infty(\dot{X}) 
		\end{pmatrix}^\intercal \nabla \\ 
		\subseteq (\rho^3 F_2 + \rho^{4+\gimel_0} C^\infty(\dot{X}) )\frac{\partial}{\partial \rho} + \rho^{3+\gimel_0} C^\infty(\dot{X})
		\operatorname{Diff}^1(\partial X)
		\label{eq:horrid_2}
		\end{multline}
		for some $F_2 \in \rho^{1+\gimel} C^\infty([0,1)_\rho)$;
		\item 
		\begin{multline}
		\sum_{i,j=1}^d  \frac{g^{ij}-g_0^{ij}}{2|g|} \frac{\partial |g|}{\partial x_i}  \frac{\partial}{\partial x_j}  \in 
		\begin{pmatrix}
		\rho^{-1}F_3 + \rho^{\gimel_0}C^\infty(\dot{X}) \\ C^\infty(\dot{X})
		\end{pmatrix}^\intercal\begin{pmatrix}
		\rho^4 F_1+\rho^{5+\gimel_0} C^\infty(\dot{X})& \rho^{4+\gimel_0} C^\infty(\dot{X}) \\
		\rho^{4+\gimel_0} C^\infty(\dot{X})&  \rho^{3+\gimel_0} C^\infty(\dot{X})
		\end{pmatrix} \nabla  \\ 
		\subseteq 	\begin{pmatrix}
		\rho^{-1}F_3 + \rho^{\gimel_0}C^\infty(\dot{X}) \\ C^\infty(\dot{X})
		\end{pmatrix}^\intercal
		\begin{pmatrix}
		(\rho^4 F_1 + \rho^{5+\gimel_0} C^\infty(\dot{X}) ) \partial_\rho + \rho^{4+\gimel_0} C^\infty(\dot{X})\calV(\partial X) \\ 
		\rho^{4+\gimel_0} C^\infty(\dot{X}) \partial_\rho + \rho^{3+\gimel_0} C^\infty(\dot{X}) \calV(\partial X)
		\end{pmatrix} \\ 
		\subseteq (\rho^3F_1F_3 + \rho^{4+\gimel_0} C^\infty(\dot{X}))\partial_\rho + \rho^{3+\gimel_0} C^\infty(\dot{X}) \calV(\partial X), 
		\label{eq:horrid_3}
		\end{multline}
		where the inclusion follows from the observation that 
		\begin{equation}
		\frac{1}{|g|} \frac{\partial |g|}{\partial x_i} \in 
		\begin{cases}
		\rho^{-1}F_3 + \rho^{\gimel_0}C^\infty(\dot{X}) & (x_i=\rho), \\ 
		C^\infty(\dot{X}) & (\text{otherwise}),
		\end{cases}
		\end{equation}
		for some $F_3 \in  C^\infty([0,1)_\rho)$.
		\item 
		\begin{multline}
		\sum_{i,j=1}^d \frac{g_0^{ij}}{2|g|} \frac{\partial}{\partial x_i} (|g|-|g_0|)  \frac{\partial}{\partial x_j} = \frac{g_0^{11}}{2|g|} \frac{\partial}{\partial \rho }(|g|-|g_0|)  \frac{\partial}{\partial \rho} + \sum_{i,j=2}^{d}  \frac{g_0^{ij}}{2|g|} \frac{\partial}{\partial \theta_i} (|g|-|g_0|)  \frac{\partial}{\partial \theta_j} \\
		\in \frac{\rho^{2d+6}}{2} \Big( \frac{1}{\det (g_{\partial X}) (1+F)} + \rho^{1+\gimel_0}C^\infty(\dot{X}) \Big)(\rho^{-2d-3} \det (g_{\partial X})F_4 + \rho^{-2d-2+\gimel_0}C^\infty(\dot{X})) \frac{\partial}{\partial \rho} 
		\\ + \frac{1}{2}  \sum_{i,j=2}^d g_0^{ij} \Big(  \frac{1}{\det (g_{\partial X}) (1+F)} + \rho^{1+\gimel_0}C^\infty(\dot{X}) \Big) \Big( F\frac{\partial \det (g_{\partial X})}{\partial \theta_i} +\rho^{1+\gimel_0} C^\infty(\dot{X})\Big)  \frac{\partial}{\partial \theta_j}
		\\
		\subseteq  \Big( \frac{\rho^3 F_4}{2(1+F)} + \rho^{4+\gimel_0} C^\infty(\dot{X}) \Big)\frac{\partial}{\partial \rho} + \rho^{3+\gimel_0} \calV(\partial X) + \calB, 
		\label{eq:horrid_4}
		\end{multline}
		where $\theta_j = x_j$ for $j\geq 2$,  $F_4 \in \rho^{1+\gimel} C^\infty([0,1)_\rho)$, and 
		\begin{equation}
		\calB = \frac{1}{2} \frac{F}{\det (g_{\partial X} )(1+F) }\sum_{i,j=2}^d g_0^{ij}  \frac{\partial \det (g_{\partial X})}{\partial \theta_i} \frac{\partial}{\partial \theta_j}.
		\end{equation}
		\item 
		\begin{multline}
		\sum_{i,j=1}^d \frac{g_0^{ij}}{2} \frac{\partial |g_0|}{\partial x_i} \Big(\frac{1}{|g|} - \frac{1}{|g_0|} \Big) \frac{\partial}{\partial x_j} =  \Big(\frac{1}{|g|} - \frac{1}{|g_0|} \Big) \Big[  \frac{\rho^4}{2} \frac{\partial |g_0|}{\partial \rho} \frac{\partial}{\partial \rho} + \sum_{i,j=2}^d \frac{g_0^{ij}}{2} \frac{\partial |g_0|}{\partial \theta_i} \frac{\partial}{\partial \theta_j}\Big] \\ 
		\subseteq \rho^{2d+2}\Big( \frac{F_5}{ \det(g_{\partial X}) }  +  \rho^{1+\gimel_0}C^\infty(\dot{X})  \Big) \Big[ -(d+1) \rho^{-2d+1} \det (g_{\partial X})\frac{\partial}{\partial \rho}  + \sum_{i,j=2}^d \frac{g_0^{ij}}{2} \frac{\partial |g_0|}{\partial \theta_i} \frac{\partial}{\partial \theta_j} \Big] \\
		\subseteq (-(d-1)\rho^{3} F_5 + \rho^{4+\gimel} C^\infty(\dot{X})) \frac{\partial}{\partial \rho} + \rho^{3+\gimel_0} C^\infty(\dot{X}) \calV(\partial X) +\calB' 
		\label{eq:horrid_5}
		\end{multline}
		for some $F_5 \in \rho^{1+\gimel} C^\infty([0,1)_\rho)$, where 
		\begin{equation}
		\calB' = \frac{1}{2}\frac{F_5}{\det (g_{\partial X})} \sum_{i,j=2}^d  g_0^{ij} \frac{\partial \det(g_{\partial X})}{\partial \theta_i} \frac{\partial}{\partial\theta_j} .
		\end{equation}
		Indeed, this follows from 
		\begin{equation}
		\frac{1}{|g|} - \frac{1}{|g_0|} \in \frac{\rho^{2d+2} }{\det(g_{\partial X}) } \Big( \frac{1}{(1+F) } - 1 \Big) + \rho^{2d+3+\gimel_0} C^\infty(\dot{X}) 
		\end{equation}
		and $g_0^{ij} (\partial_{\theta_i} |g_0|) \partial_{\theta_j} \in \rho^{-2d} \calV(\partial X)$ for $i,j\geq 2$, from which we also get that $F_5 = -F/(1+F)$, so $\calB'=-\calB$.
	\end{itemize}
	All of the terms on the right-hand sides of \cref{eq:horrid_1}, \cref{eq:horrid_2}, \cref{eq:horrid_3}, \cref{eq:horrid_4}, \cref{eq:horrid_5} 
	lie in $\rho^{3+\gimel} \operatorname{Diff}^2_{\mathrm{b}}([0,1)_\rho) +  \rho^{3+\gimel_0} \operatorname{Diff}^2_{\mathrm{b}}(X)$, with the exception of $\calB,\calB' = -\calB$. Fortunately, when we combine these exceptional contributions to $\triangle_g-\triangle_{g_0}$, they cancel, so we are left only with terms in $\rho^{3+\gimel} \operatorname{Diff}^2_{\mathrm{b}}([0,1)_\rho) +  \rho^{3+\gimel_0} \operatorname{Diff}^2_{\mathrm{b}}(X)$.
\end{proof}

\begin{proof}[Proof of \Cref{prop:Schrodinger_form}]
	We prove the two parts of the proposition in turn. In both cases, it is immediate that the operators considered have the desired forms in the interior of $X$. So, we only describe the situation near $\partial X$. For convenience, we repeat here the definitions (\cref{eq:misc_025})
	\begin{equation}
		\hat{O}(\sigma) = e^{i\sigma r} r^{i\sigma \frakm/2} \circ O(\sigma)\circ e^{-i\sigma r} r^{-i\sigma \frakm/2}, \qquad \check{O}(\sigma) = e^{-i\sigma r} r^{-i\sigma \frakm/2} \circ O(\sigma)  \circ e^{i\sigma r} r^{i\sigma \frakm/2}, 
	\end{equation}
	of $\hat{O},\check{O}$ for $O$ a differential operator.
	\begin{enumerate}[label=(\Roman*)]
		\item Suppose that $P(\sigma)$ has the form described in and below \cref{eq:Pform}. We then compute 
		\begin{multline}
		\check{P}(\sigma) = \check{\triangle}_g  + 2i \sigma(1-\chi) \frac{\partial}{\partial r}-2 \sigma^2(1-\chi) +\frac{i\sigma(d-1)}{r} + \check{L} + \sigma \check{Q} + \sigma^2 R  -  \frac{\sigma^2 \frakm (1-\chi)}{r}.
		\end{multline}
		(Recall that $R$ is scalar, so $\check{R}=R$.)
		
		Starting with the first order terms $\check{L},\check{Q}$: note that if $A\in \operatorname{Diff}^1_{\mathrm{b}}(X)$, then $\check{A} - A\in \sigma \rho^{-1} C^\infty(X)$, and if $A\in \operatorname{Diff}^1_{\mathrm{b}}([0,1)_\rho)$, then we can improve this to $\check{A}-A \in \sigma \rho^{-1} C^\infty([0,1)_\rho)$. So, 
		\begin{align}
		\check{L} &= L + \sigma L_1\;\text{ for }L_1 \in \rho^{2+\beth}C^\infty([0,1)_\rho) + \rho^{2+\beth_0}C^\infty(X), \\ 
		\check{Q} &= Q + \sigma Q_1\text{ for }Q_1 \in \rho^{1+\beth_1}C^\infty([0,1)_\rho) + \rho^{1+\beth_2}C^\infty(X).
		\end{align}

		Likewise, writing $\check{\triangle}_g = \check{\triangle}_{g_0} + r^{-1} \frakm\check{\partial_r^2} + \check{O}$ for 
		\begin{equation} 
			O = \triangle_g- \triangle_{g_0} - r^{-1} \frakm \partial_r^2 \in \rho^{\max\{4,3+\gimel\}} \operatorname{Diff}^2_{\mathrm{b}}([0,1)_\rho) +  \rho^{3+\gimel_0} \operatorname{Diff}^2_{\mathrm{b}}(X)
		\end{equation} 
		(this being \cref{eq:misc_018}) we compute
		\begin{equation}
			\check{\triangle}_{g_0} = \triangle_{g_0} - i\sigma \Big( 2 +\frac{\frakm}{r} \Big)\frac{\partial}{\partial r} -\frac{i\sigma(d-1)}{r} + \frac{i \sigma \frakm}{r^2} \Big(1- \frac{d}{2} \Big) + \sigma^2\Big(1 + \frac{\frakm}{r} + \frac{\frakm^2}{4r^2}   \Big),
			\label{eq:misc_276}
		\end{equation}
		\begin{equation}
			r^{-1} \frakm\check{\partial_r^2} = r^{-1} \frakm \frac{\partial^2}{\partial r^2} + \frac{i\sigma \frakm}{r}\Big(2 + \frac{\frakm}{r} \Big) \frac{\partial}{\partial r}  -\frac{\sigma^2 \frakm^3}{4r^3} - \frac{i \sigma \frakm^2}{2r^3} - \frac{ \sigma^2 \frakm}{r}- \frac{\sigma^2 \frakm^2}{r^2}
			\label{eq:misc_277}
		\end{equation}
		near $\partial X$ and 
		\begin{multline}
			\qquad\qquad\qquad\qquad\check{O}- O \in \sigma\rho^{\max\{3,2+\gimel\}} \operatorname{Diff}^1_{\mathrm{b}}([0,1)_\rho ) + \sigma \rho^{2+\gimel_0} \operatorname{Diff}^1_{\mathrm{b}}(X) \\ + \sigma^2  \rho^{\max\{2,1+\gimel\}} C^\infty([0,1)_\rho) + \sigma^2 \rho^{1+\gimel_0} C^\infty(X).\qquad\qquad
			\label{eq:misc_144}
		\end{multline}
		By the definition of $\gimel$, we have $\gimel=0$ if $\frakm\neq 0$. 
		So, $\check{\triangle}_g = \triangle_g - 2i \sigma \partial_r + \sigma^2- i \sigma(d-1) r^{-1}+ \sigma O_1 + \sigma^2 O_2$ for 
		\begin{align}
		O_1 &\in  \rho^{2+\gimel} \operatorname{Diff}^1_{\mathrm{b}}([0,1)_\rho ) + \rho^{2+\gimel_0} \operatorname{Diff}^1_{\mathrm{b}}(X),\\ 
		O_2 &\in \rho^{1+\gimel} C^\infty([0,1)_\rho) +  \rho^{1+\gimel_0} C^\infty(X).
		\end{align}
		Here, we used that, if $O\in \operatorname{Diff}^2_{\mathrm{b}}(X)$, we have $\check{O}  -  O \in \sigma^2 \rho^{-2} C^\infty(X) + \sigma \rho^{-1} \operatorname{Diff}^1_{\mathrm{b}}(X)$. If $O \in \operatorname{Diff}^2_{\mathrm{b}}([0,1)_\rho)$, then we can improve this to $\check{O}  - O \in \sigma^2 \rho^{-2} C^\infty([0,1)_\rho) + \sigma \rho^{-1} \operatorname{Diff}^1_{\mathrm{b}}([0,1)_\rho)$.
		
		Combining these ingredients, we see that
		\begin{equation}
		\check{P}(\sigma) = \triangle_{g}  - \sigma^2 + L + \sigma\underbrace{\Big(- 2i \chi \frac{\partial}{\partial r}+L_1 + Q + O_1\Big)}_{P_1} + \sigma^2\underbrace{(2\chi +Q_1+R + O_2-\frakm(1-\chi)r^{-1})}_{P_2}
		\end{equation}
		near $\partial X$. So, the stated form \cref{eq:Pcheck_form} of $\check{P}(\sigma)$ holds, with $P_1,P_2$ as underlined. Thus, 
		\begin{align}
		P_1 &\in \rho^{2+\min\{\gimel,\beth,\beth_1\} }\operatorname{Diff}^1_{\mathrm{b}}([0,1)_\rho ) + \rho^{2+\min\{\gimel_0,\beth_0,\beth_2\}} \operatorname{Diff}^1_{\mathrm{b}}(X),  \\
		P_2 &\in \rho^{1+\min\{\gimel,\beth_1,1+\beth_3\} }C^\infty([0,1)_\rho ) + \rho^{1+\min\{\gimel_0,\beth_2,1+\beth_4\}} C^\infty(X),
		\end{align}
		which is what was claimed in the proposition.
		\item The converse direction is similar. Given $P_0(\sigma) = P_0 -\sigma^2$ for $P_0 = \triangle_g + L$, we have 
		\begin{equation}
		\hat{P}_0(\sigma) = \hat{\triangle}_g+\hat{L} - \sigma^2. 
		\end{equation}
		As in the previous part, $\hat{L} = L + \sigma L_1$ for $L_1 \in \rho^{2+\beth}C^\infty([0,1)_\rho) + \rho^{2+\beth_0}C^\infty(X)$, and
		\begin{equation} 
			\hat{\triangle}_g = \hat{\triangle}_{g_0}  + \frakm r^{-1} \hat{\partial_r^2} + \hat{O}
		\end{equation} 
		for $O=\triangle_g - \triangle_{g_0} - \frakm r^{-1}\partial_r^2$. The difference $\hat{O}-O$ is in the set on the right-hand side of \cref{eq:misc_144}.  
	 	The conjugate of the computation leading to \cref{eq:misc_276}, \cref{eq:misc_277} yields 
		\begin{equation}
		\hat{\triangle}_{g_0} = \triangle_{g_0} + i\sigma \Big( 2 +\frac{\frakm}{r} \Big)\frac{\partial}{\partial r} +\frac{i\sigma(d-1)}{r} - \frac{i \sigma \frakm}{r^2} \Big(1- \frac{d}{2} \Big) + \sigma^2\Big(1 + \frac{\frakm}{r} + \frac{\frakm^2}{4r^2}   \Big),
		\label{eq:misc_286}
		\end{equation}
		\begin{equation}
				r^{-1} \frakm\hat{\partial_r^2} = r^{-1} \frakm \frac{\partial^2}{\partial r^2} - \frac{i\sigma \frakm}{r}\Big(2 + \frac{\frakm}{r} \Big) \frac{\partial}{\partial r}  -\frac{\sigma^2 \frakm^3}{4r^3} + \frac{i \sigma \frakm^2}{2r^3} - \frac{ \sigma^2 \frakm}{r}- \frac{\sigma^2 \frakm^2}{r^2}.
				\label{eq:misc_287}
		\end{equation}
		So, altogether, we have 
		\begin{multline}
		P(\sigma) = \triangle_{g_0} + 2i \sigma \frac{\partial}{\partial r} + \frac{i\sigma(d-1)}{r} + L  \\ + \sigma \underbrace{(\rho^{2+\min\{\gimel,\beth\}} \operatorname{Diff}^1_{\mathrm{b}} ([0,1)_\rho ) + \rho^{2+\min\{\gimel_0,\beth_0\}} \operatorname{Diff}^1_{\mathrm{b}}(X))}_{Q}  \\ + \sigma^2 \underbrace{( \rho^{\max\{2,1+\gimel \}}  C^\infty([0,1)_\rho) + \rho^{1+\gimel_0 } C^\infty(X)  )}_R.
		\end{multline}
		The one key cancellation that occurs is between the $\sigma^2\frakm/r$ terms in \cref{eq:misc_286}, \cref{eq:misc_287}. Without this cancellation, we would only have $R\in \rho^{1+\gimel }  C^\infty([0,1)_\rho) + \rho^{1+\gimel_0 } C^\infty(X)$.

		The operators $Q,R$ defined here have the desired forms if we take $\beth,\cdots,\beth_4$ as in the proposition statement.
	\end{enumerate}
\end{proof}

\begin{proof}[Proof of \Cref{prop:Vasy_absorption}]
	Our goal will be to deduce this from \cite[Theorem 1.1]{VasyLA}. 
	There are two reasons why \Cref{prop:Vasy_absorption} is not a special case of \cite[Theorem 1.1]{VasyLA}:
	\begin{itemize}
		\item Vasy does not conjugate by $r^{\pm i \sigma \frakm/2}$, 
		\item depending on $Q,R$, it may not be the case that $\check{P}$ is the spectral family of a Schr\"odinger operator.
	\end{itemize}
	The former is actually irrelevant, because $r^{\pm i \sigma \frakm/2}$ is an isomorphism on all of the Sobolev spaces in \cite[Theorem 1.1]{VasyLA}, so Vasy's theorem applies with our conjugations. 
	
	Regardless of $Q,R$, for each individual $\sigma>0$, we have 
	\begin{equation}
	\check{P}(\sigma) = \check{P}(\sigma;\sigma),
	\end{equation}
	where, for each fixed $\omega\in \bbR$, $\check{P}(\sigma;\omega)$ is the spectral family of a Schr\"odinger operator $\check{P}(0;\omega)$, which depends on $\omega$. To be concrete, we can take, in terms of the operators in \cref{eq:Pcheck_form}, 
	\begin{equation}
	\check{P}(\sigma;\omega) = P_0 - \sigma^2 + \omega P_1 + \omega^2 P_2. 
	\end{equation}
	So, \cite[Theorem 1.1]{VasyLA} gives the most of the proposition. (Note that what we call $P$ Vasy calls $\hat{P}$, and what we call $\check{P}$ is what Vasy just calls $P$.)  
	
	What remains to be proven is that, when our original operator is symmetric, it is the case that, for each $\omega\in \bbR$, 
	\begin{equation} 
		\check{P}(0;\omega) =\check{P}(0;\omega)^*,
	\end{equation} 
	i.e.\ that $P_0 +\omega P_1 + \omega^2 P_2$ is symmetric on $C_{\mathrm{c}}^\infty(X^\circ)$ with respect to the $L^2(X,g)$ inner product. Indeed, it follows from the symmetry of $P$ that $\check{P}(\sigma) = \check{P}(\sigma)^*$ for all $\sigma \in \bbR$. We can deduce the symmetry of each of $P_0,P_1,P_2$ from this fact:
	\begin{itemize}
		\item Since $\check{P}(0) = P_0$, the symmetry of $P_0$ is immediate. 
		\item 
			\begin{equation}
		P_1 = \frac{\partial \check{P}(\sigma)}{\partial \sigma} \Big|_{\sigma =0} =\frac{\partial \check{P}(\sigma)^*}{\partial \sigma} \Big|_{\sigma =0}  = P_1^*
		\end{equation}
		\item 
		
		\begin{equation}
		P_2 = 1 +  \frac{1}{2}\frac{\partial^2 \check{P}(\sigma)}{\partial \sigma^2} \Big|_{\sigma =0} = 1 +  \frac{1}{2}\frac{\partial^2 \check{P}(\sigma)^*}{\partial \sigma^2} \Big|_{\sigma =0} = P_2^*. 
		\end{equation}
		
	\end{itemize}

	So, $\check{P}(\sigma;\omega)$ satisfies the assumptions of \cite[\S3]{VasyLA}, specifically with $\Im \alpha_\pm = 0$. 
	We may therefore apply \cite[Theorem 1.1]{VasyLA} to conclude the proposition. 
\end{proof}

\begin{proof}[Proof of \Cref{prop:Vasy_low}]
	First note that the operator $P(\sigma)$ above falls into the framework discussed in \cite[\S2, eq. 2.1]{VasyLagrangian}. So, we can appeal to \cite[Thm.\ 2.5]{VasyLagrangian}, which, in the case at hand, gives the same conclusion as \cite[Thm.\ 2.1]{VasyLagrangian}. (The latter theorem is stated in a slightly less general case than needed here, but the additional terms in $P(\sigma)$ do not change the statement of \cite[Thm.\ 2.5]{VasyLagrangian}.)
	
	In order to apply this theorem, we need that $P(0)$ has trivial kernel acting on $H_{\mathrm{b}}^{\infty,(d-4)/2}(X)$; \cref{eq:Sobolev} says that
	\begin{equation}
	H_{\mathrm{b}}^{\infty,(d-4)/2}(X) \subseteq \calA^{d-2}(X),
	\end{equation}
	so our assumption that $\ker_{\calA^{d-2}(X)} P(0) = \{0\}$ is sufficient to verify this hypothesis.

	So, if $r>-1/2$ and $\ell,\nu$ are as in the proposition statement, then \cite[Thm.\ 2.1]{VasyLagrangian} says that, for any $s\in \bbR$, there exists some $\sigma_0>0$ and $C>0$ such that 
	\begin{equation}
	\lVert (\rho+\sigma)^\nu u \rVert_{H_{\mathrm{sc,b,res}}^{s,r,\ell} } \leq C \lVert (\rho+\sigma)^{\nu-1} P(\sigma) u \rVert_{H_{\mathrm{sc,b,res}}^{s-1,r+1,\ell+1} }
	\label{eq:misc_0h7}
	\end{equation}
	holds for all $u,\sigma$ such that the left side is finite, 
	where the norms $\lVert \bullet \rVert_{H_{\mathrm{sc,b,res}}^{s,r,\ell}}$ are defined in \cite{VasyLA}.

	In order to put this in a more useful form, we can use the equivalence
	\begin{equation}
	\lVert \bullet\rVert_{H_{\mathrm{b}}^{s,\ell}} \approx \lVert \bullet\rVert_{H_{\mathrm{sc,b,res}}^{s,s+\ell,\ell}} ,
	\end{equation}
	which holds for all $s,\ell\in \bbR$ (with the $\sigma$-\emph{in}dependent constants, notationally suppressed by the ``$\approx$,'' depending $s,\ell$); see \cite[Eq. 3.5]{VasyLA}. \Cref{eq:misc_0h7} therefore gives, for $s,\ell,\nu$ as in the proposition statement, 
	\begin{multline}
	\lVert (\rho+\sigma)^\nu u \rVert_{H_{\mathrm{b}}^{s,\ell} } \approx \lVert (\rho+\sigma)^\nu u \rVert_{H_{\mathrm{sc,b,res}}^{s,s+\ell,\ell} } \lesssim \lVert (\rho+\sigma)^{\nu-1} P(\sigma)u \rVert_{H_{\mathrm{sc,b,res}}^{s-1,s+\ell+1,\ell+1} } \\ \lesssim \lVert (\rho+\sigma)^{\nu-1} P(\sigma) u \rVert_{H_{\mathrm{sc,b,res}}^{s,s+\ell+1,\ell+1} }  \approx \lVert (\rho+\sigma)^{\nu-1} P(\sigma) u \rVert_{H_{\mathrm{b} }^{s,\ell+1} }.
	\label{eq:misc_1h7}
	\end{multline}
\end{proof}

\section{An ODE example with $\frakm\neq 0$}
\label{sec:example}

Consider the ordinary differential operator $P(\sigma)$ on $\bbR^+_r$ given by 
\begin{equation}
	P(\sigma) =  \Big(-1+\frac{\frakm}{r} \Big) \frac{\partial^2}{\partial r^2}  -\sigma^2. 
\end{equation}
The ODE $Pu=0$ is a form of the confluent hypergeometric equation, and a basis of solutions is given by 
\begin{equation}
	u_1= e^{i(r-\frakm) \sigma} U(-2^{-1} i \frakm\sigma,0,2i(\frakm-r) \sigma) , \quad u_2 = e^{-i(r-\frakm)} L(2^{-1} i \frakm\sigma,-1,2i (\frakm-r) \sigma), 
\end{equation}
where $U(a,b,z)$ denotes Tricomi's confluent hypergeometric function and $L(n,a,z)$ denotes the generalized Laguerre function of order $n$.

The large-$z$ asymptotics of $U(a,b,z)$ are given by $U(a,b,z)\sim z^{-a}$ in the relevant region of the complex plane \cite[\href{http://dlmf.nist.gov/13.2E6}{13.2.6}]{NIST}, so 
\begin{equation}
	u_1 \sim e^{i (r-\frakm)\sigma} (2i (\frakm-r) \sigma)^{i \sigma \frakm/2 } 
\end{equation}
as $r\to\infty$ for $\sigma$ fixed. This shows how $\frakm$ being nonzero results in an additional oscillation $r^{i\sigma \frakm/2} = \exp(2^{-1} i \sigma \frakm \log r)$ being present in solutions.

\section{Index of notation}
\label{sec:index}

\subsection*{Index sets}
 
A \emph{pre-index set} is a subset $\calE\subset \bbC\times \bbN$ such that $\calE\cap \{z\in \bbC:\Re z<\alpha\}$ is finite for each $\alpha\in \bbR$ and such that $(j,k+1)\in \calE\Rightarrow (j,k)\in \calE$ for all $j\in \bbC$ and $k\in \bbN$.\footnote{We use $\bbN=\{0,1,2,3,\dots\}$, $\bbN_+=\{1,2,3,\dots\}$. } Then, if $\calE$ also satisfies $(j,k)\in \calE\Rightarrow (j+1,k)\in \calE$, then $\calE$ is called an \emph{index set}.

We use the caligraphic typeface, e.g.\ $\calE,\calF,\calI,\calJ,\calK$, to denote index or pre-index sets. (Exceptions are `$\calA,\calB,\calC,\calM$', which are reserved for other uses.)
\begin{itemize}
	\item We use $(j,k)$ as an abbreviation for the index set $\{(j+\delta,\kappa):\delta\in \bbN,\kappa \leq k\}$ the point $(j,k)\in \bbC\times \bbN$ generates.  Thus, $(j,k)$ always denotes either a point or the index set that point generates. Which will be clear from context. 
	\item $\{(j,k)\}\subset \bbC\times \bbN$ denotes the singleton consisting of the point $(j,k) \in \bbC\times \bbN$.
	\item If $\calE$ is a pre-index set, then $\dot{\calE}$ denotes the smallest index set containing it (the index set ``it generates''). That is, 
	\begin{equation*}
		\dot{\calE} = \{(j+\delta,k) :(j,k)\in \calE,\delta\in \bbN\}.
	\end{equation*}
	\item  If $\calE$ is a pre-index set, then \begin{equation}
		\{(j,k)\}\uplus \calE = \{(j,k)\}\cup \calE \cup \{(j+\delta,k'+\kappa+1) : (j,\kappa)\in \calE, k'\in \{0,\dots,k\}\}, 
	\end{equation}
	which consists of the singleton $\{(j,k)\}$, the terms in $\calE$, and possibly a few more logarithmic terms. It is closely related to the notion of the ``extended union'' of the given index sets, but it can be smaller. 
	\item  Similarly, 
	\begin{equation}
		(j,k)\uplus \calE = (j,k)\cup \calE \cup \{(j+\delta,k'+\kappa+1) : (j,\kappa)\in \calE, \delta\in \bbN, k'\in \{0,\dots,k\}\}.
		\label{eq:uplus}
	\end{equation}
	This consists of $(j+\delta,k)$ for each $\delta\in \bbN$, the terms in $\calE$, and, if $(j,0)\in \calE$, then it consists of a few more logarithmic terms.
	Note that this is a pre-index set and is an index set if $\calE$ is.

	We will abbreviate $(j,0)\uplus \calE=j\uplus \calE$ in some places.
	\item If $\calE$ is a pre-index set, $\Pi \calE=\{\Re j:(j,k)\in \calE\}$. 
	\item $\calC_\ell \uplus \calE = \cup_{j\geq \ell} \{(c_j,0)\}\uplus \calE $. (See below for the definition of $c_j$.)
	\item $\calC_\ell \dot{\uplus} \calE = \dot{\calI}$ for $\calI= \cup_{j\geq \ell} \{(c_j,0)\}\uplus \calE $.
	\item $c+\calE = \{ (c+j,k):(j,k)\in \calE\}$, and $(c,\kappa)+\calE = \{ (c+j,k+\varkappa) : (j,k)\in \calE, \varkappa \in \{0,\dots,\kappa\} \}$.
	\item In \S\ref{sec:0},  $\calI_j[\calF_\bullet]$ and $\calI[\calF_\bullet,\ell]$ denote pre-index sets defined in \S\ref{subsec:0main}. Their definition is via a complicated recursive procedure, so we do not provide closed form expressions (except for a few cases, see \Cref{ex:spherical_breaking}, \Cref{ex:sym}, \Cref{ex:Coulomb}). In \S\ref{sec:1} and \S\ref{sec:2}, $\calI_j[\calF_\bullet]$ and $\calI[\calF_\bullet,\ell]$ denote the index sets generated by the pre-index sets above.
	\item $\calB_\ell[\calG]$ is the smallest pre-index set (in the appendix) or index set (in the body) containing $(b_j,0)\uplus \calG$ for every $j\geq \ell$.
\end{itemize}

\subsection*{Manifolds-with-corners (mwc)}
\begin{itemize}
	\item $X$ is our manifold-with-boundary, with dimension $d\geq 3$. The asymptotically Euclidean case involves 
	\begin{equation*} 
		X=\overline{\bbR^d}=\bbR^d\sqcup \infty \bbS^{d-1},
	\end{equation*} 
	where $\infty \bbS^{d-1}$ is a ``sphere at infinity.'' More precisely, 
	\begin{equation*}
		\overline{\bbR^d} = \bbR^d\cup ([0,\infty)_{1/r}\times \bbS^{d-1}_\theta), 
	\end{equation*}
	where on the right-hand side we are implicitly identifying nonzero $x\in \bbR^d$ with $(1/|x|,x/|x|)$. 
	This is called the radial compactification of $\bbR^d$. 
	\item $\dot{X}\subset X$ is a boundary collar of $X$, a neighborhood of $\partial X$ which we identify with $[0,1)_{\rho}\times \partial X$ using a fixed diffeomorphism.
	\item  $X^+_{\mathrm{res}}$ is the result of blowing up the corner of $X\times [0,\infty)_\sigma$. This is the mwc on which we do low-energy scattering theory. See \Cref{fig}.
	\item In \S\ref{sec:2}, $\tilde{X}^+_{\mathrm{res}}$ is the result of blowing up the small-$\hat{r}$ corner of $[0,\infty]_{\hat{r}}\times \partial X\times [0,\infty)_\sigma$. 
	\item $\mathrm{bf},\mathrm{tf},\mathrm{zf}$ are the three boundary hypersurfaces of $X^+_{\mathrm{res}},\dot{X}^+_{\mathrm{res}}$, and $\tilde{X}^+_{\mathrm{res}}$ relevant to low-energy analysis. The names stand for ``boundary face,'' ``transition face,'' and ``zero (energy) face,'' respectively. The face $\mathrm{bf}$ can be thought of as corresponding to the large-$r$, $r\gg 1/\sigma$ regime. The face $\mathrm{zf}$ can be thought of as corresponding to the small-$\sigma$, $r\ll 1/\sigma$ regime. As the name indicates, $\mathrm{tf}$ is the transitional regime, and which $r\to \infty$ and $\sigma\to 0$ inversely. Note that 
	\begin{align*}
		\mathrm{bf}&\cong \partial X \times [0,\infty)_\sigma \\ 
		\mathrm{tf} &\cong \partial X \times [0,\infty]_{\hat{r}} \\ 
		\mathrm{zf}&\cong X, 
	\end{align*}
	naturally, where $\hat{r}=\sigma r$. 
	 
	\item $\rho$ is a boundary-defining-function (bdf) of $\partial X$ in $X$. 
	\item $\rho_{\mathrm{f}}$ is a bdf of $\mathrm{f}$ in $X^+_{\mathrm{res}}$. Concretely, 
	\begin{equation*}
		\rho_{\mathrm{zf}} = \frac{\sigma}{\sigma+\rho},\quad \rho_{\mathrm{tf}}=\sigma+\rho,\quad \rho_{\mathrm{bf}} = \frac{\rho}{\sigma+\rho} 
	\end{equation*}
	work.
\end{itemize}
Analogous notation is used with $\dot{X},[0,1)$ in place of $X$. 
\subsection*{Function spaces}
\begin{itemize}
	\item $H_{\mathrm{b}}^{m,s}(X)=\{u\in \calS':\Psi_{\mathrm{b}}^{-m,-s}u\in L^2(X,g)\}$ is the b-Sobolev space on $X$, with $m$ differential orders and $s$ decay orders. (Higher $s$ means more decay.)
	
	 Notice that $H_{\mathrm{b}}^{0,0}= L^2(X,g)\neq L^2(X,g_{\mathrm{b}})$ for $g_{\mathrm{b}}$ a b-metric (i.e.\ asymptotically cylindrical metric).
\end{itemize}
Our main function spaces are the partially polyhomogeneous (phg) spaces $\calA^\bullet(\bullet)$. 
\begin{itemize}
	\item $\calA^{\calE,\calF,\calG}(X^+_{\mathrm{res}})$ are the phg spaces with index set $\calE$ at $\mathrm{bf}$, $\calF$ at $\mathrm{tf}$, and $\calG$ at $\mathrm{zf}$. 
	\item $\calA^{\calE}(\mathrm{zf})$ is the phg space on $\mathrm{zf}$ with index set $\calE$ at $\mathrm{zf}\cap\mathrm{tf}$. 
	\item $\calA^{\calE,\calF}(\mathrm{tf})$ is the phg space on $\mathrm{tf}$ with index set $\calE$ at $\mathrm{tf}\cap\mathrm{bf}$ and index set $\calF$ at $\mathrm{zf}\cap\mathrm{tf}$. 
	\item $\calY_0,\calY_1,\calY_2,\dots$ denote subspaces of eigenspaces of the boundary Laplacian $\triangle_{\partial X}$ with non-decreasing eigenvalues $\lambda_j$. 
	\item $\calA^{\bullet}(\bullet;\ell)$ denotes a phg space on some boundary collar in which functions are orthogonal to the elements of $\calY_0,\dots,\calY_{\ell-1}$. 
	\item If we write something like $\calA^{\alpha,\bullet}$ for $\alpha$ a \emph{number} instead of index set, what is meant is, near the face $\mathrm{f}$ to which $\alpha$ corresponds, a conormal function space in $\rho_{\mathrm{f}}$ valued in $\calA^\bullet(\mathrm{f})$.
	\item  For $\calE$ and index set and $\alpha$ a number, 
	$\calA^{(\calE,\alpha),\bullet}= \calA^{\calE,\bullet}+\calA^{\alpha,\bullet}$.
\end{itemize}

\subsection*{Operators}

\begin{itemize}
	\item $\triangle_g$ denotes the positive semidefinite Laplace--Beltrami operator associated to a Riemannian metric $g$ (the negative of the usual definition).
	\item $P$ is our main operator, described in \S\ref{sec:op}.
	\item $N_{\mathrm{f}}=N_{\mathrm{f}}(P)$ is the normal operator (``model operator,'' ``leading terms'') in $P$ at the face $\mathrm{f}\in \{\mathrm{zf},\mathrm{tf},\mathrm{bf}\}$. This means that $P-N_{\mathrm{f}}$ has more decay than $P$ at $\mathrm{f}$. 
	\item $\hat{N}_{\mathrm{tf}}= \sigma^{-2}N_{\mathrm{tf}}$.
	\item $R(\sigma^2 \pm i 0)$ is the limiting resolvent, defined in \S\ref{subsec:Sommerfeld}
	\item $\beth_\bullet\in \bbN$ are various terms telling us at which order (in terms of $1/r$) below subleading various terms in the PDE enter. For example, $\beth$ tells us the decay order of the difference between the metric $g$ and the Minkowski metric --- if $\beth=0$, then this means $1/r$ decay. The term $\beth_0$ is the same, but restricting attention to spherically asymmetric terms. So, on spherically symmetric spacetimes, $\beth_0=\infty$, even if $\beth=0$. On asymptotically Kerr spacetimes, $\beth=0$ and $\beth_0=1$. 
	
	The terms $\beth_j$ for $j\geq 1$ are similar, except about other terms in the PDE. See \S\ref{subsec:op} for the details.
	\item In \S\ref{sec:op}, \S\ref{sec:tedium}, we use $\gimel,\gimel_0$ to refer to orders of subleading terms in the metric. These will become $\beth,\beth_0$.
	\item In most of the paper, we use $\frakm$ to denote a coefficient in the immediately subleading term in $g^{00}$. In \S\ref{sec:Price}, we use it to denote the immediately subleading term in the potential, which enters the low-energy theory in the same way.
	\item In \S\ref{sec:0}, we use $\aleph,\aleph_1$ to denote the orders of conormal errors.
\end{itemize}

\subsection*{Miscellaneous}

\begin{itemize}
	\item We use $b_j,c_j$ to denote the numbers 
	\begin{align*}
		b_j &= 2^{-1}(2-d + ((d-2)^2+4\lambda_j)^{1/2}) \\ 
		c_j &= 2^{-1}(d-2 + ((d-2)^2+4\lambda_j)^{1/2}).
	\end{align*}
	\item In \S\ref{sec:Mellin}, $\calM$ denotes the Mellin transform.
	\item $a\wedge b = \operatorname{min}\{a,b\}$.
	\item When we write `$-$' at the end of an order, e.g.\ $\calA^{\alpha-}$, what we mean is an intersection, e.g.\ $\cap_{\epsilon>0} \calA^{\alpha-\epsilon}$. 
	\item Similarly, $\calA^{\alpha+}=\cup_{\epsilon>0} \calA^{\alpha+\epsilon}$. 
	\item $\hat{r}=\sigma r$.
\end{itemize}

\printbibliography

\end{document}